\documentclass{interact}
\title{Generalized Left-Localized Cayley Parametrization\\ for Optimization with Orthogonality Constraints}
\author{
  \name{Keita Kume\textsuperscript{a}\thanks{CONTACT Keita Kume. Email: kume@sp.ce.titech.ac.jp} and Isao Yamada\textsuperscript{a}\thanks{CONTACT Isao Yamada. Email: isao@sp.ce.titech.ac.jp}\thanks{This is an original manuscript of an article published by Taylor \& Francis in {\it Optimization} on 15 Nov. 2022, available online: https://doi.org/10.1080/02331934.2022.2142471.}
}
\affil{\textsuperscript{a}Dept. of Information and Communications Engineering, Tokyo Institute of Technology, Tokyo, Japan}
}

\usepackage[caption=false]{subfig}
\usepackage[numbers,sort&compress]{natbib}
\bibpunct[, ]{[}{]}{,}{n}{,}{,}

\makeatletter
\def\NAT@def@citea{\def\@citea{\NAT@separator}}
\makeatother

\usepackage{amsmath,amssymb,amsfonts}
\usepackage{graphicx}
\usepackage{textcomp}
\usepackage{xcolor}

\usepackage{here}
\usepackage{amssymb}
\usepackage{ascmac}
\usepackage{bm}
\usepackage{algorithm}
\usepackage{algorithmic}
\usepackage{comment}
\usepackage{cases}
\usepackage{mathtools}
\usepackage{enumitem}
\usepackage{mathabx}
\usepackage{hyperref}
\usepackage{stmaryrd}
\usepackage{tikz}
\usetikzlibrary{arrows.meta}
\usetikzlibrary{positioning}
\usepackage{csvsimple}

\mathtoolsset{showonlyrefs=true}

\newcommand{\argmin}{\mathop{\mathrm{argmin}}\limits}

\newcommand{\inprod}[2]{{\langle #1,#2 \rangle}}
\newcommand{\trace}{{\rm Tr}}

\newcommand{\St}{{\rm St}}
\newcommand{\Gr}{{\rm Gr}}

\newcommand{\T}{\mathsf{T}}
\newcommand{\diag}{\mathrm{diag}}
\newcommand{\Skew}{\mathop{\mathrm{S_{kew}}}}
\newcommand{\dbra}[1]{\llbracket #1 \rrbracket}

\makeatletter
\newcommand{\doublewidetilde}[1]{{%
  \mathpalette\double@widetilde{#1}%
}}
\newcommand{\double@widetilde}[2]{%
  \sbox\z@{$\m@th#1\widetilde{#2}$}%
  \ht\z@=.9\ht\z@
  \widetilde{\box\z@}%
}
\makeatother

\theoremstyle{plain}
\newtheorem{theorem}{Theorem}[section]
\newtheorem{lemma}[theorem]{Lemma}

\newtheorem{proposition}[theorem]{Proposition}

\theoremstyle{definition}
\newtheorem{definition}[theorem]{Definition}

\newtheorem{problem}[theorem]{Problem}

\theoremstyle{remark}
\newtheorem{remark}[theorem]{Remark}

\newtheorem{fact}[theorem]{Fact}

\begin{document}
\articletype{ARTICLE TEMPLATE}

\maketitle

\begin{abstract}
We present a reformulation of optimization problems over the Stiefel manifold by using a Cayley-type transform, named the generalized left-localized Cayley transform, for the Stiefel manifold.
The reformulated optimization problem is defined over a vector space, whereby we can apply directly powerful computational arts designed for optimization over a vector space.
The proposed Cayley-type transform enjoys several key properties which are useful to
(i) study relations between the original problem and the proposed problem;
(ii) check the conditions to guarantee the global convergence of optimization algorithms.
Numerical experiments demonstrate that the proposed algorithm outperforms the standard algorithms designed with a retraction on the Stiefel manifold.
\end{abstract}

\begin{keywords}
  Stiefel manifold; Cayley transform; Cayley parametrization; Orthogonality constraint; Non-convex optimization
\end{keywords}

\noeqref{eq:prop:gradient_Lipschitz, eq:Cayley, eq:Cayley_inv_origin, eq:grad_propo, eq:optimality_CP_lemma, eq:gradient_translation}
\section{Introduction}\label{sec:introduction}
The Stiefel manifold
$\St(p,N):= \{\bm{U} \in \mathbb{R}^{N\times p} \mid \bm{U}^{\T}\bm{U} = \bm{I}_{p}\}$
is defined for
$(p,N) \in \mathbb{N} \times \mathbb{N}$
with
$p \leq N$,
where
$\bm{I}_{p}$
is the
$p\times p$
identity matrix
(see Appendix~\ref{appendix:facts} for basic facts on
$\St(p,N)$).

We consider an orthogonal constraint optimization problem formulated as:
\begin{problem}\label{problem:origin}
  For a given continuous function
  $f:\mathbb{R}^{N\times p }\to \mathbb{R}$,
  \begin{equation}
    \textrm{find} \  \bm{U}^{\star} \in \argmin_{\bm{U}\in \St(p,N)}f(\bm{U}), \label{eq:cost}
  \end{equation}
  where the existence of a minimizer in~\eqref{eq:cost} is automatically guaranteed by the compactness of
  $\St(p,N)$
  and the continuity of
  $f$
  over the $Np$-dimensional Euclidean space
  $\mathbb{R}^{N\times p}$.
\end{problem}
This problem belongs to the so-called Riemannian optimization problems (see~\cite{manifold_book} and Appendix~\ref{appendix:retraction}), and has rich applications, in the case
$p \ll N$
in particular, in data sciences including signal processing and machine learning as remarked recently in~\cite{M20} and~\cite{Sato21}.
These applications include, e.g., nearest low-rank correlation matrix problem~\cite{P04,G07,Z15B}, nonlinear eigenvalue problem~\cite{L98, Y06,Z15}, sparse principal component analysis~\cite{Z06,J10,L12}, 1-bit compressed sensing~\cite{B08,L11}, joint diagonalization problem for independent component analysis~\cite{J02,T09,S17} and enhancement of the generalization performance in deep neural network~\cite{H18,N18B}.
However, Problem~\ref{problem:origin} has inherent difficulties regarding the severe nonlinearity of
$\St(p,N)$
as an instance of general nonlinear Riemannian manifolds.

Minimization of a continuous
$f:\mathbb{R}^{N\times N}\to \mathbb{R}$
over the orthogonal group
${\rm O}(N) := \St(N,N)$
is a special instance of Problem~\ref{problem:origin} with
$p = N$.
This problem can be separated into two optimization problems over the special orthogonal group
${\rm SO}(N):=\{\bm{U}\in {\rm O}(N):=\St(N,N) \mid \det(\bm{U})=1\}$
as
\begin{equation}
  \textrm{find}\    \bm{U}_{1}^{\star} \in \argmin_{\bm{U}\in {\rm SO}(N)}f(\bm{U}) \label{eq:cost_SO}
\end{equation}
and, with an arbitrarily chosen
$\bm{Q} \in {\rm O}(N) \setminus {\rm SO}(N)$,
\begin{equation}
  \textrm{find}\    \bm{U}_{2}^{\star} \in \argmin_{\bm{U}\in {\rm SO}(N)}f(\bm{Q}\bm{U})
\end{equation}
because
${\rm O}(N)$
is the disjoint union of
${\rm SO}(N)$
and
${\rm O}(N) \setminus {\rm SO}(N)=\{\bm{U} \in {\rm O}(N) \mid \det(\bm{U}) = -1\} = \{\bm{Q}\bm{U} \in {\rm O}(N)\mid \bm{U} \in {\rm SO}(N)\}$.
For the problem in~\eqref{eq:cost_SO}, the Cayley transform
\begin{equation}
  \varphi:{\rm SO}(N)\setminus E_{N,N} \to Q_{N,N}:\bm{U}\mapsto (\bm{I}-\bm{U})(\bm{I}+\bm{U})^{-1} \label{eq:origin_Cayley}
\end{equation}
and its inversion mapping\footnote{
  $\varphi^{-1}$ is well-defined over
  $Q_{N,N}$
  because all eigenvalues of
  $\bm{V}\in Q_{N,N}$
  are pure imaginary.
  For the second expression in~\eqref{eq:inv_origin_Cayley}, see the beginning of Appendix~\ref{appendix:inverse}.
}
\begin{equation}
  \varphi^{-1}: Q_{N,N}\to {\rm SO}(N)\setminus E_{N,N}:\bm{V}\mapsto (\bm{I}-\bm{V})(\bm{I}+\bm{V})^{-1}= 2(\bm{I}+\bm{V})^{-1} -\bm{I} \label{eq:inv_origin_Cayley}
\end{equation}
have been utilized in~\cite{Y03,H10,H18} because
$\varphi$
translates a subset
${\rm SO}(N) \setminus E_{N,N}(={\rm O}(N)\setminus E_{N,N} {\rm [see~\eqref{eq:orthogonal_normal}]})$
of
${\rm SO}(N)$
into the vector space
$Q_{N,N}:=\{\bm{V} \in \mathbb{R}^{N\times N} \mid \bm{V}^{\T} = -\bm{V} \}$
of all skew-symmetric matrices, where
$E_{N,N}:=\{\bm{U} \in {\rm O}(N) \mid \det(\bm{I}+\bm{U}) =0 \}$
is called, in this paper, the singular-point set of
$\varphi$.
More precisely, this is because
$\varphi$
is a diffeomorphism between the dense subset\footnote{
  The closure of
  ${\rm SO}(N) \setminus E_{N,N}$
  is equal to
  ${\rm SO}(N)$.
  For every
  $\bm{U} \in {\rm SO}(N)$,
  we can approximate it by some sequence
  $(\bm{U}_{n})_{n=1}^{\infty}$
  of
  ${\rm SO}(N)\setminus E_{N,N}$
  with any accuracy, i.e.,
  $\lim_{n\to \infty} \bm{U}_{n} = \bm{U}$.
}
${\rm SO}(N)\setminus E_{N,N}$
of
${\rm SO}(N)$
and
$Q_{N,N}$.

The Cayley transform pair
$\varphi$
and
$\varphi^{-1}$
can be modified with an arbitrarily chosen
$\bm{S} \in {\rm O}(N)$
as
\begin{equation} \label{eq:LCT_O}
  \varphi_{\bm{S}}:{\rm O}(N)\setminus E_{N,N}(\bm{S})\to Q_{N,N}(\bm{S}):=Q_{N,N}:\bm{U}\mapsto \varphi(\bm{S}^{\T}\bm{U}) = (\bm{I}-\bm{S}^{\T}\bm{U})(\bm{I}+\bm{S}^{\T}\bm{U})^{-1}
\end{equation}
and
\begin{equation}\label{eq:ILCT_O}
  \varphi_{\bm{S}}^{-1}:Q_{N,N}(\bm{S})\to{\rm O}(N)\setminus E_{N,N}(\bm{S}):\bm{V}\mapsto \bm{S}\varphi^{-1}(\bm{V})=\bm{S}(\bm{I}-\bm{V})(\bm{I}+\bm{V})^{-1},
\end{equation}
where
$E_{N,N}(\bm{S}):=\{\bm{U}\in{\rm O}(N) \mid \det(\bm{I}+\bm{S}^{T}\bm{U}) = 0\}$
is the singular-point set of
$\varphi_{\bm{S}}$.
These mappings are also diffeomorphisms between their domains and images.
With the aid of
$\varphi_{\bm{S}}$\footnote{
  The domain of
  $\varphi_{\bm{S}}$
  with
  $\bm{S} \in {\rm SO}(N)$
  is a subset
  ${\rm O}(N)\setminus E_{N,N}(\bm{S}) = {\rm SO}(N)\setminus E_{N,N}(\bm{S})$
  of
  ${\rm SO}(N)$.
}
with
$\bm{S} \in {\rm SO}(N)$,
the following Problem~\ref{problem:CP_O} was considered in~\cite{Y03} as a relaxation of the problem in~\eqref{eq:cost_SO}.

\begin{problem}\label{problem:CP_O}
  For a given continuous function
  $f:\mathbb{R}^{N\times N }\to \mathbb{R}$,
  choose
  $\bm{S} \in {\rm SO}(N)$,
  and
  $\epsilon > 0$
  arbitrarily.
  Then,
  \begin{equation}
    \textrm{find} \ \bm{V}^{\star} \in  Q_{N,N}(\bm{S}) \ \textrm{such that} \ 
    f\circ\varphi_{\bm{S}}^{-1}(\bm{V}^{\star}) < \min f({\rm SO}(N)) + \epsilon.\label{eq:CP_O}
  \end{equation}
\end{problem}
\begin{remark}\label{remark:problem_CP_O}
  \begin{enumerate}[label=(\alph*)]
    \item (The existence of $\bm{V}^{\star}$ in Problem~\ref{problem:CP_O}). \label{enum:existence_CP_O}
      The existence of
      $\bm{V}^{\star}$
      satisfying~\eqref{eq:CP_O} is guaranteed because
      $\varphi_{\bm{S}}^{-1}(Q_{N,N})={\rm SO}(N)\setminus E_{N,N}(\bm{S})$
      is a dense subset of
      ${\rm SO}(N)$
      for any
      $\bm{S} \in {\rm SO}(N)$~\cite{Y03} (see Fact~\ref{fact:dense}) and
      $f\circ \varphi_{\bm{S}}^{-1}$
      is continuous.
    \item (Left-localized Cayley transform). \label{enum:LCT}
      We call
      $\varphi_{\bm{S}}$
      in~\eqref{eq:LCT_O}
      \textit{the left-localized Cayley transform centered at $\bm{S}\in {\rm O}(N)$}
      because
      $\bm{S}$
      is multiplied from the left of
      $\varphi^{-1}(\bm{V})$
      in~\eqref{eq:ILCT_O}, and
      $\varphi_{\bm{S}}(\bm{S})= \bm{0}$.
      Although
      $Q_{N,N}(\bm{S})$
      in~\eqref{eq:LCT_O} is the common set
      $Q_{N,N}$
      for all
      $\bm{S}\in {\rm O}(N)$,
      we distinguish
      $Q_{N,N}(\bm{S})$
      for each
      $\bm{S} \in {\rm O}(N)$
      as the domain of parametrization
      $\varphi_{\bm{S}}^{-1}$
      for a particular subset
      ${\rm O}(N)\setminus E_{N,N}(\bm{S})\subset {\rm O}(N)$.
  \end{enumerate}
\end{remark}

We note that Problem~\ref{problem:CP_O} is a realistic relaxation of the problem in~\eqref{eq:cost_SO} as long as our target is approximation of a solution to~\eqref{eq:cost_SO} algorithmically because
${\rm SO}(N)\setminus E_{N,N}(\bm{S})=\varphi_{\bm{S}}^{-1}(Q_{N,N}(\bm{S}))$
is dense in
${\rm SO}(N)$.
In reality with a digital computer, we can handle just a small subset of the rational numbers
$\mathbb{Q}$,
which is dense in
$\mathbb{R}$,
due to the limitation of the numerical precision.
This situation implies that it is reasonable to consider an approximation of
${\rm SO}(N)$
within its dense subset
${\rm SO}(N) \setminus E_{N,N}(\bm{S})$.

For Problem~\ref{problem:CP_O}, we can enjoy various arts of optimization over a vector space, e.g., the gradient descent method and Newton's method, because
$Q_{N,N}(\bm{S})$
is a vector space.
Thanks to the homeomorphism of
$\varphi_{\bm{S}}$,
we can estimate a solution to the problem in~\eqref{eq:cost_SO} by applying
$\varphi_{\bm{S}}^{-1}$
to a solution of Problem~\ref{problem:CP_O} with a sufficiently small
$\epsilon > 0$.
We call this strategy via Problem~\ref{problem:CP_O} {\it a Cayley parametrization (CP) strategy} for the problem in~\eqref{eq:cost_SO}.
The CP strategy has a notable advantage over the standard optimization strategies~\cite{manifold_book}, called {\it the retraction-based strategies}, in view that many powerful computational arts designed for optimization over a single vector space can be directly plugged into the CP strategy.
We will discuss the details in Remark~\ref{remark:relation_to_others}.

In this paper, we address a natural question regarding a possible extension of the CP strategy to Problem~\ref{problem:origin} for general $p<N$: can we parameterize a dense subset of
$\St(p,N)$
even with
$p<N$
in terms of a single vector space?
To answer this question positively, we propose \textit{a Generalized Left-Localized Cayley Transform (G-L$^{2}$CT)}:
\begin{align}
  \Phi_{\bm{S}}:\St(p,N) \setminus E_{N,p}(\bm{S})\to Q_{N,p}(\bm{S}):\bm{U}\mapsto
  \begin{bmatrix}
    \bm{A}_{\bm{S}}(\bm{U}) & -\bm{B}^{\T}_{\bm{S}}(\bm{U}) \\
    \bm{B}_{\bm{S}}(\bm{U}) & \bm{0}
  \end{bmatrix},
\end{align}
with
$\bm{S} \in {\rm O}(N)$,
as an extension of the left-localized Cayley transform
$\varphi_{\bm{S}}$
in~\eqref{eq:LCT_O},
where
$\bm{A}_{\bm{S}}(\bm{U}) \in Q_{p,p}$
and
$\bm{B}_{\bm{S}}(\bm{U}) \in \mathbb{R}^{(N-p)\times p}$
are determined with a center point
$\bm{S}\in {\rm O}(N)$
(see~\eqref{eq:Cay_A} and~\eqref{eq:Cay_B} in Definition~\ref{definition:Cayley}).
The set
$E_{N,p}(\bm{S}):= \{\bm{U} \in \St(p,N) \mid \det(\bm{I}_{p} + \bm{S}_{\rm le}^{\T}\bm{U}) = 0\}$
is called the singular-point set of
$\Phi_{\bm{S}}$
(see the notation in the end of this section), and
$Q_{N,p}(\bm{S})$
is a linear subspace of
$Q_{N,N}(\bm{S})$
(see~\eqref{eq:skew}).
For any
$\bm{S} \in {\rm O}(N)$,
we will show several key properties, e.g.,
(i)
$\Phi_{\bm{S}}$
is diffeomorphism between
$\St(p,N)\setminus E_{N,p}(\bm{S})$
and the vector space
$Q_{N,p}(\bm{S})$
with  the inversion mapping
$\Phi_{\bm{S}}^{-1}:Q_{N,p}(\bm{S})\to \St(p,N)\setminus E_{N,p}(\bm{S})$
(see Proposition~\ref{proposition:inverse});
(ii)
$\St(p,N)\setminus E_{N,p}(\bm{S})$
is a dense subset of
$\St(p,N)$
for
$p < N$
(see Theorem~\ref{theorem:dense}~\ref{enum:dense}).
Therefore, the proposed
$\Phi_{\bm{S}}$
and
$\Phi_{\bm{S}}^{-1}$
have inherent properties desired for applications in the CP strategy to Problem~\ref{problem:origin}.

To extend the CP strategy to Problem~\ref{problem:origin} for
$p < N$,
we consider Problem~\ref{problem:CP_St} below, which can be seen as an extension of Problem~\ref{problem:CP_O}.
For the same reason as in Remark~\ref{remark:problem_CP_O}~\ref{enum:existence_CP_O}, the existence of
$\bm{V}^{\star}$
achieving~\eqref{eq:simple_alcp} is guaranteed by the denseness of
$\St(p,N) \setminus E_{N,p}(\bm{S})$
in
$\St(p,N)$
(see Lemma~\ref{lemma:gap}).
\begin{problem} \label{problem:CP_St}
  For a given continuous function
  $f:\mathbb{R}^{N\times p}\to \mathbb{R}$
  with
  $p < N$,
  choose
  $\bm{S} \in {\rm O}(N)$,
  and
  $\epsilon > 0$
  arbitrarily.
  Then,
  \begin{equation}
    \textrm{find} \ \bm{V}^{\star} \in  Q_{N,p}(\bm{S}) \ \textrm{such that} \  f\circ\Phi_{\bm{S}}^{-1}(\bm{V}^{\star}) < \min f(\St(p,N)) + \epsilon. \label{eq:simple_alcp}
  \end{equation}
\end{problem}

Under a smoothness assumption on general
$f$,
a realistic goal for Problem~\ref{problem:origin} is to find a stationary point
$\bm{U}^{\star} \in \St(p,N)$
of
$f$
because Problem~\ref{problem:origin} is a non-convex optimization problem (see, e.g.,~\cite{manifold_book,W13,BXX18}) and any local minimizer must be a stationary point~\cite{W13,BXX18}.
In Lemma~\ref{lemma:optimality}, we present a characterization of a stationary point
$\bm{U}^{\star} \in \St(p,N)$
of
$f$
over
$\St(p,N)$,
with
$\bm{S} \in {\rm O}(N)$
satisfying
$\bm{U}^{\star} \in \St(p,N)\setminus E_{N,p}(\bm{S})$,
in terms of a stationary point
$\bm{V}^{\star} \in Q_{N,p}(\bm{S})$
of
$f\circ \Phi_{\bm{S}}^{-1}$
over the vector space
$Q_{N,p}(\bm{S})$,
i.e.,
$\nabla (f\circ\Phi_{\bm{S}}^{-1})(\bm{V}^{\star})=\bm{0}$.
To approximate a stationary point of
$f$
over
$\St(p,N)$,
we also consider the following problem:
\begin{problem}\label{problem:CP_grad}
  For a continuously differentiable function
  $f:\mathbb{R}^{N\times p}\to \mathbb{R}$
  with
  $p<N$,
  choose
  $\bm{S} \in {\rm O}(N)$
  and
  $\epsilon > 0$
  arbitrarily.
  Then,
  \begin{equation} \label{eq:problem_alcp_grad}
    \textrm{find}\ \bm{V}^{\star}\in Q_{N,p}(\bm{S}) \  \textrm{such that}\ \|\nabla (f\circ \Phi_{\bm{S}}^{-1})(\bm{V}^{\star})\|_{F} < \epsilon.
  \end{equation}
\end{problem}
\noindent
For Problem~\ref{problem:CP_grad}, we can apply many powerful arts for searching a stationary point of a non-convex function over a vector space.

Numerical experiments in Section~\ref{sec:numerical} demonstrate that the proposed CP strategy outperforms the standard algorithms designed with a retraction on
$\St(p,N)$
(see Appendix~\ref{appendix:retraction})
in the scenario of a certain eigenbasis extraction problem.

\textbf{Notation}
$\mathbb{N}$
and
$\mathbb{R}$
denote the set of all positive integers and the set of all real numbers respectively.
For general
$n\in \mathbb{N}$,
we use
$\bm{I}_{n}$
for the identity matrix in
$\mathbb{R}^{n\times n}$,
but for simplicity, we use
$\bm{I}$
for the identity matrix in
$\mathbb{R}^{N\times N}$.
For
$p \leq N$,
$\bm{I}_{N\times p} \in \mathbb{R}^{N\times p}$
denotes the matrix of the first
$p$
columns of
$\bm{I}$.
For a matrix
$\bm{X} \in \mathbb{R}^{n\times m}$,
$[\bm{X}]_{ij}$
$(1\leq i \leq n, 1 \leq j\leq m)$
denotes the
$(i,j)$
entry of
$\bm{X}$,
and
$\bm{X}^{\T}$
denotes the transpose of
$\bm{X}$.
For a square matrix
$\bm{X}:= \begin{bmatrix} \bm{X}_{11} \in \mathbb{R}^{p\times p} & \bm{X}_{12} \in \mathbb{R}^{p\times (N-p)} \\ \bm{X}_{21} \in \mathbb{R}^{(N-p)\times p} & \bm{X}_{22} \in \mathbb{R}^{(N-p)\times (N-p)} \end{bmatrix}\in \mathbb{R}^{N\times N}$,
we use the notation
$\dbra{\bm{X}}_{ij}:= \bm{X}_{ij}$
for
$i,j \in \{1,2\}$.
For
$\bm{U} \in \mathbb{R}^{N\times p}$,
the matrices
$\bm{U}_{\rm up} \in \mathbb{R}^{p \times p}$
and
$\bm{U}_{\rm lo} \in \mathbb{R}^{(N-p) \times p}$
respectively denote the upper and the lower block matrices of
$\bm{U}=[\bm{U}_{\rm up}^{\T}\ \bm{U}_{\rm lo}^{\T}]^{\T}$.
For
$\bm{S} \in \mathbb{R}^{N\times N}$,
the matrices
$\bm{S}_{\rm le} \in \mathbb{R}^{N\times p}$
and
$\bm{S}_{\rm ri} \in \mathbb{R}^{N\times (N-p)}$
respectively denote the left and right block matrices of
$\bm{S}=[\bm{S}_{\rm le}\ \bm{S}_{\rm ri}]$.
For a matrix
$\bm{X}\in\mathbb{R}^{n\times n}$,
$\Skew(\bm{X}) = (\bm{X}-\bm{X}^{\T})/2$
denotes the skew-symmetric component of
$\bm{X}$.
For square matrices
$\bm{X}_{i}\in \mathbb{R}^{n_{i}\times n_{i}}\ (1\leq i \leq k)$,
$\diag(\bm{X}_{1},\bm{X}_{2},\ldots,\bm{X}_{k})\in \mathbb{R}^{(\sum_{i=1}^{k}n_{i})\times  (\sum_{i=1}^{k}n_{i})}$
denotes the block diagonal matrix with diagonal blocks
$\bm{X}_{1},\bm{X}_{2},\ldots,\bm{X}_{k}$.
For a given matrix,
$\|\cdot\|_{2}$
and
$\|\cdot\|_{F}$
denote the spectral norm and the Frobenius norm respectively.
The functions
$\sigma_{\max}(\cdot)$
and
$\sigma_{\min}(\cdot)$
denote respectively the largest and the nonnegative smallest singular values of a given matrix.
The function
$\lambda_{\max}(\cdot)$
denotes the largest eigenvalue of a given symmetric matrix.
For a vector space
$\mathcal{X}$
of matrices,
$B_{\mathcal{X}}(\bm{X}^{\star},\epsilon):= \{\bm{X} \in \mathcal{X} \mid \|\bm{X}-\bm{X}^{\star}\|_{F} < \epsilon \}$
denotes an open ball centered at
$\bm{X}^{\star} \in \mathcal{X}$
with radius
$\epsilon > 0$.
To distinguish from the symbol for the orthogonal group
${\rm O}(N)$,
the symbol
$\mathfrak{o}(\cdot)$
is used in place of the standard big O notation for computational complexity.

\section{Generalized left-localized Cayley Transform (G-L$^{2}$CT)}\label{sec:cayley_stiefel}
\subsection{Definition and Properties of G-L$^{2}$CT}\label{sec:localized}
In this subsection, we introduce the Generalized Left-Localized Cayley Transform (G-L${}^2$CT) for the parametrization of
$\St(p,N)$
as a natural extension of
$\varphi_{\bm{S}}$
in~\eqref{eq:LCT_O}.
Indeed, the G-L${}^{2}$CT inherits key properties satisfied by
$\varphi_{\bm{S}}$
(see Proposition~\ref{proposition:inverse} and Theorem~\ref{theorem:dense}).
\begin{definition}[Generalized left-localized Cayley transform]\label{definition:Cayley}
  For
  $p,N\in \mathbb{N}$
  satisfying
  $p\leq N$,
  let
  $\bm{S} \in {\rm O}(N)$,
  $E_{N,p}(\bm{S}):= \{\bm{U} \in \St(p,N) \mid \det(\bm{I}_{p} + \bm{S}_{\rm le}^{\T}\bm{U}) = 0\}$,
  and
  \begin{equation} \label{eq:skew}
    Q_{N,p}(\bm{S}) := Q_{N,p}:=\left\{
    \left.\begin{bmatrix} \bm{A} & -\bm{B}^{\T} \\ \bm{B} & \bm{0} \end{bmatrix}
      \;\right|\;\substack{ -\bm{A}^{\T} = \bm{A} \in \mathbb{R}^{p\times p},\\ \bm{B} \in \mathbb{R}^{(N-p)\times p}}\right\} \subset Q_{N,N}.
  \end{equation}
  The generalized left-localized Cayley transform  centered at
  $\bm{S}$
  is defined by
  \begin{align}
    \Phi_{\bm{S}}:\St(p,N) \setminus E_{N,p}(\bm{S})\to Q_{N,p}(\bm{S}):\bm{U}\mapsto
    \begin{bmatrix}
      \bm{A}_{\bm{S}}(\bm{U}) & -\bm{B}^{\T}_{\bm{S}}(\bm{U}) \\
      \bm{B}_{\bm{S}}(\bm{U}) & \bm{0}
    \end{bmatrix} 
    \label{eq:Cayley}
  \end{align}
  with
  \begin{align}
    \bm{A}_{\bm{S}}(\bm{U}) &:= 2(\bm{I}_{p}+\bm{S}_{\rm le}^{\T}\bm{U})^{-\mathrm{T}}\Skew(\bm{U}^{\T}\bm{S}_{\rm le})(\bm{I}_{p}+\bm{S}_{\rm le}^{\T}\bm{U})^{-1} \in Q_{p,p} \label{eq:Cay_A}\\
    \bm{B}_{\bm{S}}(\bm{U}) &:= - \bm{S}_{\rm ri}^{\T}\bm{U}(\bm{I}_{p}+\bm{S}_{\rm le}^{\T}\bm{U})^{-1} \in \mathbb{R}^{(N-p) \times p}, \label{eq:Cay_B}
  \end{align}
  where we call
  $\bm{S}$
  the center point of
  $\Phi_{\bm{S}}$,
  and
  $E_{N,p}(\bm{S})$
  the singular-point set of
  $\Phi_{\bm{S}}$.
\end{definition}
\begin{proposition}[Inversion of G-L${}^2$CT]\label{proposition:inverse}
  The mapping
  $\Phi_{\bm{S}}$
  with
  $\bm{S}\in {\rm O}(N)$
  is a diffeomorphism between a subset
  $\St(p,N)\setminus E_{N,p}(\bm{S})\subset \St(p,N)$
  and
  $Q_{N,p}(\bm{S})$\footnote{
    As in~\eqref{eq:skew},
    $Q_{N,p}(\bm{S})$
    is the common set
    $Q_{N,p}$
    for every
    $\bm{S}\in{\rm O}(N)$.
    However, we distinguish
    $Q_{N,p}(\bm{S})$
    for each
    $\bm{S} \in {\rm O}(N)$
    as a parametrization of the particular subset
    $\St(p,N)\setminus E_{N,p}(\bm{S})$
    of
    $\St(p,N)$
    (see also Remark~\ref{remark:problem_CP_O}~\ref{enum:LCT}).
  }.
  The inversion mapping is given, in terms of
  $\varphi_{\bm{S}}^{-1}$
  in~\eqref{eq:ILCT_O},
  by
  \begin{align}
    \Phi_{\bm{S}}^{-1}:Q_{N,p}(\bm{S})\to \St(p,N)\setminus E_{N,p}(\bm{S}):\bm{V} \mapsto
    & \Xi\circ\varphi_{\bm{S}}^{-1}(\bm{V})=\bm{S}(\bm{I}-\bm{V})(\bm{I}+\bm{V})^{-1}\bm{I}_{N\times p},\\ \label{eq:Cayley_inv_origin}
  \end{align}
  where
  $\Xi:{\rm O}(N)\to \St(p,N):\bm{U}\mapsto \bm{U}\bm{I}_{N\times p}$.
  Moreover, for
  $\bm{V} \in Q_{N,p}(\bm{S})$,
  we have the following expressions
  \begin{align}
     \quad \Phi_{\bm{S}}^{-1}(\bm{V})
    & = \Xi\circ\varphi_{\bm{S}}^{-1}(\bm{V})
    = 2\bm{S}(\bm{I}+\bm{V})^{-1}\bm{I}_{N\times p} - \bm{S}\bm{I}_{N\times p} \label{eq:Cayley_inv_alt}\\
    & = 2\bm{S}\begin{bmatrix}\bm{M}^{-1} \\ -\dbra{\bm{V}}_{21}\bm{M}^{-1} \end{bmatrix} - \bm{S}_{\rm le}
    = 2(\bm{S}_{\rm le} - \bm{S}_{\rm ri}\dbra{\bm{V}}_{21})\bm{M}^{-1}-\bm{S}_{\rm le}, \label{eq:Cayley_inv}
  \end{align}
  where
  $\bm{M}:= \bm{I}_{p}+\dbra{\bm{V}}_{11}+\dbra{\bm{V}}_{21}^{\T}\dbra{\bm{V}}_{21}\in \mathbb{R}^{p\times p}$
  is the Schur complement matrix of
  $\bm{I}+\bm{V}\in \mathbb{R}^{N\times N}$ (see Fact~\ref{fact:Schur}).
\end{proposition}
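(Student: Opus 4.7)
The plan is to write down an explicit candidate inverse and verify its properties by direct block computation. Define $\Psi_{\bm{S}}:Q_{N,p}(\bm{S})\to\mathbb{R}^{N\times p}$ by
\[
  \Psi_{\bm{S}}(\bm{V}) \;:=\; \bm{S}(\bm{I}-\bm{V})(\bm{I}+\bm{V})^{-1}\bm{I}_{N\times p}, \qquad \bm{V}\in Q_{N,p}(\bm{S}).
\]
Since every $\bm{V}\in Q_{N,p}(\bm{S})\subset Q_{N,N}$ is skew-symmetric, its eigenvalues are purely imaginary, so $\bm{I}+\bm{V}$ is invertible and $(\bm{I}-\bm{V})(\bm{I}+\bm{V})^{-1}=\varphi^{-1}(\bm{V})\in{\rm SO}(N)$; hence $\bm{S}\varphi^{-1}(\bm{V})\in {\rm O}(N)$ and its first $p$ columns lie in $\St(p,N)$. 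Thus $\Psi_{\bm{S}}$ is well-defined, takes values in $\St(p,N)$, and agrees with $\Xi\circ\varphi_{\bm{S}}^{-1}$ in view of~\eqref{eq:ILCT_O}.

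Next I would derive the Schur-complement form and verify the singular-point condition. Writing $\bm{I}+\bm{V}$ in $2\times 2$ block form with $\dbra{\bm{I}+\bm{V}}_{22}=\bm{I}_{N-p}$, the standard block-inversion identity (Fact~\ref{fact:Schur}) yields
\[
  (\bm{I}+\bm{V})^{-1}\bm{I}_{N\times p}
  \;=\;\begin{bmatrix} \bm{M}^{-1} \\ -\dbra{\bm{V}}_{21}\bm{M}^{-1}\end{bmatrix},\qquad
  \bm{M}:=\bm{I}_p+\dbra{\bm{V}}_{11}+\dbra{\bm{V}}_{21}^{\T}\dbra{\bm{V}}_{21}.
\]
Combined with $(\bm{I}-\bm{V})(\bm{I}+\bm{V})^{-1}=2(\bm{I}+\bm{V})^{-1}-\bm{I}$ and $\bm{S}=[\bm{S}_{\rm le}\ \bm{S}_{\rm ri}]$, this immediately produces the expressions in~\eqref{eq:Cayley_inv_alt}--\eqref{eq:Cayley_inv}. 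Using the orthogonality relations $\bm{S}_{\rm le}^{\T}\bm{S}_{\rm le}=\bm{I}_p$ and $\bm{S}_{\rm le}^{\T}\bm{S}_{\rm ri}=\bm{0}$, a short computation gives $\bm{I}_p+\bm{S}_{\rm le}^{\T}\Psi_{\bm{S}}(\bm{V})=2\bm{M}^{-1}$, which is invertible; hence $\Psi_{\bm{S}}(\bm{V})\in\St(p,N)\setminus E_{N,p}(\bm{S})$.

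The core of the proof is verifying the two inversion identities. For $\Phi_{\bm{S}}\circ\Psi_{\bm{S}}=\mathrm{id}_{Q_{N,p}(\bm{S})}$, substituting $\bm{U}=\Psi_{\bm{S}}(\bm{V})$ into~\eqref{eq:Cay_A}--\eqref{eq:Cay_B} and using the identities $\bm{I}_p+\bm{S}_{\rm le}^{\T}\bm{U}=2\bm{M}^{-1}$ and $\bm{S}_{\rm ri}^{\T}\bm{U}=-2\dbra{\bm{V}}_{21}\bm{M}^{-1}$ (both immediate from~\eqref{eq:Cayley_inv}) collapses $\bm{A}_{\bm{S}}(\bm{U})$ to $\Skew(\bm{M})=\dbra{\bm{V}}_{11}$ and $\bm{B}_{\bm{S}}(\bm{U})$ to $\dbra{\bm{V}}_{21}$. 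For the harder direction $\Psi_{\bm{S}}\circ\Phi_{\bm{S}}=\mathrm{id}$, the key algebraic step is to show $\bm{M}(\Phi_{\bm{S}}(\bm{U}))=2(\bm{I}_p+\bm{S}_{\rm le}^{\T}\bm{U})^{-1}$. Setting $\bm{X}:=\bm{S}_{\rm le}^{\T}\bm{U}$, this reduces after substitution to the factorization
\[
  (\bm{X}^{\T}-\bm{X})+(\bm{I}_p-\bm{X}^{\T}\bm{X})=(\bm{I}_p+\bm{X}^{\T})(\bm{I}_p-\bm{X}),
\]
together with the identity $\bm{I}_p-\bm{X}^{\T}\bm{X}=\bm{U}^{\T}\bm{S}_{\rm ri}\bm{S}_{\rm ri}^{\T}\bm{U}$ coming from $\bm{S}_{\rm le}\bm{S}_{\rm le}^{\T}+\bm{S}_{\rm ri}\bm{S}_{\rm ri}^{\T}=\bm{I}$ and $\bm{U}^{\T}\bm{U}=\bm{I}_p$. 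Plugging the resulting $\bm{M}^{-1}=\tfrac{1}{2}(\bm{I}_p+\bm{S}_{\rm le}^{\T}\bm{U})$ into~\eqref{eq:Cayley_inv} and invoking $\bm{S}_{\rm le}\bm{S}_{\rm le}^{\T}+\bm{S}_{\rm ri}\bm{S}_{\rm ri}^{\T}=\bm{I}$ once more collapses the right-hand side to $\bm{U}$. This algebraic bookkeeping is the main obstacle; the rest is routine.

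Finally, both $\Phi_{\bm{S}}$ and $\Psi_{\bm{S}}$ are $C^{\infty}$ on their respective domains, since each is built from matrix multiplication, transposition, and inversion of matrices whose determinants have just been shown to be nonzero there. Combined with the bijectivity established above, this yields the claimed diffeomorphism.
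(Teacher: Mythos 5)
Your proposal is correct and follows essentially the same route as the paper's Appendix~C proof: derive the Schur-complement expression for $(\bm{I}+\bm{V})^{-1}\bm{I}_{N\times p}$, check that the candidate inverse lands in $\St(p,N)\setminus E_{N,p}(\bm{S})$ via $\bm{I}_{p}+\bm{S}_{\rm le}^{\T}\Psi_{\bm{S}}(\bm{V})=2\bm{M}^{-1}$, verify both composition identities (your factorization $(\bm{X}^{\T}-\bm{X})+(\bm{I}_{p}-\bm{X}^{\T}\bm{X})=(\bm{I}_{p}+\bm{X}^{\T})(\bm{I}_{p}-\bm{X})$ is exactly the algebra behind the paper's step~(II)), and conclude smoothness from the matrix operations involved. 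The only cosmetic difference is that you deduce $\Psi_{\bm{S}}(\bm{V})\in\St(p,N)$ from $\varphi^{-1}(\bm{V})\in{\rm SO}(N)$ rather than computing $\Psi_{\bm{S}}(\bm{V})^{\T}\Psi_{\bm{S}}(\bm{V})=\bm{I}_{p}$ directly, which is an equally valid shortcut.
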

\begin{proof}
  See Appendix~\ref{appendix:inverse}.
\end{proof}

\begin{theorem}[Denseness of $\St(p,N)\setminus E_{N,p}(\bm{S})$]\label{theorem:dense}
  Let
  $\bm{S} \in {\rm O}(N)$
  and
  $p < N$.
  Then, the following hold:
  \begin{enumerate}[label=(\alph*)]
    \item
      \label{enum:St_Xi_O}
      $\St(p,N)\setminus E_{N,p}(\bm{S}) = \Xi({\rm O}(N)\setminus E_{N,N}(\bm{S}))$,
      i.e.,
      $\Phi_{\bm{S}}^{-1}(Q_{N,p})=\Xi\circ\varphi_{\bm{S}}^{-1}(Q_{N,p}) = \Xi\circ\varphi_{\bm{S}}^{-1}(Q_{N,N})$,
      where
      $\Xi$
      is defined as in Proposition~\ref{proposition:inverse}.
    \item \label{enum:dense}
      $\St(p,N)\setminus E_{N,p}(\bm{S})$
      is an open dense subset of
      $\St(p,N)$
      (see Fact~\ref{fact:stiefel}~\ref{enum:submanifold} for the topology of $\St(p,N)$).
    \item \label{enum:intersection_dense}
      For
      $\bm{S}_{1},\bm{S}_{2} \in {\rm O}(N)$,
      the subset
      $\Delta(\bm{S}_{1},\bm{S}_{2}):=  (\St(p,N)\setminus E_{N,p}(\bm{S}_{1})) \cap (\St(p,N)\setminus E_{N,p}(\bm{S}_{2}))$
      is a nonempty open dense subset of
      $\St(p,N)$.
    \item \label{enum:characterize_singular}
      Let
      $g:Q_{N,p}(\bm{S})\to\mathbb{R}:\bm{V}\mapsto \det(\bm{I}_{p} + \bm{S}_{\rm le}^{\T}\Phi_{\bm{S}}^{-1}(\bm{V}))$.
      Then,
      $g$
      is a positive-valued function and
      $\displaystyle \lim_{\substack{\bm{V}\in Q_{N,p}(\bm{S}) \\ \|\bm{V}\|_{2}\to\infty}} g(\bm{V}) = 0$.
      Conversely, if a sequence
      $(\bm{V}_{n})_{n=0}^{\infty} \subset Q_{N,p}(\bm{S})$
      satisfies
      $\lim_{n\to \infty}g(\bm{V}_{n}) = 0$,
      then
      $\lim_{n\to \infty}\|\bm{V}_{n}\|_{2} = \infty$.
  \end{enumerate}
\end{theorem}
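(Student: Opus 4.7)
I will treat the four parts in order: (a) carries the essential matrix-analytic content, (b) and (c) follow by soft topology once (a) is in hand, and (d) is a direct computation from the explicit inversion formula in Proposition~\ref{proposition:inverse}.

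For (a), one inclusion is essentially free from Proposition~\ref{proposition:inverse}: since $\Phi_{\bm{S}}^{-1}=\Xi\circ\varphi_{\bm{S}}^{-1}|_{Q_{N,p}}$ and $Q_{N,p}\subseteq Q_{N,N}$, we have $\St(p,N)\setminus E_{N,p}(\bm{S}) = \Phi_{\bm{S}}^{-1}(Q_{N,p}) = \Xi\circ\varphi_{\bm{S}}^{-1}(Q_{N,p}) \subseteq \Xi\circ\varphi_{\bm{S}}^{-1}(Q_{N,N}) = \Xi({\rm O}(N)\setminus E_{N,N}(\bm{S}))$. The substantive direction is the reverse inclusion, equivalent to $\Xi\circ\varphi_{\bm{S}}^{-1}(\bm{V})\notin E_{N,p}(\bm{S})$ for every $\bm{V}\in Q_{N,N}$ (not only for $\bm{V}\in Q_{N,p}$). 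The key identity, obtained from~\eqref{eq:ILCT_O} and~\eqref{eq:inv_origin_Cayley}, is $\bm{I}+\bm{S}^{\T}\varphi_{\bm{S}}^{-1}(\bm{V}) = 2(\bm{I}+\bm{V})^{-1}$; reading off its top-left $p\times p$ block yields $\bm{I}_{p}+\bm{S}_{\rm le}^{\T}\Xi\circ\varphi_{\bm{S}}^{-1}(\bm{V}) = 2[(\bm{I}+\bm{V})^{-1}]_{11}$. Since $\bm{V}^{\T}=-\bm{V}$, the matrices $\bm{I}\pm\bm{V}$ commute, giving $(\bm{I}+\bm{V})^{-1}+(\bm{I}+\bm{V})^{-\T}=2(\bm{I}+\bm{V}^{\T}\bm{V})^{-1}$; hence the symmetric part of $(\bm{I}+\bm{V})^{-1}$ is positive definite. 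Its $(1,1)$-block, which coincides with the symmetric part of $[(\bm{I}+\bm{V})^{-1}]_{11}$, is then positive definite as a principal submatrix of a positive-definite matrix, and any matrix with positive-definite symmetric part is nonsingular.

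For (b), openness of $\St(p,N)\setminus E_{N,p}(\bm{S})$ is immediate from continuity of $\bm{U}\mapsto\det(\bm{I}_{p}+\bm{S}_{\rm le}^{\T}\bm{U})$. Denseness combines (a) with the denseness of ${\rm O}(N)\setminus E_{N,N}(\bm{S})$ in ${\rm O}(N)$ (Fact~\ref{fact:dense}): every $\bm{U}'\in\St(p,N)$ extends (Fact~\ref{fact:stiefel}) to some $\bm{U}\in{\rm O}(N)$, which is approximated by a sequence $(\bm{U}_{n})\subset{\rm O}(N)\setminus E_{N,N}(\bm{S})$, and continuity of $\Xi$ together with (a) produces $(\Xi(\bm{U}_{n}))\subset\St(p,N)\setminus E_{N,p}(\bm{S})$ converging to $\bm{U}'$. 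Part (c) is the standard topological fact that an intersection of two open dense subsets of any topological space is open and dense, hence nonempty.

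For (d), substituting~\eqref{eq:Cayley_inv} and using $\bm{S}_{\rm le}^{\T}\bm{S}_{\rm le}=\bm{I}_{p}$, $\bm{S}_{\rm le}^{\T}\bm{S}_{\rm ri}=\bm{0}$ collapses $\bm{I}_{p}+\bm{S}_{\rm le}^{\T}\Phi_{\bm{S}}^{-1}(\bm{V})$ to $2\bm{M}^{-1}$, so $g(\bm{V}) = 2^{p}/\det(\bm{M})$. Applying Fact~\ref{fact:Schur} to $\bm{I}+\bm{V}$, whose $(2,2)$-block is $\bm{I}_{N-p}$ because $\dbra{\bm{V}}_{22}=\bm{0}$ for $\bm{V}\in Q_{N,p}(\bm{S})$, identifies $\det(\bm{M})=\det(\bm{I}+\bm{V})$. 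Pairing up the pure imaginary eigenvalues $\pm i\mu_{k}$ (with $\mu_{k}\geq 0$) of the real skew-symmetric $\bm{V}$ gives $\det(\bm{I}+\bm{V})=\prod_{k}(1+\mu_{k}^{2})\geq 1>0$, proving positivity of $g$. Since $\|\bm{V}\|_{2}=\sigma_{\max}(\bm{V})=\max_{k}\mu_{k}$ for skew-symmetric $\bm{V}$, the condition $\|\bm{V}\|_{2}\to\infty$ forces $\det(\bm{I}+\bm{V})\to\infty$ and $g(\bm{V})\to 0$. The converse is a routine subsequence argument: any bounded subsequence of $(\bm{V}_{n})$ would yield $(g(\bm{V}_{n_{k}}))$ bounded away from $0$, contradicting $g(\bm{V}_{n})\to 0$.

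The main obstacle is the nontrivial inclusion in (a): verifying invertibility of $[(\bm{I}+\bm{V})^{-1}]_{11}$ for every $\bm{V}\in Q_{N,N}$, not merely for $\bm{V}\in Q_{N,p}$. This is not obvious a priori, because modifying the lower-right block $\dbra{\bm{V}}_{22}$ can alter the entries of $[(\bm{I}+\bm{V})^{-1}]_{11}$ via block-inversion; the positive-definite-symmetric-part argument is the linchpin that rules out this pathology and keeps $\Xi\circ\varphi_{\bm{S}}^{-1}$ inside $\St(p,N)\setminus E_{N,p}(\bm{S})$.
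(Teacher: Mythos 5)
Your proposal is correct, and part~\ref{enum:St_Xi_O} takes a genuinely different route from the paper. The paper proves the inclusion $\Xi\circ\varphi_{\bm{S}}^{-1}(Q_{N,N})\subseteq \St(p,N)\setminus E_{N,p}(\bm{S})$ constructively: Lemma~\ref{lemma:translation_V} exhibits, for each $\bm{V}\in Q_{N,N}$ with blocks $\bm{A},\bm{B},\bm{C}$, an explicit $\widehat{\bm{V}}\in Q_{N,p}$ (with $\widehat{\bm{B}}=(\bm{I}_{N-p}+\bm{C})^{-1}\bm{B}$ and a corrected $\widehat{\bm{A}}$) satisfying $\Phi_{\bm{S}}^{-1}(\widehat{\bm{V}})=\Xi\circ\varphi_{\bm{S}}^{-1}(\bm{V})$, verified by a Schur-complement computation. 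You instead argue non-membership in $E_{N,p}(\bm{S})$ directly: from $\bm{I}_{p}+\bm{S}_{\rm le}^{\T}\Xi\circ\varphi_{\bm{S}}^{-1}(\bm{V})=2\dbra{(\bm{I}+\bm{V})^{-1}}_{11}$ and the identity $(\bm{I}+\bm{V})^{-1}+(\bm{I}+\bm{V})^{-\T}=2(\bm{I}+\bm{V}^{\T}\bm{V})^{-1}$, the symmetric part of the $(1,1)$-block is a principal submatrix of a positive-definite matrix, hence the block is nonsingular. This is shorter and more conceptual; what it does not deliver is the explicit representative $\widehat{\bm{V}}$, which the paper reuses in Appendix~\ref{appendix:how_to_choice_projection} to show that alternative column selections $\Xi_{\langle\bm{\mathfrak{U}}\rangle}\circ\varphi_{\bm{S}}^{-1}$ fail to be injective. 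Your part~\ref{enum:characterize_singular} also replaces the paper's appeal to Lemma~\ref{lemma:norm_basic}~\ref{enum:determinant_positive},\ref{enum:norm_determinant} by a direct eigenvalue-pairing computation of $\det(\bm{I}+\bm{V})=\prod_{k}(1+\mu_{k}^{2})$, which yields the same two-sided bounds. Parts~\ref{enum:dense} and~\ref{enum:intersection_dense} follow the paper's soft-topology line.

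One small point to make explicit in~\ref{enum:dense}: Fact~\ref{fact:dense} gives denseness of ${\rm O}(N)\setminus E_{N,N}(\bm{S})$ only in ${\rm O}(N,\bm{S})=\{\bm{U}\in{\rm O}(N)\mid\det(\bm{U})=\det(\bm{S})\}$, not in all of ${\rm O}(N)$; indeed every $\bm{U}$ with $\det(\bm{U})=-\det(\bm{S})$ lies in $E_{N,N}(\bm{S})$. So the orthogonal completion of $\bm{U}'\in\St(p,N)$ must be chosen with determinant $\det(\bm{S})$, which is always possible when $p<N$ by flipping the sign of one column of $\bm{U}_{\perp}$. This is a one-line fix (and the paper's own proof is equally terse about it), not a genuine gap.
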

\begin{proof}
  See Appendix~\ref{appendix:dense}.
\end{proof}

\begin{proposition}[Properties of G-L$^{2}$CT in view of the manifold theory]\label{proposition:properties}
  \mbox{}
  \begin{enumerate}[label= (\alph*)]
    \item (Chart).\label{enum:chart}
      For
      $\bm{S} \in {\rm O}(N)$,
      the ordered pair
      $(\St(p,N)\setminus E_{N,p}(\bm{S}),\Phi_{\bm{S}})$
      is a chart of
      $\St(p,N)$,
      i.e.,
      (i)
      $\St(p,N)\setminus E_{N,p}(\bm{S})$
      is an open subset of
      $\St(p,N)$;
      (ii)
      $\Phi_{\bm{S}}$
      is a homeomorphism between
      $\St(p,N)\setminus E_{N,p}(\bm{S})$
      and the
      $Np-\frac{1}{2}p(p+1)$
      dimensional Euclidean space
      $Q_{N,p}(\bm{S})$.
    \item (Smooth atlas).\label{enum:atlas}
      The set
      $(\St(p,N)\setminus E_{N,p}(\bm{S}), \Phi_{\bm{S}})_{\bm{S} \in {\rm O}(N)}$
      is a smooth atlas of
      $\St(p,N)$,
      i.e.,
      (i)
      $\bigcup_{\bm{S}\in {\rm O}(N)} (\St(p,N)\setminus E_{N,p}(\bm{S})) = \St(p,N)$;
      (ii)
      for every pair
      $\bm{S}_{1},\bm{S}_{2} \in {\rm O}(N)$,
      $\Phi_{\bm{S}_{2}} \circ \Phi_{\bm{S}_{1}}^{-1}$
      is smooth over
      $\Phi_{\bm{S}_{1}}(\Delta(\bm{S}_{1},\bm{S}_{2}))$,
      where
      $\Delta(\bm{S}_{1},\bm{S}_{2}) \neq \emptyset$
      has been defined in Theorem~\ref{theorem:dense}~\ref{enum:intersection_dense}.
  \end{enumerate}
\end{proposition}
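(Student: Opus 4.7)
The overall plan is to derive both (a) and (b) almost entirely from Proposition~\ref{proposition:inverse} and Theorem~\ref{theorem:dense}, with a single explicit basis-completion step for the covering claim in (b)(i). For part (a), openness of $\St(p,N)\setminus E_{N,p}(\bm{S})$ inside $\St(p,N)$ follows from the continuity of $\bm{U}\mapsto \det(\bm{I}_{p}+\bm{S}_{\rm le}^{\T}\bm{U})$: since $\St(p,N)$ inherits its topology from $\mathbb{R}^{N\times p}$ (Fact~\ref{fact:stiefel}~\ref{enum:submanifold}), $E_{N,p}(\bm{S})$ is the preimage of the closed set $\{0\}$ under a polynomial map, and hence closed in $\St(p,N)$; for $p<N$ this is also recorded in Theorem~\ref{theorem:dense}~\ref{enum:dense}. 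The homeomorphism portion of (ii) is immediate from Proposition~\ref{proposition:inverse}, because a diffeomorphism is in particular a homeomorphism. For the dimension count, $Q_{N,p}(\bm{S})$ consists of block matrices in which the $(1,1)$ block is an arbitrary skew-symmetric matrix in $\mathbb{R}^{p\times p}$ (contributing $\tfrac{1}{2}p(p-1)$ degrees of freedom), the $(2,1)$ block is an arbitrary element of $\mathbb{R}^{(N-p)\times p}$ (contributing $(N-p)p$ degrees of freedom), and the remaining two blocks are determined by these; summing gives $Np-\tfrac{1}{2}p(p+1)$.

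For part (b)(i), given any $\bm{U}\in \St(p,N)$, I would complete the columns of $\bm{U}$ to an orthonormal basis of $\mathbb{R}^{N}$ by choosing $\bm{S}_{\rm ri}\in \mathbb{R}^{N\times (N-p)}$ whose columns form an orthonormal basis of the orthogonal complement of the column space of $\bm{U}$, and then setting $\bm{S}:=[\bm{U}\ \bm{S}_{\rm ri}]\in {\rm O}(N)$. This yields $\bm{S}_{\rm le}^{\T}\bm{U}=\bm{U}^{\T}\bm{U}=\bm{I}_{p}$, so $\det(\bm{I}_{p}+\bm{S}_{\rm le}^{\T}\bm{U})=2^{p}\neq 0$ and hence $\bm{U}\in \St(p,N)\setminus E_{N,p}(\bm{S})$. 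This gives the covering identity $\bigcup_{\bm{S}\in{\rm O}(N)}(\St(p,N)\setminus E_{N,p}(\bm{S}))=\St(p,N)$.

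For part (b)(ii), nonemptiness of $\Phi_{\bm{S}_{1}}(\Delta(\bm{S}_{1},\bm{S}_{2}))$ is supplied by Theorem~\ref{theorem:dense}~\ref{enum:intersection_dense}. By Proposition~\ref{proposition:inverse}, $\Phi_{\bm{S}_{1}}^{-1}$ is smooth on all of $Q_{N,p}(\bm{S}_{1})$ (via the explicit rational form in~\eqref{eq:Cayley_inv}) and $\Phi_{\bm{S}_{2}}$ is smooth on all of $\St(p,N)\setminus E_{N,p}(\bm{S}_{2})$. Restricting the domain of the first to the open subset $\Phi_{\bm{S}_{1}}(\Delta(\bm{S}_{1},\bm{S}_{2}))$ ensures that its image lies in $\St(p,N)\setminus E_{N,p}(\bm{S}_{2})$, and hence the transition map $\Phi_{\bm{S}_{2}}\circ \Phi_{\bm{S}_{1}}^{-1}$ is smooth there as a composition of smooth maps. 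I do not anticipate a genuine obstacle: the whole proposition is essentially a direct assembly of the earlier results, and the only points requiring mild care are confirming that the determinantal condition defines a closed subset in the subspace topology of $\St(p,N)$, making the basis-completion argument of (b)(i) precise, and checking in (b)(ii) that the domain and codomain constraints align for the composition.
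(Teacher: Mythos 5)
Your proposal is correct and follows essentially the same route as the paper: openness via continuity of $\bm{U}\mapsto\det(\bm{I}_{p}+\bm{S}_{\rm le}^{\T}\bm{U})$ (this is exactly how Theorem~\ref{theorem:dense}~\ref{enum:dense} establishes openness), the homeomorphism and dimension count read off from Proposition~\ref{proposition:inverse} and the structure of $Q_{N,p}(\bm{S})$, the covering in (b)(i) from writing $\bm{U}=\bm{S}_{\rm le}=\Phi_{\bm{S}}^{-1}(\bm{0})$ with $\bm{S}=[\bm{U}\ \bm{U}_{\perp}]$, and smoothness of the transition maps from the diffeomorphism in Proposition~\ref{proposition:inverse}. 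You simply spell out a few steps the paper leaves implicit (the explicit $\det=2^{p}$ computation and the domain/codomain bookkeeping in (b)(ii)), which is fine.
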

\begin{proof}
  \ref{enum:chart}
  (i)
  See Theorem~\ref{theorem:dense}~\ref{enum:dense}.
  (ii)
  From Proposition~\ref{proposition:inverse},
  $\Phi_{\bm{S}}$
  is a homeomorphism between
  $\St(p,N)\setminus E_{N,p}(\bm{S})$
  and
  $Q_{N,p}(\bm{S})$.
  Clearly the dimension of the vector space
  $Q_{N,p}(\bm{S})$
  is
  $Np-p(p+1)/2$.

  \ref{enum:atlas}
  (i) Recall that
  $\St(p,N) = \bigcup_{\bm{S} \in{\rm O}(N)}\{\bm{S}\bm{I}_{N\times p}\} = \bigcup_{\bm{S}\in {\rm O}(N)}\{\bm{S}_{\rm le}\}=\bigcup_{\bm{S}\in {\rm O}(N)}\{\Phi_{\bm{S}}^{-1}(\bm{0})\}$.
  (ii) See Proposition~\ref{proposition:inverse}.
\end{proof}

\begin{remark}
  \begin{enumerate}[label=(\alph*)]
    \item (Relation between the Cayley transform-based retraction and $\Phi_{\bm{S}}^{-1}$).
      By using
      $\varphi^{-1}$
      in~\eqref{eq:inv_origin_Cayley}, the Cayley transform-based retraction has been utilized for Problem~\ref{problem:origin}, e.g.,~\cite{W13,Z17,Zhu-Sato20} (see Appendix~\ref{appendix:retraction} for the retraction-based strategy).
      The Cayley transform-based retraction can be expressed by using the proposed
      $\Phi_{\bm{S}}^{-1}$
      (see~\eqref{eq:Phi_retraction}).
      In Section~\ref{sec:Cayley_retraction}, we will clarify a diffeomorphic property of this retraction through
      $\Phi_{\bm{S}}^{-1}$.
    \item
      (Parametrization of $\St(p,N)$ with $\Phi_{\bm{S}}$).
      By
      $\St(p,N) \setminus E_{N,p}(\bm{S}) (=\Phi_{\bm{S}}^{-1}(Q_{N,p}(\bm{S}))) \subsetneq \St(p,N)$,
      for a given pair of
      $\bm{U} \in \St(p,N)$
      and
      $\bm{S} \in {\rm O}(N)$,
      the inclusion
      $\bm{U} \in \St(p,N)\setminus E_{N,p}(\bm{S})$
      is not guaranteed in general.
      However, Proposition~\ref{proposition:properties}~\ref{enum:atlas} ensures the existence of
      $\bm{S} \in {\rm O}(N)$
      satisfying
      $\bm{U} \in \St(p,N)\setminus E_{N,p}(\bm{S})$.
      Indeed, we can construct such
      $\bm{S}$
      by using a singular value decomposition of
      $\bm{U}_{\rm up}\in \mathbb{R}^{p\times p}$
      as shown later in Theorem~\ref{theorem:construct_center}.
      This fact tells us that the availability of general
      $\bm{S} \in {\rm O}(N)$
      can realize overall parametrization of
      $\St(p,N)$
      with
      $\Phi_{\bm{S}}^{-1}$.
      We note that a naive idea for using
      $\Phi_{\bm{I}}^{-1}$,
      i.e., a special case of
      $\Phi_{\bm{S}}^{-1}$
      with
      $\bm{S} = \bm{I}$,
      in optimization over
      $\St(p,N)$
      has been reported shortly in~\cite{C07}, which can be seen as an extension of the Cayley parametrization in~\cite{Y03} for optimization over
      ${\rm O}(N)$.
    \item
      (On the choice of $\Xi:{\rm O}(N) \to \St(p,N)$ for $\Phi_{\bm{S}}^{-1}=\Xi\circ\varphi_{\bm{S}}^{-1}$ in Proposition~\ref{proposition:inverse}).
      Since
      $\Xi$
      defined in Proposition~\ref{proposition:inverse} selects the first
      $p$
      column vectors from an orthogonal matrix,
      $\Phi_{\bm{S}}^{-1}(\bm{V}) = \Xi\circ\varphi_{\bm{S}}^{-1}(\bm{V})$
      for
      $\bm{V} \in Q_{N,p}(\bm{S})$
      can be regarded as the matrix of the first
      $p$
      column vectors selected from an orthogonal matrix
      $\varphi_{\bm{S}}^{-1}(\bm{V})$.
      Proposition~\ref{proposition:inverse} guarantees that the matrix
      $\Phi_{\bm{S}}^{-1}(\bm{V})$
      of the first
      $p$
      column vectors of
      $\varphi_{\bm{S}}^{-1}(\bm{V})$
      does not overlap in
      $\bm{V} \in Q_{N,p}(\bm{S})$.
      Although there are many other selection rules
      $\Xi_{\langle \bm{\mathfrak{U}} \rangle}:{\rm O}(N) \to \St(p,N):\bm{U}\mapsto \bm{U}\bm{\mathfrak{U}}$
      with
      $\bm{\mathfrak{U}} \in \{\bm{\mathfrak{U}}' \in \St(p,N) \mid [\bm{\mathfrak{U}}']_{ij} \in \{0,1\},\ 1\leq i \leq N, 1 \leq j \leq p\}$
      of
      $p$
      column vectors from
      $\varphi_{\bm{S}}^{-1}(\bm{V})$,
      $\Xi_{\langle \bm{\mathfrak{U}} \rangle} \circ \varphi_{\bm{S}}^{-1}$
      can not necessarily parameterize
      $\St(p,N)$
      without any overlap as shown below.
      For simplicity, assume
      $2p < N$.
      Consider
      $\bm{\mathfrak{U}}$
      satisfying
      $\bm{\mathfrak{U}}_{\rm up} = \bm{0}$
      ($\bm{\mathfrak{U}}:=[\bm{0} \ \bm{I}_{p}]^{\T}$ is such a typical instance).
      Then, we can verify that
      $\Xi_{\langle\bm{\mathfrak{U}}\rangle}\circ \varphi_{\bm{S}}^{-1}(\bm{V})$
      is not an injection on
      $Q_{N,p}$
      (see Appendix~\ref{appendix:how_to_choice_projection}).
      Note that an idea for using
      $\Xi_{\langle \bm{\mathfrak{U}} \rangle} \circ \varphi_{\bm{S}}^{-1}$
      only with
      $\bm{S} = \bm{I}$
      have been considered in~\cite{C07}.
      However, for parametrization of
      $\St(p,N)$,
      it seems to suggest the special selection
      $\bm{\mathfrak{U}} = \bm{I}_{N\times p}$,
      which corresponds to
      $\Phi_{\bm{I}}^{-1}$.
  \end{enumerate}
\end{remark}
By using Theorem~\ref{theorem:dense}, we deduce Lemma~\ref{lemma:gap}, which guarantees the existence of a solution to Problem~\ref{problem:CP_St} for any
$\epsilon > 0$.
Theorem~\ref{theorem:dense} will also be used in Lemma~\ref{lemma:gap_grad} to ensure the existence of a solution to Problem~\ref{problem:CP_grad}.
\begin{lemma} \label{lemma:gap}
  Let
  $f:\mathbb{R}^{N\times p} \to \mathbb{R}$
  be continuous with
  $p < N$
  and
  $\bm{S} \in {\rm O}(N)$.
  Then, it holds
  \begin{equation} \label{eq:min_inf}
    \min_{\bm{U}\in \St(p,N)} f(\bm{U}) = \inf_{\bm{U}\in \St(p,N)\setminus E_{N,p}(\bm{S})} f(\bm{U}) = \inf_{\bm{V} \in Q_{N,p}(\bm{S})} f\circ \Phi_{\bm{S}}^{-1}(\bm{V}).
  \end{equation}
\end{lemma}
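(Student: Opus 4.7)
The plan is to chain together three facts: the minimum on the left exists by compactness and continuity, the first equality follows from denseness plus continuity, and the second equality follows from the fact that $\Phi_{\bm{S}}^{-1}$ is a bijection from $Q_{N,p}(\bm{S})$ onto $\St(p,N)\setminus E_{N,p}(\bm{S})$.

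First, since $\St(p,N)$ is compact and $f$ is continuous, the minimum $m:=\min_{\bm{U}\in\St(p,N)}f(\bm{U})$ is attained. Denote the intermediate and rightmost quantities by $m_1:=\inf_{\bm{U}\in \St(p,N)\setminus E_{N,p}(\bm{S})}f(\bm{U})$ and $m_2:=\inf_{\bm{V}\in Q_{N,p}(\bm{S})} f\circ\Phi_{\bm{S}}^{-1}(\bm{V})$. The inequality $m\le m_1$ is immediate since $\St(p,N)\setminus E_{N,p}(\bm{S}) \subset \St(p,N)$.

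For the reverse inequality $m_1\le m$, I would pick a minimizer $\bm{U}^{\star}\in\St(p,N)$ of $f$ and invoke Theorem~\ref{theorem:dense}~\ref{enum:dense}, which guarantees that $\St(p,N)\setminus E_{N,p}(\bm{S})$ is dense in $\St(p,N)$. Thus there exists a sequence $(\bm{U}_n)_{n=1}^\infty \subset \St(p,N)\setminus E_{N,p}(\bm{S})$ converging to $\bm{U}^{\star}$; continuity of $f$ yields $f(\bm{U}_n)\to f(\bm{U}^{\star})=m$, so $m_1\le \lim_n f(\bm{U}_n) = m$. This establishes $m=m_1$.

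For the second equality $m_1=m_2$, I would apply Proposition~\ref{proposition:inverse}, which states that $\Phi_{\bm{S}}^{-1}$ is a bijection (in fact a diffeomorphism) from $Q_{N,p}(\bm{S})$ onto $\St(p,N)\setminus E_{N,p}(\bm{S})$. Hence
\begin{equation*}
\{f\circ\Phi_{\bm{S}}^{-1}(\bm{V}) \mid \bm{V}\in Q_{N,p}(\bm{S})\} = \{f(\bm{U}) \mid \bm{U}\in \St(p,N)\setminus E_{N,p}(\bm{S})\},
\end{equation*}
and taking infima on both sides gives $m_1=m_2$. Chaining these two equalities yields~\eqref{eq:min_inf}.

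The only step that is not a purely formal rewriting is the denseness-plus-continuity argument, but this is a standard consequence of Theorem~\ref{theorem:dense}~\ref{enum:dense} and the continuity of $f$; no real obstacle arises, and the lemma follows in a few lines.
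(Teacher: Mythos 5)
Your proposal is correct and follows essentially the same route as the paper's proof: the second equality via the bijectivity (homeomorphism) of $\Phi_{\bm{S}}^{-1}$ from Proposition~\ref{proposition:inverse}, and the first equality by approximating a global minimizer with a sequence in the dense subset $\St(p,N)\setminus E_{N,p}(\bm{S})$ (Theorem~\ref{theorem:dense}~\ref{enum:dense}) and invoking continuity of $f$. Your explicit note that $m\le m_1$ follows from the subset inclusion is a minor elaboration the paper leaves implicit; otherwise the arguments coincide.
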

\begin{proof}
  The second equality in~\eqref{eq:min_inf} is verified from the homeomorphism of
  $\Phi_{\bm{S}}^{-1}$.
  Let
  $\bm{U}^{\star} \in \St(p,N)$
  be a global minimizer of
  $f$
  over
  $\St(p,N)$,
  i.e.,
  $f(\bm{U}^{\star}) = \min f(\St(p,N))$.
  From the denseness of
  $\St(p,N) \setminus E_{N,p}(\bm{S})$
  in
  $\St(p,N)$
  (see Theorem~\ref{theorem:dense}~\ref{enum:dense}), there exists a sequence
  $(\bm{U}_{n})_{n=0}^{\infty} \subset \St(p,N) \setminus E_{N,p}(\bm{S})$
  satisfying
  $\lim_{n\to \infty} \bm{U}_{n} = \bm{U}^{\star}$.
  The continuity of
  $f$
  yields
  $\lim_{n\to \infty} f(\bm{U}_{n}) = f(\bm{U}^{\star})$,
  i.e.,
  $\inf_{\bm{U}\in \St(p,N)\setminus E_{N,p}(\bm{S})} f(\bm{U}) = \min f(\St(p,N))$.
\end{proof}

\subsection[Computational complexities for G-L2CT and its inversion]{Computational complexities for $\Phi_{\bm{S}}$ and $\Phi_{\bm{S}}^{-1}$ with $\bm{S} \in {\rm O}_{p}(N)$}\label{sec:complexity}
From the expressions
in~\eqref{eq:Cayley}-\eqref{eq:Cay_B} and~\eqref{eq:Cayley_inv}, both
$\Phi_{\bm{S}}$
and
$\Phi_{\bm{S}}^{-1}$
with general
$\bm{S} \in{\rm O}(N)$
require
$\mathfrak{o}(N^2p+p^3)$
flops (FLoating-point OPerationS [not "FLoating point Operations Per Second"]), which are dominated by the matrix multiplications
$\bm{S}_{\rm ri}^{\T}\bm{U}$
in~\eqref{eq:Cay_B} and
$\bm{S}_{\rm ri}\dbra{\bm{V}}_{21}$
in~\eqref{eq:Cayley_inv} respectively.
However, if we employ a special center point
\begin{equation}
  \bm{S} \in \textrm{O}_{p}(N) := \left\{\diag(\bm{T},\bm{I}_{N-p})\mid  \bm{T}\in \textrm{O}(p)\right\}   \subset {\rm O}(N), \label{eq:structure}
\end{equation}
then the complexities for
$\Phi_{\bm{S}}$
and
$\Phi_{\bm{S}}^{-1}$
can be reduced to
$\mathfrak{o}(Np^{2} +p^3)$ flops.
Indeed, for
$\bm{T}\in\textrm{O}(p)$
and
$\bm{U} \in \St(p,N)$,
we have
$[\diag(\bm{T},\bm{I}_{N-p})]_{\rm le}^{\T}\bm{U} = \bm{T}^{\T}\bm{U}_{\rm up}$
and
$[\diag(\bm{T},\bm{I}_{N-p})]_{\rm ri}^{\T}\bm{U} = \bm{U}_{\rm lo}$.
Hence
$\Phi_{\diag(\bm{T},\bm{I}_{N-p})}(\bm{U})$
requires
$Np^2+\mathfrak{o}(p^3)$ flops due to
\begin{align}
  \bm{A}_{\diag(\bm{T},\bm{I}_{N-p})}(\bm{U}) &= 2(\bm{I}_{p}+\bm{T}^{\T}\bm{U}_{\rm up})^{-\mathrm{T}}\Skew(\bm{U}_{\rm up}^{\T}\bm{T})(\bm{I}_{p}+\bm{T}^{\T}\bm{U}_{\rm up})^{-1} \in \mathbb{R}^{p\times p}, \\
  \bm{B}_{\diag(\bm{T},\bm{I}_{N-p})}(\bm{U}) &= - \bm{U}_{\rm lo}(\bm{I}_{p}+\bm{T}^{\T}\bm{U}_{\rm up})^{-1} \in \mathbb{R}^{(N-p)\times p}.
\end{align}
Moreover, for
$\bm{V}\in Q_{N,p}(\diag(\bm{T},\bm{I}_{N-p}))$
and
$\bm{M} := \bm{I}_{p}+ \dbra{\bm{V}}_{11} + \dbra{\bm{V}}_{21}^{\T}\dbra{\bm{V}}_{21}\in \mathbb{R}^{p\times p}$,
it follows from
$[\diag(\bm{T},\bm{I}_{N-p})]_{\rm ri}\dbra{\bm{V}}_{21} = \begin{bmatrix} \bm{0}_{p} & \dbra{\bm{V}}_{21}^{\T} \end{bmatrix}^{\T}$ and~\eqref{eq:Cayley_inv} that
\begin{align}
  \Phi_{\diag(\bm{T},\bm{I}_{N-p})}^{-1}(\bm{V})
  = \begin{bmatrix} 2\bm{T}\bm{M}^{-1} - \bm{T} \\ -2 \dbra{\bm{V}}_{21}\bm{M}^{-1} \end{bmatrix}\label{eq:special_expression}
\end{align}
requires
$2Np^2 + \mathfrak{o}(p^3)$ flops.

For a given
$\bm{U}\in \St(p,N)$,
Theorem~\ref{theorem:construct_center} below presents a way to select
$\bm{T}\in {\rm O}(p)$
satisfying
$\bm{U} \in \St(p,N)\setminus E_{N,p}(\diag(\bm{T},\bm{I}_{N-p}))$,
where
$\bm{T}$
is designed with a singular value decomposition of
$\bm{U}_{\rm up}\in\mathbb{R}^{p\times p}$,
requiring thus at most
$\mathfrak{o}(p^3)$ flops.
\begin{theorem}[Parametrization of $\St(p,N)$ by $\Phi_{\bm{S}}$ with $\bm{S} \in {\rm O}_{p}(N)$]\label{theorem:construct_center}
  Let
  $\bm{U}=\begin{bmatrix} \bm{U}_{\rm up}^{\T} & \bm{U}_{\rm lo}^{\T} \end{bmatrix}^{\T}\in \St(p,N)$,
  and
  $\bm{U}_{\rm up} = \bm{Q}_{1}\bm{\Sigma}\bm{Q}_{2}^{\T}$
  be a singular value decomposition of
  $\bm{U}_{\rm up}\in\mathbb{R}^{p\times p}$,
  where
  $\bm{Q}_{1},\bm{Q}_{2} \in \textrm{O}(p)$
  and
  $\bm{\Sigma}\in\mathbb{R}^{p\times p}$
  is a diagonal matrix with non-negative entries.
  Define
  $\bm{S}:=\diag(\bm{T},\bm{I}_{N-p}) \in {\rm O}_{p}(N)$
  with
  $\bm{T}:=\bm{Q}_{1}\bm{Q}_{2}^{\T}\in {\rm O}(p)$.
  Then, the following hold:
  \begin{enumerate}[label=(\alph*)]
    \item \label{enum:dense_nonsingular}
      $\det(\bm{I}_{p}+\bm{S}_{\rm le}^{\T}\bm{U}) \geq 1$
      and
      $\bm{U}\in \St(p,N)\setminus E_{N,p}(\bm{S})$.
    \item \label{enum:dense_norm}
      \begin{equation}
        \Phi_{\bm{S}}(\bm{U}) =
        \begin{bmatrix}
          \bm{0} & \bm{Q}_{2}(\bm{I}_{p}+\bm{\Sigma})^{-1}\bm{Q}_{2}^{\T}\bm{U}_{\rm lo}^{\T}\\
          -\bm{U}_{\rm lo}\bm{Q}_{2}(\bm{I}_{p}+\bm{\Sigma})^{-1}\bm{Q}_{2}^{\T} & \bm{0}
        \end{bmatrix}, \label{eq:const_state}
      \end{equation}
      where
      $\|\bm{B}_{\bm{S}}(\bm{U})\|_{2} \overset{\eqref{eq:Cay_B}}{=} \|\bm{U}_{\rm lo}\bm{Q}_{2}(\bm{I}_{p}+\bm{\Sigma})^{-1}\bm{Q}_{2}^{\T}\|_{2} \leq 1$.
  \end{enumerate}
\end{theorem}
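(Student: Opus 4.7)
The plan is to exploit the block structure of $\bm{S} = \diag(\bm{T}, \bm{I}_{N-p})$ to simplify the quantities $\bm{S}_{\rm le}^{\T}\bm{U}$ and $\bm{S}_{\rm ri}^{\T}\bm{U}$ appearing in Definition~\ref{definition:Cayley}, and then to exploit the particular choice $\bm{T} = \bm{Q}_1 \bm{Q}_2^{\T}$ to make $\bm{S}_{\rm le}^{\T}\bm{U}$ symmetric. Concretely, $\bm{S}_{\rm le} = \bigl[\bm{T}^{\T}\ \bm{0}\bigr]^{\T}$ and $\bm{S}_{\rm ri} = \bigl[\bm{0}\ \bm{I}_{N-p}\bigr]^{\T}$, so $\bm{S}_{\rm le}^{\T}\bm{U} = \bm{T}^{\T}\bm{U}_{\rm up}$ and $\bm{S}_{\rm ri}^{\T}\bm{U} = \bm{U}_{\rm lo}$. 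Substituting the SVD $\bm{U}_{\rm up} = \bm{Q}_1\bm{\Sigma}\bm{Q}_2^{\T}$ and $\bm{T} = \bm{Q}_1\bm{Q}_2^{\T}$ gives $\bm{S}_{\rm le}^{\T}\bm{U} = \bm{Q}_2 \bm{\Sigma} \bm{Q}_2^{\T}$, a symmetric positive semidefinite matrix whose eigenvalues are exactly the singular values of $\bm{U}_{\rm up}$.

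For~\ref{enum:dense_nonsingular}, I would first observe that $\bm{U}^{\T}\bm{U} = \bm{I}_p$ implies $\bm{U}_{\rm up}^{\T}\bm{U}_{\rm up} = \bm{I}_p - \bm{U}_{\rm lo}^{\T}\bm{U}_{\rm lo} \preceq \bm{I}_p$, so every diagonal entry of $\bm{\Sigma}$ lies in $[0,1]$. Consequently $\bm{I}_p + \bm{S}_{\rm le}^{\T}\bm{U} = \bm{Q}_2(\bm{I}_p + \bm{\Sigma})\bm{Q}_2^{\T}$ has determinant $\prod_{i=1}^{p}(1+\sigma_i(\bm{U}_{\rm up})) \geq 1$, which both establishes the bound and certifies $\bm{U} \notin E_{N,p}(\bm{S})$.

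For~\ref{enum:dense_norm}, since $\bm{S}_{\rm le}^{\T}\bm{U}$ is symmetric, $\Skew(\bm{U}^{\T}\bm{S}_{\rm le}) = \bm{0}$, so $\bm{A}_{\bm{S}}(\bm{U}) = \bm{0}$ by~\eqref{eq:Cay_A}. Then~\eqref{eq:Cay_B} together with $(\bm{I}_p + \bm{Q}_2\bm{\Sigma}\bm{Q}_2^{\T})^{-1} = \bm{Q}_2(\bm{I}_p+\bm{\Sigma})^{-1}\bm{Q}_2^{\T}$ yields $\bm{B}_{\bm{S}}(\bm{U}) = -\bm{U}_{\rm lo}\bm{Q}_2(\bm{I}_p+\bm{\Sigma})^{-1}\bm{Q}_2^{\T}$, giving~\eqref{eq:const_state} after plugging these into~\eqref{eq:Cayley}. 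For the norm estimate, I would use submultiplicativity: $\|\bm{U}_{\rm lo}\|_2 \leq 1$ from $\bm{U}_{\rm lo}^{\T}\bm{U}_{\rm lo} \preceq \bm{I}_p$, and $\|\bm{Q}_2(\bm{I}_p+\bm{\Sigma})^{-1}\bm{Q}_2^{\T}\|_2 = \max_i (1+\sigma_i)^{-1} \leq 1$, whose product bounds $\|\bm{B}_{\bm{S}}(\bm{U})\|_2$ by $1$.

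There is no real obstacle; the proof is essentially a chain of verifications. The only conceptual step is spotting that the choice $\bm{T} = \bm{Q}_1\bm{Q}_2^{\T}$ (rather than, say, $\bm{Q}_1$ or $\bm{Q}_2$) is precisely what makes $\bm{T}^{\T}\bm{U}_{\rm up}$ symmetric, simultaneously giving the largest possible determinant $\prod(1+\sigma_i)$ and killing the $\bm{A}_{\bm{S}}(\bm{U})$ block — this is what both deliveries~\ref{enum:dense_nonsingular} and~\ref{enum:dense_norm} rely upon.
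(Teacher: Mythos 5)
Your proposal is correct and follows essentially the same route as the paper's proof: reduce $\bm{S}_{\rm le}^{\T}\bm{U}$ to $\bm{Q}_{2}\bm{\Sigma}\bm{Q}_{2}^{\T}$, read off the determinant $\prod_i(1+\sigma_i)\geq 1$ for~\ref{enum:dense_nonsingular}, and substitute into~\eqref{eq:Cay_A}--\eqref{eq:Cay_B} with a submultiplicative norm bound for~\ref{enum:dense_norm} (the paper bounds $\|\bm{S}_{\rm ri}\|_2\|\bm{U}\|_2$ where you bound $\|\bm{U}_{\rm lo}\|_2$, a cosmetic difference). Your remark that the singular values lie in $[0,1]$ is true but not needed anywhere; nonnegativity suffices for both the determinant and the norm estimates.
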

\begin{proof}
  \ref{enum:dense_nonsingular}
  By
  $\bm{S}_{\rm le}^{\T}\bm{U} =\bm{T}^{\T}\bm{U}_{\rm up} = \bm{Q}_{2}\bm{\Sigma}\bm{Q}_{2}^{\T}$,
  it holds
  $\det(\bm{I}_{p}+\bm{S}_{\rm le}^{\T}\bm{U}) = \det(\bm{Q}_{2}(\bm{I}_{p}+\bm{\Sigma})\bm{Q}_{2}^{\T}) = \det(\bm{I}_{p}+\bm{\Sigma}) \geq 1$,
  which implies
  $\bm{U} \in \St(p,N)\setminus E_{N,p}(\bm{S})$
  by Definition~\ref{definition:Cayley}.

  \ref{enum:dense_norm}
  Substituting
  $\bm{S}_{\rm le}^{\T}\bm{U} =\bm{Q}_{2}\bm{\Sigma}\bm{Q}_{2}^{\T}$
  and
  $\bm{S}_{\rm ri}^{\T}\bm{U} = \bm{U}_{\rm lo}$
  into~\eqref{eq:Cay_A} and~\eqref{eq:Cay_B}, we obtain~\eqref{eq:const_state}.
  From~\eqref{eq:Cay_B},
  $\|\bm{B}_{\bm{S}}(\bm{U})\|_{2}$
  is bounded above as
  \begin{align}
    & \|\bm{B}_{\bm{S}}(\bm{U})\|_{2}
    = \|\bm{S}_{\rm ri}^{\T}\bm{U}(\bm{I}_{p}+\bm{S}_{\rm le}^{\T}\bm{U})^{-1}\|_{2}
    \leq \|\bm{S}_{\rm ri}\|_{2}\|\bm{U}\|_{2}\|(\bm{I}_{p}+\bm{Q}_{2}\bm{\Sigma}\bm{Q}_{2}^{\T})^{-1}\|_{2}\\
    & = \|\bm{Q}_{2}(\bm{I}_{p}+\bm{\Sigma})^{-1}\bm{Q}_{2}^{\T}\|_{2}
    = \|(\bm{I}_{p}+\bm{\Sigma})^{-1}\|_{2} \leq 1. \label{eq:theorem_construct_BS}
  \end{align}
\end{proof}
\begin{remark}[Comparisons to commonly used retractions of $\St(p,N)$]\label{remark:comparison_complexity}
  The computational complexity
  $2Np^2 + \mathfrak{o}(p^3)$
  flops for
  $\Phi_{\bm{S}}^{-1}$
  with
  $\bm{S} \in {\rm O}_{p}(N)$
  is competitive to that for commonly used retractions, which map a tangent vector to a point in
  $\St(p,N)$
  (for the retraction-based strategy, see Appendix~\ref{appendix:retraction}).
  Indeed, retractions based on QR decomposition, the polar decomposition~\cite{manifold_book} and the Cayley transform~\cite{W13} require respectively
  $2Np^2 + \mathfrak{o}(p^3)$
  flops,
  $3Np^2 + \mathfrak{o}(p^3)$
  flops and
  $6Np^2 + \mathfrak{o}(p^3)$
  flops~\cite[Table 1]{Z17}.
\end{remark}

\subsection{Gradient of function after the Cayley parametrization}\label{sec:gradient}
For the applications of
$\Phi_{\bm{S}}$
(G-L$^{2}$CT)
with
$\bm{S}\in {\rm O}(N)$
to Problems~\ref{problem:CP_St} and~\ref{problem:CP_grad}, we present an expression of the gradient of
$f\circ\Phi_{\bm{S}}^{-1}$
denoted by
$\nabla (f\circ\Phi_{\bm{S}}^{-1})$
(Proposition~\ref{proposition:gradient})
and its useful properties (Proposition~\ref{proposition:change_center}, Remark~\ref{remark:gradient} and also Proposition~\ref{proposition:gradient_property}).
\begin{proposition}[Gradient of function after the Cayley parametrization] \label{proposition:gradient}
  For a differentiable function
  $f:\mathbb{R}^{N\times p}\to \mathbb{R}$
  and
  $\bm{S} \in {\rm O}(N)$,
  the function
  $f_{\bm{S}}:=f\circ\Phi_{\bm{S}}^{-1}:Q_{N,p}(\bm{S}) \to \mathbb{R}$
  is differentiable with
  \begin{align}
    (\bm{V}\in Q_{N,p}(\bm{S})) \quad \nabla f_{\bm{S}}(\bm{V}) = 2\Skew(\bm{W}^{f}_{\bm{S}}(\bm{V})) = \bm{W}^{f}_{\bm{S}}(\bm{V}) - \bm{W}^{f}_{\bm{S}}(\bm{V})^{\T}\in Q_{N,p}(\bm{S}), \label{eq:grad_propo}
  \end{align}
  where
  \begin{equation}
    \bm{W}^{f}_{\bm{S}}(\bm{V})
    := \begin{bmatrix}
      \dbra{\overline{\bm{W}}_{\bm{S}}^{f}(\bm{V})}_{11} & \dbra{\overline{\bm{W}}_{\bm{S}}^{f}(\bm{V})}_{12} \\ 
      \dbra{\overline{\bm{W}}_{\bm{S}}^{f}(\bm{V})}_{21} & \bm{0}
    \end{bmatrix} \in \mathbb{R}^{N\times N} \label{eq:matrix_W}
  \end{equation}
  and
  {
  \thickmuskip=0.0\thickmuskip
  \medmuskip=0.0\medmuskip
  \thinmuskip=0.0\thinmuskip
  \begin{align}
    & \overline{\bm{W}}^{f}_{\bm{S}}(\bm{V})
    := (\bm{I}+\bm{V})^{-1}\bm{I}_{N\times p}\nabla f(\Phi_{\bm{S}}^{-1}(\bm{V}))^{\T}\bm{S}(\bm{I}+\bm{V})^{-1} \label{eq:matrix_W_bar} \\
    = & \begin{bmatrix}
      \bm{M}^{-1}\nabla f(\bm{U})^{\T}(\bm{S}_{\rm le} -\bm{S}_{\rm ri}\dbra{\bm{V}}_{21})\bm{M}^{-1}
      &
      \bm{M}^{-1}
      \nabla f(\bm{U})^{\T}
      ((\bm{S}_{\rm le} -\bm{S}_{\rm ri}\dbra{\bm{V}}_{21})\bm{M}^{-1}\dbra{\bm{V}}_{21}^{\T} +\bm{S}_{\rm ri})
      \\
      -\dbra{\bm{V}}_{21}\bm{M}^{-1}\nabla f(\bm{U})^{\T}(\bm{S}_{\rm le} -\bm{S}_{\rm ri}\dbra{\bm{V}}_{21})\bm{M}^{-1}
      &
      -\dbra{\bm{V}}_{21}\bm{M}^{-1}
      \nabla f(\bm{U})^{\T}
      ((\bm{S}_{\rm le} -\bm{S}_{\rm ri}\dbra{\bm{V}}_{21})\bm{M}^{-1}\dbra{\bm{V}}_{21}^{\T} +\bm{S}_{\rm ri})
    \end{bmatrix} \\
      \in&  \mathbb{R}^{N\times N}\label{eq:gradient_block_original}
  \end{align}
}%
  in terms of
  $\bm{U}:=\Phi_{\bm{S}}^{-1}(\bm{V}) \in \St(p,N)\setminus E_{N,p}(\bm{S})$
  and
  $\bm{M}:=\bm{I}_{p}+\dbra{\bm{V}}_{11}+\dbra{\bm{V}}_{21}^{\T}\dbra{\bm{V}}_{21}\in \mathbb{R}^{p\times p}$.
  In particular, by
  $\bm{S}_{\rm le} = \Phi_{\bm{S}}^{-1}(\bm{0})$
  in~\eqref{eq:Cayley_inv_origin},
  \begin{align}
    \nabla f_{\bm{S}}(\bm{0}) & = \begin{bmatrix} \nabla f(\bm{S}_{\rm le})^{\T}\bm{S}_{\rm le}-\bm{S}_{\rm le}^{\T}\nabla f(\bm{S}_{\rm le}) & \nabla f(\bm{S}_{\rm le})^{\T}\bm{S}_{\rm ri} \\ -\bm{S}_{\rm ri}^{\T}\nabla f(\bm{S}_{\rm le}) & \bm{0} \end{bmatrix}. \label{eq:gradient_0}
  \end{align}
\end{proposition}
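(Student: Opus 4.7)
My plan is to compute $\nabla f_{\bm{S}}(\bm{V})$ via the chain rule applied to the explicit form $\Phi_{\bm{S}}^{-1}(\bm{V}) = 2\bm{S}(\bm{I}+\bm{V})^{-1}\bm{I}_{N\times p} - \bm{S}\bm{I}_{N\times p}$ from \eqref{eq:Cayley_inv_alt}, then identify the resulting linear functional on $Q_{N,p}(\bm{S})$ as the Frobenius inner product with a specific element of the subspace. The key point is that $Q_{N,p}(\bm{S})$ carries the induced Frobenius metric, so $\nabla f_{\bm{S}}(\bm{V})$ is the unique element of $Q_{N,p}(\bm{S})$ representing the directional derivative.

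First I would differentiate $\Phi_{\bm{S}}^{-1}$ using the standard identity $D((\bm{I}+\bm{V})^{-1})[\bm{H}] = -(\bm{I}+\bm{V})^{-1}\bm{H}(\bm{I}+\bm{V})^{-1}$, obtaining, for any $\bm{H}\in Q_{N,p}(\bm{S})$,
\begin{equation*}
  D\Phi_{\bm{S}}^{-1}(\bm{V})[\bm{H}] = -2\bm{S}(\bm{I}+\bm{V})^{-1}\bm{H}(\bm{I}+\bm{V})^{-1}\bm{I}_{N\times p}.
\end{equation*}
Combining with the chain rule $Df_{\bm{S}}(\bm{V})[\bm{H}] = \trace\!\bigl(\nabla f(\bm{U})^{\T}\,D\Phi_{\bm{S}}^{-1}(\bm{V})[\bm{H}]\bigr)$ and cycling the trace, I would reach $Df_{\bm{S}}(\bm{V})[\bm{H}] = -2\trace\!\bigl(\overline{\bm{W}}^{f}_{\bm{S}}(\bm{V})\,\bm{H}\bigr)$, with $\overline{\bm{W}}^{f}_{\bm{S}}(\bm{V})$ exactly as in \eqref{eq:matrix_W_bar}.

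The main obstacle is converting the trace functional into the inner product $\langle\nabla f_{\bm{S}}(\bm{V}),\bm{H}\rangle_F$ with $\nabla f_{\bm{S}}(\bm{V})\in Q_{N,p}(\bm{S})$, i.e., correctly projecting onto the subspace. I would exploit two structural features of $\bm{H}\in Q_{N,p}(\bm{S})$: (i) $\dbra{\bm{H}}_{22}=\bm{0}$, so zeroing the bottom-right block of $\overline{\bm{W}}^{f}_{\bm{S}}(\bm{V})$ to obtain $\bm{W}^{f}_{\bm{S}}(\bm{V})$ in \eqref{eq:matrix_W} does not change the trace, giving $\trace(\overline{\bm{W}}^{f}_{\bm{S}}(\bm{V})\bm{H})=\trace(\bm{W}^{f}_{\bm{S}}(\bm{V})\bm{H})$; and (ii) $\bm{H}^{\T}=-\bm{H}$, so applying $\trace(\bm{W}\bm{H})=-\trace(\bm{W}^{\T}\bm{H})$ yields $\trace(\bm{W}^{f}_{\bm{S}}(\bm{V})\bm{H}) = -\langle\Skew(\bm{W}^{f}_{\bm{S}}(\bm{V})),\bm{H}\rangle_F$. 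Hence $Df_{\bm{S}}(\bm{V})[\bm{H}]=\langle 2\Skew(\bm{W}^{f}_{\bm{S}}(\bm{V})),\bm{H}\rangle_F$. A direct block check shows $2\Skew(\bm{W}^{f}_{\bm{S}}(\bm{V}))\in Q_{N,p}(\bm{S})$: its bottom-right block vanishes (both blocks of $\bm{W}$ there are zero) and it is skew-symmetric by construction. By uniqueness of the gradient, this establishes \eqref{eq:grad_propo}.

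The block expansion \eqref{eq:gradient_block_original} is then obtained by substituting the Schur-complement expressions $(\bm{I}+\bm{V})^{-1}\bm{I}_{N\times p}=\bigl[\bm{M}^{-1};\,-\dbra{\bm{V}}_{21}\bm{M}^{-1}\bigr]$ (already used in \eqref{eq:Cayley_inv}) and the analogous formula for $\bm{S}(\bm{I}+\bm{V})^{-1}$ into the definition of $\overline{\bm{W}}^{f}_{\bm{S}}(\bm{V})$ and reading off the four blocks; this is routine block-matrix multiplication. Finally, \eqref{eq:gradient_0} is the specialization $\bm{V}=\bm{0}$: then $\bm{M}=\bm{I}_p$, $\dbra{\bm{V}}_{21}=\bm{0}$, $\bm{U}=\bm{S}_{\rm le}$, and $\bm{W}^{f}_{\bm{S}}(\bm{0})$ reduces to $\bigl[\nabla f(\bm{S}_{\rm le})^{\T}\bm{S}_{\rm le},\,\nabla f(\bm{S}_{\rm le})^{\T}\bm{S}_{\rm ri};\,\bm{0},\,\bm{0}\bigr]$, whose $2\Skew(\cdot)$ gives the stated matrix.
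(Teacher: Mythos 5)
Your proposal is correct and follows essentially the same route as the paper's proof in Appendix~F: chain rule on $\Phi_{\bm{S}}^{-1}(\bm{V})=2\bm{S}(\bm{I}+\bm{V})^{-1}\bm{I}_{N\times p}-\bm{S}\bm{I}_{N\times p}$, reduction of the directional derivative to $-2\trace(\overline{\bm{W}}^{f}_{\bm{S}}(\bm{V})\bm{D})$, elimination of the $(2,2)$-block and of the symmetric part against skew-symmetric test directions, and identification of the gradient in $Q_{N,p}(\bm{S})$. The only (immaterial) difference is the order of the two reduction steps — you zero the $(2,2)$-block before taking the skew part, the paper does the reverse.
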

\begin{proof}
  See Appendix~\ref{appendix:gradient}.
\end{proof}

\begin{proposition}[Transformation formula for gradients of function]\label{proposition:change_center}
  For
  $\bm{S}_{1},\bm{S}_{2}\in {\rm O}(N)$,
  suppose that
  $\bm{V}_{1}\in Q_{N,p}(\bm{S}_{1})$
  and
  $\bm{V}_{2}\in Q_{N,p}(\bm{S}_{2})$
  satisfy
  $\Phi_{\bm{S}_{1}}^{-1}(\bm{V}_{1}) = \Phi_{\bm{S}_{2}}^{-1}(\bm{V}_{2})$.
  Then, for a differentiable function
  $f:\mathbb{R}^{N\times p}\to \mathbb{R}$,
  the following hold:
  \begin{enumerate}[label=(\alph*)]
    \item \label{enum:change_formula}
      $\bm{\mathfrak{X}}:= [\varphi_{\bm{S}_{1}}^{-1}(\bm{V}_{1})]_{\rm ri}^{\T}[\varphi_{\bm{S}_{2}}^{-1}(\bm{V}_{2})]_{\rm ri} \in {\rm O}(N-p)$
      is guaranteed.
      Moreover, by using
      \begin{align}
        &\mathcal{G}_{\bm{S}_{1},\bm{S}_{2}}(\bm{V}_{1},\bm{V}_{2})
        :=
        (\bm{I}+\bm{V}_{1})^{-1}\begin{bmatrix}\bm{I}_{p}&\bm{0}\\\bm{0}&\bm{\mathfrak{X}}\end{bmatrix}(\bm{I}+\bm{V}_{2}) \\
        & \left( 
        \nabla f_{\bm{S}_{2}}(\bm{V}_{2})-\begin{bmatrix}\bm{0}&\bm{0}\\ \dbra{\bm{V}_{2}}_{21}&\bm{I}_{N-p}\end{bmatrix} \nabla f_{\bm{S}_{2}}(\bm{V}_{2})\begin{bmatrix}\bm{0}&\dbra{\bm{V}_{2}}_{21}^{\T}\\ \bm{0}&\bm{I}_{N-p}\end{bmatrix}
        \right)
        (\bm{I}+\bm{V}_{2})^{\T}\begin{bmatrix}\bm{I}_{p}&\bm{0}\\\bm{0}&\bm{\mathfrak{X}}^{\T}\end{bmatrix}(\bm{I}+\bm{V}_{1})^{-\T} \in Q_{N,N}, \label{eq:cal_F}
      \end{align}
      we have
      \begin{align}\label{eq:center_change_grad}
        \nabla f_{\bm{S}_{1}}(\bm{V}_{1})
        & = \begin{bmatrix}
          \dbra{\mathcal{G}_{\bm{S}_{1},\bm{S}_{2}}(\bm{V}_{1},\bm{V}_{2})}_{11} & \dbra{\mathcal{G}_{\bm{S}_{1},\bm{S}_{2}}(\bm{V}_{1},\bm{V}_{2})}_{12} \\
          \dbra{\mathcal{G}_{\bm{S}_{1},\bm{S}_{2}}(\bm{V}_{1},\bm{V}_{2})}_{21} & \bm{0}
        \end{bmatrix} \in Q_{N,p}(\bm{S}_{1})\\
        & = \mathcal{G}_{\bm{S}_{1},\bm{S}_{2}}(\bm{V}_{1},\bm{V}_{2})
        - \begin{bmatrix} \bm{0}&\bm{0}\\\bm{0}&\bm{I}_{N-p}\end{bmatrix}
        \mathcal{G}_{\bm{S}_{1},\bm{S}_{2}}(\bm{V}_{1},\bm{V}_{2})
        \begin{bmatrix} \bm{0}&\bm{0}\\\bm{0}&\bm{I}_{N-p}\end{bmatrix}. \label{eq:gradient_translation}
      \end{align}
    \item \label{enum:change_center}
      $\|\nabla f_{\bm{S}_{1}}(\bm{V}_{1})\|_{F} \leq 2(1+\|\bm{V}_{2}\|_{2}^{2})\|\nabla f_{\bm{S}_{2}}(\bm{V}_{2})\|_{F}$.
    \item \label{enum:zero}
      $\nabla f_{\bm{S}_{1}}(\bm{V}_{1}) = \bm{0}$
      if and only if
      $\nabla f_{\bm{S}_{2}}(\bm{V}_{2}) = \bm{0}$.
  \end{enumerate}
\end{proposition}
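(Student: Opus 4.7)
The plan is to pull the hypothesis $\Phi_{\bm{S}_1}^{-1}(\bm{V}_1) = \Phi_{\bm{S}_2}^{-1}(\bm{V}_2)$ back to a relation between the full orthogonal matrices $\bm{Y}_i := \varphi_{\bm{S}_i}^{-1}(\bm{V}_i) \in {\rm O}(N)$. Since $\Phi_{\bm{S}}^{-1} = \Xi \circ \varphi_{\bm{S}}^{-1}$ selects the first $p$ columns (Proposition~\ref{proposition:inverse}), both $\bm{Y}_1$ and $\bm{Y}_2$ share their first $p$ columns, which equal $\bm{U} := \Phi_{\bm{S}_1}^{-1}(\bm{V}_1)$. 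The orthogonality $\bm{Y}_i\bm{Y}_i^{\T} = \bm{I}$ then forces $[\bm{Y}_i]_{\rm ri}[\bm{Y}_i]_{\rm ri}^{\T} = \bm{I}-\bm{U}\bm{U}^{\T}$, whence $\bm{\mathfrak{X}}^{\T}\bm{\mathfrak{X}} = [\bm{Y}_2]_{\rm ri}^{\T}(\bm{I}-\bm{U}\bm{U}^{\T})[\bm{Y}_2]_{\rm ri} = \bm{I}_{N-p}$ (using $\bm{U}^{\T}[\bm{Y}_2]_{\rm ri}=\bm{0}$), and symmetrically $\bm{\mathfrak{X}}\bm{\mathfrak{X}}^{\T} = \bm{I}_{N-p}$. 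This proves $\bm{\mathfrak{X}} \in {\rm O}(N-p)$, and yields the auxiliary identity $\bm{Y}_1 = \bm{Y}_2 \diag(\bm{I}_p,\bm{\mathfrak{X}}^{\T})$ that drives the rest of part~(a).

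To derive~\eqref{eq:gradient_translation}, I would start from $\nabla f_{\bm{S}_i}(\bm{V}_i) = \bm{W}^f_{\bm{S}_i}(\bm{V}_i) - \bm{W}^f_{\bm{S}_i}(\bm{V}_i)^{\T}$ together with the closed form~\eqref{eq:matrix_W_bar}. The crucial algebraic fact is $\bm{S}_i(\bm{I}+\bm{V}_i)^{-1} = \tfrac{1}{2}(\bm{Y}_i+\bm{S}_i)$, which together with the substitution $\bm{Y}_1 = \bm{Y}_2\diag(\bm{I}_p,\bm{\mathfrak{X}}^{\T})$ lets me rewrite $\overline{\bm{W}}^f_{\bm{S}_1}(\bm{V}_1)$ so as to expose a sandwich by $(\bm{I}+\bm{V}_1)^{-1}\diag(\bm{I}_p,\bm{\mathfrak{X}})(\bm{I}+\bm{V}_2)$ on one side and its transpose on the other. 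After regrouping, the inner core matches—up to a correction supported only in the $(2,2)$-block—the bracket $\nabla f_{\bm{S}_2}(\bm{V}_2) - P'\nabla f_{\bm{S}_2}(\bm{V}_2)Q'$ appearing in~\eqref{eq:cal_F}. The passage $\overline{\bm{W}}^f_{\bm{S}_1}\mapsto \bm{W}^f_{\bm{S}_1}$ amounts to zeroing this $(2,2)$-block, which is exactly the subtraction $\mathcal{G}-P_2\mathcal{G}P_2$ in~\eqref{eq:gradient_translation} with $P_2:=\diag(\bm{0},\bm{I}_{N-p})$. Consistency is checked by observing that $Q'=P'^{\T}$ makes $\bm{K}:=\nabla f_{\bm{S}_2}(\bm{V}_2)-P'\nabla f_{\bm{S}_2}(\bm{V}_2)Q'$ skew-symmetric, hence $\mathcal{G}=M\bm{K}M^{\T}$ is skew-symmetric as demanded of an element of $Q_{N,p}(\bm{S}_1)$.

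For part~(b), I would apply sub-multiplicativity to $\mathcal{G} = M\bm{K}M^{\T}$ with $M := (\bm{I}+\bm{V}_1)^{-1}\diag(\bm{I}_p,\bm{\mathfrak{X}})(\bm{I}+\bm{V}_2)$. Two sharp bounds drive the estimate: for skew-symmetric $\bm{V}$, $\|(\bm{I}+\bm{V})\bm{x}\|^2 = \|\bm{x}\|^2 + \|\bm{V}\bm{x}\|^2$ (since $\bm{x}^{\T}\bm{V}\bm{x}=0$), giving $\|\bm{I}+\bm{V}_2\|_2 \leq \sqrt{1+\|\bm{V}_2\|_2^2}$; and $(\bm{I}+\bm{V}_i)^{-1}=\tfrac{1}{2}(\bm{I}+\bm{S}_i^{\T}\bm{Y}_i)$ is the average of $\bm{I}$ and an orthogonal matrix, so $\|(\bm{I}+\bm{V}_i)^{-1}\|_2 \leq 1$. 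Combined with $\|\diag(\bm{I}_p,\bm{\mathfrak{X}})\|_2=1$, these yield $\|M\|_2 \leq \sqrt{1+\|\bm{V}_2\|_2^2}$. The bracket $\bm{K}$ is controlled via the factorization $(\bm{I}+\bm{V}_2)P' = \begin{bmatrix}-\dbra{\bm{V}_2}_{21}^{\T}\\\bm{I}_{N-p}\end{bmatrix}\begin{bmatrix}\dbra{\bm{V}_2}_{21} & \bm{I}_{N-p}\end{bmatrix}$, which has spectral norm $\leq 1+\|\bm{V}_2\|_2^2$, and an analogous factorization of $Q'(\bm{I}-\bm{V}_2)$; combining these inside $(\bm{I}+\bm{V}_2)\bm{K}(\bm{I}-\bm{V}_2)$ (rather than bounding each term separately) produces the target prefactor $2(1+\|\bm{V}_2\|_2^2)$. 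Finally, $\|\nabla f_{\bm{S}_1}(\bm{V}_1)\|_F \leq \|\mathcal{G}\|_F$ (subtracting $P_2\mathcal{G}P_2$ only zeros the $(2,2)$-block) completes the inequality. Part~(c) then follows immediately from~(b) applied with the roles of $1,2$ swapped in both directions.

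The main obstacle is the book-keeping in part~(a): tracing which substitution terms populate the $(2,2)$-block (and are thus discarded by $\mathcal{G}-P_2\mathcal{G}P_2$) versus which assemble the bracket of~\eqref{eq:cal_F}. A secondary challenge in part~(b) is organizing the estimates so that the prefactor is precisely $2(1+\|\bm{V}_2\|_2^2)$ rather than the coarser $(1+\|\bm{V}_2\|_2^2)(2+\|\bm{V}_2\|_2^2)$ that a naive triangle-plus-sub-multiplicativity would produce; this requires exploiting the rank-$(N-p)$ factorization of $(\bm{I}+\bm{V}_2)P'$ together with the skew-symmetry of $\nabla f_{\bm{S}_2}(\bm{V}_2)$.
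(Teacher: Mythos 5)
Your part~(a) is correct and follows essentially the same route as the paper: the paper packages the computation into Lemma~\ref{lemma:translate_W} (translate $\bm{V}_1\mapsto\bm{0}$ via~\eqref{eq:translate_0toV}, rotate between two center points sharing the same first $p$ columns via~\eqref{eq:translate_0to0}, translate $\bm{0}\mapsto\bm{V}_2$ via~\eqref{eq:translate_Vto0}), whereas you carry out the same congruence manipulations in one pass starting from~\eqref{eq:matrix_W_bar}; your identity $\bm{Y}_1=\bm{Y}_2\diag(\bm{I}_p,\bm{\mathfrak{X}}^{\T})$ and the orthogonality argument for $\bm{\mathfrak{X}}$ coincide with the paper's Lemma~\ref{lemma:translate_W}~\ref{enum:gradient_change_0}. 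Deriving~(c) from~(b) with the indices swapped is also exactly what the paper does.

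The gap is in part~(b); you have located the sensitive step correctly, but your repair does not close it. Write $P':=\begin{bmatrix}\bm{0}&\bm{0}\\ \dbra{\bm{V}_2}_{21}&\bm{I}_{N-p}\end{bmatrix}$. The paper obtains the factor $2$ by asserting $\|P'\|_2=1$ on the grounds that the eigenvalues of a triangular matrix are its diagonal entries; this conflates spectral radius with spectral norm, since in fact $\|P'\|_2=\sqrt{1+\|\dbra{\bm{V}_2}_{21}\|_2^2}$, so the triangle-inequality route yields only the coarse constant $(1+\|\bm{V}_2\|_2^2)(2+\|\dbra{\bm{V}_2}_{21}\|_2^2)$ you anticipated. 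Your proposed fix fares no better: the bound $\|(\bm{I}+\bm{V}_2)P'\|_2\le 1+\|\dbra{\bm{V}_2}_{21}\|_2^2$ enters the congruence squared, so the subtracted term is controlled only by $(1+\|\dbra{\bm{V}_2}_{21}\|_2^2)^2\|\nabla f_{\bm{S}_2}(\bm{V}_2)\|_F$, which dominates $2(1+\|\bm{V}_2\|_2^2)\|\nabla f_{\bm{S}_2}(\bm{V}_2)\|_F$ once $\|\dbra{\bm{V}_2}_{21}\|_2$ is large, and the proposal does not explain how ``combining'' the two terms avoids this. Indeed the stated constant appears unattainable: take $p=2$, $\bm{S}_2=\bm{I}$, $\dbra{\bm{V}_2}_{11}=\bm{0}$, $\dbra{\bm{V}_2}_{21}=b\,\bm{I}_{(N-p)\times 2}$, $\bm{S}_1=\varphi_{\bm{S}_2}^{-1}(\bm{V}_2)$, $\bm{V}_1=\bm{0}$, and a linear $f$ whose gradient is tuned so that $\dbra{\nabla f_{\bm{S}_2}(\bm{V}_2)}_{21}=\bm{0}$; then~\eqref{eq:grad_change_block} gives $\|\nabla f_{\bm{S}_1}(\bm{V}_1)\|_F=(1+b^2)\sqrt{1+b^4}\,\|\nabla f_{\bm{S}_2}(\bm{V}_2)\|_F$, which exceeds $2(1+b^2)\|\nabla f_{\bm{S}_2}(\bm{V}_2)\|_F$ whenever $b>3^{1/4}$. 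A correct argument therefore has to settle for a weaker (still polynomial in $\|\bm{V}_2\|_2$) constant such as $(1+\|\bm{V}_2\|_2^2)(2+\|\bm{V}_2\|_2^2)$; that weaker bound is all that part~(c) and Lemma~\ref{lemma:gap_grad} actually use, so the downstream results survive, but neither your sketch nor the paper's argument establishes the inequality with the prefactor $2(1+\|\bm{V}_2\|_2^2)$ as stated.
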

\begin{proof}
  See Appendix~\ref{appendix:change_center}.
\end{proof}

\begin{remark}\label{remark:gradient}
  \begin{enumerate}[label=(\alph*)]
    \item (Computational complexity for $\nabla (f\circ\Phi_{\bm{S}}^{-1})$ with $\bm{S}\in {\rm O}_{p}(N)$ in~\eqref{eq:structure}).\label{enum:economical_gradient}
      Let
      $\bm{S}:=\diag(\bm{T},\bm{I}_{N-p}) \in {\rm O}_{p}(N)$
      with
      $\bm{T} \in \textrm{O}(p)$
      and
      $\bm{V} \in Q_{N,p}(\bm{S})$.
      From~\eqref{eq:matrix_W} and~\eqref{eq:gradient_block_original}, computation of
      $\nabla (f\circ \Phi_{\bm{S}}^{-1})(\bm{V})(= 2\Skew(\bm{W}^{f}_{\bm{S}}(\bm{V})))$
      requires at most
      $5Np^{2} + \mathfrak{o}(p^3)$
      flops due to
      \begin{equation}
        \hspace{-2em}
        \thickmuskip=0.1\thickmuskip
        \medmuskip=0.1\medmuskip
        \thinmuskip=0.1\thinmuskip
        \arraycolsep=0.0\arraycolsep
        \bm{W}^{f}_{\bm{S}}(\bm{V}) =
        \begin{bmatrix}
          \bm{M}^{-1}\nabla f(\bm{U})^{\T}
          \begin{bmatrix}
            \bm{T}\\ -\dbra{\bm{V}}_{21}
          \end{bmatrix}
          \bm{M}^{-1}
          &
          \bm{M}^{-1}\nabla f(\bm{U})^{\T}
          \left(
          \begin{bmatrix}
            \bm{T}\\ -\dbra{\bm{V}}_{21}
          \end{bmatrix}
          \bm{M}^{-1}\dbra{\bm{V}}_{21}^{\T}
          +
          \begin{bmatrix}
            \bm{0} \\ \bm{I}_{N-p}
          \end{bmatrix}
          \right)
          \\
          -\dbra{\bm{V}}_{21}\bm{M}^{-1}\nabla f(\bm{U})^{\T}
          \begin{bmatrix}
            \bm{T}\\ -\dbra{\bm{V}}_{21}
          \end{bmatrix}
          \bm{M}^{-1}
          &
          \bm{0}
        \end{bmatrix},
      \end{equation}
      where
      $\bm{U}:=\Phi_{\bm{S}}^{-1}(\bm{V}) \in \St(p,N)\setminus E_{N,p}(\bm{S})$
      and
      $\bm{M}:=\bm{I}_{p}+\dbra{\bm{V}}_{11}+\dbra{\bm{V}}_{21}^{\T}\dbra{\bm{V}}_{21} \in \mathbb{R}^{p\times p}$.
    \item (Relation of gradients after Cayley parametrization).
      Proposition~\ref{proposition:change_center} illustrates the relations of two gradients after Cayley parameterization with different center points.
      These relations will be used in Lemmas~\ref{lemma:optimality} and~\ref{lemma:gap_grad} to characterize the first-order optimality condition with the proposed Cayley parametrization.
    \item (Useful properties of the gradient after Cayley parametrization). \label{enum:gradient_property}
      In Appendix~\ref{appendix:gradient_property}, we present useful properties
      (i) Lipschitz continuity; (ii) the boundedness; (iii) the variance bounded;
      of
      $\nabla (f\circ\Phi_{\bm{S}}^{-1})$
      for the minimization of
      $f\circ\Phi_{\bm{S}}^{-1}$
      over
      $Q_{N,p}(\bm{S})$.
      These properties have been exploited in distributed optimization and stochastic optimization, e.g.,~\cite{Reddi-Hefny-Sra16,G16,Zeyuan18,Ward-Wu-Bottou,Chen-Liu-Sun-Hong19,Tatarenko-Touri17}.
  \end{enumerate}
\end{remark}

\section{Optimization over the Stiefel manifold with the Cayley parametrization}\label{sec:technique}
\subsection{Optimality condition via the Cayley parametrization}\label{sec:optimality}
We present simple characterizations of (i) local minimizer, and (ii) stationary point, of a real valued function over
$\St(p,N)$
in terms of
$\Phi_{\bm{S}}$.

Let
$\mathcal{X}$
be a vector space of matrices.
A point
$\bm{X}^{\star} \in \mathcal{Y} \subset \mathcal{X}$
is said to be {\it a local minimizer of
$J:\mathcal{X} \to \mathbb{R}$
over
$\mathcal{Y} \subset \mathcal{X}$}
if there exists
$\epsilon > 0$
satisfying
$J(\bm{X}^{\star}) \leq J(\bm{X})$
for all
$\bm{X} \in B_{\mathcal{X}}(\bm{X}^{\star},\epsilon) \cap \mathcal{Y}$.
Under the smoothness assumption on
$J$,
$\bm{X} \in \mathcal{X}$
is said to be {\it a stationary point of
$J$
over the vector space
$\mathcal{X}$}
if
$\nabla J(\bm{X}) = \bm{0}$.
For a smooth function
$f:\mathbb{R}^{N\times p}\to \mathbb{R}$,
$\bm{U} \in \St(p,N)$
is said to be {\it a stationary point of
$f$
over
$\St(p,N)$}~\cite{W13,BXX18}
if
$\bm{U}$
satisfies the following conditions:
\begin{equation}
  \begin{cases}
    (\bm{I}-\bm{U}\bm{U}^{\T})\nabla f(\bm{U}) & = \bm{0} \\
    \bm{U}^{\T}\nabla f(\bm{U}) -\nabla f(\bm{U})^{\T}\bm{U} & = \bm{0}.
  \end{cases}\label{eq:optimality}
\end{equation}
The above conditions
$\nabla J(\bm{X}) = \bm{0}$
and~\eqref{eq:optimality} are called {\it the first-order optimality conditions} because they are respectively necessary conditions for
$\bm{X}$
to be a local minimizer of
$J$
over
$\mathcal{X}$
(see, e.g.,~\cite[Theorem 2.2]{numerical_optimization}),
and for
$\bm{U}$
to be a local minimizer of
$f$
over
$\St(p,N)$
(see \cite[Definition~2.1, Remark~2.3]{BXX18} and~\cite[Lemma~1]{W13}).

In Lemma~\ref{lemma:local} below, we characterize a local minimizer of
$f$
over
$\St(p,N)$
as a local minimizer of
$f\circ\Phi_{\bm{S}}^{-1}$
with a certain
$\bm{S} \in {\rm O}(N)$
over the vector space
$Q_{N,p}(\bm{S})$.
\begin{lemma}[Equivalence of local minimizers in the two senses] \label{lemma:local}
  Let
  $f:\mathbb{R}^{N\times p} \to \mathbb{R}$
  be continuous.
  Let
  $\bm{U}^{\star} \in \St(p,N)$
  and
  $\bm{S} \in {\rm O}(N)$
  satisfy
  $\bm{U}^{\star} \in \St(p,N) \setminus E_{N,p}(\bm{S})$.
  Then,
  $\bm{U}^{\star}$
  is a local minimizer of
  $f$
  over
  $\St(p,N)$
  if and only if
  $\bm{V}^{\star}:=\Phi_{\bm{S}}(\bm{U}^{\star}) \in Q_{N,p}(\bm{S})$
  is a local minimizer of
  $f\circ\Phi_{\bm{S}}^{-1}$
  over the vector space
  $Q_{N,p}(\bm{S})$.
\end{lemma}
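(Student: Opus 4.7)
The claim is a statement that the homeomorphism $\Phi_{\bm{S}}$ preserves the local-minimizer property on the relevant open subset of $\St(p,N)$. My plan is to prove both implications directly from the definition of local minimizer, leveraging only two facts already established: (i) $\Phi_{\bm{S}}$ is a homeomorphism between $\St(p,N)\setminus E_{N,p}(\bm{S})$ and $Q_{N,p}(\bm{S})$ (Proposition~\ref{proposition:inverse}), and (ii) $\St(p,N)\setminus E_{N,p}(\bm{S})$ is an open subset of $\St(p,N)$ (Theorem~\ref{theorem:dense}~\ref{enum:dense}).

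For the forward direction, I assume $\bm{U}^{\star}$ is a local minimizer of $f$ over $\St(p,N)$, so there exists $\epsilon>0$ with $f(\bm{U}^{\star})\leq f(\bm{U})$ for all $\bm{U} \in B_{\mathbb{R}^{N\times p}}(\bm{U}^{\star},\epsilon)\cap \St(p,N)$. The continuity of $\Phi_{\bm{S}}^{-1}$ at $\bm{V}^{\star}$ (with $\Phi_{\bm{S}}^{-1}(\bm{V}^{\star})=\bm{U}^{\star}$) produces $\delta>0$ such that $\Phi_{\bm{S}}^{-1}\bigl(B_{Q_{N,p}(\bm{S})}(\bm{V}^{\star},\delta)\bigr) \subset B_{\mathbb{R}^{N\times p}}(\bm{U}^{\star},\epsilon)\cap(\St(p,N)\setminus E_{N,p}(\bm{S}))$. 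Pulling the inequality back through $\Phi_{\bm{S}}^{-1}$ yields $f\circ\Phi_{\bm{S}}^{-1}(\bm{V})\geq f\circ\Phi_{\bm{S}}^{-1}(\bm{V}^{\star})$ on $B_{Q_{N,p}(\bm{S})}(\bm{V}^{\star},\delta)$, i.e., $\bm{V}^{\star}$ is a local minimizer.

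For the converse, suppose $\bm{V}^{\star}$ is a local minimizer of $f\circ\Phi_{\bm{S}}^{-1}$ with neighborhood radius $\delta>0$. Here the key subtlety, and the one point requiring some care, is that the local-minimizer condition for $\bm{U}^{\star}$ uses the relative topology of $\St(p,N)$, whereas the comparison inequality is initially available only on $\St(p,N)\setminus E_{N,p}(\bm{S})$. I resolve this by first invoking Theorem~\ref{theorem:dense}~\ref{enum:dense} to pick $\epsilon_{1}>0$ with $B_{\mathbb{R}^{N\times p}}(\bm{U}^{\star},\epsilon_{1})\cap \St(p,N)\subset \St(p,N)\setminus E_{N,p}(\bm{S})$, then using the continuity of $\Phi_{\bm{S}}$ at $\bm{U}^{\star}$ to choose $\epsilon\in(0,\epsilon_{1}]$ so that $\Phi_{\bm{S}}\bigl(B_{\mathbb{R}^{N\times p}}(\bm{U}^{\star},\epsilon)\cap(\St(p,N)\setminus E_{N,p}(\bm{S}))\bigr)\subset B_{Q_{N,p}(\bm{S})}(\bm{V}^{\star},\delta)$. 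For any $\bm{U}\in B_{\mathbb{R}^{N\times p}}(\bm{U}^{\star},\epsilon)\cap \St(p,N)$ (which lies in $\St(p,N)\setminus E_{N,p}(\bm{S})$ by the choice of $\epsilon_{1}$), writing $\bm{U}=\Phi_{\bm{S}}^{-1}(\Phi_{\bm{S}}(\bm{U}))$ and applying the hypothesis on $\bm{V}^{\star}$ yields $f(\bm{U})\geq f(\bm{U}^{\star})$, which is the desired local-minimizer property.

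There is no serious obstacle here; the proof is a textbook homeomorphism-pullback argument. The only place demanding attention is the backward direction, where one must explicitly use that $\St(p,N)\setminus E_{N,p}(\bm{S})$ is open in $\St(p,N)$ so that small $\St(p,N)$-neighborhoods of $\bm{U}^{\star}$ are automatically contained in the domain of $\Phi_{\bm{S}}$; without this, one would only get the inequality on the open dense subset $\St(p,N)\setminus E_{N,p}(\bm{S})$, not on a full relative neighborhood in $\St(p,N)$.
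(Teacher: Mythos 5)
Your proposal is correct and follows essentially the same route as the paper: a homeomorphism-pullback argument using Proposition~\ref{proposition:inverse} together with the openness of $\St(p,N)\setminus E_{N,p}(\bm{S})$ from Theorem~\ref{theorem:dense}~\ref{enum:dense} (your forward direction phrases via continuity of $\Phi_{\bm{S}}^{-1}$ what the paper phrases via openness of $\Phi_{\bm{S}}(\mathcal{N}_{1}(\bm{U}^{\star}))$, which is equivalent). You also spell out the converse, which the paper leaves as ``in a similar way,'' and your handling of the relative-topology subtlety there is exactly right.
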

\begin{proof}
  Let
  $\bm{U}^{\star}$
  be a local minimizer of
  $f$
  over
  $\St(p,N)$
  and
  $\epsilon > 0$
  satisfy
  $f(\bm{U}^{\star}) \leq f(\bm{U})$
  for all
  $\bm{U} \in B_{\mathbb{R}^{N\times p}}(\bm{U}^{\star}, \epsilon)\cap \St(p,N)=:\mathcal{N}_{1}(\bm{U}^{\star}) \subset \St(p,N)\setminus E_{N,p}(\bm{S})$.
  From the homeomorphism of
  $\Phi_{\bm{S}}$
  in Proposition~\ref{proposition:inverse},
  $\mathcal{N}_{2}(\bm{V}^{\star}):=\Phi_{\bm{S}}(\mathcal{N}_{1}(\bm{U}^{\star})) \subset Q_{N,p}(\bm{S})$
  is a nonempty open subset of
  $Q_{N,p}(\bm{S})$
  containing
  $\bm{V}^{\star}$.
  Then, there exists
  $\widehat{\epsilon} > 0$
  satisfying
  $B_{Q_{N,p}(\bm{S})}(\bm{V}^{\star}, \widehat{\epsilon}) \subset \mathcal{N}_{2}(\bm{V}^{\star})$.
  Since
  $f\circ \Phi_{\bm{S}}^{-1}(B_{Q_{N,p}(\bm{S})}(\bm{V}^{\star}, \widehat{\epsilon})) \subset f\circ \Phi_{\bm{S}}^{-1}(\mathcal{N}_{2}(\bm{V}^{\star})) = f(\mathcal{N}_{1}(\bm{U}^{\star}))$,
  we obtain
  $f(\bm{U}^{\star}) = f\circ \Phi_{\bm{S}}^{-1}(\bm{V}^{\star}) \leq f\circ \Phi_{\bm{S}}^{-1}(\bm{V})$
  for all
  $\bm{V}\in B_{Q_{N,p}(\bm{S})}(\bm{V}^{\star}, \widehat{\epsilon})$,
  implying thus
  $\bm{V}^{\star}$
  is a local minimizer of
  $f\circ \Phi_{\bm{S}}^{-1}$
  over
  $Q_{N,p}(\bm{S})$.
  In a similar way, we can prove its converse.
\end{proof}

Under a special assumption on
$f$
in Theorem~\ref{theorem:range_minimizer} below, yet found especially in many data science scenarios (see Remark~\ref{remark:not_restricted}),
we can characterize a global minimizer of Problem~\ref{problem:origin} via
$f\circ \Phi_{\bm{S}}^{-1}$
with any
$\bm{S} \in {\rm O}(N)$.
In this case, a global minimizer
$\bm{V}^{\star} \in Q_{N,p}(\bm{S})$
of
$f\circ \Phi_{\bm{S}}^{-1}$
is guaranteed to exist in the unit ball
$\{\bm{V} \in Q_{N,p}(\bm{S}) \mid \|\bm{V}\|_{2} \leq 1\}$.
\begin{theorem} \label{theorem:range_minimizer}
  Let
  $\bm{S} \in {\rm O}(N)$.
  Assume that
  $f:\mathbb{R}^{N\times p}\to \mathbb{R}$
  is continuous and {\it right orthogonal invariant}, i.e.,
  $f(\bm{U}) = f(\bm{U}\bm{Q})$
  for
  $\bm{U} \in \St(p,N)$
  and
  $\bm{Q} \in {\rm O}(p)$.
  Then, there exists a global minimizer
  $\bm{V}^{\star} \in Q_{N,p}(\bm{S})$
  of
  $f\circ\Phi_{\bm{S}}^{-1}$
  achieving
  $f\circ\Phi_{\bm{S}}^{-1}(\bm{V}^{\star}) = \min_{\bm{U}\in \St(p,N)} f(\bm{U})$,
  $\|\dbra{\bm{V}^{\star}}_{21}\|_{2} \leq 1$
  and
  $\|\bm{V}^{\star}\|_{2} \leq 1$.
\end{theorem}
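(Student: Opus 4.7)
The strategy is to exploit the right orthogonal invariance of $f$: starting from any global minimizer $\bm{U}^{\star}$ of $f$ on $\St(p,N)$ (whose existence is guaranteed by compactness of $\St(p,N)$ and continuity of $f$, as noted in Problem~\ref{problem:origin}), I will right-multiply it by a carefully chosen $\bm{Q}\in{\rm O}(p)$ so that the resulting point lies in $\St(p,N)\setminus E_{N,p}(\bm{S})$ and its image under $\Phi_{\bm{S}}$ simultaneously makes the block $\bm{A}_{\bm{S}}$ vanish and the block $\bm{B}_{\bm{S}}$ have spectral norm at most one. The construction parallels that of Theorem~\ref{theorem:construct_center}, but is transported from the canonical center $\diag(\bm{T},\bm{I}_{N-p})$ to an arbitrary $\bm{S}\in{\rm O}(N)$ by replacing the role of the ``left'' rotation $\bm{T}$ with the right action $\bm{U}\mapsto\bm{U}\bm{Q}$ allowed by the invariance hypothesis.

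Concretely, I set $\widetilde{\bm{U}}:=\bm{S}^{\T}\bm{U}^{\star}\in\St(p,N)$, so that $\widetilde{\bm{U}}_{\rm up}=\bm{S}_{\rm le}^{\T}\bm{U}^{\star}$ and $\widetilde{\bm{U}}_{\rm lo}=\bm{S}_{\rm ri}^{\T}\bm{U}^{\star}$. Taking a singular value decomposition $\widetilde{\bm{U}}_{\rm up}=\bm{Q}_{1}\bm{\Sigma}\bm{Q}_{2}^{\T}$ with $\bm{Q}_{1},\bm{Q}_{2}\in{\rm O}(p)$ and $\bm{\Sigma}$ diagonal with nonnegative entries, I define $\bm{Q}:=\bm{Q}_{2}\bm{Q}_{1}^{\T}\in{\rm O}(p)$ and $\bm{U}^{\sharp}:=\bm{U}^{\star}\bm{Q}\in\St(p,N)$. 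Right orthogonal invariance gives $f(\bm{U}^{\sharp})=f(\bm{U}^{\star})=\min f(\St(p,N))$, and a direct computation yields $\bm{S}_{\rm le}^{\T}\bm{U}^{\sharp}=\bm{Q}_{1}\bm{\Sigma}\bm{Q}_{1}^{\T}$, which is symmetric positive semidefinite. Hence $\det(\bm{I}_{p}+\bm{S}_{\rm le}^{\T}\bm{U}^{\sharp})=\det(\bm{I}_{p}+\bm{\Sigma})\geq 1$ and $\bm{U}^{\sharp}\in\St(p,N)\setminus E_{N,p}(\bm{S})$, so $\bm{V}^{\star}:=\Phi_{\bm{S}}(\bm{U}^{\sharp})\in Q_{N,p}(\bm{S})$ is well defined and satisfies $f\circ\Phi_{\bm{S}}^{-1}(\bm{V}^{\star})=\min f(\St(p,N))$.

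Finally, I read off the two norm bounds from \eqref{eq:Cay_A}--\eqref{eq:Cay_B}. Because $(\bm{U}^{\sharp})^{\T}\bm{S}_{\rm le}=\bm{Q}_{1}\bm{\Sigma}\bm{Q}_{1}^{\T}$ is symmetric, its skew-symmetric part vanishes and hence $\bm{A}_{\bm{S}}(\bm{U}^{\sharp})=\bm{0}$. Substituting the SVD into \eqref{eq:Cay_B} gives $\bm{B}_{\bm{S}}(\bm{U}^{\sharp})=-\widetilde{\bm{U}}_{\rm lo}\bm{Q}_{2}(\bm{I}_{p}+\bm{\Sigma})^{-1}\bm{Q}_{1}^{\T}$, whose spectral norm is bounded above by $\|\widetilde{\bm{U}}_{\rm lo}\|_{2}\,\|(\bm{I}_{p}+\bm{\Sigma})^{-1}\|_{2}\leq 1$; here $\|\widetilde{\bm{U}}_{\rm lo}\|_{2}\leq\|\widetilde{\bm{U}}\|_{2}=1$ because $\widetilde{\bm{U}}_{\rm lo}$ is a block of the orthonormal-column matrix $\widetilde{\bm{U}}$, and $\|(\bm{I}_{p}+\bm{\Sigma})^{-1}\|_{2}\leq 1$ since the entries of $\bm{\Sigma}$ are nonnegative. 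This already gives $\|\dbra{\bm{V}^{\star}}_{21}\|_{2}\leq 1$; for $\|\bm{V}^{\star}\|_{2}$ I compute $\bm{V}^{\star\T}\bm{V}^{\star}$ block-wise and observe that its eigenvalues coincide with the squared singular values of $\bm{B}_{\bm{S}}(\bm{U}^{\sharp})$, yielding $\|\bm{V}^{\star}\|_{2}=\|\bm{B}_{\bm{S}}(\bm{U}^{\sharp})\|_{2}\leq 1$. The only conceptually nontrivial step is the choice of $\bm{Q}$; once it is made, everything else reduces to routine linear algebra.
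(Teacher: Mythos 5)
Your proof is correct and follows essentially the same route as the paper: take a global minimizer, use the SVD $\bm{S}_{\rm le}^{\T}\bm{U}^{\star}=\bm{Q}_{1}\bm{\Sigma}\bm{Q}_{2}^{\T}$ to build $\bm{Q}=\bm{Q}_{2}\bm{Q}_{1}^{\T}$, right-multiply to make $\bm{S}_{\rm le}^{\T}\bm{U}^{\sharp}$ symmetric positive semidefinite, and then read off $\bm{A}_{\bm{S}}=\bm{0}$, $\|\bm{B}_{\bm{S}}\|_{2}\leq 1$, and $\|\bm{V}^{\star}\|_{2}=\|\dbra{\bm{V}^{\star}}_{21}\|_{2}$ from the block structure. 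The only differences are notational.
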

\begin{proof}
  Let
  $\bm{U}^{\diamond} \in \St(p,N)$
  be a global minimizer of
  $f$
  over
  $\St(p,N)$,
  and
  $\bm{S}_{\rm le}^{\T}\bm{U}^{\diamond} = \bm{Q}_{1}\bm{\Sigma}\bm{Q}_{2}^{\T}$
  be a singular value decomposition with
  $\bm{Q}_{1},\bm{Q}_{2} \in {\rm O}(p)$
  and nonnegative-valued diagonal matrix
  $\bm{\Sigma} \in \mathbb{R}^{p\times p}$.
  Then, we obtain
  $\bm{U}^{\star}:=\bm{U}^{\diamond}\bm{Q} \in \St(p,N) \setminus E_{N,p}(\bm{S})$
  with
  $\bm{Q} := \bm{Q}_{2}\bm{Q}_{1}^{\T} \in {\rm O}(p)$
  by
  $|\det(\bm{I}_{p}+\bm{S}_{\rm le}^{\T}\bm{U}^{\star})| = |\det(\bm{I}_{p}+\bm{Q}_{1}\bm{\Sigma}\bm{Q}_{2}^{\T}\bm{Q}_{2}\bm{Q}_{1}^{\T})| = |\det(\bm{I}_{p}+\bm{\Sigma})| \geq 1$.
  The right orthogonal invariance of
  $f$
  ensures
  $f(\bm{U}^{\diamond}) = f(\bm{U}^{\star}) = f\circ\Phi_{\bm{S}}^{-1}(\bm{V}^{\star})$
  with
  $\bm{V}^{\star} := \Phi_{\bm{S}}(\bm{U}^{\star})$.

  Substituting
  $\bm{S}_{\rm le}^{\T}\bm{U}^{\star} = \bm{Q}_{1}\bm{\Sigma}\bm{Q}_{1}^{\T}$
  into~\eqref{eq:Cay_A} and~\eqref{eq:Cay_B}, we obtain
  $\dbra{\bm{V}^{\star}}_{11} = \bm{A}_{\bm{S}}(\bm{U}^{\star}) = \bm{0}$
  and
  $\dbra{\bm{V}^{\star}}_{21} = \bm{B}_{\bm{S}}(\bm{U}^{\star}) = -\bm{S}_{\rm ri}^{\T}\bm{U}^{\diamond}\bm{Q}(\bm{I}_{p}+\bm{Q}_{1}\bm{\Sigma}\bm{Q}_{1}^{\T})^{-1} = -\bm{S}_{\rm ri}^{\T}\bm{U}^{\diamond}\bm{Q}_{2}(\bm{I}_{p}+\bm{\Sigma})^{-1}\bm{Q}_{1}^{\T}$.
  In a similar manner to~\eqref{eq:theorem_construct_BS}, the last equality implies
  $\|\dbra{\bm{V}^{\star}}_{21}\|_{2} \leq 1$.
  The last statement is verified by
  \begin{align}
    & \|\bm{V}^{\star}\|_{2}^{2} = \lambda_{\max}\left(\begin{bmatrix} \bm{0} & \dbra{\bm{V}^{\star}}_{21}^{\T} \\ -\dbra{\bm{V}^{\star}}_{21} & \bm{0} \end{bmatrix}\begin{bmatrix} \bm{0} & -\dbra{\bm{V}^{\star}}_{21}^{\T} \\ \dbra{\bm{V}^{\star}}_{21} & \bm{0} \end{bmatrix}\right) \\
    & = \lambda_{\max}\left(\begin{bmatrix} \dbra{\bm{V}^{\star}}_{21}^{\T}\dbra{\bm{V}^{\star}}_{21} & \bm{0} \\ \bm{0} & \dbra{\bm{V}^{\star}}_{21}\dbra{\bm{V}^{\star}}_{21}^{\T} \end{bmatrix}\right)
    = \lambda_{\max}(\dbra{\bm{V}^{\star}}_{21}^{\T}\dbra{\bm{V}^{\star}}_{21}) = \|\dbra{\bm{V}^{\star}}_{21}\|_{2}^{2} \leq 1.
  \end{align}
\end{proof}

\begin{remark}[Right orthogonal invariance] \label{remark:not_restricted}
  Under the right orthogonal invariance of
  $f$,
  Problem~\ref{problem:origin} arises in, e.g., low-rank matrix completion~\cite{Boumal-Absil2015,Pitaval-Dai-Tirkkonen2015}, eigenvalue problems~\cite{manifold_book,W13,Sato-Iwai2014,Z17}, and optimal
  $\mathcal{H}_{2}$
  model reduction~\cite{Xu-Zeng2013,Sato21}.
  These applications can be formulated as optimization problems over {\it the Grassmann manifold}
  $\Gr(p,N)$,
  which is the set of all $p$-dimensional subspace of
  $\mathbb{R}^{N}$.
  Practically,
  $\Gr(p,N)$
  is represented numerically by
  $\{[\bm{U}] \mid \bm{U} \in \St(p,N)\}$,
  where
  $[\bm{U}]:=\{\bm{U}\bm{Q} \in \St(p,N) \mid \bm{Q}\in {\rm O}(p)\}$
  is an equivalence class, because the column space of
  $\bm{U} \in \St(p,N)$
  equals that of
  $\bm{U}\bm{Q} \in \St(p,N)$
  for all
  $\bm{Q}\in {\rm O}(p)$.
  Since the value of the right orthogonal invariant
  $f$
  depends only on the equivalence class
  $[\bm{U}]$,
  Problem~\ref{problem:origin} of such
  $f$
  can be regarded as an optimization problem over
  $\Gr(p,N)$.
\end{remark}

In Lemma~\ref{lemma:optimality} below,
we characterize a stationary point of
$f$
over
$\St(p,N)$
by a stationary point of
$f\circ\Phi_{\bm{S}}^{-1}$
with a certain
$\bm{S} \in {\rm O}(N)$
over the vector space
$Q_{N,p}(\bm{S})$.
Moreover, Lemma~\ref{lemma:gap_grad} ensures the existence of solutions to Problem~\ref{problem:CP_grad} with any
$\epsilon > 0$.
Therefore, we can approximate a stationary point of
$f$
over
$\St(p,N)$
by solving Problem~\ref{problem:CP_grad} with a sufficiently small
$\epsilon > 0$.
\begin{lemma}[First-order optimality condition]\label{lemma:optimality}
  Let
  $f:\mathbb{R}^{N\times p} \to \mathbb{R}$
  be differentiable.
  Let
  $\bm{U} \in \St(p,N)$
  and
  $\bm{S} \in {\rm O}(N)$
  satisfy
  $\bm{U} \in \St(p,N) \setminus E_{N,p}(\bm{S})$.
  Then, the first-order optimality condition in~\eqref{eq:optimality} can be stated equivalently as
  \begin{equation}\label{eq:optimality_CP_lemma}
    \nabla f_{\bm{S}}(\Phi_{\bm{S}}(\bm{U})) = \bm{0},
  \end{equation}
  where
  $f_{\bm{S}} := f\circ\Phi_{\bm{S}}^{-1}$.
\end{lemma}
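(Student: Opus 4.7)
The plan is to exploit Proposition~\ref{proposition:change_center}\ref{enum:zero}, which says that the vanishing of $\nabla f_{\bm{S}}(\Phi_{\bm{S}}(\bm{U}))$ is independent of the particular choice of center $\bm{S} \in {\rm O}(N)$ admitting $\bm{U} \in \St(p,N)\setminus E_{N,p}(\bm{S})$. Thanks to this invariance, I only need to verify~\eqref{eq:optimality_CP_lemma} for a single, convenient choice of $\bm{S}$, and the implicit-function style computation~\eqref{eq:gradient_0} at $\bm{V} = \bm{0}$ will do all the work.

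Concretely, the plan is to extend $\bm{U} \in \St(p,N)$ to an orthonormal basis and set $\bm{S} := [\bm{U}\ \bm{S}_{\rm ri}] \in {\rm O}(N)$ with $\bm{S}_{\rm le} = \bm{U}$. For this $\bm{S}$ one has $\bm{S}_{\rm le}^{\T}\bm{U} = \bm{I}_{p}$, so $\det(\bm{I}_{p}+\bm{S}_{\rm le}^{\T}\bm{U}) = 2^{p} \neq 0$, hence $\bm{U} \in \St(p,N)\setminus E_{N,p}(\bm{S})$; moreover, substituting into~\eqref{eq:Cay_A}--\eqref{eq:Cay_B} immediately yields $\Phi_{\bm{S}}(\bm{U}) = \bm{0}$. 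Consequently, by Proposition~\ref{proposition:change_center}\ref{enum:zero}, the condition~\eqref{eq:optimality_CP_lemma} is equivalent to $\nabla f_{\bm{S}}(\bm{0}) = \bm{0}$ for this specific $\bm{S}$.

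Plugging $\bm{S}_{\rm le} = \bm{U}$ into~\eqref{eq:gradient_0} gives
\begin{align}
\nabla f_{\bm{S}}(\bm{0}) = \begin{bmatrix} \nabla f(\bm{U})^{\T}\bm{U} - \bm{U}^{\T}\nabla f(\bm{U}) & \nabla f(\bm{U})^{\T}\bm{S}_{\rm ri} \\ -\bm{S}_{\rm ri}^{\T}\nabla f(\bm{U}) & \bm{0}\end{bmatrix},
\end{align}
so $\nabla f_{\bm{S}}(\bm{0}) = \bm{0}$ decouples into the two conditions: (i) $\bm{U}^{\T}\nabla f(\bm{U}) - \nabla f(\bm{U})^{\T}\bm{U} = \bm{0}$, which is literally the second line of~\eqref{eq:optimality}; and (ii) $\bm{S}_{\rm ri}^{\T}\nabla f(\bm{U}) = \bm{0}$. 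For (ii), I use the orthogonality $\bm{S}_{\rm le}\bm{S}_{\rm le}^{\T} + \bm{S}_{\rm ri}\bm{S}_{\rm ri}^{\T} = \bm{I}$ to get $\bm{S}_{\rm ri}\bm{S}_{\rm ri}^{\T} = \bm{I} - \bm{U}\bm{U}^{\T}$; thus left-multiplying (ii) by $\bm{S}_{\rm ri}$ gives $(\bm{I}-\bm{U}\bm{U}^{\T})\nabla f(\bm{U}) = \bm{0}$, the first line of~\eqref{eq:optimality}. The converse direction follows by left-multiplying $(\bm{I}-\bm{U}\bm{U}^{\T})\nabla f(\bm{U}) = \bm{0}$ by $\bm{S}_{\rm ri}^{\T}$ and using $\bm{S}_{\rm ri}^{\T}\bm{U} = \bm{S}_{\rm ri}^{\T}\bm{S}_{\rm le} = \bm{0}$.

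I do not anticipate any real obstacle; the only delicate point is to notice that the apparent dependence on a choice of completion $\bm{S}_{\rm ri}$ is harmless: Proposition~\ref{proposition:change_center}\ref{enum:zero} removes the $\bm{S}$-dependence on the Cayley side, and the orthogonality identity $\bm{S}_{\rm ri}\bm{S}_{\rm ri}^{\T} = \bm{I}-\bm{U}\bm{U}^{\T}$ removes it on the Stiefel side. Everything else is algebraic bookkeeping about the block structure displayed in~\eqref{eq:gradient_0}.
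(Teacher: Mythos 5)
Your proposal is correct and follows essentially the same route as the paper's proof: both reduce to the center $\bm{S}=[\bm{U}\ \bm{U}_{\perp}]$ (your $\bm{S}_{\rm ri}$ playing the role of $\bm{U}_{\perp}$) via Proposition~\ref{proposition:change_center}~\ref{enum:zero}, then read off the equivalence from~\eqref{eq:gradient_0} using $\bm{U}\bm{U}^{\T}+\bm{U}_{\perp}\bm{U}_{\perp}^{\T}=\bm{I}$. No gaps.
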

\begin{proof}
  Let
  $\bm{U}_{\perp}\in\St(N-p,N)$
  satisfy
  $\bm{U}^{\T}\bm{U}_{\perp}= \bm{0}$.
  Then, we have
  $\bm{U} = \Phi_{[\bm{U}\ \bm{U}_{\perp}]}^{-1}(\bm{0})$.
  For
  $\bm{S}\in{\rm O}(N)$
  satisfying
  $\bm{U}\in \St(p,N)\setminus E_{N,p}(\bm{S})$
  and
  $\bm{V}:=\Phi_{\bm{S}}(\bm{U}) \in Q_{N,p}(\bm{S})$,
  i.e.,
  $\bm{U}=\Phi_{\bm{S}}^{-1}(\bm{V})$,
  Proposition~\ref{proposition:change_center}~\ref{enum:zero} asserts that
  $\nabla f_{\bm{S}}(\bm{V}) = \bm{0}$
  if and only if
  $\nabla f_{[\bm{U}\ \bm{U}_{\perp}]}(\bm{0}) = \bm{0}$.
  To prove the equivalence between~\eqref{eq:optimality} and~\eqref{eq:optimality_CP_lemma}, it is sufficient to show the equivalence between the condition in~\eqref{eq:optimality} and
  $\nabla f_{[\bm{U}\ \bm{U}_{\perp}]}(\bm{0}) = \bm{0}$.
  By~\eqref{eq:gradient_0}, we have
  \begin{equation}
    \nabla f_{[\bm{U}\ \bm{U}_{\perp}]}(\bm{0}) = \begin{bmatrix} \nabla f(\bm{U})^{\T}\bm{U}-\bm{U}^{\T}\nabla f(\bm{U}) & \nabla f(\bm{U})^{\T}\bm{U}_{\perp} \\ -\bm{U}_{\perp}^{\T}\nabla f(\bm{U}) & \bm{0} \end{bmatrix},
  \end{equation}
  which yields
  $\dbra{\nabla f_{[\bm{U}\ \bm{U}_{\perp}]}(\bm{0})}_{11} = \bm{0}$
  if and only if the second condition in~\eqref{eq:optimality} holds true.

  In the following, we show the equivalence of
  $\bm{U}_{\perp}^{\T}\nabla f(\bm{U}) = \bm{0}$
  and
  $(\bm{I}-\bm{U}\bm{U}^{\T})\nabla f(\bm{U}) = \bm{0}$.
  By noting
  $\begin{bmatrix} \bm{U} & \bm{U}_{\perp} \end{bmatrix}\begin{bmatrix} \bm{U} & \bm{U}_{\perp} \end{bmatrix}^{\T} = \bm{U}\bm{U}^{\T}+\bm{U}_{\perp}\bm{U}_{\perp}^{\T} = \bm{I}$,
  the equality
  $\bm{U}_{\perp}^{\T}\nabla f(\bm{U})  = \bm{0}$
  implies
  $\bm{0}=\bm{U}_{\perp}\bm{U}_{\perp}^{\T}\nabla f(\bm{U}) = (\bm{I}-\bm{U}\bm{U}^{\T})\nabla f(\bm{U})$.
  Conversely,
  $(\bm{I}-\bm{U}\bm{U}^{\T})\nabla f(\bm{U}) = \bm{0}$
  implies
  $\bm{0}= \bm{U}_{\perp}^{\T}(\bm{I}-\bm{U}\bm{U}^{\T})\nabla f(\bm{U}) =  \bm{U}_{\perp}^{\T}\nabla f(\bm{U})$.
\end{proof}

\begin{lemma} \label{lemma:gap_grad}
  Let
  $f:\mathbb{R}^{N\times p} \to \mathbb{R}$
  be continuously differentiable with
  $p<N$
  and
  $\bm{S} \in {\rm O}(N)$.
  Then,
  $\inf_{\bm{V}\in Q_{N,p}(\bm{S})} \|\nabla (f\circ\Phi_{\bm{S}}^{-1})(\bm{V})\|_{F} = 0$.
\end{lemma}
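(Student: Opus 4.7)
The plan is to exploit the existence of a global (hence stationary) minimizer $\bm{U}^{\star}$ of $f$ over $\St(p,N)$, convert its stationarity into the vanishing of $\nabla f_{\bm{S}_0}$ at $\Phi_{\bm{S}_0}(\bm{U}^{\star})$ for a \emph{conveniently chosen} center $\bm{S}_0$, and then transfer this information to the prescribed center $\bm{S}$ via the transformation formula of Proposition~\ref{proposition:change_center}~\ref{enum:change_center} combined with density.

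First I would invoke compactness of $\St(p,N)$ and continuity of $f$ to obtain a minimizer $\bm{U}^{\star} \in \St(p,N)$. Since any local minimizer satisfies the first-order optimality condition~\eqref{eq:optimality}, $\bm{U}^{\star}$ is a stationary point of $f$ over $\St(p,N)$. Next, applying Theorem~\ref{theorem:construct_center} to $\bm{U}^{\star}$ yields a center $\bm{S}_{0} \in {\rm O}_{p}(N) \subset {\rm O}(N)$ with $\bm{U}^{\star} \in \St(p,N) \setminus E_{N,p}(\bm{S}_{0})$. By Lemma~\ref{lemma:optimality} (applied with $\bm{S}_{0}$ in place of $\bm{S}$), the optimality condition~\eqref{eq:optimality} is equivalent to $\nabla f_{\bm{S}_0}(\bm{V}^{\star}) = \bm{0}$, where $\bm{V}^{\star} := \Phi_{\bm{S}_0}(\bm{U}^{\star}) \in Q_{N,p}(\bm{S}_0)$.

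To pass from the center $\bm{S}_0$ to the prescribed $\bm{S}$, I would use Theorem~\ref{theorem:dense}~\ref{enum:intersection_dense} with $\bm{S}_1 = \bm{S}$ and $\bm{S}_2 = \bm{S}_0$: the overlap $\Delta(\bm{S},\bm{S}_0)$ is a dense subset of $\St(p,N)$, so there exists a sequence $(\bm{U}_n)_{n\in \mathbb{N}} \subset \Delta(\bm{S},\bm{S}_0)$ with $\bm{U}_n \to \bm{U}^{\star}$. Define $\bm{V}_n := \Phi_{\bm{S}}(\bm{U}_n) \in Q_{N,p}(\bm{S})$ and $\bm{W}_n := \Phi_{\bm{S}_0}(\bm{U}_n) \in Q_{N,p}(\bm{S}_0)$; these satisfy $\Phi_{\bm{S}}^{-1}(\bm{V}_n) = \bm{U}_n = \Phi_{\bm{S}_0}^{-1}(\bm{W}_n)$, so Proposition~\ref{proposition:change_center}~\ref{enum:change_center} yields
\begin{equation}
\|\nabla f_{\bm{S}}(\bm{V}_n)\|_{F} \;\leq\; 2\bigl(1+\|\bm{W}_n\|_{2}^{2}\bigr)\,\|\nabla f_{\bm{S}_0}(\bm{W}_n)\|_{F}.
\end{equation}
Because $\Phi_{\bm{S}_0}$ is continuous (Proposition~\ref{proposition:inverse}), $\bm{W}_n \to \bm{V}^{\star}$, so $(\|\bm{W}_n\|_{2})$ is bounded. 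Moreover, since $f$ is continuously differentiable, $\nabla f_{\bm{S}_0} = \nabla(f\circ \Phi_{\bm{S}_0}^{-1})$ is continuous on $Q_{N,p}(\bm{S}_0)$, hence $\nabla f_{\bm{S}_0}(\bm{W}_n) \to \nabla f_{\bm{S}_0}(\bm{V}^{\star}) = \bm{0}$. The displayed bound then gives $\|\nabla f_{\bm{S}}(\bm{V}_n)\|_{F} \to 0$, and consequently $\inf_{\bm{V} \in Q_{N,p}(\bm{S})} \|\nabla f_{\bm{S}}(\bm{V})\|_{F} = 0$.

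The only delicate point is that $\bm{U}^{\star}$ may happen to lie in $E_{N,p}(\bm{S})$, so $\Phi_{\bm{S}}(\bm{U}^{\star})$ is not defined and one cannot simply apply Lemma~\ref{lemma:optimality} at $\bm{S}$. This obstruction is exactly what forces the detour through an auxiliary center $\bm{S}_0$ and an approximating sequence inside the dense intersection $\Delta(\bm{S},\bm{S}_0)$; everything else is continuity and the already established change-of-center inequality.
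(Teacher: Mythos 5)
Your proposal is correct and follows essentially the same route as the paper's proof: both pass through a stationary point $\bm{U}^{\star}$ with an auxiliary center $\bm{S}_0$ (the paper's $\bm{S}^{\star}$), apply Lemma~\ref{lemma:optimality} there, approximate $\bm{U}^{\star}$ by a sequence in the dense intersection $\Delta(\bm{S},\bm{S}_0)$ from Theorem~\ref{theorem:dense}~\ref{enum:intersection_dense}, and conclude via the change-of-center bound of Proposition~\ref{proposition:change_center}~\ref{enum:change_center}. Your only additions are to name Theorem~\ref{theorem:construct_center} as the explicit source of $\bm{S}_0$ and to state the continuity of $\nabla f_{\bm{S}_0}$ explicitly, both of which the paper leaves implicit.
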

\begin{proof}
  Let
  $\bm{U}^{\star} \in \St(p,N)$
  be a global minimizer of
  $f$
  over
  $\St(p,N)$,
  and
  $\bm{S}^{\star} \in {\rm O}(N)$
  satisfy
  $\bm{U}^{\star} \in \St(p,N) \setminus E_{N,p}(\bm{S}^{\star})$.
  Then,
  $\bm{U}^{\star}$
  is a stationary point of
  $f$
  over
  $\St(p,N)$,
  and we have
  $\|\nabla (f\circ\Phi_{\bm{S}^{\star}}^{-1})(\bm{V}^{\star})\|_{F} = 0$
  with
  $\bm{V}^{\star} := \Phi_{\bm{S}^{\star}}(\bm{U}^{\star}) \in Q_{N,p}(\bm{S}^{\star})$
  from Lemma~\ref{lemma:optimality}.

  Theorem~\ref{theorem:dense}~\ref{enum:intersection_dense} ensures the denseness of
  $\Delta(\bm{S},\bm{S}^{\star}):=(\St(p,N) \setminus E_{N,p}(\bm{S})) \cap (\St(p,N)\setminus E_{N,p}(\bm{S}^{\star}))$
  in
  $\St(p,N)$.
  Then, we obtain a sequence
  $(\bm{U}_{n})_{n=0}^{\infty}$
  of
  $\Delta(\bm{S},\bm{S}^{\star})$
  converging to
  $\bm{U}^{\star}$.
  Let
  $(\bm{V}_{n}^{\star})_{n=0}^{\infty}$
  and
  $(\bm{V}_{n})_{n=0}^{\infty}$
  be sequences of
  $\bm{V}_{n}^{\star}:= \Phi_{\bm{S}^{\star}}(\bm{U}_{n}) \in Q_{N,p}(\bm{S}^{\star})$
  and
  $\bm{V}_{n}:= \Phi_{\bm{S}}(\bm{U}_{n}) \in Q_{N,p}(\bm{S})$.
  The continuity of
  $\Phi_{\bm{S}^{\star}}$
  yields
  $\lim_{n\to\infty}\bm{V}_{n}^{\star} = \bm{V}^{\star}$,
  implying the boundedness of
  $(\bm{V}_{n}^{\star})_{n=0}^{\infty}$.
  From
  $\Phi_{\bm{S}}^{-1}(\bm{V}_{n}) = \bm{U}_{n} = \Phi_{\bm{S}^{\star}}^{-1}(\bm{V}_{n}^{\star})$
  and Proposition~\ref{proposition:change_center}~\ref{enum:change_center}, we have
  $0 \leq \|\nabla (f\circ\Phi_{\bm{S}}^{-1})(\bm{V}_{n})\|_{F} \leq 2(1+\|\bm{V}_{n}^{\star}\|_{2}^{2})\|\nabla (f\circ\Phi_{\bm{S}^{\star}}^{-1})(\bm{V}_{n}^{\star})\|_{F}$.
  The right-hand side of the above inequality converges to zero from the boundedness of
  $(\bm{V}_{n}^{\star})_{n=0}^{\infty}$
  and
  $\|\nabla (f\circ\Phi_{\bm{S}^{\star}}^{-1})(\bm{V}^{\star})\|_{F} = 0$.
  Therefore, we have
  $\lim_{n\to \infty}\|\nabla (f\circ\Phi_{\bm{S}}^{-1})(\bm{V}_{n})\|_{F} = 0$,
  from which we completed the proof.
\end{proof}
\subsection{Basic framework to incorporate optimization techniques designed over a vector space with the Cayley parametrization}\label{sec:CP}

\begin{algorithm}[t] 
  \caption{Cayley parametrization strategy (Algorithm~\ref{alg:proposed}+$\mathcal{A}$)}
\begin{algorithmic}
  \label{alg:proposed}
\REQUIRE
$\bm{U}_0 \in \St(p,N)$,
$\bm{S} \in {\rm O}(N)$,
$\mathcal{A}:Q_{N,p}(\bm{S})\to Q_{N,p}(\bm{S})$: update rule
\STATE
  $\bm{V}_0 =\Phi_{\bm{S}}(\bm{U}_0)$
\FOR{$n=0,1,2,\ldots,m-1$}
  \STATE
  $\bm{V}_{n+1} = \mathcal{A}(\bm{V}_{n})$
  \STATE
  $\bm{U}_{n+1} = \Phi_{\bm{S}}^{-1}(\bm{V}_{n+1})$
\ENDFOR
\ENSURE
$\bm{U}_{m}$
\end{algorithmic}
\end{algorithm}
We illustrate a general scheme of the Cayley parametrization strategy in Algorithm~\ref{alg:proposed}\footnote{
  Algorithm~\ref{alg:proposed} can serve as a central building block in our further advanced Cayley parametrization strategies, reported partially in~\cite{Kume-Yamada19,Kume-Yamada20,Kume-Yamada21}.
},
where
$\bm{U}_{0} \in \St(p,N)$
is an initial estimate for a solution to Problem~\ref{problem:origin} with
$p < N$,
$\bm{S} \in {\rm O}(N)$
is a center point for parametrization of a dense subset
$\St(p,N) \setminus E_{N,p}(\bm{S}) \subset \St(p,N)$
in terms of the vector space
$Q_{N,p}(\bm{S})$,
and a mapping
$\mathcal{A}:Q_{N,p}(\bm{S}) \to Q_{N,p}(\bm{S})$
is a certain update rule for decreasing the value of
$f\circ\Phi_{\bm{S}}^{-1}$.
In principle, we can employ any optimization update scheme over a vector space as
$\mathcal{A}$,
which is a notable advantage of the proposed strategy over the standard strategy (see Remark~\ref{remark:relation_to_others}).
As a simplest example, we will employ, in Section~\ref{sec:numerical}, a gradient descent-type update scheme
$\mathcal{A}^{\rm GDM}:Q_{N,p}(\bm{S})\to Q_{N,p}(\bm{S}):\bm{V} \mapsto \bm{V} - \gamma \nabla (f\circ \Phi_{\bm{S}}^{-1})(\bm{V})$
with a stepsize
$\gamma > 0$
determined by a certain line-search algorithm (see, e.g.,~\cite{numerical_optimization}).

To parameterize
$\bm{U}_{0} \in \St(p,N)$
by
$\Phi_{\bm{S}}^{-1}$,
$\bm{S} \in {\rm O}(N)$
must be chosen to satisfy
$\bm{U}_{0} \in \St(p,N) \setminus E_{N,p}(\bm{S})$.
An example of selection of such
$\bm{S}$
for a given
$\bm{U}_{0}$
is
$\bm{S}:=\diag(\bm{Q}_{1}\bm{Q}_{2}^{\T}, \bm{I}_{N-p}) \in {\rm O}_{p}(N)$
by using a singular value decomposition
$[\bm{U}_{0}]_{\rm up} = \bm{Q}_{1}\bm{\Sigma}\bm{Q}_{2}^{\T} \in \mathbb{R}^{p\times p}$
with
$\bm{Q}_{1}, \bm{Q}_{2} \in {\rm O}(p)$
and a diagonal matrix
$\bm{\Sigma} \in \mathbb{R}^{p\times p}$
with non-negative entries (see Theorem~\ref{theorem:construct_center}).

\begin{remark}[Comparison to the retraction-based strategy]\label{remark:relation_to_others}
  As reported in~\cite{manifold_book,E98,N02,N05,A07,T08,A12,R12,W13,H15,J15,M15,S15,Z17,K18,Zhu-Sato20,Sato21}, Problem~\ref{problem:origin} has been tackled with a retraction
  $R:T\St(p,N):=\{\{\bm{U}\}\times T_{\bm{U}}\St(p,N) \mid \bm{U} \in \St(p,N)\} \to \St(p,N):(\bm{U}, \bm{\mathcal{V}})\mapsto R_{\bm{U}}(\bm{\mathcal{V}})$ (see, e.g.,~\cite{manifold_book})
  by exploiting only a local diffeomorphism\footnote{
    The local diffeomorphism of
    $R_{\bm{U}}$
    around
    $\bm{0} \in T_{\bm{U}}\St(p,N)$
    can be verified with the inverse function theorem and the condition (ii) in Definition~\ref{definition:retraction}.
  }
  of each
  $R_{\bm{U}}$
  between a sufficiently small neighborhood of
  $\bm{0} \in T_{\bm{U}}\St(p,N)$
  in the tangent space
  $T_{\bm{U}}\St(p,N)$,
  at
  $\bm{U} \in \St(p,N)$
  to
  $\St(p,N)$,
  and its image in
  $\St(p,N)$
  (see Appendix~\ref{appendix:retraction} for its basic idea).
  At the $n$th iteration, these retraction-based strategies decrease the time-varying function
  $f\circ R_{\bm{U}_{n}}$
  at
  $\bm{0} \in T_{\bm{U}_{n}}\St(p,N)$
  over the time-varying vector space
  $T_{\bm{U}_{n}}\St(p,N)$,
  where
  $\bm{U}_{n} \in \St(p,N)$
  is the $n$th estimate for a solution.
  Many computational mechanisms for finding a descent direction
  $\bm{\mathcal{D}}_{n} \in T_{\bm{U}_{n}}\St(p,N)$
  in the tangent space
  $T_{\bm{U}_{n}}\St(p,N)$
  have been motivated by standard ideas for optimization over a fixed vector space.
  To achieve fast convergence in optimization over a vector space, many researchers have been trying to utilize the past updating directions
  for estimating a current descent direction, e.g., the conjugate gradient method, quasi-Newton's method and Nesterov accelerated gradient method~\cite{numerical_optimization,N83,G16,Reddi-Hefny-Sra16}.
  However, in the retraction-based strategy, since the past updating directions
  $(\bm{\mathcal{D}}_{k})_{k=0}^{n-1}$
  no longer live in the current tangent space
  $T_{\bm{U}_{n}}\St(p,N)$,
  we can not utilize directly
  $(\bm{\mathcal{D}}_{k})_{k=0}^{n-1}$
  for estimating a new descent direction
  $\bm{\mathcal{D}}_{n} \in T_{\bm{U}_{n}}\St(p,N)$.
  To be exploited the past updating directions with a retraction, those directions must be translated into the current tangent space with certain mappings, e.g., {\it a vector transport}~\cite{manifold_book} and the inversion mapping of retractions~\cite{Zhu-Sato20}.

  On the other hand, Algorithm~\ref{alg:proposed} decreases the fixed cost function
  $f\circ \Phi_{\bm{S}}^{-1}$
  with a fixed
  $\bm{S} \in {\rm O}(N)$
  over the fixed vector space
  $Q_{N,p}(\bm{S})$
  during the process of Algorithm~\ref{alg:proposed} by exploiting the diffeomorphism of
  $\Phi_{\bm{S}}^{-1}$
  between
  $Q_{N,p}(\bm{S})$
  and an open dense subset
  $\St(p,N)\setminus E_{N,p}(\bm{S})$
  of
  $\St(p,N)$
  (see Proposition~\ref{proposition:inverse} and Theorem~\ref{theorem:dense}~\ref{enum:dense}).
  Since every past updating direction lives in the same vector space
  $Q_{N,p}(\bm{S})$,
  we can utilize the past updating directions without requiring any additional computation such as a vector transport and the inversion mapping of retractions.
  Therefore, we can transplant powerful computational arts, e.g.,~\cite{numerical_optimization,Reddi-Hefny-Sra16,G16,Zeyuan18,Ward-Wu-Bottou,Chen-Liu-Sun-Hong19,Tatarenko-Touri17}, designed for optimization over a vector space, into the proposed strategy.
  For many such algorithms, Proposition~\ref{proposition:gradient_property} must be useful for checking whether conditions, regarding the cost function, for a global convergence of optimization techniques hold true or not.
\end{remark}

\subsection{Singular-point issue in the Cayley parametrization strategy}\label{sec:mobility}
Numerical performance of Algorithm~\ref{alg:proposed} heavily depends on tuning
$\bm{S} \in {\rm O}(N)$
in general.
If we choose
$\bm{S}$
such that a minimizer
$\bm{U}^{\star} \in \St(p,N)$
of Problem~\ref{problem:origin} is close to the singular-point set
$E_{N,p}(\bm{S})$,
then a risk of a slow convergence of Algorithm~\ref{alg:proposed} arises due to an insensitivity of
$\Phi_{\bm{S}}^{-1}$
to the change
around
$\Phi_{\bm{S}}(\bm{U}^{\star})$
in the vector space
$Q_{N,p}(\bm{S})$.
In a case where
$p = N$,
this risk has been reported by~\cite{Y03,H10}.
We can see this insensitivity of
$\Phi_{\bm{S}}^{-1}$
via Proposition~\ref{proposition:mobility} below.

\begin{proposition}[The mobility of $\Phi_{\bm{S}}^{-1}$] \label{proposition:mobility}
  Let
  $p,N\in \mathbb{N}$
  satisfy
  $p < N$,
  $\bm{S} \in {\rm O}(N)$,
  $\bm{V} \in Q_{N,p}(\bm{S})$,
  and
  $\bm{\mathcal{E}}\in Q_{N,p}(\bm{S})$
  satisfy
  $\|\bm{\mathcal{E}}\|_{F} = 1$.
  Then, we have
  \begin{equation}\label{eq:mobility}
    \|\Phi_{\bm{S}}^{-1}(\bm{V}+\tau \bm{\mathcal{E}}) - \Phi_{\bm{S}}^{-1}(\bm{V})\|_{F} \leq \tau r(\bm{V}),
  \end{equation}
  where
  \begin{align}
    r(\bm{V}):=
    \frac{2\sqrt{1+\|\dbra{\bm{V}}_{21}\|_{2}^{2}}}{1+\sigma_{\min}^{2}(\dbra{\bm{V}}_{21})}.\label{eq:mobility_rate}
  \end{align}
  We call
  $r:Q_{N,p}(\bm{S})\to \mathbb{R}$
  the mobility of
  $\Phi_{\bm{S}}^{-1}$,
  which is bounded as
  \begin{equation}
      r(\bm{V}) \geq 2(1+\|\dbra{\bm{V}}_{21}\|_{2}^{2})^{-1/2}, \label{eq:bound_ratio}
  \end{equation}
  where the equality holds when
  $\sigma_{\min}(\dbra{\bm{V}}_{21}) = \sigma_{\max}(\dbra{\bm{V}}_{21})(=\|\dbra{\bm{V}}_{21}\|_{2})$.
\end{proposition}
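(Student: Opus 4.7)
The plan is to prove~\eqref{eq:mobility} directly for every $\tau>0$ by combining an exact finite-difference identity for $\Phi_{\bm{S}}^{-1}$ with three spectral estimates, and then to derive~\eqref{eq:bound_ratio} by elementary algebra. Since $Q_{N,p}(\bm{S})$ is a vector space, $\bm{V}+\tau\bm{\mathcal{E}}\in Q_{N,p}(\bm{S})$, and skew-symmetry of elements of $Q_{N,p}(\bm{S})\subset Q_{N,N}$ guarantees that $\bm{I}+\bm{V}+\tau\bm{\mathcal{E}}$ is invertible. Starting from $\Phi_{\bm{S}}^{-1}(\bm{V}) = 2\bm{S}(\bm{I}+\bm{V})^{-1}\bm{I}_{N\times p}-\bm{S}_{\rm le}$ in~\eqref{eq:Cayley_inv_alt} and applying the exact matrix identity $\bm{A}^{-1}-\bm{B}^{-1} = -\bm{A}^{-1}(\bm{A}-\bm{B})\bm{B}^{-1}$ with $\bm{A} = \bm{I}+\bm{V}+\tau\bm{\mathcal{E}}$ and $\bm{B} = \bm{I}+\bm{V}$, I obtain
\[
\Phi_{\bm{S}}^{-1}(\bm{V}+\tau\bm{\mathcal{E}}) - \Phi_{\bm{S}}^{-1}(\bm{V}) = -2\tau\bm{S}(\bm{I}+\bm{V}+\tau\bm{\mathcal{E}})^{-1}\bm{\mathcal{E}}(\bm{I}+\bm{V})^{-1}\bm{I}_{N\times p}.
\]
Taking Frobenius norms, using orthogonality of $\bm{S}$, and applying $\|\bm{A}\bm{B}\bm{C}\|_F\leq\|\bm{A}\|_2\|\bm{B}\|_F\|\bm{C}\|_2$ with $\bm{B}=\bm{\mathcal{E}}$ reduces the task to estimating $\|(\bm{I}+\bm{V}+\tau\bm{\mathcal{E}})^{-1}\|_2$ and $\|(\bm{I}+\bm{V})^{-1}\bm{I}_{N\times p}\|_2$.

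The first factor is bounded by one via skew-symmetry: eigenvalues of any element of $Q_{N,N}$ are purely imaginary, so every singular value of $\bm{I}+\bm{V}+\tau\bm{\mathcal{E}}$ is at least one. For the second, Proposition~\ref{proposition:inverse} supplies the block identity $(\bm{I}+\bm{V})^{-1}\bm{I}_{N\times p} = \bm{J}\bm{M}^{-1}$ with $\bm{J}:=\begin{bmatrix}\bm{I}_p\\-\dbra{\bm{V}}_{21}\end{bmatrix}$, so submultiplicativity yields
\[
\|(\bm{I}+\bm{V})^{-1}\bm{I}_{N\times p}\|_2 \leq \|\bm{J}\|_2\,\|\bm{M}^{-1}\|_2 = \sqrt{1+\|\dbra{\bm{V}}_{21}\|_2^2}\,\|\bm{M}^{-1}\|_2,
\]
since $\|\bm{J}\|_2^2 = \lambda_{\max}(\bm{I}_p+\dbra{\bm{V}}_{21}^{\T}\dbra{\bm{V}}_{21}) = 1+\|\dbra{\bm{V}}_{21}\|_2^2$. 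To estimate $\|\bm{M}^{-1}\|_2$, I write $\bm{M} = \bm{P}+\dbra{\bm{V}}_{11}$ with $\bm{P}:=\bm{I}_p+\dbra{\bm{V}}_{21}^{\T}\dbra{\bm{V}}_{21}$ symmetric positive definite satisfying $\bm{P}\succeq(1+\sigma_{\min}^2(\dbra{\bm{V}}_{21}))\bm{I}_p$ and $\dbra{\bm{V}}_{11}$ skew-symmetric, so that the skew part vanishes in the quadratic form and $\bm{x}^{\T}\bm{M}\bm{x} = \bm{x}^{\T}\bm{P}\bm{x}\geq(1+\sigma_{\min}^2(\dbra{\bm{V}}_{21}))\|\bm{x}\|^2$ for every $\bm{x}\in\mathbb{R}^p$; Cauchy--Schwarz $\bm{x}^{\T}\bm{M}\bm{x}\leq\|\bm{x}\|\,\|\bm{M}\bm{x}\|$ then forces $\sigma_{\min}(\bm{M})\geq 1+\sigma_{\min}^2(\dbra{\bm{V}}_{21})$, whence $\|\bm{M}^{-1}\|_2\leq 1/(1+\sigma_{\min}^2(\dbra{\bm{V}}_{21}))$. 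Multiplying the three bounds by the prefactor $2\tau$ together with $\|\bm{\mathcal{E}}\|_F=1$ produces exactly $\tau r(\bm{V})$, which gives~\eqref{eq:mobility}.

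For~\eqref{eq:bound_ratio}, $\sigma_{\min}(\dbra{\bm{V}}_{21})\leq\sigma_{\max}(\dbra{\bm{V}}_{21}) = \|\dbra{\bm{V}}_{21}\|_2$ implies $1+\sigma_{\min}^2(\dbra{\bm{V}}_{21})\leq 1+\|\dbra{\bm{V}}_{21}\|_2^2$, and substitution into~\eqref{eq:mobility_rate} yields the claimed inequality with equality iff $\sigma_{\min}(\dbra{\bm{V}}_{21}) = \sigma_{\max}(\dbra{\bm{V}}_{21})$. I expect the main subtle point to be resisting the tempting simplification of splitting $\bm{I}_{N\times p}$ off the right factor: replacing $\|(\bm{I}+\bm{V})^{-1}\bm{I}_{N\times p}\|_2$ by $\|(\bm{I}+\bm{V})^{-1}\|_2\cdot\|\bm{I}_{N\times p}\|_2\leq 1$ would discard the factor $\sqrt{1+\|\dbra{\bm{V}}_{21}\|_2^2}/(1+\sigma_{\min}^2(\dbra{\bm{V}}_{21}))$ and yield only the looser bound $2\tau$, which fails to establish $\leq\tau r(\bm{V})$ whenever $r(\bm{V})<2$.
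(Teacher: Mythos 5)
Your proof is correct and follows essentially the same route as the paper's: the exact resolvent identity giving $-2\tau\bm{S}(\bm{I}+\bm{V}+\tau\bm{\mathcal{E}})^{-1}\bm{\mathcal{E}}(\bm{I}+\bm{V})^{-1}\bm{I}_{N\times p}$, the split $\|(\bm{I}+\bm{V}+\tau\bm{\mathcal{E}})^{-1}\|_{2}\leq 1$ together with $(\bm{I}+\bm{V})^{-1}\bm{I}_{N\times p}=\bm{J}\bm{M}^{-1}$, and the same two spectral estimates. The only (valid) variation is that you bound $\|\bm{M}^{-1}\|_{2}$ by a coercivity/Cauchy--Schwarz argument on the quadratic form, whereas the paper conjugates $\bm{M}$ by $(\bm{I}_{p}+\bm{\Sigma})^{1/2}$ and invokes the skew-symmetric inverse bound; both yield $\|\bm{M}^{-1}\|_{2}\leq(1+\sigma_{\min}^{2}(\dbra{\bm{V}}_{21}))^{-1}$.
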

\begin{proof}
  See Appendix~\ref{appendix:mobility}.
\end{proof}

To interpret the result in Proposition~\ref{proposition:mobility}, we consider two simple examples.
Under the condition
$\sigma_{\min}(\dbra{\bm{V}}_{21}) = \sigma_{\max}(\dbra{\bm{V}}_{21})(=\|\dbra{\bm{V}}_{21}\|_{2})$,
we observe from~\eqref{eq:bound_ratio} that the mobility
$r(\bm{V})$
becomes small when
$\|\dbra{\bm{V}}_{21}\|_{2}$
increases.
On the other hand, because
$r(\bm{V}) = 2$
is achieved by
$\|\dbra{\bm{V}}_{21}\|_{2} = 0$
from~\eqref{eq:mobility_rate},
$\dbra{\bm{V}}_{21}$
around zero does not lead small
$r(\bm{V})$.

These tendencies can be observed numerically in Figure~\ref{fig:mobility}, where the plot shows the norm
$\|\dbra{\bm{V}}_{21}\|_{2}$
on the horizontal axis versus the values
$\|\Phi_{\bm{S}}^{-1}(\bm{V}+\bm{\mathcal{E}}) - \Phi_{\bm{S}}^{-1}(\bm{V})\|_{F}$
and
$r(\bm{V})$,
with randomly chosen
$\bm{V},\bm{\mathcal{E}} \in Q_{N,p}(\bm{S})$
satisfying
$\|\bm{\mathcal{E}}\|_{F} = 1$,
on the vertical axis
for each $N\in \{500,1000,2000\}$
and
$p=10$.
From this figure, we observe that the mobility
$r(\bm{V})$
decreases and
$\Phi_{\bm{S}}^{-1}$
becomes insensitive as
$\|\dbra{\bm{V}}_{21}\|_{2}$
increases.

This insensitivity of
$\Phi_{\bm{S}}^{-1}$,
at distant points from zero, causes a risk of slow convergence of Algorithm~\ref{alg:proposed} even if the current estimate
$\bm{V}_{n} \in Q_{N,p}(\bm{S})$
is not sufficiently close to a solution
$\bm{V}^{\star}\in Q_{N,p}(\bm{S})$
of Problem~\ref{problem:CP_St} or Problem~\ref{problem:CP_grad}.
Since Theorem~\ref{theorem:dense}~\ref{enum:characterize_singular} implies that
$\|\Phi_{\bm{S}}(\bm{U})\|_{2}$
increases as
$\bm{U}\in \St(p,N) \setminus E_{N,p}(\bm{S})$
approaches
$E_{N,p}(\bm{S})$,
the risk of the slow convergence, say {\it a singular-point issue}, can arise in a case where a global minimizer
$\bm{U}^{\star} \in \St(p,N)$
stays around
$E_{N,p}(\bm{S})$.
In Section~\ref{sec:numerical_singular}, we will see that the numerical performance of Algorithm~\ref{alg:proposed} employing the gradient descent-type  method tends to deteriorate as
$\bm{U}^{\star}$
approaches
$E_{N,p}(\bm{S})$.

To remedy the singular-point issue in Algorithm~\ref{alg:proposed}, it is recommendable to use
$\bm{S}$
such that
$\Phi_{\bm{S}}(\bm{U}^{\star})$
is close to zero in
$Q_{N,p}(\bm{S})$.
Although we can not determine for a given
$\bm{S}$
whether
$\Phi_{\bm{S}}(\bm{U}^{\star})$
is close to zero or not
in advance of minimization for general
$f$,
Theorem~\ref{theorem:range_minimizer} guarantees,
under the right orthogonal invariance of
$f$, 
the existence of a global minimizer
$\bm{U}^{\star}$
satisfying
$\|\dbra{\Phi_{\bm{S}}(\bm{U}^{\star})}_{21}\|_{2} \leq 1$
for every
$\bm{S} \in {\rm O}(N)$.
In this case, by
$r(\Phi_{\bm{S}}(\bm{U}^{\star})) \geq \sqrt{2}$
in~\eqref{eq:bound_ratio}
and the continuity of
$r$,
the mobility
$r$
of
$\Phi_{\bm{S}}^{-1}$
can be maintained in a neighborhood of
$\Phi_{\bm{S}}(\bm{U}^{\star})$
to which a point sequence
$(\bm{V}_{n})_{n=0}^{\infty}$
generated by Algorithm~\ref{alg:proposed} is desired to approach.
Therefore, we do not need to be nervous about the influence by the singular-point set around
$\Phi_{\bm{S}}(\bm{U}^{\star})$.

For general
$f$,
to remedy the singular-point issue, we reported shortly in~\cite{Kume-Yamada19,Kume-Yamada20} that this issue can be avoided by a Cayley parametrization-type strategy, for Problem~\ref{problem:ALCP} below, by updating not only
$\bm{V}_{n} \in Q_{N,p}$
but also a preferable center point
$\bm{S}_{n} \in {\rm O}(N)$
strategically.
Due to the space consuming discussion, we will present its fully detailed discussion in another occasion.
\begin{problem} \label{problem:ALCP}
  For a given continuous function
  $f:\mathbb{R}^{N\times p}\to \mathbb{R}$,
  choose
  $\epsilon > 0$
  arbitrarily.
  Then,
  \begin{align}\label{eq:problem_alcp}
    \textrm{find} \ (\bm{V}^{\star},\bm{S}^{\star}) \in  Q_{N,p} \times {\rm O}(N) \ \textrm{such that} \  f\circ\Phi_{\bm{S}^{\star}}^{-1}(\bm{V}^{\star}) < \min f(\St(p,N)) + \epsilon.
  \end{align}
\end{problem}

\begin{figure}[t]
  \begin{center}
    \includegraphics[width=0.8\linewidth]{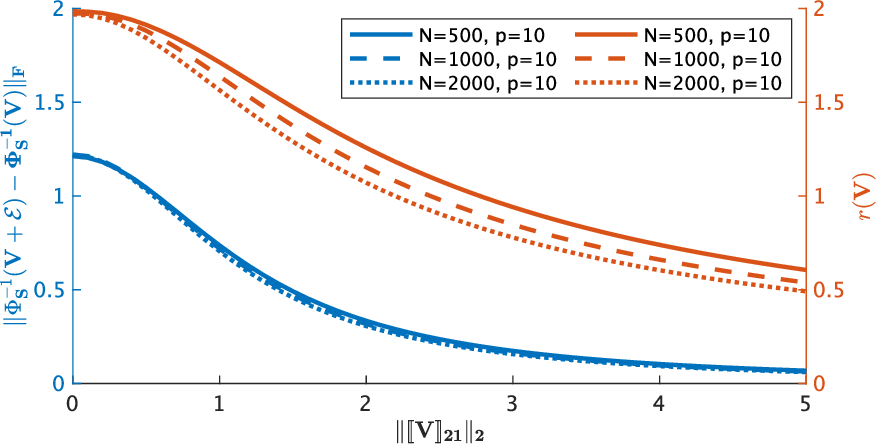}
    \caption{The average values of the change $\|\Phi_{\bm{S}}^{-1}(\bm{V}+\bm{\mathcal{E}})-\Phi_{\bm{S}}^{-1}(\bm{V})\|_{F}$ and the mobility $r(\bm{V})$ for each $\|\dbra{\bm{V}}_{21}\|_{2}$ over $10$ trials in the case
    $N = \{500, 1000, 2000\}$
    and
    $p = 10$.
    In each trial, we generate
    $\bm{\widetilde{V}},\bm{\widetilde{\mathcal{E}}}\in \mathbb{R}^{N\times N}$
    of which each entry is uniformly chosen from
    $[-0.5,0.5]$
    except for the $(N-p)$-by-$(N-p)$ right lower block matrix.
    Then, with
    $\bm{\mathcal{E}}:=\Skew(\bm{\widetilde{\mathcal{E}}})/\|\Skew(\bm{\widetilde{\mathcal{E}}})\|_{F} \in Q_{N,p}$
    satisfying
    $\|\bm{\mathcal{E}}\|_{F} = 1$,
    we evaluate
    $\|\Phi_{\bm{S}}^{-1}(\bm{V}+\bm{\mathcal{E}})-\Phi_{\bm{S}}^{-1}(\bm{V})\|_{F}$
    and
    $r(\bm{V})$
    at
    $\bm{V} \in Q_{N,p}$
    with
    $\dbra{\bm{V}}_{11} = \dbra{\Skew(\bm{\widetilde{V}})}_{11}$
    and
    $\dbra{\bm{V}}_{21} = c\dbra{\Skew(\bm{\widetilde{V}})}_{21}$
    by changing
    $c \in [0, 5/\|\dbra{\Skew(\bm{\widetilde{V}})}_{21}\|_{2}]$.
  }
    \label{fig:mobility}
  \end{center}
\end{figure}

\subsection[Relation between the Cayley transform-based retraction and the inversion of G-L$^2$CT]{Relation between the Cayley transform-based retraction and $\Phi_{\bm{S}}^{-1}$} \label{sec:Cayley_retraction}
The proposed
$\Phi_{\bm{S}}^{-1}$
can be regarded as another form of {\it the Cayley transform-based retraction} for
$\St(p,N)$.
By using the inversion
$\varphi^{-1}$
in~\eqref{eq:inv_origin_Cayley}, the Cayley transform-based retraction
$R^{\rm Cay}:T\St(p,N) \to \St(p,N):(\bm{U}, \bm{\mathcal{V}})\mapsto R_{\bm{U}}^{\rm Cay}(\bm{\mathcal{V}})$
was introduced explicitly in~\cite{W13,Z17}, where the tangent bundle
$T\St(p,N)=\{\{\bm{U}\}\times T_{\bm{U}}\St(p,N)\mid \bm{U} \in \St(p,N)\}$
is defined with the tangent space
$T_{\bm{U}}\St(p,N)$
to
$\St(p,N)$
at
$\bm{U} \in \St(p,N)$
(see Fact~\ref{fact:stiefel}~(d)).
For
$\bm{U} \in \St(p,N)$,
$R_{\bm{U}}^{\rm Cay}$
can be expressed with
$\bm{P}_{\bm{U}}:=\bm{I}-\bm{U}\bm{U}^{\T}/2 \in \mathbb{R}^{N\times N}$
as
\begin{equation} \label{eq:Cayley_retraction}
  R_{\bm{U}}^{\rm Cay}:T_{\bm{U}}\St(p,N)\to \St(p,N):\bm{\mathcal{V}}\mapsto \varphi^{-1}(\Skew(\bm{U}\bm{\mathcal{V}}^{\T}\bm{P}_{\bm{U}}))\bm{U}.
\end{equation}
By passing through the linear mapping
\begin{equation}
  \Psi_{[\bm{U}\ \bm{U}_{\perp}]}:T_{\bm{U}}\St(p,N) \to Q_{N,p}([\bm{U}\ \bm{U}_{\perp}]):\bm{\mathcal{V}}\mapsto
        -\frac{1}{2}\begin{bmatrix} \bm{U}^{\T}\bm{\mathcal{V}} & - (\bm{U}_{\perp}^{\T}\bm{\mathcal{V}})^{\T} \\ \bm{U}_{\perp}^{\T}\bm{\mathcal{V}} & \bm{0}\end{bmatrix} \label{eq:tangent_to_Q},
\end{equation}
with
$\bm{U}_{\perp} \in \St(N-p,N)$
satisfying
$\bm{U}^{\T}\bm{U}_{\perp} = \bm{0}$,
we have the following relation
\begin{equation}
  (\bm{\mathcal{V}} \in T_{\bm{U}}\St(p,N)) \quad \Phi_{[\bm{U}\ \bm{U}_{\perp}]}^{-1} \circ \Psi_{[\bm{U}\ \bm{U}_{\perp}]}(\bm{\mathcal{V}}) = R_{\bm{U}}^{\rm Cay}(\bm{\mathcal{V}}). \label{eq:Phi_retraction}
\end{equation}
This relation can be verified specially with
$\bm{S}:= [\bm{U}\ \bm{U}_{\perp}] \in {\rm O}(N)$
by
\begin{align}
  & (\bm{V}\in Q_{N,p}(\bm{S})) \quad
  \Phi_{\bm{S}}^{-1}(\bm{V})
  = \bm{S}(\bm{I}-\bm{V})(\bm{I}+\bm{V})^{-1}\bm{I}_{N\times p} \\
  & = (\bm{I}-\bm{S}\bm{V}\bm{S}^{-1})(\bm{I}+\bm{S}\bm{V}\bm{S}^{-1})^{-1}\bm{S}\bm{I}_{N\times p}
  = (\bm{I}-\bm{S}\bm{V}\bm{S}^{\T})(\bm{I}+\bm{S}\bm{V}\bm{S}^{\T})^{-1}\bm{U}
  = \varphi^{-1}(\bm{S}\bm{V}\bm{S}^{\T})\bm{U}
\end{align}
and
\begin{align}
  & (\bm{\mathcal{V}} \in T_{\bm{U}}\St(p,N)) \quad
  \bm{S}\Psi_{\bm{S}}(\bm{\mathcal{V}})\bm{S}^{\T}
  = -\frac{1}{2}
  \begin{bmatrix} \bm{U} & \bm{U}_{\perp} \end{bmatrix}
  \begin{bmatrix} \bm{U}^{\T}\bm{\mathcal{V}} & -\bm{\mathcal{V}}^{\T}\bm{U}_{\perp} \\ \bm{U}_{\perp}^{\T}\bm{\mathcal{V}} & \bm{0} \end{bmatrix}
  \begin{bmatrix} \bm{U}^{\T} \\ \bm{U}_{\perp}^{\T} \end{bmatrix} \\
  & = -\frac{1}{2}(\bm{U}\bm{U}^{\T}\bm{\mathcal{V}}\bm{U}^{\T} + \bm{U}_{\perp}\bm{U}_{\perp}^{\T}\bm{\mathcal{V}}\bm{U}^{\T} -\bm{U}\bm{\mathcal{V}}^{\T}\bm{U}_{\perp}\bm{U}_{\perp}^{\T})  \\
  & = -\frac{1}{2}\left(\bm{U}\bm{U}^{\T}\bm{\mathcal{V}}\bm{U}^{\T} + (\bm{I} - \bm{U}\bm{U}^{\T})\bm{\mathcal{V}}\bm{U}^{\T} -\bm{U}\bm{\mathcal{V}}^{\T}(\bm{I} - \bm{U}\bm{U}^{\T})\right) \quad (\because \bm{U}\bm{U}^{\T} + \bm{U}_{\perp}\bm{U}_{\perp}^{\T} = \bm{I}) \\
  & = \frac{1}{2}(\bm{U}\bm{\mathcal{V}}^{\T}-\bm{\mathcal{V}}\bm{U}^{\T} -  \bm{U}\bm{\mathcal{V}}^{\T}\bm{U}\bm{U}^{\T}) \\
  & = \frac{1}{2}\left(\bm{U}\bm{\mathcal{V}}^{\T}-\bm{\mathcal{V}}\bm{U}^{\T} -  \frac{1}{2}\bm{U}(\bm{\mathcal{V}}^{\T}\bm{U}-\bm{U}^{\T}\bm{\mathcal{V}})\bm{U}^{\T}\right) \quad (\because \bm{\mathcal{V}}\in T_{\bm{U}}\St(p,N) \Leftrightarrow \bm{U}^{\T}\bm{\mathcal{V}} + \bm{\mathcal{V}}^{\T}\bm{U} = \bm{0}) \\
  & = \Skew\left(\bm{U}\bm{\mathcal{V}}^{\T} - \frac{1}{2}\bm{U}\bm{\mathcal{V}}^{\T}\bm{U}\bm{U}^{\T}\right)
  = \Skew\left(\bm{U}\bm{\mathcal{V}}^{\T}\left(\bm{I}-\frac{1}{2}\bm{U}\bm{U}^{\T}\right)\right) 
  = \Skew(\bm{U}\bm{\mathcal{V}}^{\T}\bm{P}_{\bm{U}}).
\end{align}

Through the relation in~\eqref{eq:Phi_retraction}, we obtain a diffeomorphic property of
$R_{\bm{U}}^{\rm Cay}$
in the following.
The linear mapping
$\Psi_{\bm{S}}$
is a bijection between
$T_{\bm{U}}\St(p,N)$
and
$Q_{N,p}(\bm{S})$
with its inversion mapping
$\Psi_{\bm{S}}^{-1}:Q_{N,p}(\bm{S})\to T_{\bm{U}}\St(p,N):\bm{V}\mapsto -2\bm{S}\bm{V}\bm{I}_{N\times p}$.
From
$\Psi_{\bm{S}}(T_{\bm{U}}\St(p,N)) = Q_{N,p}(\bm{S})$,~\eqref{eq:Phi_retraction} and Proposition~\ref{proposition:inverse},
$R_{\bm{U}}^{\rm Cay}$
is a diffeomorphic between
$T_{\bm{U}}\St(p,N)$
and a subset
$\St(p,N)\setminus E_{N,p}(\bm{S})$
of
$\St(p,N)$.
Clearly, the inversion mapping of
$R_{\bm{U}}^{\rm Cay}$
is given by
$R_{\bm{U}}^{{\rm Cay}^{-1}}:\St(p,N)\setminus E_{N,p}(\bm{S}) \to T_{\bm{U}}\St(p,N):\bm{\mathfrak{U}}\mapsto \Psi_{\bm{S}}^{-1} \circ \Phi_{\bm{S}}(\bm{\mathfrak{U}})$.

We present an explicit formula for
$R_{\bm{U}}^{{\rm Cay}^{-1}}$.
From Definition~\ref{definition:Cayley}, we have
\begin{align}
  & (\bm{\mathfrak{U}}\in \St(p,N)\setminus E_{N,p}(\bm{S})) \quad
  R_{\bm{U}}^{{\rm Cay}^{-1}}(\bm{\mathfrak{U}})
  = -2\bm{S}\begin{bmatrix} \bm{A}_{\bm{S}}(\bm{\mathfrak{U}}) & -\bm{B}_{\bm{S}}(\bm{\mathfrak{U}}) \\
  \bm{B}_{\bm{S}}(\bm{\mathfrak{U}}) & \bm{0} \end{bmatrix} \bm{I}_{N\times p} \\
  & = -2\begin{bmatrix}\bm{U} & \bm{U}_{\perp} \end{bmatrix}
  \begin{bmatrix} \bm{A}_{\bm{S}}(\bm{\mathfrak{U}}) \\ \bm{B}_{\bm{S}}(\bm{\mathfrak{U}}) \end{bmatrix}
  = -2\bm{U}\bm{A}_{\bm{S}}(\bm{\mathfrak{U}}) -2\bm{U}_{\perp}\bm{B}_{\bm{S}}(\bm{\mathfrak{U}}).\label{eq:Cayley_retraction_inv_calc}
\end{align}
From~\eqref{eq:Cay_A} and~\eqref{eq:Cay_B}, each term in~\eqref{eq:Cayley_retraction_inv_calc} is evaluated as
\begin{align}
  -2\bm{U}\bm{A}_{\bm{S}}(\bm{\mathfrak{U}})
  & = -4\bm{U}(\bm{I}_{p}+\bm{\mathfrak{U}}^{\T}\bm{U})^{-1}\Skew(\bm{\mathfrak{U}}^{\T}\bm{U})(\bm{I}_{p}+\bm{U}^{\T}\bm{\mathfrak{U}})^{-1} \\
  & = 2\bm{U}(\bm{I}_{p}+\bm{\mathfrak{U}}^{\T}\bm{U})^{-1}\left((\bm{I}_{p}+\bm{U}^{\T}\bm{\mathfrak{U}}) - (\bm{I}_{p}+\bm{\mathfrak{U}}^{\T}\bm{U})\right)(\bm{I}_{p}+\bm{U}^{\T}\bm{\mathfrak{U}})^{-1} \\
  & = 2\bm{U}(\bm{I}_{p}+\bm{\mathfrak{U}}^{\T}\bm{U})^{-1} - 2\bm{U}(\bm{I}_{p} + \bm{U}^{\T}\bm{\mathfrak{U}})^{-1}, \\
  -2\bm{U}_{\perp}\bm{B}_{\bm{S}}(\bm{\mathfrak{U}})
  & = 2\bm{U}_{\perp}\bm{U}_{\perp}^{\T}\bm{\mathfrak{U}}(\bm{I}_{p} + \bm{U}^{\T}\bm{\mathfrak{U}})^{-1}
  = 2(\bm{I}-\bm{U}\bm{U}^{\T})\bm{\mathfrak{U}}(\bm{I}_{p} + \bm{U}^{\T}\bm{\mathfrak{U}})^{-1}.
\end{align}
By substituting these equalities into~\eqref{eq:Cayley_retraction_inv_calc}, we have
\begin{align}
  & R_{\bm{U}}^{{\rm Cay}^{-1}}(\bm{\mathfrak{U}})
  = 2\bm{U}(\bm{I}_{p}+\bm{\mathfrak{U}}^{\T}\bm{U})^{-1} - 2\bm{U}(\bm{I}_{p} + \bm{U}^{\T}\bm{\mathfrak{U}})^{-1}
  +2(\bm{I}-\bm{U}\bm{U}^{\T})\bm{\mathfrak{U}}(\bm{I}_{p} + \bm{U}^{\T}\bm{\mathfrak{U}})^{-1} \\
  & = 2\bm{U}(\bm{I}_{p}+\bm{\mathfrak{U}}^{\T}\bm{U})^{-1} + 2\bm{\mathfrak{U}}(\bm{I}_{p} + \bm{U}^{\T}\bm{\mathfrak{U}})^{-1} - 2\bm{U}(\bm{I}_{p} + \bm{U}^{\T}\bm{\mathfrak{U}})(\bm{I}_{p}+\bm{U}^{\T}\bm{\mathfrak{U}})^{-1} \\
  & = 2\bm{U}(\bm{I}_{p}+\bm{\mathfrak{U}}^{\T}\bm{U})^{-1} + 2\bm{\mathfrak{U}}(\bm{I}_{p} + \bm{U}^{\T}\bm{\mathfrak{U}})^{-1} - 2\bm{U}.\label{eq:Cayley_retraction_inv}
\end{align}

Although the expression~\eqref{eq:Cayley_retraction_inv} of
$R_{\bm{U}}^{{\rm Cay}^{-1}}$
has been given by~\cite{S19,Zhu-Sato20}, our discussion via~\eqref{eq:Phi_retraction} presents much more comprehensive information about
$R_{\bm{U}}^{\rm Cay}$.
In~\cite{S19,Zhu-Sato20}, it has been reported that a certain restriction of
$R_{\bm{U}}^{\rm Cay}$
to a sufficiently small open neighborhood of
$\bm{0} \in T_{\bm{U}}\St(p,N)$
is invertible with
$R_{\bm{U}}^{{\rm Cay}^{-1}}$.
Meanwhile, we clarify that
$R_{\bm{U}}^{\rm Cay}$
is invertible on
$T_{\bm{U}}\St(p,N)$
entirely by passing through
$\Phi_{\bm{S}}^{-1}$.
The following proposition summarizes the above discussion.
\begin{proposition}\label{proposition:Cayley_retraction}
  For
  $\bm{U} \in \St(p,N)$,
  let
  $\bm{U}_{\perp} \in \St(N-p,N)$
  satisfy
  $\bm{U}^{\T}\bm{U}_{\perp} = \bm{0}$.
  Then, the Cayley transform-based retraction
  $R_{\bm{U}}^{\rm Cay}$
  in~\eqref{eq:Cayley_retraction}~\cite{W13,Z17} is diffeomorphic between
  $T_{\bm{U}}\St(p,N)$
  and
  $\St(p,N) \setminus E_{N,p}(\bm{S})$,
  and its inversion mapping
  $R_{\bm{U}}^{{\rm Cay}^{-1}}$
  is given by~\eqref{eq:Cayley_retraction_inv},
  where
  $\bm{S}:=\begin{bmatrix} \bm{U}&\bm{U}_{\perp} \end{bmatrix}$.
  In addition, for
  $p < N$,
  the image
  $\St(p,N)\setminus E_{N,p}(\bm{S})$
  of
  $R_{\bm{U}}^{\rm Cay}$
  is an open dense subset of
  $\St(p,N)$
  (see Theorem~\ref{theorem:dense}~\ref{enum:dense}).
\end{proposition}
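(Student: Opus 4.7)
The plan is to reduce every claim to properties of the decomposition $R_{\bm{U}}^{\rm Cay} = \Phi_{\bm{S}}^{-1}\circ \Psi_{\bm{S}}$ that has already been verified in the discussion leading up to the proposition statement (see~\eqref{eq:Phi_retraction}). First I would establish that the linear map $\Psi_{\bm{S}}:T_{\bm{U}}\St(p,N)\to Q_{N,p}(\bm{S})$ defined in~\eqref{eq:tangent_to_Q} is a bijection with explicit inverse $\bm{V}\mapsto -2\bm{S}\bm{V}\bm{I}_{N\times p}$. The bijectivity follows structurally, because $\bm{\mathcal{V}}\in T_{\bm{U}}\St(p,N)$ holds if and only if $\bm{U}^{\T}\bm{\mathcal{V}}$ is skew-symmetric while $\bm{U}_{\perp}^{\T}\bm{\mathcal{V}}$ ranges freely in $\mathbb{R}^{(N-p)\times p}$, which matches precisely the block structure of $Q_{N,p}(\bm{S})$; a short computation using $\bm{S}\bm{S}^{\T} = \bm{I}$ then verifies the inversion formula.

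Next I would combine this with Proposition~\ref{proposition:inverse}, which guarantees that $\Phi_{\bm{S}}^{-1}$ is a diffeomorphism between $Q_{N,p}(\bm{S})$ and $\St(p,N)\setminus E_{N,p}(\bm{S})$. Since a linear bijection between finite-dimensional vector spaces is smooth with smooth inverse, the composition $\Phi_{\bm{S}}^{-1}\circ\Psi_{\bm{S}}$ is a diffeomorphism between $T_{\bm{U}}\St(p,N)$ and $\St(p,N)\setminus E_{N,p}(\bm{S})$. Identity~\eqref{eq:Phi_retraction} identifies this composition with $R_{\bm{U}}^{\rm Cay}$, so the first assertion of the proposition is settled at once.

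For the explicit inverse formula, I would invoke $R_{\bm{U}}^{{\rm Cay}^{-1}} = \Psi_{\bm{S}}^{-1}\circ \Phi_{\bm{S}}$ and substitute the expressions for $\bm{A}_{\bm{S}}$ and $\bm{B}_{\bm{S}}$ from~\eqref{eq:Cay_A}--\eqref{eq:Cay_B} at the special center $\bm{S} = [\bm{U}\ \bm{U}_{\perp}]$, where $\bm{S}_{\rm le}^{\T}\bm{\mathfrak{U}} = \bm{U}^{\T}\bm{\mathfrak{U}}$ and $\bm{S}_{\rm ri}^{\T}\bm{\mathfrak{U}} = \bm{U}_{\perp}^{\T}\bm{\mathfrak{U}}$. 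Algebraic manipulation — expanding $\Skew(\bm{\mathfrak{U}}^{\T}\bm{U}) = \tfrac{1}{2}[(\bm{I}_p+\bm{U}^{\T}\bm{\mathfrak{U}})-(\bm{I}_p+\bm{\mathfrak{U}}^{\T}\bm{U})]$, using $\bm{U}_{\perp}\bm{U}_{\perp}^{\T} = \bm{I}-\bm{U}\bm{U}^{\T}$, and collecting terms — reproduces~\eqref{eq:Cayley_retraction_inv} (these derivations have in fact already been carried out in the exposition preceding the proposition). The denseness claim for $p<N$ is then an immediate corollary of Theorem~\ref{theorem:dense}~\ref{enum:dense}.

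The main subtlety lies in verifying the bijectivity of $\Psi_{\bm{S}}$ and confirming the compatibility between the block-matrix convention defining $Q_{N,p}(\bm{S})$ and the skew-symmetry constraint characterizing $T_{\bm{U}}\St(p,N)$; once these are pinned down, the rest amounts to quoting Proposition~\ref{proposition:inverse} and Theorem~\ref{theorem:dense} together with the algebraic simplifications already laid out in the text.
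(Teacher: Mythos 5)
Your proposal is correct and follows essentially the same route as the paper: both factor $R_{\bm{U}}^{\rm Cay}$ as $\Phi_{\bm{S}}^{-1}\circ\Psi_{\bm{S}}$ via~\eqref{eq:Phi_retraction}, observe that $\Psi_{\bm{S}}$ is a linear bijection with inverse $\bm{V}\mapsto -2\bm{S}\bm{V}\bm{I}_{N\times p}$, invoke Proposition~\ref{proposition:inverse} for the diffeomorphism of $\Phi_{\bm{S}}^{-1}$, derive~\eqref{eq:Cayley_retraction_inv} by composing $\Psi_{\bm{S}}^{-1}\circ\Phi_{\bm{S}}$ and expanding~\eqref{eq:Cay_A}--\eqref{eq:Cay_B}, and cite Theorem~\ref{theorem:dense}~\ref{enum:dense} for the denseness claim. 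No gaps.
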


\begin{remark}[Minimization of $f\circ R_{\bm{U}}^{\rm Cay}$ with a fixed $\bm{U}$] \label{remark:parametrization_comparison}
  By using the Cayley transform-based retraction
  $R_{\bm{U}}^{\rm Cay}$,
  the Cayley parametrization strategy in Algorithm~\ref{alg:proposed} can be modified to the minimization of
  $f\circ R_{\bm{U}}^{\rm Cay}$
  with a fixed
  $\bm{U} \in \St(p,N)$
  over
  $T_{\bm{U}}\St(p,N)$.
  The explicit formula for the gradient of
  $f\circ R_{\bm{U}}^{\rm Cay}$
  is given in Appendix~\ref{appendix:gradient_retraction}.
  Compared to the minimization of
  $f\circ R_{\bm{U}}^{\rm Cay}$
  over
  $T_{\bm{U}}\St(p,N)$,
  advantages of the minimization of
  $f\circ\Phi_{\bm{S}}^{-1}$
  with
  $\bm{S} \in {\rm O}_{p}(N)$
  over
  $Q_{N,p}(\bm{S})$
  are as follows.
  \begin{enumerate}[label=(\alph*)]
    \item
      The complexity
      $2Np^{2} + \mathfrak{o}(p^{3})$
      flops of
      $\Phi_{\bm{S}}^{-1}$
      with
      $\bm{S} \in {\rm O}_{p}(N)$
      is more efficient than
      $6Np^{2} + \mathfrak{o}(p^{3})$
      flops of
      $R_{\bm{U}}^{\rm Cay}$
      (see Remark~\ref{remark:comparison_complexity}).
      In a case where we employ the gradient descent-type method for the minimization of
      $f\circ\Phi_{\bm{S}}^{-1}$
      and
      $f\circ R_{\bm{U}}^{\rm Cay}$,
      the difference of constant factor affects run time of algorithm in practice because
      $\Phi_{\bm{S}}^{-1}$
      and
      $R_{\bm{U}}^{\rm Cay}$
      are used to estimate a stepsize many times within a line-search algorithm, e.g., the backtracking algorithm (Algorithm~\ref{alg:backtracking}), in each iteration (see, e.g.,~\cite{numerical_optimization}).
    \item
      $R_{\bm{U}}^{\rm Cay}$
      has been exploited with the aid of the Sherman-Morrison-Woodbury formula (see Fact~\ref{fact:SMW}) to reduce the complexity for matrix inversion, which can induce the deterioration of the orthogonal feasibility due to the numerical instability of its formula~\cite{W13}.
      On the other hand,
      $\Phi_{\bm{S}}^{-1}$
      does not use the formula, and thus is numerically stabler than
      $R_{\bm{U}}^{\rm Cay}$.
      This will be demonstrated numerically in Section~\ref{sec:numerical}.
      Indeed, for
      $\bm{V} \in Q_{N,p}(\bm{S})$,
      the condition number
      $\kappa(\bm{M}) := \|\bm{M}\|_{2}\|\bm{M}^{-1}\|_{2}$
      of
      $\bm{M}:=\bm{I}_{p}+\dbra{\bm{V}}_{11} + \dbra{\bm{V}}_{21}^{\T}\dbra{\bm{V}}_{21}$
      in~\eqref{eq:Cayley_inv} is upper bounded by\footnote{
        Let
        $\bm{I}_{p}+\dbra{\bm{V}}_{21}^{\T}\dbra{\bm{V}}_{21} = \bm{Q}(\bm{I}_{p} + \bm{\Sigma})\bm{Q}^{\T}$
        be the eigenvalue decomposition with
        $\bm{Q} \in {\rm O}(N)$
        and a nonnegative-valued diagonal matrix
        $\bm{\Sigma} \in \mathbb{R}^{p\times p}$.
        From~\eqref{eq:mobility_proof_A} in Appendix~\ref{appendix:mobility}, we have
        $\|\bm{M}^{-1}\|_{2} \leq \|(\bm{I}_{p} + \bm{\Sigma})^{-1}\|_{F} = (1+\sigma_{\min}^{2}(\dbra{\bm{V}}_{21}))^{-1} \leq 1$.
        Thus, we have
        $\kappa(\bm{M}) \leq \|\bm{M}\|_{2} \leq 1+\|\dbra{\bm{V}}_{11}\|_{2} + \|\dbra{\bm{V}}_{21}\|_{2}^{2}$.
      }
      $1+\|\dbra{\bm{V}}_{11}\|_{2} + \|\dbra{\bm{V}}_{21}\|_{2}^{2}$,
      implying thus
      $\bm{M}$
      is hardly become ill-conditioned whenever
      $\|\bm{V}\|_{2}$
      is not very large
      (this is usual case, e.g., in application of G-L$^{2}$CT for optimization of right orthogonal invariant functions [see Theorem~\ref{theorem:range_minimizer}]).
  \end{enumerate}
\end{remark}

\section{Numerical experiments}\label{sec:numerical}
We illustrate the performance of the proposed CP strategy in Algorithm~\ref{alg:proposed} by numerical experiments.
To demonstrate the effectiveness of the proposed formulation in Problem~\ref{problem:CP_St} in a simple situation, we implemented Algorithm~\ref{alg:proposed} with a gradient descent-type update scheme
$\mathcal{A}^{\rm GDM}:Q_{N,p}(\bm{S})\to Q_{N,p}(\bm{S}):\bm{V} \mapsto \bm{V} - \gamma \nabla f_{\bm{{S}}}(\bm{V})$
in MATLAB, where
$f_{\bm{S}} := f\circ\Phi_{\bm{S}}^{-1}$.
In
$\mathcal{A}^{\rm GDM}$
for a given
$\bm{V} \in Q_{N,p}(\bm{S})$,
we use a stepsize
$\gamma > 0$,
satisfying the so-called Armijo rule, generated by the backtracking algorithm (see, e.g.,~\cite{numerical_optimization}) with predetermined
$\gamma_{\rm initial} > 0$
and
$\rho, c \in (0,1)$
(see Algorithm~\ref{alg:backtracking}).
Armijo rule has been utilized to design a stepsize for decreasing the function value sufficiently in numerical optimization.
All the experiments were performed on MacBook Pro (13-inch, 2017) with Intel Core i5-7360U and 16GB of RAM.
\begin{algorithm}[t]
\caption{Backtracking algorithm}
\label{alg4}
  \begin{algorithmic}\label{alg:backtracking}
\REQUIRE 
$c\in (0,1),\ \rho \in (0, 1),\ \gamma_{\rm initial} > 0, \ \bm{S} \in {\rm O}(N),\ \bm{V}\in Q_{N,p}(\bm{S})$
\STATE
  $\gamma \leftarrow \gamma_{\rm initial}$
\WHILE{$f_{\bm{S}}(\bm{V} - \gamma \nabla f_{\bm{S}}(\bm{V})) > f_{\bm{S}}(\bm{V}) - c\gamma \|\nabla f_{\bm{S}}(\bm{V})\|_{F}^{2} $}
  \STATE
  $\gamma \leftarrow \rho \gamma$
\ENDWHILE
\ENSURE
$\gamma$
\end{algorithmic}
\end{algorithm}

\subsection{Comparison to the retraction-based strategy} \label{sec:numerical_comparison}
We compared Algorithm~\ref{alg:proposed}+$\mathcal{A}^{\rm GDM}$ (abbreviated as GDM+CP)
and three {\it retraction-based strategies}~\cite{manifold_book} with the steepest descent solver implemented in Manopt~\cite{manopt} in the scenario of eigenbasis extraction problem below.
Since the Cayley transform-based retraction
$R^{\rm Cay}$
in~\eqref{eq:Cayley_retraction} can be utilized for a parametrization of a subset of
$\St(p,N)$ (see Section~\ref{sec:Cayley_retraction} and Proposition~\ref{proposition:Cayley_retraction}), to see differences in performance between
$\Phi_{\bm{S}}^{-1}$
and
$R_{\bm{U}}^{\rm Cay}$,
we also compared the proposed GDM+CP and its modified version with replacement of
$\Phi_{\bm{S}}^{-1}$
by
$R_{\bm{U}}^{\rm Cay}$
(abbreviated by GDM+CP-retraction) illustrated in
Algorithm~\ref{alg:proposed_retraction}+$\widehat{\mathcal{A}}^{\rm GDM}:\bm{\mathcal{V}}\mapsto \bm{\mathcal{V}} - \gamma \nabla (f\circ R_{\bm{U}}^{\rm Cay})(\bm{\mathcal{V}})$ for the minimization of
$f\circ R_{\bm{U}}^{\rm Cay}$
with a fixed
$\bm{U} \in \St(p,N)$
over
$T_{\bm{U}}\St(p,N)$.

\begin{problem}[Eigenbasis extraction problem (e.g.,~\cite{manifold_book,W13,Z17})]\label{problem:eigen}
  For a given symmetric matrix
  $\bm{A} \in \mathbb{R}^{N\times N}$,
  \begin{equation}
    {\rm find}\ \bm{U}^{\star} \in \argmin_{\bm{U}\in\St(p,N)} f(\bm{U})\left(:=-\trace(\bm{U}^{\T}\bm{A}\bm{U})\right). \label{eq:eigen}
  \end{equation}
  Any solution
  $\bm{U}^{\star}$
  of Problem~\ref{problem:eigen} is an orthonormal eigenbasis associated with the
  $p$
  largest eigenvalues of
  $\bm{A}$.
  In our experiment, we used
  $\bm{A}:=\widetilde{\bm{A}}^{\T} \widetilde{\bm{A}} \in \mathbb{R}^{N\times N}$
  with randomly chosen
  $\widetilde{\bm{A}} \in \mathbb{R}^{N\times N}$
  of which each entry is sampled by the standard normal distribution
  $\mathcal{N}(0,1)$.
  Note that
  $f$
  is right orthogonal invariant, and thus we can exploit Theorem~\ref{theorem:range_minimizer} for GDM+CP.
\end{problem}

\begin{algorithm}[t] 
  \caption{Cayley parametrization strategy with the Cayley transform-based retraction (Algorithm~\ref{alg:proposed_retraction}+$\mathcal{A}$)}
\begin{algorithmic}
  \label{alg:proposed_retraction}
\REQUIRE
$\bm{U},\bm{U}_0 \in \St(p,N)$,
$\mathcal{A}:T_{\bm{U}}\St(p,N)\to T_{\bm{U}}\St(p,N)$: update rule
\STATE
$\bm{\mathcal{V}}_0 =R_{\bm{U}}^{{\rm Cay}^{-1}}(\bm{U}_{0})$
\FOR{$n=0,1,2,\ldots,m-1$}
  \STATE
  $\bm{\mathcal{V}}_{n+1} = \mathcal{A}(\bm{\mathcal{V}}_{n})$
  \STATE
  $\bm{U}_{n+1} = R_{\bm{U}}^{\rm Cay}(\bm{\mathcal{V}}_{n+1})$
\ENDFOR
\ENSURE
$\bm{U}_{m}$
\end{algorithmic}
\end{algorithm}

For the retraction-based strategies, we employed three retractions: (i) Cayley transform-based (abbreviated by GDM+Cayley)~\cite{W13}; (ii) QR decomposition-based (abbreviated by GDM+QR)~\cite{manifold_book}; (iii) polar decomposition-based (abbreviated by GDM+polar)~\cite{manifold_book}.
In the steepest descent solver in Manopt, we calculated a stepsize for the current estimate
$\bm{U} \in \St(p,N)$
with Algorithm~\ref{alg:backtracking} after replacement of the criterion
$f_{\bm{S}}(\bm{V}-\gamma \nabla f_{\bm{S}}(\bm{V})) > f_{\bm{S}}(\bm{V}) - c\gamma \|\nabla f_{\bm{S}}(\bm{V})\|_{F}^{2}$
by
$f\circ R_{\bm{U}}(-\gamma \mathrm{grad}\ f(\bm{U})) > f(\bm{U}) - c\gamma\|\mathrm{grad}\ f(\bm{U})\|_{F}^{2}$
(see, e.g.,~\cite[Algorithm 3.1]{Sato21}),
where
$\mathrm{grad}\ f(\bm{U}) = \mathcal{P}_{T_{\bm{U}}\St(p,N)} (\nabla f(\bm{U})) \in T_{\bm{U}}\St(p,N)$
is the Riemannian gradient of
$f$
at
$\bm{U} \in \St(p,N)$
(for the projection mapping
$\mathcal{P}_{T_{\bm{U}}\St(p,N)}$,
see Fact~\ref{fact:stiefel}~\ref{enum:tangent}).

For an initial point
$\bm{U}_{0} \in \St(p,N)$,
we used a center point for GDM+CP as
$\bm{S} := \diag(\bm{Q}_{1}\bm{Q}_{2}^{\T},\bm{I}_{N-p}) \in {\rm O}_{p}(N)$
by using a singular value decomposition of
$[\bm{U}_{0}]_{\rm up} = \bm{Q}_{1}\bm{\Sigma}\bm{Q}_{2}^{\T}$
with
$\bm{Q}_{1},\bm{Q}_{2} \in {\rm O}(p)$
and a nonnegative-valued diagonal matrix
$\bm{\Sigma} \in \mathbb{R}^{p\times p}$ (see Theorem~\ref{theorem:construct_center}).
For GDM+CP-retraction, we used a fixed
$\bm{U}:= \bm{U}_{0}$
for the minimization of
$f\circ R_{\bm{U}}^{\rm Cay}$.
We note that the choice of
$\bm{U}:= \bm{U}_{0}$
is reasonable because the procedure of Algorithm~\ref{alg:proposed_retraction}($\bm{U}_{0},\bm{U}_{0},\widehat{A}^{\rm GDM}$), which tries to decrease
$f\circ R_{\bm{U}_{0}}^{\rm Cay}$
from the initial point
$R_{\bm{U}_{0}}^{{\rm Cay}^{-1}}(\bm{U}_{0}) = \bm{0} \in T_{\bm{U}_{0}}\St(p,N)$,
is the same as the procedure of GDM+Cayley in the first iteration.
The explicit formula for the gradient of
$f\circ R_{\bm{U}}^{\rm Cay}$
is given in Appendix~\ref{appendix:gradient_retraction}.

For five algorithms, we used the default parameters
$\rho = 0.5$
and
$c = 2^{-13}$
in Manopt.
We employed several initial stepsizes
$\gamma_{\rm initial} \in \{0.1,0.01,0.001\}$.
We generated an initial point
$\bm{U}_{0} \in \St(p,N)$
by using
"orth(rand(N,p))"
in MATLAB.

For each algorithm, we stopped the update at $n$th iteration when it achieved the following conditions (used in~\cite{Zhu-Sato20}) with
$\bm{D}_{n} := \nabla f_{\bm{S}}(\bm{V}_{n})$,
$\nabla (f\circ R_{\bm{U}_{0}}^{\rm Cay})(\bm{\mathcal{V}}_{n})$,
$\mathrm{grad}\ f(\bm{U}_{n})$:
\begin{equation}
  n \geq 5000\ {\rm or} \ 
  \frac{\|\bm{D}_{n}\|_{F}}{\|\bm{D}_{0}\|_{F}} \leq 10^{-10}\ 
  {\rm or}\ 
  \frac{|f(\bm{U}_{n})-f(\bm{U}_{n-1})|}{|f(\bm{U}_{n})|} \leq 10^{-20}. \label{eq:stopping}
\end{equation}

Table~\ref{table:results} illustrates average results for
$10$
trials of each algorithm employing the initial stepsize
$\gamma_{\rm initial} \in \{0.1,0.01,0.001\}$
with the shortest CPU time to reach the stopping criteria in the scenario of Problem~\ref{problem:eigen} with
$(N,p) \in \{1000,2000\} \times \{10,50\}$.
In the table, "fval" means the value
$f(\bm{U})$
at the output
$\bm{U} \in \St(p,N)$,
"fval-optimal" means
$f(\bm{U}) - f(\bm{U}^{\star})$
with the global minimizer
$\bm{U}^{\star} \in \St(p,N)$
obtained by the eigenvalue decomposition of
$\bm{A}$,
"feasi" means the feasibility
$\|\bm{I}_{p}-\bm{U}^{\T}\bm{U}\|_{F}$,
"nrmg" means the norm
$\|\nabla f_{\bm{S}}(\Phi_{\bm{S}}(\bm{U}))\|_{F}$,
$\|\nabla (f\circ R_{\bm{U}_{0}}^{{\rm Cay}})(R_{\bm{U}_{0}}^{{\rm Cay}^{-1}}(\bm{U}))\|_{F}$
or
$\|\mathrm{grad}\ f(\bm{U})\|_{F}$,
"itr" means the number of iterations, and "time" means the CPU time (s).
Figure~\ref{fig:eigen} shows the convergence history of algorithms for each problem size respectively.
The plots show CPU time on the horizontal axis versus the value
$f(\bm{U}) - f(\bm{U}^{\star})$
on the vertical axis.

We observe that the proposed GDM+CP reaches the stopping criteria with the shortest CPU time among all five algorithms for every problem size.
Possible reasons for the superiority of the proposed Cayley parametrization strategy to the retraction-based strategy are as follows.
\begin{enumerate}[label=(\roman*)]
  \item
    The Cayley parametrization strategy exploits the diffeomorphic property of
    $\Phi_{\bm{S}}^{-1}$
    between a vector space and an open dense subset of
    $\St(p,N)$
    while the retraction-based strategy exploits only a local diffeomorphic property around
    $\bm{0}$
    of retractions (see Remark~\ref{remark:relation_to_others}).
  \item
    For Problem~\ref{problem:eigen}, a global minimizer
    $\bm{V}^{\star} \in Q_{N,p}(\bm{S})$
    of
    $f\circ \Phi_{\bm{S}}^{-1}$
    exists\footnote{
      From the relation
      $\min_{\bm{U}\in\St(p,N)} f(\bm{U})  = \inf_{\bm{V}\in Q_{N,p}(\bm{S})} f\circ\Phi_{\bm{S}}^{-1}(\bm{V})$
      in Lemma~\ref{lemma:gap},
      $\Phi_{\bm{S}}^{-1}(\bm{V}^{\star}) \in\St(p,N)$
      is also a global minimizer of
      $f$
      over
      $\St(p,N)$,
    }
    within the unit ball
    $\{\bm{V} \in Q_{N,p}(\bm{S})\ \mid \|\bm{V}\|_{2} \leq 1\}$
    due to the right orthogonal invariance of
    $f$
    in~\eqref{eq:eigen} and Theorem~\ref{theorem:range_minimizer}.
    In comparison, the existence of a global minimizer, say
    $\bm{\mathcal{V}}^{\star} \in T_{\bm{U}_{n}}\St(p,N)$,
    of
    $f\circ R_{\bm{U}_{n}}$
    over
    $T_{\bm{U}_{n}}\St(p,N)$
    is not guaranteed for a general retraction
    $R$.
    Even if such a
    $\bm{\mathcal{V}}^{\star}$
    exists, it is not guaranteed that
    $R_{\bm{U}_{n}}(\bm{\mathcal{V}}^{\star}) \in \St(p,N)$
    is a global minimizer of
    $f$
    over
    $\St(p,N)$
    because
    $R_{\bm{U}_{n}}(T_{\bm{U}_{n}}\St(p,N))$
    is not necessarily dense in
    $\St(p,N)$.
\end{enumerate}
Additionally, GDM+CP can keep the feasibility at the same level as GDM+QR and GDM+polar, and better than GDM+Cayley.
These observations imply that the proposed CP strategy outperforms the retraction-based strategy.
Moreover, it is expected that the proposed CP strategy achieves fast convergence to a solution for Problem~\ref{problem:origin} when we plug more powerful computational arts designed for optimization over a vector space into the CP strategy (see also Remark~\ref{remark:relation_to_others}).

As shown in Propositions~\ref{proposition:inverse} and~\ref{proposition:Cayley_retraction},
both
$\Phi_{\bm{S}}^{-1}$
and
$R_{\bm{U}_{0}}^{\rm Cay}$
can parameterize respectively open dense subsets of
$\St(p,N)$.
However, we observe that (i) the proposed GDM+CP for the minimization of
$f\circ \Phi_{\bm{S}}^{-1}$
has faster convergence speed than GDM+CP-retraction for the minimization of
$f\circ R_{\bm{U}_{0}}^{\rm Cay}$;
(ii) the orthogonal feasibility in GDM+CP-retraction deteriorates compared than GDM+CP.
We believe that these performance differences are made respectively by (i) the computational complexity for
$\Phi_{\bm{S}}^{-1}$
is more efficient than that of
$R_{\bm{U}_{0}}^{\rm Cay}$,
and by
(ii)
calculations of
$R_{\bm{U}_{0}}^{\rm Cay}$
and
$\nabla (f\circ R_{\bm{U}_{0}}^{\rm Cay})$
require the Sherman-Morrison-Woodbury formula for matrix inversions in order to achieve comparable computational complexities, and its formula is known to have a numerical instability~\cite{W13} (see Remark~\ref{remark:parametrization_comparison}).

Moreover, although GDM+CP-retraction reaches the stopping criteria without achieving the same level of the final cost value as the others\footnote{We note that this early stopping of GDM+CP-retraction can be caused by the instability~\cite{W13} of the Sherman-Morrison-Woodbury formula used in
$R_{\bm{U}_{0}}^{\rm Cay}$
and
$\nabla (f\circ R_{\bm{U}_{0}}^{\rm Cay})$.
}, GDM+CP-retraction has the same or better performance than GDM+Cayley in view of convergence history in Figure~\ref{fig:eigen} at every time.
This indicates an efficacy of the parametrization strategy of
$\St(p,N)$
in the vector space reformulation for Problem~\ref{problem:origin} because GDM+CP-retraction and GDM+Cayley used the same Cayley transform-based retraction.

Finally, we remark that if
$\gamma_{\rm initial}$
is set as too large, numerical performance of the proposed GDM+CP can deteriorate because a generated sequence
$(\bm{V}_{n})_{n=0}^{\infty} \subset Q_{N,p}(\bm{S})$
can go away from
$\bm{0} \in Q_{N,p}(\bm{S})$
quickly, which induces the insensitivity of
$\Phi_{\bm{S}}^{-1}$ (see Section~\ref{sec:mobility}).
This tendency can be observed from Figure~\ref{fig:eigen_step}, which illustrates average convergence histories for
$10$
trials of GDM+CP with each stepsize
$\gamma_{\rm initial} \in \{0.1,0.01,0.001,0.0001\}$
in the scenario of Problem~\ref{problem:eigen}.
Figure~\ref{fig:eigen_step} shows that GDM+CP with
$\gamma_{\rm initial} = 0.001$
has the best performance among four algorithms.
This observation indicates that we need not set $\gamma_{\rm initial}$ as large for GDM+CP.
Not surprisingly, we also see that too small
$\gamma_{\rm initial}$
causes slow convergence speed of GDM+CP with move only a little along
$-\nabla f_{\bm{S}}(\bm{V}_{n})$
at each iteration.

\begin{table}[t]
  \centering
  \footnotesize
  \csvautobooktabular{csv/table.csv}
  \caption{Performance of each algorithm applied to Problem~\ref{problem:eigen}.}
  \label{table:results}
\end{table}
\begin{figure}[t]
  \subfloat[][$N=1000,p=10$]{
    \includegraphics[clip, width=0.5\columnwidth]{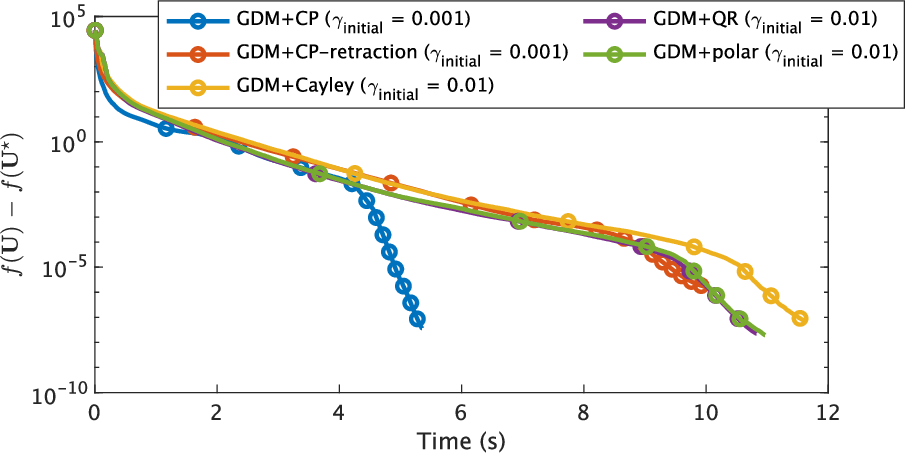}
  }
  \subfloat[][$N=1000,p=50$]{
    \includegraphics[clip, width=0.5\columnwidth]{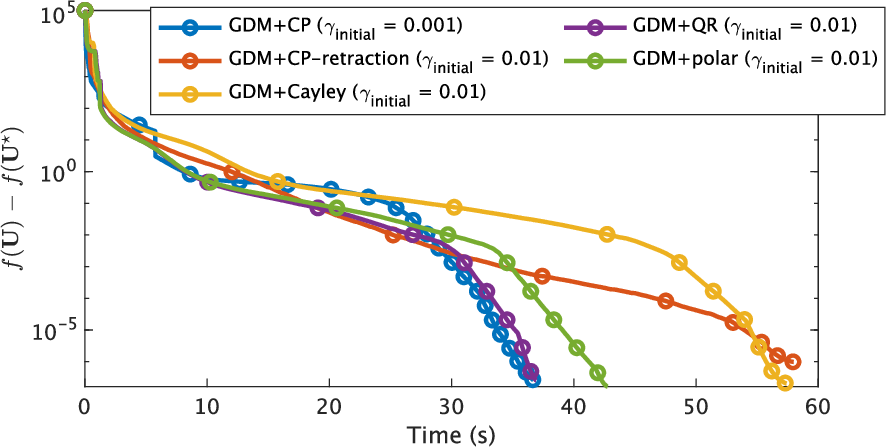}
  } \\
  \subfloat[][$N=2000,p=10$]{
    \includegraphics[clip, width=0.5\columnwidth]{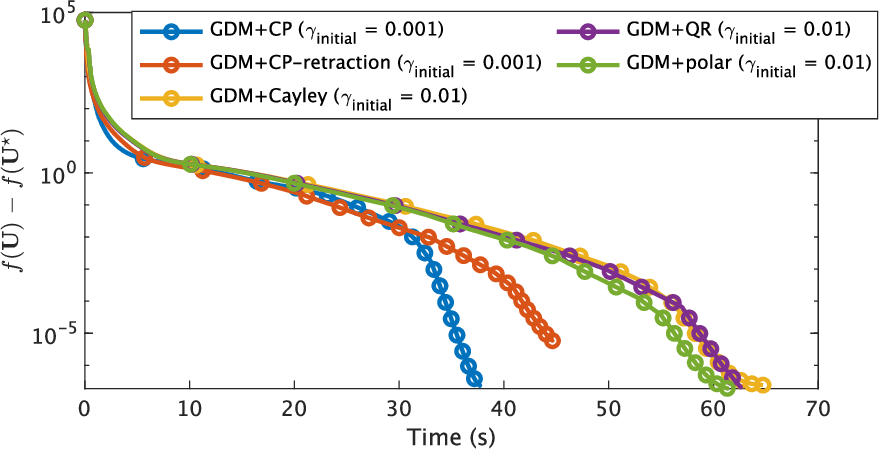}
  }
  \subfloat[][$N=2000,p=50$]{
    \includegraphics[clip, width=0.5\columnwidth]{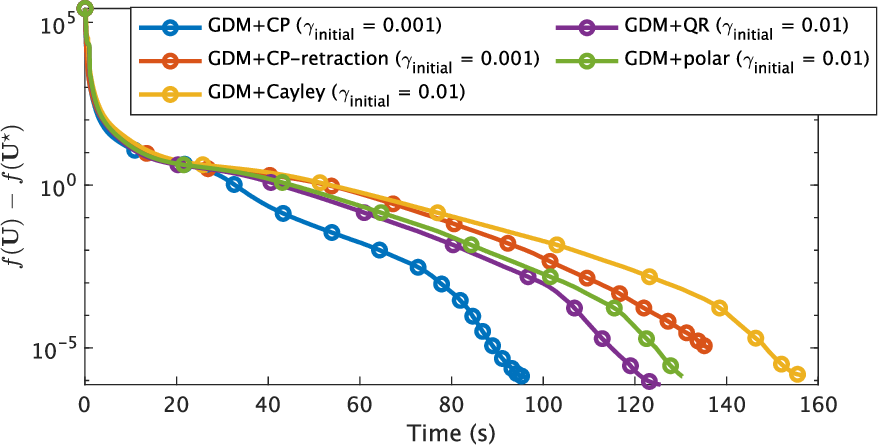}
  }
  \caption{Convergence histories of each algorithm applied to Problem~\ref{problem:eigen} regarding the value $f(\bm{U})-f(\bm{U}^{\star})$ at CPU time for each problem size.
    Markers are put at every 250 iterations.}
  \label{fig:eigen}
\end{figure}

\begin{figure}[t]
  \begin{center}
    \includegraphics[width=0.8\linewidth]{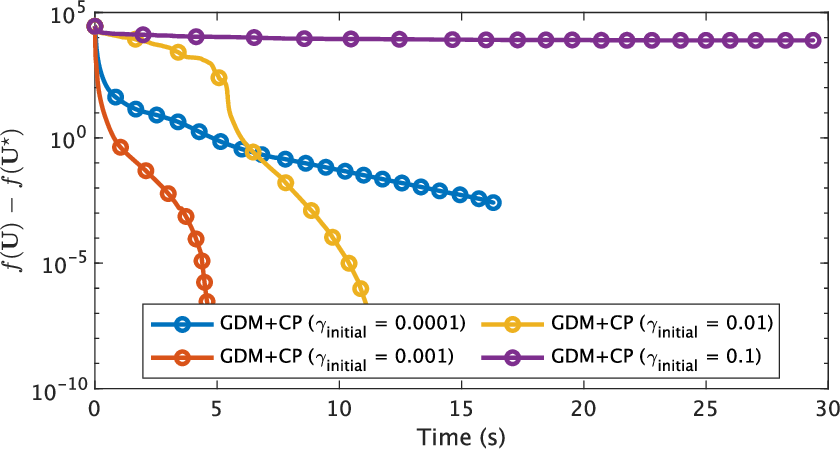}
    \caption{Convergence histories of GDM+CP with each $\gamma_{\rm initial}$ applied to Problem~\ref{problem:eigen} ($N=1000,p=10$) regarding the value $f(\bm{U})-f(\bm{U}^{\star})$ at CPU time for each problem size.
    Markers are put at every 250 iterations.}
  \label{fig:eigen_step}
  \end{center}
\end{figure}
\subsection{Singular-point issue} \label{sec:numerical_singular}
In this subsection, we tested how much the singular-points influence the performance of the proposed CP strategy.
As we mentioned in Section~\ref{sec:mobility}, a risk of the slow convergence of Algorithm~\ref{alg:proposed} can arise in a case where a global minimizer
$\bm{U}^{\star} \in \St(p,N)$
of Problem~\ref{problem:origin} is close to the singular-point set
$E_{N,p}(\bm{S})$.
To see such an influence, we compared CP strategies with several center point
$\bm{S}$
by a toy Problem~\ref{problem:origin} for the minimization of
$f(\bm{U}):=\frac{1}{2}\|\bm{U}-\bm{U}^{\star}\|_{F}^{2}$
with a given
$\bm{U}^{\star} \in \St(p,N)$.
Clearly, its solution is
$\bm{U}^{\star}$.

In this experiment, we used center points
$\bm{S}(\theta):=\diag(\bm{R}(\theta),\bm{I}_{N-p}) \in {\rm O}(N)\ (\theta=\pi/1000, \pi/4, \pi/2, \pi)$,
the global minimizer
$\bm{U}^{\star} := [\bm{S}(\pi)]_{\rm le}$
and an initial point
$\bm{U}_{0} := [\bm{S}(\pi/4)]_{\rm le}$,
where
$\bm{R}(\theta):=\begin{bmatrix} \cos(\theta) & -\sin(\theta) \\ \sin(\theta) & \cos(\theta) \end{bmatrix} \in {\rm SO}(2)$
is a rotation matrix.
From
$[\bm{S}(\theta)]_{\rm le}^{\T}\bm{U}^{\star} = \diag(-\bm{R}(\theta),\bm{I}_{p-2})$,
we have
$\det(\bm{I}_{p} + [\bm{S}(\theta)]_{\rm le}^{\T}\bm{U}^{\star}) = 2^{p-1}(1-\cos(\theta))$.
Therefore,
$E_{N,p}(\bm{S}(\theta)) = \{\bm{U} \in \St(p,N) \mid \det(\bm{I}+[\bm{S}(\theta)]_{\rm le}^{\T}\bm{U}) = 0\}$
approaches
$\bm{U}^{\star}$
as
$\theta \to 0$,
and
$E_{N,p}(\bm{S}(\pi))$
is farthest from
$\bm{U}^{\star}$.

We used the stopping criteria~\eqref{eq:stopping}, and parameters
$\rho = 0.5$,
$c = 2^{-13}$,
and
$\gamma_{\rm initial} = 0.1$
for Algorithm~\ref{alg:backtracking} to determine a stepsize
$\gamma > 0$.

Table~\ref{table:results_singular_S} illustrates average results for
$10$
trials of each algorithm with
$N=1000$
and
$p=10$
in this scenario.
Figure~\ref{fig:singular_S_1000_10} shows the convergence history of algorithms.
The plot shows CPU time on the horizontal axis versus the value
$f(\bm{U}) - f(\bm{U}^{\star})$
on the vertical axis.

From Figure~\ref{fig:singular_S_1000_10}, we observe that GDM+CP with
$\bm{S}(\pi)$
is the fastest among all algorithms.
On the other hand,
$\bm{U}_{n}$
generated by GDM+CP with
$\bm{S}(\pi/1000)$
does not approach a global minimizer
$\bm{U}^{\star}$.
This implies that the convergence speed of GDM+CP tends to become slower as
$\theta \to 0$,
or equivalently as
$\bm{U}^{\star}$
approaches the singular-point set.

From these observations, the performance of the proposed Algorithm~\ref{alg:proposed} depends heavily on tuning
$\bm{S}$
as mentioned in~\ref{sec:mobility}.
Since we can not see whether a solution
$\bm{U}^{\star}$
is distant from
$E_{N,p}(\bm{S})$
or not in advance before running algorithms, it is desired to circumvent the influence of this singular-point issue.
In~\cite{Kume-Yamada19,Kume-Yamada20}, we presented preliminary reports for a CP strategy with an adaptive changing center point scheme to avoid the singular-point issue by considering Problem~\ref{problem:ALCP} instead of Problem~\ref{problem:CP_St}.

\begin{table}[t]
  \centering
  \footnotesize
  \csvautobooktabular{csv/singular1000_10-20221503160214595.csv}
  \caption{Performance of each algorithm applied Problem~\ref{problem:origin} with $f(\bm{U}):=\frac{1}{2}\|\bm{U}-\bm{U}^{\star}\|_{F}^{2}$.
}
  \label{table:results_singular_S}
\end{table}

\begin{figure}[t]
  \begin{center}
    \includegraphics[width=0.8\linewidth]{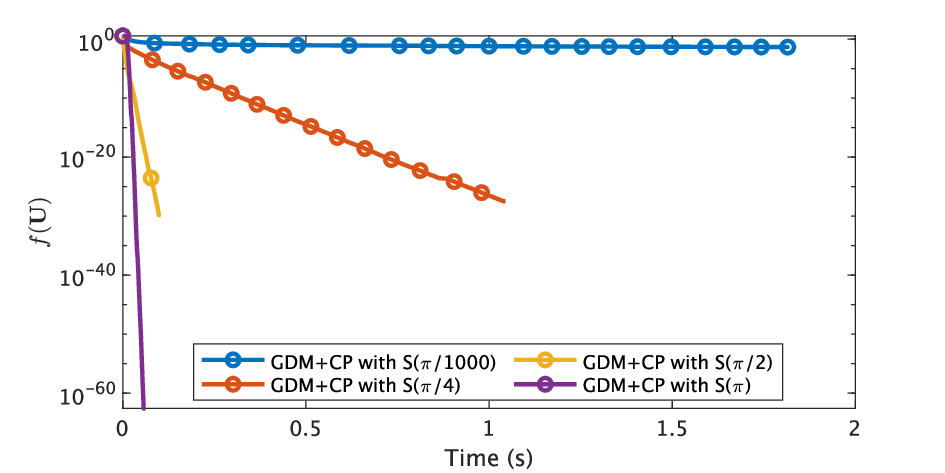}
    \caption{Convergence histories of each algorithm applied to Problem~\ref{problem:origin} with
      $f(\bm{U}):=\frac{1}{2}\|\bm{U}-\bm{U}^{\star}\|_{F}^{2}$,
      and
      $N=1000,p=10$ regarding the value $f(\bm{U})$ at CPU time.
    Markers are put at every 250 iterations.
  }
  \label{fig:singular_S_1000_10}
  \end{center}
\end{figure}
\section{Conclusion}
We presented a generalization of the Cayley transform for the Stiefel manifold to parameterize a dense subset of the Stiefel manifold in terms of a single vector space.
The proposed Cayley transform is diffeomorphic between a dense subset of the Stiefel manifold and a vector space.
Thanks to the diffeomorphic property, we proposed a new reformulation of optimization problem over the Stiefel manifold to transplant optimization techniques designed over a vector space.
Numerical experiments have shown that the proposed algorithm outperformed the standard algorithms designed with a retraction on the Stiefel manifold under a simple situation.

\section*{Funding}
This work was supported by JSPS Grants-in-Aid (19H04134) partially, by JSPS Grants-in-Aid (21J21353) and by JST SICORP (JPMJSC20C6).

\bibliographystyle{tfnlm}
\bibliography{main}

\appendix

\section{Basic facts on the Stiefel manifold, the Cayley transform and tools for matrix analysis}\label{appendix:facts}
In this section, we summarize basic properties on
$\St(p,N)$
and the Cayley transform together with elementary tools for matrix analysis.

\begin{fact}[Stiefel manifold~\cite{manifold_book,E98}]\label{fact:stiefel}
  \mbox{}
  \begin{enumerate}[label=(\alph*)]
      \item\label{enum:submanifold}
        The Stiefel manifold
        $\St(p,N)$
        is an embedded submanifold of
        $\mathbb{R}^{N\times p}$.
        The topology
        $\mathcal{O}(\St(p,N))$,
        the family of all open subsets, of
        $\St(p,N)$
        is defined as any union of sets in
        $\{\St(p,N) \cap B_{\mathbb{R}^{N\times p}}(\bm{X},r) \mid \bm{X} \in \mathbb{R}^{N\times p}, r>0 \}$.
      \item\label{enum:dimension}
          The dimension of
          $\St(p,N)$
          is
          $Np-\frac{1}{2}p(p+1)$,
          i.e., every point
          $\bm{U} \in \St(p,N)$
          has an open neighborhood
          $\mathcal{N}(\bm{U})\subset \St(p,N)$
          such that there exists a homeomorphism
          $\phi:\mathcal{N}(\bm{U}) \to \mathbb{R}^{Np-p(p+1)/2}$
          between
          $\mathcal{N}(\bm{U})$
          and some open subset of
          $\mathbb{R}^{Np-p(p+1)/2}$.
      \item\label{enum:connected}
        The Stiefel manifold
        $\St(p,N)$
        is compact.
        Moreover,
        $\St(p,N)$
        with
        $p<N$
        is connected while
        ${\rm O}(N) := \St(N,N)$
        is a disconnected union of connected subsets
        ${\rm SO}(N):=\{\bm{U}\in{\rm O}(N) \mid \det(\bm{U})=1\}$
        and
        ${\rm O}(N)\setminus {\rm SO}(N)$.
      \item\label{enum:tangent}
        The tangent space to
        $\St(p,N)$
        at
        $\bm{U} \in\St(p,N)$
        is expressed as
        \begin{align}
          T_{\bm{U}}\St(p,N)
          & = \{\bm{\mathcal{V}} \in \mathbb{R}^{N\times p} \mid \bm{U}^{\T}\bm{\mathcal{V}} + \bm{\mathcal{V}}^{\T}\bm{U} = \bm{0}\} \\
          & = \{\bm{U}\bm{\Omega} + \bm{U}_{\perp}\bm{K} \in \mathbb{R}^{N\times p} \mid \bm{\Omega}^{\T} = -\bm{\Omega} \in \mathbb{R}^{p\times p}, \bm{K} \in \mathbb{R}^{(N-p) \times p}\}
        \end{align}
        in terms of an arbitrarily chosen
        $\bm{U}_{\perp} \in \St(N-p,p)$
        satisfying
        $\bm{U}^{\T}\bm{U}_{\perp} = \bm{0} \in \mathbb{R}^{p\times (N-p)}$ (see, e.g.,~\cite[Example 3.5.2]{manifold_book}).
        The projection mapping
        $\mathcal{P}_{T_{\bm{U}}\St(p,N)}:\mathbb{R}^{N\times p} \to T_{\bm{U}}\St(p,N)$
        onto
        $T_{\bm{U}}\St(p,N)$
        is given by\footnote{
          The subspace
          $W_{1}:=\{\bm{U}\bm{\Omega} \in \mathbb{R}^{N\times p}\mid \bm{\Omega}^{\T}=-\bm{\Omega} \in \mathbb{R}^{p\times p}\} \subset \mathbb{R}^{N\times p}$
          is an orthogonal complement to the subspace
          $W_{2}:=\{\bm{U}_{\perp}\bm{K} \in \mathbb{R}^{N\times p} \mid \bm{K} \in \mathbb{R}^{(N-p)\times p}\}\subset \mathbb{R}^{N\times p}$
          with the inner product
          $\inprod{\bm{X}}{\bm{Y}} = \trace(\bm{X}^{\T}\bm{Y})\ (\bm{X},\bm{Y}\in \mathbb{R}^{N\times p})$.
          The tangent space
          $T_{\bm{U}}\St(p,N)$
          can be decomposed as
          $W_{1} \oplus W_{2}$
          with the direct sum
          $\oplus$.
          In view of the orthogonal decomposition, the first term and the second term in the right-hand side of~\eqref{eq:tangent_projection} can be regarded respectively as the orthogonal projection of
          $\bm{X}$
          onto
          $W_{1}$
          and
          $W_{2}$.
        } (see, e.g.,~\cite[Example 3.6.2]{manifold_book})
        \begin{align}
          (\bm{X}\in\mathbb{R}^{N\times p})\ 
          \mathcal{P}_{T_{\bm{U}}\St(p,N)}(\bm{X})
          &:= \argmin_{\bm{Z} \in T_{\bm{U}}\St(p,N)} \|\bm{X} - \bm{Z}\|_{F} \\
          & = \frac{1}{2}\bm{U}(\bm{U}^{\T}\bm{X}-\bm{X}^{\T}\bm{U}) + (\bm{I}-\bm{U}\bm{U}^{\T})\bm{X}.\label{eq:tangent_projection}
        \end{align}
      \end{enumerate}
\end{fact}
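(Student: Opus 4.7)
The plan is to verify each of (a)--(d) using standard submersion-theorem techniques together with elementary linear algebra, since these are textbook facts whose proofs simply require assembling the right pieces.

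For \ref{enum:submanifold}, I would realize $\St(p,N)$ as a regular level set. Define $F:\mathbb{R}^{N\times p} \to \mathrm{Sym}(p):\bm{U}\mapsto \bm{U}^{\T}\bm{U}-\bm{I}_{p}$, where $\mathrm{Sym}(p)$ denotes the space of $p\times p$ symmetric matrices. Then $\St(p,N) = F^{-1}(\bm{0})$. The differential $dF_{\bm{U}}(\bm{Z}) = \bm{Z}^{\T}\bm{U}+\bm{U}^{\T}\bm{Z}$ is surjective onto $\mathrm{Sym}(p)$ for every $\bm{U}\in \St(p,N)$: given $\bm{S}\in \mathrm{Sym}(p)$, set $\bm{Z}:=\tfrac{1}{2}\bm{U}\bm{S}$, and check $dF_{\bm{U}}(\bm{Z})=\bm{S}$ using $\bm{U}^{\T}\bm{U}=\bm{I}_{p}$. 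The submersion (regular value) theorem then yields that $\St(p,N)$ is an embedded submanifold, which carries the subspace topology inherited from $\mathbb{R}^{N\times p}$, matching the description via intersections with open balls $B_{\mathbb{R}^{N\times p}}(\bm{X},r)$.

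For \ref{enum:dimension}, I would read off the codimension from the submersion, obtaining $\dim \St(p,N) = Np - \dim \mathrm{Sym}(p) = Np - \tfrac{1}{2}p(p+1)$, and invoke the fact that every embedded submanifold of Euclidean space is locally Euclidean of this dimension, supplying the homeomorphism $\phi$ on a suitable $\mathcal{N}(\bm{U})$. For \ref{enum:connected}, compactness follows from $\St(p,N)$ being closed (preimage of $\bm{0}$ under the continuous $F$) and bounded (since $\|\bm{U}\|_{F}^{2}=\trace(\bm{U}^{\T}\bm{U})=p$), hence Heine--Borel applies. Connectedness for $p<N$ would be obtained by exhibiting a transitive continuous action of the connected group ${\rm SO}(N)$ on $\St(p,N)$ (via $\bm{Q}\cdot \bm{U}:=\bm{Q}\bm{U}$, with transitivity verified by completing any $\bm{U}\in \St(p,N)$ to an orthogonal matrix, which uses $p<N$ in the argument that one can always choose the completion in ${\rm SO}(N)$). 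The disconnection of ${\rm O}(N)$ into ${\rm SO}(N)$ and its complement follows from continuity of $\det$.

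For \ref{enum:tangent}, the first equality is immediate: $T_{\bm{U}}\St(p,N) = \ker dF_{\bm{U}}$. For the second equality I would use $\bm{U}\bm{U}^{\T}+\bm{U}_{\perp}\bm{U}_{\perp}^{\T}=\bm{I}$ to write any $\bm{\mathcal{V}}\in \mathbb{R}^{N\times p}$ as $\bm{U}(\bm{U}^{\T}\bm{\mathcal{V}})+\bm{U}_{\perp}(\bm{U}_{\perp}^{\T}\bm{\mathcal{V}})$, set $\bm{\Omega}:=\bm{U}^{\T}\bm{\mathcal{V}}$ and $\bm{K}:=\bm{U}_{\perp}^{\T}\bm{\mathcal{V}}$, and observe that the tangent condition $\bm{U}^{\T}\bm{\mathcal{V}}+\bm{\mathcal{V}}^{\T}\bm{U}=\bm{0}$ is precisely $\bm{\Omega}^{\T}=-\bm{\Omega}$, while $\bm{K}$ is unconstrained. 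For the projection formula in~\eqref{eq:tangent_projection}, I would exploit the orthogonal decomposition $T_{\bm{U}}\St(p,N) = W_{1}\oplus W_{2}$ described in the footnote: the $W_{2}$-component of $\bm{X}$ is $\bm{U}_{\perp}\bm{U}_{\perp}^{\T}\bm{X}=(\bm{I}-\bm{U}\bm{U}^{\T})\bm{X}$, and the $W_{1}$-component is $\bm{U}\Skew(\bm{U}^{\T}\bm{X})=\tfrac{1}{2}\bm{U}(\bm{U}^{\T}\bm{X}-\bm{X}^{\T}\bm{U})$, whose sum gives the claimed formula. The main (mild) obstacle is verifying that these two summands are indeed orthogonal in the Frobenius inner product and that the second actually lies in the tangent space, which reduces to $\Skew(\bm{U}^{\T}\bm{X})$ being skew-symmetric and a short trace computation.
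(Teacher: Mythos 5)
Your proposal is correct; the paper itself states this as a Fact with citations to \cite{manifold_book,E98} and gives no proof, and the argument you outline (regular level set of $\bm{U}\mapsto\bm{U}^{\T}\bm{U}-\bm{I}_{p}$ for the submanifold structure and dimension, Heine--Borel plus the transitive ${\rm SO}(N)$-action for (c), and the kernel/orthogonal-decomposition computation for (d)) is exactly the standard treatment in those references. All the individual verifications you sketch (surjectivity of $dF_{\bm{U}}$ via $\bm{Z}=\tfrac{1}{2}\bm{U}\bm{S}$, the splitting $\bm{\mathcal{V}}=\bm{U}\bm{\Omega}+\bm{U}_{\perp}\bm{K}$, and the identification of the $W_{1}$- and $W_{2}$-components of $\bm{X}$) check out.
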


\begin{fact}[Commutativity of the Cayley transform pair, e.g.,~\cite{S_linear}]
  The Cayley transform
  $\varphi$
  in~\eqref{eq:origin_Cayley}
  and
  its inversion
  $\varphi^{-1}$
  in~\eqref{eq:inv_origin_Cayley}
  can be expressed as
  \begin{align}
    (\bm{U}\in {\rm O}(N)\setminus E_{N,N}) \quad& \varphi(\bm{U}) = (\bm{I}-\bm{U})(\bm{I}+\bm{U})^{-1} = (\bm{I}+\bm{U})^{-1}(\bm{I}-\bm{U}) \\
    (\bm{V} \in Q_{N,N})\quad & \varphi^{-1}(\bm{V}) = (\bm{I}-\bm{V})(\bm{I}+\bm{V})^{-1} = (\bm{I}+\bm{V})^{-1}(\bm{I}-\bm{V}).\label{eq:commute}
  \end{align}
\end{fact}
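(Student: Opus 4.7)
The plan is to reduce both identities to a single elementary fact about matrix algebra: if two square matrices commute and one is invertible, then the inverse commutes with the other. Once this is isolated, the two claimed equalities follow by inspection.

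First I would observe that for any square matrix $\bm{X}$, the matrices $\bm{I}-\bm{X}$ and $\bm{I}+\bm{X}$ are both polynomials in $\bm{X}$, hence
\begin{equation}
(\bm{I}-\bm{X})(\bm{I}+\bm{X}) = \bm{I} - \bm{X}^{2} = (\bm{I}+\bm{X})(\bm{I}-\bm{X}).
\end{equation}
Under the hypothesis that $\bm{I}+\bm{X}$ is invertible, left- and right-multiplying the above identity by $(\bm{I}+\bm{X})^{-1}$ yields
\begin{equation}
(\bm{I}+\bm{X})^{-1}(\bm{I}-\bm{X}) = (\bm{I}-\bm{X})(\bm{I}+\bm{X})^{-1}.
\end{equation}

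Next I would specialize this to the two cases at hand. For $\bm{U}\in {\rm O}(N)\setminus E_{N,N}$, the definition $E_{N,N}=\{\bm{U}\in {\rm O}(N)\mid \det(\bm{I}+\bm{U})=0\}$ guarantees that $\bm{I}+\bm{U}$ is invertible, so the commutativity identity applies with $\bm{X}=\bm{U}$ and gives the first claim. For $\bm{V}\in Q_{N,N}$, the footnote in the paper (or the standard fact that every eigenvalue of a real skew-symmetric matrix is purely imaginary, so $-1$ cannot be an eigenvalue) ensures $\bm{I}+\bm{V}$ is invertible, and the identity applies with $\bm{X}=\bm{V}$ to give the second claim.

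There is no real obstacle; the only thing to be careful about is the well-definedness of the inverses, which is handled by the two facts noted above (membership in ${\rm O}(N)\setminus E_{N,N}$ for $\varphi$, and skew-symmetry of elements of $Q_{N,N}$ for $\varphi^{-1}$). Thus the entire proof amounts to the single polynomial identity $(\bm{I}-\bm{X})(\bm{I}+\bm{X})=(\bm{I}+\bm{X})(\bm{I}-\bm{X})$ combined with invertibility of $\bm{I}+\bm{X}$ in each setting.
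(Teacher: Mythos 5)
Your proof is correct and complete: the identity $(\bm{I}-\bm{X})(\bm{I}+\bm{X})=\bm{I}-\bm{X}^{2}=(\bm{I}+\bm{X})(\bm{I}-\bm{X})$ plus left- and right-multiplication by $(\bm{I}+\bm{X})^{-1}$ is exactly the standard argument, and you correctly justify invertibility of $\bm{I}+\bm{U}$ (via the definition of $E_{N,N}$) and of $\bm{I}+\bm{V}$ (via skew-symmetry). The paper states this as a Fact with a citation to a linear algebra reference and supplies no proof of its own, so there is nothing to compare against; your argument is the one any such reference would give.
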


\begin{fact}[Denseness of ${\rm O}(N)\setminus E_{N,N}(\bm{S})$; see~\cite{Y03} for $\bm{S}=\bm{I}$] \label{fact:dense}
  For
  $\bm{S}\in {\rm O}(N)$,
  define
  ${\rm O}(N,\bm{S}):= \{\bm{U} \in {\rm O}(N) \mid \det(\bm{U})=\det(\bm{S})\}$,
  i.e.,
  \begin{equation}
    {\rm O}(N,\bm{S}) =
    \begin{cases}
      {\rm SO}(N) & (\mathrm{if}\ \det(\bm{S}) = 1) \\
      {\rm O}(N)\setminus {\rm SO}(N) & (\mathrm{if}\ \det(\bm{S}) = -1).
    \end{cases}\label{eq:O_S}
  \end{equation}
  Then, for
  $\bm{S}\in {\rm O}(N)$ and
  $E_{N,N}(\bm{S})$
  defined just after~\eqref{eq:ILCT_O},
  ${\rm O}(N)\setminus E_{N,N}(\bm{S})$
  is a dense subset of
  ${\rm O}(N,\bm{S})$,
  i.e., the closure of
  ${\rm O}(N)\setminus E_{N,N}(\bm{S})$
  is
  ${\rm O}(N,\bm{S})$.
\end{fact}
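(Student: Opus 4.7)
The plan is to first verify the inclusion ${\rm O}(N) \setminus E_{N,N}(\bm{S}) \subset {\rm O}(N,\bm{S})$, and then construct, for every $\bm{U}^{\star} \in {\rm O}(N,\bm{S})$, an explicit approximating sequence in ${\rm O}(N) \setminus E_{N,N}(\bm{S})$ via a small rotation of the real canonical form of $\bm{S}^{\T}\bm{U}^{\star}$.

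For the inclusion, I would start from an arbitrary $\bm{U} \in {\rm O}(N) \setminus E_{N,N}(\bm{S})$ and set $\bm{W} := \bm{S}^{\T}\bm{U} \in {\rm O}(N)$. The condition $\det(\bm{I} + \bm{W}) \neq 0$ states that $-1$ is not an eigenvalue of $\bm{W}$. The spectrum of any real orthogonal matrix consists of $\pm 1$ together with complex conjugate pairs on the unit circle; since each conjugate pair contributes $|\lambda|^{2} = 1$ to the determinant, one has $\det(\bm{W}) = (-1)^{k}$ where $k$ is the algebraic multiplicity of $-1$. With $k=0$, $\det(\bm{W}) = 1$, and therefore $\det(\bm{U}) = \det(\bm{S})$, i.e., $\bm{U} \in {\rm O}(N,\bm{S})$.

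For denseness, let $\bm{U}^{\star} \in {\rm O}(N,\bm{S})$ and put $\bm{W}^{\star} := \bm{S}^{\T}\bm{U}^{\star}$. Since $\det(\bm{S}) = \pm 1$, we have $\det(\bm{W}^{\star}) = \det(\bm{S})\det(\bm{U}^{\star}) = \det(\bm{S})^{2} = 1$, so $\bm{W}^{\star} \in {\rm SO}(N)$. The real canonical form for orthogonal matrices provides $\bm{R} \in {\rm O}(N)$ and
\begin{equation}
  \bm{R}^{\T} \bm{W}^{\star} \bm{R} \;=\; \diag\!\bigl( \bm{R}(\theta_{1}), \ldots, \bm{R}(\theta_{\ell}),\, \bm{I}_{r} \bigr),
\end{equation}
where each $\bm{R}(\theta_{j}) \in {\rm SO}(2)$ is a planar rotation by $\theta_{j} \in [0,\pi]$ (the $-1$ eigenvalues of $\bm{W}^{\star}$, whose number is even because $\det(\bm{W}^{\star}) = 1$, are absorbed into blocks with $\theta_{j} = \pi$). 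Define, for $\epsilon \in (0,\pi/2)$,
\begin{equation}
  \bm{W}_{\epsilon} \;:=\; \bm{R}\, \diag\!\bigl( \bm{R}(\widetilde{\theta}_{1}(\epsilon)), \ldots, \bm{R}(\widetilde{\theta}_{\ell}(\epsilon)),\, \bm{I}_{r} \bigr)\, \bm{R}^{\T},
\end{equation}
with $\widetilde{\theta}_{j}(\epsilon) := \theta_{j} - \epsilon$ if $\theta_{j} = \pi$ and $\widetilde{\theta}_{j}(\epsilon) := \theta_{j}$ otherwise. Then $\bm{W}_{\epsilon} \in {\rm SO}(N)$ and none of its $2 \times 2$ rotation blocks has angle $\pi$, so $-1$ is not an eigenvalue of $\bm{W}_{\epsilon}$; equivalently $\det(\bm{I} + \bm{W}_{\epsilon}) \neq 0$. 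Setting $\bm{U}_{\epsilon} := \bm{S}\bm{W}_{\epsilon}$, we obtain $\bm{U}_{\epsilon} \in {\rm O}(N) \setminus E_{N,N}(\bm{S})$ and $\bm{U}_{\epsilon} \to \bm{U}^{\star}$ as $\epsilon \downarrow 0$ by continuity of matrix multiplication and of $\bm{R}(\cdot)$.

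The only genuinely delicate point is the opening bookkeeping step that confirms the $-1$ eigenvalues of $\bm{W}^{\star}$ pair up into $\bm{R}(\pi)$ blocks (ensuring every block can be perturbed independently while staying in ${\rm SO}(N)$); this is where the assumption $\det(\bm{W}^{\star}) = 1$ — which in turn relies on $\bm{U}^{\star} \in {\rm O}(N,\bm{S})$ — is essential. Once the real canonical form is invoked, the perturbation and passage to the limit are routine.
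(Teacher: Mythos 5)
Your proof is correct and follows essentially the same route as the paper: reduce to the real canonical form of $\bm{S}^{\T}\bm{U}^{\star}\in{\rm SO}(N)$, observe that the $-1$ eigenvalues pair up into $\bm{R}(\pi)$ blocks because the determinant is $1$, and perturb those angles away from $\pi$ (you use $\pi-\epsilon$, the paper uses $\pi+1/n$; both work). Your explicit verification of the inclusion ${\rm O}(N)\setminus E_{N,N}(\bm{S})\subset{\rm O}(N,\bm{S})$ is a welcome addition that the paper leaves implicit, and it is indeed needed for the closure to equal, rather than merely contain, ${\rm O}(N,\bm{S})$.
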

\begin{proof}
  It suffices to show for
  $\bm{U} \in {\rm O}(N,\bm{S})$
  that there exists a sequence
  $(\bm{U}_{n})_{n=0}^{\infty} \subset {\rm O}(N)\setminus E_{N,N}(\bm{S})$
  such that
  $\lim_{n\to\infty}\bm{U}_{n} = \bm{U}$.

  Let
  $\bm{U} \in  {\rm O}(N,\bm{S})$.
  Then,
  $\bm{S}^{\T}\bm{U}$
  can be expressed as
  $\bm{S}^{\T}\bm{U} = \bm{Q}^{\T}\bm{\Lambda}\bm{Q}$
  with some
  $\bm{Q} \in {\rm O}(N)$
  and
  \begin{equation}
    \bm{\Lambda} = \diag\left(\bm{I}_{k_{1}}, -\bm{I}_{k_{2}}, \bm{R}(\theta_{1}), \bm{R}(\theta_{2}),\ldots, \bm{R}(\theta_{m})\right)
    \in {\rm O}(N) \label{eq:orthogonal_normal}
  \end{equation}
  (see~\cite[IV.~\S5]{S_linear}), where
  $k_{1}, k_{2},m \in \mathbb{N}\cup\{0\}$
  satisfy
  $k_{1} + k_{2} + 2m = N$,
  and
  $\bm{R}(\theta) := \begin{bmatrix} \cos(\theta) & -\sin(\theta) \\ \sin(\theta) & \cos(\theta) \end{bmatrix} \in {\rm SO}(2)$
  $(\theta \in [0,2\pi)\setminus \{0,\pi\})$.
  The relation
  $\det(\bm{U}) = \det(\bm{S}) = \det(\bm{S}^{\T})$
  ensures
  $\bm{S}^{\T}\bm{U} \in {\rm SO}(N)$,
  thus the number
  $k_{2}$
  must be even.
  Define
  $\bm{U}_{n} = \bm{S}\bm{Q}^{\T}\bm{\Lambda}(\pi + 1/n)\bm{Q} \in {\rm O}(N)$
  $(n \in \mathbb{N})$,
  where
  $\bm{\Lambda}(\pi + 1/n)\in {\rm SO}(N)$
  is given by replacing each diagonal block matrix
  $-\bm{I}_{2} \in {\rm SO}(2)$
  in
  $-\bm{I}_{k_{2}}$~[in~\eqref{eq:orthogonal_normal}]
  with
  $\bm{R}(\pi+1/n)$.
  From
  $\det(\bm{I}_{2}+\bm{R}(\pi+1/n))\neq 0$
  for
  $n\in \mathbb{N}$,
  we have
  $\bm{U}_{n} \in {\rm O}(N)\setminus E_{N,N}(\bm{S})$
  and
  $\lim_{n\to \infty} \bm{U}_{n} = \bm{U}$,
  which implies
  ${\rm O}(N)\setminus E_{N,N}(\bm{S})$
  is dense in
  ${\rm O}(N,\bm{S})$.
\end{proof}

\begin{lemma}[Matrix norms]\label{lemma:norm_basic}
  \mbox{}
  \begin{enumerate}[label=(\alph*)]
      \item \label{enum:norm_upper}
        For
        $\bm{A} \in \mathbb{R}^{l\times m}$
        and
        $\bm{B} \in \mathbb{R}^{m\times n}$,
        it holds
        $\|\bm{A}\bm{B}\|_{F} \leq \|\bm{A}\|_{2} \|\bm{B}\|_{F}$
        and
        $\|\bm{A}\bm{B}\|_{F} \leq \|\bm{A}\|_{F} \|\bm{B}\|_{2}$.
      \item \label{enum:IV_inv_norm}
        For
        $\bm{V} \in Q_{N,N}:=\{\bm{V} \in \mathbb{R}^{N\times N} \mid \bm{V}^{\T} = -\bm{V}\}$,
        we have
        $\sigma_{i}(\bm{I}+\bm{V}) \geq 1\ (1\leq i \leq N)$,
        $\|\bm{I}+\bm{V}\|_{2}^{2} =1+\|\bm{V}\|_{2}^{2}$
        and
        $\|(\bm{I}+\bm{V})^{-1}\|_{2} \leq 1$,
        where
        $\sigma_{i}(\cdot)$
        stands for the $i$th largest singular value of a given matrix.
      \item \label{enum:norm_Lipschitz}
        For
        $\bm{V}_{1},\bm{V}_{2} \in Q_{N,N}$,
        $\left\| (\bm{I}+\bm{V}_1)^{-1} - (\bm{I}+\bm{V}_2)^{-1} \right\|_{F} \leq \|\bm{V}_1 - \bm{V}_2 \|_{F}$.
      \item \label{enum:determinant_positive}
        $\det(\bm{I}+\bm{V}) > 0$
        for all
        $\bm{V} \in Q_{N,N}$.
      \item \label{enum:norm_determinant}
        For
        $\bm{V} \in Q_{N,N}$,
        it holds
        $\sqrt{1 + \|\bm{V}\|_{2}^{2}} \leq \det(\bm{I}+\bm{V})\leq (1 + \|\bm{V}\|_{2}^{2})^{N/2}$.
  \end{enumerate}
\end{lemma}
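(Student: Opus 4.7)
Part (a) is the standard submultiplicativity of the Frobenius norm against the spectral norm. I would partition $\bm{B}=[\bm{b}_{1},\ldots,\bm{b}_{n}]$ into its columns, bound each $\|\bm{A}\bm{b}_{i}\|_{2}\leq \|\bm{A}\|_{2}\|\bm{b}_{i}\|_{2}$, and sum the squares to obtain $\|\bm{A}\bm{B}\|_{F}\leq \|\bm{A}\|_{2}\|\bm{B}\|_{F}$. The second inequality follows by applying the first to $\bm{B}^{\T}\bm{A}^{\T}$ and using $\|\bm{X}\|_{F}=\|\bm{X}^{\T}\|_{F}$ together with $\|\bm{X}^{\T}\|_{2}=\|\bm{X}\|_{2}$.

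The central observation for parts (b)--(e) is the skew-symmetric identity $(\bm{I}+\bm{V})^{\T}(\bm{I}+\bm{V}) = \bm{I} - \bm{V} + \bm{V} - \bm{V}^{2} = \bm{I}+\bm{V}^{\T}\bm{V}$, since $\bm{V}^{\T}=-\bm{V}$. Letting $\mu_{1}\geq \cdots \geq \mu_{N}\geq 0$ denote the eigenvalues of $\bm{V}^{\T}\bm{V}$ (i.e.\ the squared singular values of $\bm{V}$), it follows that $\sigma_{i}(\bm{I}+\bm{V})=\sqrt{1+\mu_{i}}\geq 1$, which proves the first assertion of (b). Specializing to $i=1$ yields $\|\bm{I}+\bm{V}\|_{2}^{2}=1+\mu_{1}=1+\|\bm{V}\|_{2}^{2}$, and taking the reciprocal of the smallest singular value yields $\|(\bm{I}+\bm{V})^{-1}\|_{2}=1/\sqrt{1+\mu_{N}}\leq 1$.

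For (c) I would use the resolvent identity $\bm{A}^{-1}-\bm{B}^{-1}=\bm{A}^{-1}(\bm{B}-\bm{A})\bm{B}^{-1}$ with $\bm{A}=\bm{I}+\bm{V}_{1}$ and $\bm{B}=\bm{I}+\bm{V}_{2}$, then apply (a) twice and the spectral bound $\|(\bm{I}+\bm{V}_{i})^{-1}\|_{2}\leq 1$ from (b) to conclude $\|(\bm{I}+\bm{V}_{1})^{-1}-(\bm{I}+\bm{V}_{2})^{-1}\|_{F}\leq \|\bm{V}_{1}-\bm{V}_{2}\|_{F}$. For (d), I would use that eigenvalues of a real skew-symmetric matrix are purely imaginary: the nonzero complex eigenvalues of $\bm{V}$ occur in conjugate pairs $\pm i\alpha$, each contributing the factor $(1+i\alpha)(1-i\alpha)=1+\alpha^{2}>0$ to $\det(\bm{I}+\bm{V})$, while any zero eigenvalues contribute $1$; hence $\det(\bm{I}+\bm{V})>0$.

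Finally, for (e), positivity of the determinant in (d) gives $\det(\bm{I}+\bm{V})=|\det(\bm{I}+\bm{V})|=\prod_{i=1}^{N}\sigma_{i}(\bm{I}+\bm{V})=\prod_{i=1}^{N}\sqrt{1+\mu_{i}}$. The upper bound follows since each factor is at most $\sqrt{1+\mu_{1}}=\sqrt{1+\|\bm{V}\|_{2}^{2}}$, and the lower bound follows since the largest factor already equals $\sqrt{1+\|\bm{V}\|_{2}^{2}}$ while the remaining $N-1$ factors are $\geq 1$. The only mildly nontrivial step is the identity $(\bm{I}+\bm{V})^{\T}(\bm{I}+\bm{V})=\bm{I}+\bm{V}^{\T}\bm{V}$; once this is noted, parts (b)--(e) reduce to elementary singular-value bookkeeping, and I do not foresee any genuine obstacle.
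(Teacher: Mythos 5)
Your proposal is correct, and for parts (a), (b), (c), and (e) it follows essentially the same route as the paper: the column-wise bound plus transposition for (a), the identity $(\bm{I}+\bm{V})^{\T}(\bm{I}+\bm{V})=\bm{I}+\bm{V}^{\T}\bm{V}$ giving $\sigma_{i}(\bm{I}+\bm{V})=\sqrt{1+\sigma_{i}^{2}(\bm{V})}$ for (b), the resolvent identity combined with (a) and (b) for (c), and the product-of-singular-values bookkeeping for (e). The only genuine divergence is in (d): the paper argues topologically, noting that $\det(\bm{I}+\cdot)$ is continuous and never zero on the connected set $Q_{N,N}$ and equals $1$ at $\bm{V}=\bm{0}$, whereas you argue spectrally, pairing the purely imaginary eigenvalues $\pm i\alpha$ of $\bm{V}$ so that $\det(\bm{I}+\bm{V})=\prod(1+\alpha^{2})>0$. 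Both are valid; your version is more self-contained in that it does not invoke connectedness of $Q_{N,N}$, but it does require the (standard, and elsewhere used in the paper) fact that real skew-symmetric matrices have purely imaginary eigenvalues occurring in conjugate pairs, while the paper's version reuses only the nonsingularity already established in (b).
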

\begin{proof}\footnote{For readers' convenience, we present a complete proof.}
  \ref{enum:norm_upper}
  Let
  $\bm{b}_{i} \in \mathbb{R}^{m}$
  be the $i$th column vector of
  $\bm{B}$.
  Then, it holds
  \begin{align}
    \|\bm{A}\bm{B}\|_{F}^{2}
    = \sum_{i=1}^{n} \|\bm{A}\bm{b}_{i}\|^{2}
    = \sum_{\bm{b}_{i}\neq \bm{0}} \left\|\bm{A}\frac{\bm{b}_{i}}{\|\bm{b}_{i}\|}\right\|^{2} \|\bm{b}_{i}\|^{2}
    \leq \sum_{i=1}^{n} \|\bm{A}\|_{2}^{2} \|\bm{b}_{i}\|^{2}
    = \|\bm{A}\|_{2}^{2}\|\bm{B}\|_{F}^{2},
  \end{align}
  where
  $\|\cdot\|$
  stands for the Euclidean norm for vectors.
  Thus, we have
  $\|\bm{A}\bm{B}\|_{F} \leq \|\bm{A}\|_{2}\|\bm{B}\|_{F}$.
  By taking the transpose of
  $\bm{A}\bm{B}$
  in the previous inequality, we have
  $\|\bm{A}\bm{B}\|_{F} = \|\bm{B}^{\T}\bm{A}^{\T}\|_{F} \leq \|\bm{B}^{\T}\|_{2}\|\bm{A}^{\T}\|_{F} = \|\bm{A}\|_{F}\|\bm{B}\|_{2}$.

  \ref{enum:IV_inv_norm}
  For
  $1\leq i \leq N$,
  let
  $\lambda_{i}(\bm{Y})$
  be the $i$th largest eigenvalue of a symmetric matrix
  $\bm{Y}\in \mathbb{R}^{N\times N}$.
  Then, we have the expression
  $\sigma_{i}(\bm{I}+\bm{V})=\sqrt{\lambda_{i}\left((\bm{I}+\bm{V})^{\T}(\bm{I}+\bm{V})\right)} = \sqrt{\lambda_{i}(\bm{I}+\bm{V}^{\T}\bm{V})} = \sqrt{1+\sigma_{i}^{2}(\bm{V})} \geq 1\ (1\leq i \leq N)$,
  which asserts
  $\|\bm{I}+\bm{V}\|_{2}^{2} = \sigma_{1}^{2}(\bm{I}+\bm{V}) = 1 + \sigma_{1}^{2}(\bm{V}) = 1 + \|\bm{V}\|_{2}^{2}$
  and
  $\|(\bm{I}+\bm{V})^{-1}\|_{2} = \sigma_{N}^{-1}(\bm{I}+\bm{V}) \leq 1$.

  \ref{enum:norm_Lipschitz}
  By (a) and (b),
  $\|(\bm{I}+\bm{V}_1)^{-1}-(\bm{I}+\bm{V}_2)^{-1}\|_{F}
  = \|(\bm{I}+\bm{V}_1)^{-1}((\bm{I}+\bm{V}_2)-(\bm{I}+\bm{V}_1))(\bm{I}+\bm{V}_2)^{-1}\|_{F} 
  \leq   \|(\bm{I}+\bm{V}_1)^{-1}\|_{2} \|(\bm{I}+\bm{V}_2)^{-1} \|_{2} \|\bm{V}_1 - \bm{V}_2 \|_{F} 
  \leq \|\bm{V}_{1} - \bm{V}_{2}\|_{F}$.

  \ref{enum:determinant_positive}
  The nonsingularity of
  $\bm{I}+\bm{V}$
  (see~\ref{enum:IV_inv_norm})
  yields
  $\det(\bm{I}+\bm{V})\neq 0$,
  and
  $\det(\bm{I}+\bm{0}) = 1$.
  Since
  $\det(\bm{I} + \cdot)$
  is continuous and
  $Q_{N,N}$
  is connected,
  $\det(\bm{I}+\bm{V})$
  is a positive-valued.

  \ref{enum:norm_determinant}
  Let
  $\bm{I}+\bm{V}=\bm{Q}_{1}\bm{\Sigma}\bm{Q}_{2}^{\T}$
  be a singular value decomposition with
  $\bm{Q}_{1},\bm{Q}_{2} \in {\rm O}(N)$
  and a nonnegative diagonal matrix
  $\bm{\Sigma}\in \mathbb{R}^{N\times N}$.
  Then, we obtain
  $|\det(\bm{I}+\bm{V})| = |\det(\bm{Q}_{1}\bm{\Sigma}\bm{Q}_{2}^{\
  T})| = \det(\bm{\Sigma}) = \prod_{i=1}^{N} \sigma_{i}(\bm{I}+\bm{V})$,
  implying thus
  $\det(\bm{I}+\bm{V}) = \prod_{i=1}^{N} \sigma_{i}(\bm{I}+\bm{V})$
  by~\ref{enum:determinant_positive}.
  Moreover by~\ref{enum:IV_inv_norm}, we have
  $\det(\bm{I}+\bm{V}) \geq \sigma_{1}(\bm{I}+\bm{V}) = \|\bm{I}+\bm{V}\|_{2} = \sqrt{1+\|\bm{V}\|_{2}^{2}}$
  and
  $\det(\bm{I}+\bm{V}) \leq \sigma_{1}^{N}(\bm{I}+\bm{V}) = \|\bm{I}+\bm{V}\|_{2}^{N} = (1+\|\bm{V}\|_{2}^{2})^{N/2}$.
\end{proof}

\begin{fact}[Derivative of matrix functions (see, e.g.,~{\cite[Appendix~D]{matrix_differential}})]\label{fact:matrix_differential}
  Let
  $D \subset \mathbb{R}$
  be an open interval.
  Then, the following hold:
  \begin{enumerate}[label=(\alph*)]
    \item \label{enum:product_rule}
      Let
      $\bm{X}:\mathbb{R}\to \mathbb{R}^{N \times M}$
      and
      $\bm{Y}:\mathbb{R}\to \mathbb{R}^{M\times L}$
      be differentiable on
      $D$.
      Then,
      \begin{equation}
        \frac{d}{dt}\bm{X}(t)\bm{Y}(t) = \left(\frac{d}{dt}\bm{X}(t)\right) \bm{Y}(t) + \bm{X}(t) \left( \frac{d}{dt}\bm{Y}(t) \right).
      \end{equation}
    \item \label{enum:inverse_rule}
      Let
      $\bm{X}:\mathbb{R}\to\mathbb{R}^{N\times N}$
      be differentiable and invertible on
      $D$.
      Then,
      \begin{equation}
        \frac{d}{dt}\bm{X}^{-1}(t) = - \bm{X}^{-1}(t)\left(\frac{d}{dt}\bm{X}(t)\right)\bm{X}^{-1}(t).
      \end{equation}
    \end{enumerate}
\end{fact}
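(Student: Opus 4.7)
The plan is to prove the two statements in Fact~\ref{fact:matrix_differential} by reducing matrix-valued calculus to the scalar product/quotient rules entry-wise. For part~\ref{enum:product_rule}, the approach is to write the $(i,j)$ entry of $\bm{X}(t)\bm{Y}(t)$ as the scalar sum $\sum_{k=1}^{M}[\bm{X}(t)]_{ik}[\bm{Y}(t)]_{kj}$. Each summand is a product of two scalar functions that are differentiable on $D$ by hypothesis (entry-wise differentiability of a matrix-valued function is equivalent to differentiability of the matrix-valued function with respect to the Frobenius norm, since all norms on $\mathbb{R}^{N\times M}$ are equivalent). Applying the classical Leibniz rule to each summand and linearity of differentiation yields
\begin{equation}
\frac{d}{dt}\left(\sum_{k=1}^{M}[\bm{X}(t)]_{ik}[\bm{Y}(t)]_{kj}\right)
= \sum_{k=1}^{M}\left([\bm{X}'(t)]_{ik}[\bm{Y}(t)]_{kj} + [\bm{X}(t)]_{ik}[\bm{Y}'(t)]_{kj}\right),
\end{equation}
and the right-hand side is exactly the $(i,j)$ entry of $\bm{X}'(t)\bm{Y}(t)+\bm{X}(t)\bm{Y}'(t)$, so repackaging the entries recovers the claimed matrix identity.

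For part~\ref{enum:inverse_rule}, I would first argue that $\bm{X}^{-1}(t)$ is differentiable on $D$. Since $\bm{X}(t)$ is invertible on the open set $D$ and the map $\bm{A}\mapsto \bm{A}^{-1}$ is smooth on the open subset of $\mathbb{R}^{N\times N}$ consisting of invertible matrices (its entries are rational functions of the entries of $\bm{A}$, with the determinant in the denominator), the chain rule gives differentiability of $t\mapsto \bm{X}^{-1}(t)$. Alternatively, one may use Cramer's rule to exhibit each entry of $\bm{X}^{-1}(t)$ as a rational function of differentiable scalar functions with nonvanishing denominator $\det(\bm{X}(t))$. Then I would differentiate the identity $\bm{X}(t)\bm{X}^{-1}(t)=\bm{I}$ using part~\ref{enum:product_rule}, obtaining
\begin{equation}
\bm{X}'(t)\bm{X}^{-1}(t) + \bm{X}(t)\frac{d}{dt}\bm{X}^{-1}(t) = \bm{0}.
\end{equation}
Left-multiplying by $\bm{X}^{-1}(t)$ and isolating the desired derivative yields
\begin{equation}
\frac{d}{dt}\bm{X}^{-1}(t) = -\bm{X}^{-1}(t)\bm{X}'(t)\bm{X}^{-1}(t),
\end{equation}
which is the claim.

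The only subtle point, and really the only non-routine step, is the preliminary justification that $\bm{X}^{-1}(t)$ is differentiable in part~\ref{enum:inverse_rule}: without this, differentiating the identity $\bm{X}(t)\bm{X}^{-1}(t)=\bm{I}$ is not permitted. Everything else is a direct consequence of scalar calculus applied componentwise. Because this fact is entirely standard and stated in the paper only as a citable tool (the reference \cite{matrix_differential} is included for that purpose), I would present the argument compactly and refer to the cited source rather than belaboring the componentwise bookkeeping.
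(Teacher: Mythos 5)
Your proof is correct; note that the paper itself gives no proof of this Fact, deferring entirely to the cited reference, so there is no in-paper argument to compare against. Your entry-wise Leibniz argument for part (a) and the differentiation of $\bm{X}(t)\bm{X}^{-1}(t)=\bm{I}$ for part (b) are the standard route, and you correctly isolate the one genuinely necessary preliminary step, namely establishing the differentiability of $t\mapsto\bm{X}^{-1}(t)$ (via Cramer's rule or smoothness of matrix inversion on the general linear group) before the identity may be differentiated.
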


\begin{fact}[The Schur complement formula~{\cite[Sec.~0.8.5]{H12}}]~\label{fact:Schur}
  Let
  $\dbra{\bm{X}}_{22} \in \mathbb{R}^{(N-p)\times (N-p)}$
  be a nonsingular block matrix of
  $\bm{X}\in \mathbb{R}^{N\times N}$.
  Define a Schur complement matrix of
  $\bm{X}$
  by
  $\bm{M}:= \dbra{\bm{X}}_{11} - \dbra{\bm{X}}_{12}\dbra{\bm{X}}_{22}^{-1}\dbra{\bm{X}}_{21}$.
  Then,
  $\bm{M}$
  is nonsingular if and only if
  $\bm{X}$
  is nonsingular, and the inversion
  $\bm{X}^{-1}$
  can be expressed as
  \begin{equation}
    \bm{X}^{-1} =
    \begin{bmatrix}
      \bm{M}^{-1} & -\bm{M}^{-1}\dbra{\bm{X}}_{12}\dbra{\bm{X}}_{22}^{-1} \\
      -\dbra{\bm{X}}_{22}^{-1}\dbra{\bm{X}}_{21}\bm{M}^{-1} & \dbra{\bm{X}}_{22}^{-1}+\dbra{\bm{X}}_{22}^{-1}\dbra{\bm{X}}_{21}\bm{M}^{-1}\dbra{\bm{X}}_{12}\dbra{\bm{X}}_{22}^{-1}
    \end{bmatrix}.
  \end{equation}
  Moreover, it holds
  $\det(\bm{X}) = \det(\dbra{\bm{X}}_{22})\det(\bm{M})$.
\end{fact}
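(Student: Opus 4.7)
The plan is to establish a block LDU factorization of $\bm{X}$ and read off all three claims (the determinant identity, the invertibility equivalence, and the explicit formula for $\bm{X}^{-1}$) directly from it. Concretely, I would first verify the identity
\begin{equation}
  \bm{X} = \begin{bmatrix} \bm{I}_{p} & \dbra{\bm{X}}_{12}\dbra{\bm{X}}_{22}^{-1} \\ \bm{0} & \bm{I}_{N-p} \end{bmatrix}
  \begin{bmatrix} \bm{M} & \bm{0} \\ \bm{0} & \dbra{\bm{X}}_{22} \end{bmatrix}
  \begin{bmatrix} \bm{I}_{p} & \bm{0} \\ \dbra{\bm{X}}_{22}^{-1}\dbra{\bm{X}}_{21} & \bm{I}_{N-p} \end{bmatrix} =: \bm{L}\bm{D}\bm{U}
\end{equation}
by multiplying out the right-hand side: the $(1,1)$ block becomes $\bm{M} + \dbra{\bm{X}}_{12}\dbra{\bm{X}}_{22}^{-1}\dbra{\bm{X}}_{21} = \dbra{\bm{X}}_{11}$ (by the definition of $\bm{M}$), and the remaining three blocks reduce to $\dbra{\bm{X}}_{12}$, $\dbra{\bm{X}}_{21}$, and $\dbra{\bm{X}}_{22}$ respectively.

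Once this factorization is in hand, the determinant identity is immediate: $\bm{L}$ and $\bm{U}$ are block unit triangular so $\det(\bm{L}) = \det(\bm{U}) = 1$, while $\bm{D}$ is block diagonal with $\det(\bm{D}) = \det(\bm{M})\det(\dbra{\bm{X}}_{22})$, yielding $\det(\bm{X}) = \det(\dbra{\bm{X}}_{22})\det(\bm{M})$. For the equivalence between nonsingularity of $\bm{X}$ and $\bm{M}$, observe that $\bm{L}$ and $\bm{U}$ are always invertible (their determinants equal $1$), so $\bm{X}$ is invertible if and only if $\bm{D}$ is, which, given that $\dbra{\bm{X}}_{22}$ is invertible by assumption, holds if and only if $\bm{M}$ is invertible.

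For the explicit formula, I would write $\bm{X}^{-1} = \bm{U}^{-1}\bm{D}^{-1}\bm{L}^{-1}$, using
\begin{equation}
  \bm{L}^{-1} = \begin{bmatrix} \bm{I}_{p} & -\dbra{\bm{X}}_{12}\dbra{\bm{X}}_{22}^{-1} \\ \bm{0} & \bm{I}_{N-p} \end{bmatrix},\quad
  \bm{U}^{-1} = \begin{bmatrix} \bm{I}_{p} & \bm{0} \\ -\dbra{\bm{X}}_{22}^{-1}\dbra{\bm{X}}_{21} & \bm{I}_{N-p} \end{bmatrix},\quad
  \bm{D}^{-1} = \diag(\bm{M}^{-1},\dbra{\bm{X}}_{22}^{-1}),
\end{equation}
and expand. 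The $(1,1)$, $(1,2)$, and $(2,1)$ blocks come out directly as $\bm{M}^{-1}$, $-\bm{M}^{-1}\dbra{\bm{X}}_{12}\dbra{\bm{X}}_{22}^{-1}$, and $-\dbra{\bm{X}}_{22}^{-1}\dbra{\bm{X}}_{21}\bm{M}^{-1}$, while the $(2,2)$ block yields $\dbra{\bm{X}}_{22}^{-1} + \dbra{\bm{X}}_{22}^{-1}\dbra{\bm{X}}_{21}\bm{M}^{-1}\dbra{\bm{X}}_{12}\dbra{\bm{X}}_{22}^{-1}$, matching the stated formula.

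There is no conceptual obstacle here; the entire argument is a bookkeeping verification of block matrix products. The only point requiring any care is checking the $(2,2)$ block of $\bm{X}^{-1}$, since it is the unique entry that mixes all three factors nontrivially; all other blocks involve a cancellation or a single multiplication that is transparent from the triangular structure.
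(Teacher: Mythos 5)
Your proposal is correct and complete: the block LDU factorization $\bm{X}=\bm{L}\bm{D}\bm{U}$ you write down is easily verified, and the determinant identity, the nonsingularity equivalence, and the explicit inverse all follow exactly as you describe. The paper itself offers no proof of this statement -- it is quoted as a known fact with a citation to Horn and Johnson, Sec.~0.8.5 -- and the argument you give is precisely the standard textbook derivation, so there is nothing to reconcile between your route and the paper's.
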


\begin{fact}[The Sherman-Morrison-Woodbury formular~{\cite[Sec.~0.7.4]{H12}}]~\label{fact:SMW}
  For nonsingular matrices
  $\bm{A} \in \mathbb{R}^{N\times N}$,
  $\bm{R} \in \mathbb{R}^{p\times p}$,
  and rectangular matrices
  $\bm{X} \in \mathbb{R}^{N\times p}$,
  $\bm{Y} \in \mathbb{R}^{p\times N}$,
  let
  $\bm{B} = \bm{A} + \bm{X}\bm{R}\bm{Y} \in \mathbb{R}^{N\times N}$.
  If
  $\bm{B}$
  and
  $\bm{R}^{-1} + \bm{Y}\bm{A}^{-1}\bm{X}$
  are nonsingular, then
  \begin{equation}
    \bm{B}^{-1} = \bm{A}^{-1} - \bm{A}^{-1}\bm{X}(\bm{R}^{-1} + \bm{Y}\bm{A}^{-1}\bm{X})^{-1}\bm{Y}\bm{A}^{-1}.
  \end{equation}
\end{fact}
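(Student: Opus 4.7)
The plan is to verify the formula by straightforward substitution: I will set $\bm{C} := \bm{A}^{-1} - \bm{A}^{-1}\bm{X}(\bm{R}^{-1} + \bm{Y}\bm{A}^{-1}\bm{X})^{-1}\bm{Y}\bm{A}^{-1}$ and show $\bm{B}\bm{C} = \bm{I}$. Since $\bm{B}$ is assumed invertible, the equality $\bm{B}\bm{C} = \bm{I}$ forces $\bm{C} = \bm{B}^{-1}$, yielding the claimed formula. Note that the hypothesis that $\bm{R}^{-1} + \bm{Y}\bm{A}^{-1}\bm{X}$ is nonsingular is precisely what is needed to make $\bm{C}$ well-defined, and the hypothesis on $\bm{B}$ upgrades the one-sided inverse into the full inverse.

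First I would expand $\bm{B}\bm{C} = (\bm{A} + \bm{X}\bm{R}\bm{Y})\bm{C}$ into four terms. Two of them combine immediately to give $\bm{I}$, and I would factor the remaining two so that $\bm{X}$ appears on the left and $\bm{Y}\bm{A}^{-1}$ on the right. Writing $\bm{Z} := (\bm{R}^{-1} + \bm{Y}\bm{A}^{-1}\bm{X})^{-1}$ for brevity, this reduces the identity $\bm{B}\bm{C} = \bm{I}$ to the inner equality $\bm{R} - \bm{Z} - \bm{R}\bm{Y}\bm{A}^{-1}\bm{X}\bm{Z} = \bm{0}$.

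The last step uses only the definition of $\bm{Z}$: from $\bm{Z}^{-1} = \bm{R}^{-1} + \bm{Y}\bm{A}^{-1}\bm{X}$, right-multiplying by $\bm{Z}$ yields $\bm{I} = \bm{R}^{-1}\bm{Z} + \bm{Y}\bm{A}^{-1}\bm{X}\bm{Z}$, so $\bm{Y}\bm{A}^{-1}\bm{X}\bm{Z} = \bm{I} - \bm{R}^{-1}\bm{Z}$. Substituting into the inner equality gives $\bm{R} - \bm{Z} - \bm{R}(\bm{I} - \bm{R}^{-1}\bm{Z}) = \bm{R} - \bm{Z} - \bm{R} + \bm{Z} = \bm{0}$, which completes the verification. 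There is no conceptual difficulty; the only thing to watch is the bookkeeping of non-commutative products, so that every cancellation respects the multiplicative order. Since the formula is classical and the stated hypotheses guarantee that every inverse appearing in the proof is well-defined, no further subtleties arise.
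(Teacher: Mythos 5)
Your verification is correct. The paper itself offers no proof of this statement: it is recorded as Fact~\ref{fact:SMW} with a citation to Horn and Johnson, so there is no argument in the paper to compare against. Your direct check --- defining the candidate $\bm{C}$, expanding $\bm{B}\bm{C}$, factoring the non-identity terms as $\bm{X}(\,\cdot\,)\bm{Y}\bm{A}^{-1}$, and reducing to the inner identity $\bm{R}-\bm{Z}-\bm{R}\bm{Y}\bm{A}^{-1}\bm{X}\bm{Z}=\bm{0}$ via $\bm{Y}\bm{A}^{-1}\bm{X}\bm{Z}=\bm{I}-\bm{R}^{-1}\bm{Z}$ --- is the standard textbook derivation and is sound. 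Two cosmetic remarks: of the four terms in the expansion, only $\bm{A}\bm{A}^{-1}$ gives $\bm{I}$ and the remaining \emph{three} must be grouped (not ``two combine to give $\bm{I}$'' with ``the remaining two'' factored); and since $\bm{B}$ and $\bm{C}$ are square, $\bm{B}\bm{C}=\bm{I}$ already forces $\bm{C}=\bm{B}^{-1}$ without separately invoking the nonsingularity hypothesis on $\bm{B}$, though using it as you do is of course harmless.
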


\section{Retraction-based strategy for optimization over $\St(p,N)$ }\label{appendix:retraction}
We summarize a standard strategy for optimization over
$\St(p,N)$.
\begin{definition}[Retraction~\cite{manifold_book}]\label{definition:retraction}
  The set of mappings
  $R_{\bm{U}}:T_{\bm{U}}\St(p,N)\to \St(p,N):\bm{\mathcal{D}}\mapsto  R_{\bm{U}}(\bm{\mathcal{D}})$
  defined at each
  $\bm{U}\in \St(p,N)$
  is called a retraction of
  $\St(p,N)$
  if it satisfies (i)~$R_{\bm{U}}(\bm{0}) = \bm{U}$;
  (ii)~$\left.\frac{d}{dt}\right|_{t=0} R_{\bm{U}}(t\bm{\mathcal{D}}) = \bm{\mathcal{D}}$
  for all
  $\bm{U}\in \St(p,N)$
  and
  $\bm{\mathcal{D}}\in T_{\bm{U}}\St(p,N)$.
\end{definition}
Retractions serve as certain approximations of \textit{the exponential mapping}
$\mathop{\mathrm{Exp}}_{\bm{U}}$\footnote{
  The exponential mapping $\mathop{\mathrm{Exp}}_{\bm{U}}:T_{\bm{U}}\St(p,N)\to\St(p,N)$ at $\bm{U}\in\St(p,N)$ is defined as a mapping that assigns a given direction
  $\bm{\mathcal{D}} \in T_{\bm{U}}\St(p,N)$
  to a point on the geodesic of
  $\St(p,N)$
  with the initial velocity
  $\bm{\mathcal{D}}$.
  The exponential mapping is also a special instance of retractions of
  $\St(p,N)$.
  However, due to its high computational complexity,
  computationally simpler retractions have been used extensively for Problem~\ref{problem:origin}~\cite{manifold_book}.}.
Many examples of retractions for
$\St(p,N)$
are known, e.g., with QR decomposition, with polar decomposition and with the Euclidean projection~\cite{manifold_book,T08,A12} as well as with the Cayley transform~\cite{T08,W13}.

In the view that
$\St(p,N)$
is a Riemannian manifold, Problem~\ref{problem:origin} has been tackled with retractions as an application of the standard strategies for optimization defined over Riemannian manifold.
In such a strategy for
$\St(p,N)$
based on a retraction~\cite{manifold_book,E98,N02,N05,A07,T08,A12,R12,W13,H15,J15,M15,S15,Z17,K18}, the computation for updating the estimate
$\bm{U}_{n}\in \St(p,N)$
to
$\bm{U}_{n+1}\in\St(p,N)$
at
$n$th iteration is decomposed into:
(i) determine a search direction
$\bm{\mathcal{D}}_{n} \in T_{\bm{U}_{n}}\St(p,N)$;
(ii) assign
$R_{\bm{U}_{n}}(\bm{\mathcal{D}}_{n}) = R_{\bm{U}_{n}}(\bm{0}+\bm{\mathcal{D}}_{n}) \in \St(p,N)$
to a new estimate
$\bm{U}_{n+1}$.
Along this strategy, optimization algorithms designed originally over a single vector space have been extended to those designed over tangent spaces, to
$\St(p,N)$,
by using additional tools, e.g., {\it a vector transport}~\cite{manifold_book} and the inversion mapping of retractions~\cite{Zhu-Sato20}, if necessary.
Such extensions have been made for many schemes, e.g., the gradient descent method~\cite{E98,N02,N05,T08}, the conjugate gradient method~\cite{R12,S15,Z17,Zhu-Sato20}, Newton's method~\cite{E98,M15}, quasi-Newton's method~\cite{R12, H15}, the Barzilai–Borwein method~\cite{W13,J15} and the trust-region method~\cite{A07,K18}.

\section{Proof of Proposition~\ref{proposition:inverse}}\label{appendix:inverse}
The second equality in~\eqref{eq:Cayley_inv_alt} is verified by
$(\bm{I}-\bm{V})(\bm{I}+\bm{V})^{-1}
= (2\bm{I} - (\bm{I}+\bm{V}))(\bm{I}+\bm{V})^{-1}
= 2(\bm{I}+\bm{V})^{-1} - \bm{I}$.
Fact~\ref{fact:Schur} and
$\dbra{\bm{I}+\bm{V}}_{22} = \bm{I}_{N-p}$
guarantee the  non-singularity of
$\bm{M} := \bm{I}_{p} + \dbra{\bm{V}}_{11} + \dbra{\bm{V}}_{21}^{\T}\dbra{\bm{V}}_{21}$
and
\begin{equation}
  (\bm{I}+\bm{V})^{-1} =
  \begin{bmatrix}
    \bm{M}^{-1} & \bm{M}^{-1}\dbra{\bm{V}}_{21}^{\T} \\
    -\dbra{\bm{V}}_{21}\bm{M}^{-1} & \bm{I}_{N-p} - \dbra{\bm{V}}_{21}\bm{M}^{-1}\dbra{\bm{V}}_{21}^{\T}
  \end{bmatrix}\label{eq:IV_inv}
\end{equation}
which implies
  $(\bm{I}+\bm{V})^{-1}\bm{I}_{N\times p} = \begin{bmatrix} \bm{I}_{p} &-\dbra{\bm{V}}_{21}^{\T} \end{bmatrix}^{\T}\bm{M}^{-1}$
and the expressions of
$\Xi\circ\varphi_{\bm{S}}^{-1}$ 
in~\eqref{eq:Cayley_inv}.

In the following, we will show
$\Phi_{\bm{S}}^{-1}=\Upsilon_{\bm{S}}:=\Xi\circ\varphi_{\bm{S}}^{-1}$
on
$Q_{N,p}(\bm{S})$
by dividing 4 steps.

\underline{\bf (I) Proof of
$\Upsilon_{\bm{S}}(Q_{N,p}(\bm{S})):= \left\{\Upsilon_{\bm{S}}(\bm{V}) \mid \bm{V}\in Q_{N,p}(\bm{S}) \right\} \subset \St(p,N)\setminus E_{N,p}(\bm{S})$.}
For every
$\bm{V} \in Q_{N,p}(\bm{S})$,
\eqref{eq:commute} ensures
\begin{align}
  \Upsilon_{\bm{S}}(\bm{V})^{\T}\Upsilon_{\bm{S}}(\bm{V})
  & = \bm{I}_{N\times p}^{\T} (\bm{I}+\bm{V})^{-\T}(\bm{I}-\bm{V})^{\T}\bm{S}^{\T}\bm{S}(\bm{I}-\bm{V})(\bm{I}+\bm{V})^{-1}\bm{I}_{N\times p} \\
  & = \bm{I}_{N\times p}^{\T}(\bm{I}-\bm{V})^{-1}(\bm{I}+\bm{V})(\bm{I}+\bm{V})^{-1}(\bm{I}-\bm{V})\bm{I}_{N\times p}
  = \bm{I}_{p},
\end{align}
thus
$\Upsilon_{\bm{S}}(\bm{V}) \in \St(p,N)$.
$\Upsilon_{\bm{S}}(\bm{V}) \not\in E_{N,p}(\bm{S})$
is confirmed by the expression in~\eqref{eq:Cayley_inv}, i.e.,
\begin{align}
  & \bm{I}_{p} + \bm{S}_{\rm le}^{\T}\Upsilon_{\bm{S}}(\bm{V})
   = \bm{I}_{p} + \bm{S}_{\rm le}^{\T}(2(\bm{S}_{\rm le} - \bm{S}_{\rm ri}\dbra{\bm{V}}_{21})\bm{M}^{-1} - \bm{S}_{\rm le}) \\
  & = \bm{I}_{p} + 2\bm{M}^{-1} - \bm{I}_{p}    = 2\bm{M}^{-1}, \ (\because \bm{S}_{\rm le}^{\T}\bm{S}_{\rm le} =\bm{I}_{p} \ {\rm and}\ \bm{S}_{\rm le}^{\T}\bm{S}_{\rm ri} = \bm{0}\ \textrm{from}\ \bm{S}^{\T}\bm{S}=\bm{I})\label{eq:block_inverse}
\end{align}
and
$\det(\bm{I}_{p}+\bm{S}_{\rm le}^{\T}\Upsilon_{\bm{S}}(\bm{V})) = 2^p/\det(\bm{M}) \neq 0$.

\underline{\bf (II) Proof of
$\Upsilon_{\bm{S}} \circ \Phi_{\bm{S}}(\bm{U})=\bm{U}\ (\bm{U}\in \St(p,N)\setminus E_{N,p}(\bm{S}))$.}
Let
$\bm{U} \in \St(p,N)\setminus E_{N,p}(\bm{S})$
and
$\bm{V} := \Phi_{\bm{S}}(\bm{U})$
in~\eqref{eq:Cayley}.
Then, by
\begin{align}
  & \bm{I}_{p} + \bm{A}_{\bm{S}}(\bm{U}) + \bm{B}^{\T}_{\bm{S}}(\bm{U})\bm{B}_{\bm{S}}(\bm{U}) \\
  & = \bm{I}_{p} +  2(\bm{I}_{p}+\bm{S}_{\rm le}^{\T}\bm{U})^{-\T}\Skew(\bm{U}^{\T}\bm{S}_{\rm le})(\bm{I}_{p}+\bm{S}_{\rm le}^{\T}\bm{U})^{-1}
  + (\bm{I}_{p}+\bm{S}_{\rm le}^{\T}\bm{U})^{-\T}\bm{U}^{\T}\bm{S}_{\rm ri}\bm{S}_{\rm ri}^{\T}\bm{U}(\bm{I}_{p}+\bm{S}_{\rm le}^{\T}\bm{U})^{-1} \\
  & = (\bm{I}_{p}+\bm{S}_{\rm le}^{\T}\bm{U})^{-\T}\left( (\bm{I}_{p}+\bm{S}_{\rm le}^{\T}\bm{U})^{\T}(\bm{I}_{p}+\bm{S}_{\rm le}^{\T}\bm{U})
  + (\bm{U}^{\T}\bm{S}_{\rm le} -\bm{S}_{\rm le}^{\T}\bm{U}) + \bm{U}^{\T}\bm{S}_{\rm ri}\bm{S}_{\rm ri}^{\T}\bm{U}\right )(\bm{I}_{p}+\bm{S}_{\rm le}^{\T}\bm{U})^{-1} \\
  & = (\bm{I}_{p}+\bm{S}_{\rm le}^{\T}\bm{U})^{-\T}(\bm{I}_{p}+2\bm{U}^{\T}\bm{S}_{\rm le} + \bm{U}^{\T}\bm{S}_{\rm le}\bm{S}_{\rm le}^{\T}\bm{U} + \bm{U}^{\T}\bm{S}_{\rm ri}\bm{S}_{\rm ri}^{\T}\bm{U})(\bm{I}_{p}+\bm{S}_{\rm le}^{\T}\bm{U})^{-1}\\
  & = (\bm{I}_{p}+\bm{U}^{\T}\bm{S}_{\rm le})^{-1}(2\bm{I}_{p}+2\bm{U}^{\T}\bm{S}_{\rm le})(\bm{I}_{p}+\bm{S}_{\rm le}^{\T}\bm{U})^{-1}
  = 2(\bm{I}_{p}+\bm{S}_{\rm le}^{\T}\bm{U})^{-1},\ 
  (\because \bm{S}\bm{S}^{\T}= \bm{S}_{\rm le}\bm{S}_{\rm le}^{\T} + \bm{S}_{\rm ri}\bm{S}_{\rm ri}^{\T} =\bm{I})
\end{align}
we deduce with~\eqref{eq:Cayley_inv}
\begin{align}
  & \Upsilon_{\bm{S}}(\bm{V})
   = 2(\bm{S}_{\rm le} - \bm{S}_{\rm ri}\bm{B}_{\bm{S}}(\bm{U}))(\bm{I}_{p}+\bm{A}_{\bm{S}}(\bm{U})+\bm{B}^{\T}_{\bm{S}}(\bm{U})\bm{B}_{\bm{S}}(\bm{U}))^{-1} - \bm{S}_{\rm le} \\
  & = \left( \bm{S}_{\rm le} + \bm{S}_{\rm ri}\bm{S}_{\rm ri}^{\T}\bm{U}(\bm{I}_{p}+\bm{S}_{\rm le}^{\T}\bm{U})^{-1}\right )(\bm{I}_{p}+\bm{S}_{\rm le}^{\T}\bm{U}) - \bm{S}_{\rm le}
  = (\bm{S}_{\rm le}\bm{S}_{\rm le}^{\T}+\bm{S}_{\rm ri}\bm{S}_{\rm ri}^{\T})\bm{U}
   = \bm{U}.
\end{align}

\underline{\bf (III) Proof of
$\Phi_{\bm{S}}\circ \Upsilon_{\bm{S}}(\bm{V}) = \bm{V}\ (\bm{V}\in Q_{N,p}(\bm{S}))$.}
Let
$\bm{V} \in Q_{N,p}(\bm{S})$
and
$\bm{U}:= \Upsilon_{\bm{S}}(\bm{V}) \overset{\eqref{eq:Cayley_inv}}{=}  2(\bm{S}_{\rm le}-\bm{S}_{\rm ri}\dbra{\bm{V}}_{21})\bm{M}^{-1} - \bm{S}_{\rm le}$
with
$\bm{M}:=\bm{I}_{p}+\dbra{\bm{V}}_{11}+\dbra{\bm{V}}_{21}^{\T}\dbra{\bm{V}}_{21}$.
It suffices to show
$\bm{A}_{\bm{S}}\circ \Upsilon_{\bm{S}}(\bm{V}) = \dbra{\bm{V}}_{11}$
and
$\bm{B}_{\bm{S}}\circ \Upsilon_{\bm{S}}(\bm{V}) = \dbra{\bm{V}}_{21}$.
Then, by the definition of
$\Phi_{\bm{S}}$
in~\eqref{eq:Cayley},~\eqref{eq:Cay_A} and~\eqref{eq:Cay_B}, and by
\begin{align}
  \bm{S}_{\rm le}^{\T}\bm{U}
  & = \bm{S}_{\rm le}^{\T}\left( 2(\bm{S}_{\rm le} - \bm{S}_{\rm ri}\dbra{\bm{V}}_{21})\bm{M}^{-1} - \bm{S}_{\rm le}\right )
  = 2\bm{M}^{-1} - \bm{I}_{p}  \ (\because \bm{S}_{\rm le}^{\T}\bm{S}_{\rm le} =\bm{I}_{p}, \ \bm{S}_{\rm le}^{\T}\bm{S}_{\rm ri} = \bm{0}) \\
  \bm{S}_{\rm ri}^{\T}\bm{U}
  & = \bm{S}_{\rm ri}^{\T}\left( 2(\bm{S}_{\rm le} - \bm{S}_{\rm ri}\dbra{\bm{V}}_{21})\bm{M}^{-1} - \bm{S}_{\rm le}\right )
   = -2\dbra{\bm{V}}_{21}\bm{M}^{-1}.\ (\because \bm{S}_{\rm ri}^{\T}\bm{S}_{\rm ri} =\bm{I}_{N-p}, \bm{S}_{\rm ri}^{\T}\bm{S}_{\rm le} = \bm{0}),
\end{align}
each block matrix in~\eqref{eq:Cayley} can be evaluated as
\begin{align}
  & \bm{A}_{\bm{S}}(\bm{U})
  = (\bm{I}_{p}+2\bm{M}^{-1}-\bm{I}_{p})^{-\T}\left( (2\bm{M}^{-1}-\bm{I}_{p})^{\T}- (2\bm{M}^{-1}-\bm{I}_{p})\right )(\bm{I}_{p}+2\bm{M}^{-1}-\bm{I}_{p})^{-1} \\
  & = 2^{-1}\bm{M}^{\T}( \bm{M}^{-\T}- \bm{M}^{-1}  )\bm{M} = 2^{-1} (\bm{M}-\bm{M}^{\T}) \\
  & = 2^{-1}\left( (\bm{I}_{p}+\dbra{\bm{V}}_{11}+\dbra{\bm{V}}_{21}^{\T}\dbra{\bm{V}}_{21}) - (\bm{I}_{p}+\dbra{\bm{V}}_{11}^{\T}+\dbra{\bm{V}}_{21}^{\T}\dbra{\bm{V}}_{21})\right )
  = \dbra{\bm{V}}_{11} \ (\because \dbra{\bm{V}}_{11}^{\T} = -\dbra{\bm{V}}_{11}), \\
  & \bm{B}_{\bm{S}}(\bm{U})
  = - (-2\dbra{\bm{V}}_{21}\bm{M}^{-1})(\bm{I}_{p}+2\bm{M}^{-1}-\bm{I}_{p})^{-1}
  = 2\dbra{\bm{V}}_{21}\bm{M}^{-1}(2\bm{M}^{-1})^{-1}
  = \dbra{\bm{V}}_{21},
\end{align}
which implies
$\Phi_{\bm{S}}\circ \Upsilon_{\bm{S}}(\bm{V}) = \bm{V}$.

\underline{\bf (IV) Proof of diffeomorphism of $\Phi_{\bm{S}}$ and $\Phi_{\bm{S}}^{-1}$.}
From (II) and (III), we have seen
$\Phi_{\bm{S}}^{-1}=\Upsilon_{\bm{S}}$,
and both
$\Phi_{\bm{S}}$
and
$\Phi_{\bm{S}}^{-1}$
are homeomorphic between their domains and images, and consist of finite numbers of matrix additions, matrix multiplications and matrix inversions, which are all smooth.
Therefore,
$\Phi_{\bm{S}}$
and
$\Phi_{\bm{S}}^{-1}$
are diffeomorphic between their domains and images.

\section{Proof of Theorem~\ref{theorem:dense}}\label{appendix:dense}
\ref{enum:St_Xi_O}
From the definition of
$\varphi_{\bm{S}}^{-1}$
in~\eqref{eq:ILCT_O},
$\Phi_{\bm{S}}^{-1}$
is the restriction of
$\Xi \circ \varphi_{\bm{S}}^{-1}$
to
$Q_{N,p}(\bm{S})$,
which implies
$\St(p,N)\setminus E_{N,p}(\bm{S}) \overset{Prop.~\ref{proposition:inverse}}{=} \Phi_{\bm{S}}^{-1}(Q_{N,p}(\bm{S})) = \Xi \circ \varphi_{\bm{S}}^{-1}(Q_{N,p}(\bm{S}))$.
Thus, it suffices to show for every
$\bm{V} \in Q_{N,N}(\bm{S})$
that there exists
$\widehat{\bm{V}} \in Q_{N,p}(\bm{S})$
satisfying
$\Phi_{\bm{S}}^{-1}(\widehat{\bm{V}}) = \Xi \circ \varphi_{\bm{S}}^{-1}(\bm{V})$,
which is verified by the following lemma.
\begin{lemma} \label{lemma:translation_V}
  Let
  $\bm{S} \in {\rm O}(N)$
  and
  $\bm{V} = \begin{bmatrix} \bm{A}  & -\bm{B}^{\T} \\ \bm{B} & \bm{C} \end{bmatrix} \in Q_{N,N}(\bm{S})$
  with
  $\bm{A}\in Q_{p,p}$,
  $\bm{B}\in \mathbb{R}^{(N-p)\times p}$,
  and
  $\bm{C}\in Q_{N-p,N-p}$.
  Define
  \begin{equation}
    \widehat{\bm{V}}
    := \begin{bmatrix} \widehat{\bm{A}}:=\bm{A} - \bm{B}^{\T}(\bm{I}_{N-p}+\bm{C})^{-\T}\bm{C}(\bm{I}_{N-p}+\bm{C})^{-1}\bm{B} & -\widehat{\bm{B}}^{\T} \\
      \widehat{\bm{B}}:=(\bm{I}_{N-p}+\bm{C})^{-1}\bm{B} & \bm{0}_{N-p}
    \end{bmatrix} \in \mathbb{R}^{N\times N}. \label{eq:Vhat}
  \end{equation}
  Then,
  $\widehat{\bm{V}} \in Q_{N,p}(\bm{S})$
  and
  $\Phi_{\bm{S}}^{-1}(\widehat{\bm{V}}) = \Xi\circ\varphi_{\bm{S}}^{-1}(\bm{V})$
\end{lemma}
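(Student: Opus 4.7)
\textbf{Proof plan for Lemma~\ref{lemma:translation_V}.}

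The plan is to reduce the claim to a statement about the first $p$ columns of $(\bm{I}+\bm{V})^{-1}$ and $(\bm{I}+\widehat{\bm{V}})^{-1}$, which can be compared via the Schur complement formula (Fact~\ref{fact:Schur}). First, I will verify $\widehat{\bm{V}} \in Q_{N,p}(\bm{S})$ by checking that $\widehat{\bm{A}}$ is skew-symmetric. Taking the transpose of the definition of $\widehat{\bm{A}}$ and using $\bm{A}^{\T} = -\bm{A}$ together with $\bm{C}^{\T} = -\bm{C}$ (hence $(\bm{I}+\bm{C})^{-\T} = (\bm{I}-\bm{C})^{-1}$), a direct calculation yields $\widehat{\bm{A}}^{\T} = -\widehat{\bm{A}}$. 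The other blocks of $\widehat{\bm{V}}$ already have the form required by~\eqref{eq:skew}.

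Second, I will reduce the identity $\Phi_{\bm{S}}^{-1}(\widehat{\bm{V}}) = \Xi\circ\varphi_{\bm{S}}^{-1}(\bm{V})$ to a comparison of first-$p$-column blocks. By~\eqref{eq:Cayley_inv_alt} (for $\Phi_{\bm{S}}^{-1}$) and by the alternative form of $\varphi_{\bm{S}}^{-1}$ in~\eqref{eq:ILCT_O}--\eqref{eq:inv_origin_Cayley} (which gives $\Xi\circ\varphi_{\bm{S}}^{-1}(\bm{V}) = 2\bm{S}(\bm{I}+\bm{V})^{-1}\bm{I}_{N\times p} - \bm{S}\bm{I}_{N\times p}$), the claim becomes
\begin{equation}
    (\bm{I}+\widehat{\bm{V}})^{-1}\bm{I}_{N\times p} = (\bm{I}+\bm{V})^{-1}\bm{I}_{N\times p}. \nonumber
\end{equation}

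Third, I will compute both sides with Fact~\ref{fact:Schur}, using $\dbra{\bm{I}+\bm{V}}_{22} = \bm{I}_{N-p}+\bm{C}$ (invertible by Lemma~\ref{lemma:norm_basic}~\ref{enum:IV_inv_norm}) and $\dbra{\bm{I}+\widehat{\bm{V}}}_{22} = \bm{I}_{N-p}$. The first $p$ columns of $(\bm{I}+\bm{V})^{-1}$ equal $[\bm{M}_{\bm{V}}^{-1};\ -(\bm{I}+\bm{C})^{-1}\bm{B}\bm{M}_{\bm{V}}^{-1}]$ with Schur complement $\bm{M}_{\bm{V}} := \bm{I}_{p}+\bm{A}+\bm{B}^{\T}(\bm{I}+\bm{C})^{-1}\bm{B}$, whereas the first $p$ columns of $(\bm{I}+\widehat{\bm{V}})^{-1}$ equal $[\bm{M}_{\widehat{\bm{V}}}^{-1};\ -\widehat{\bm{B}}\bm{M}_{\widehat{\bm{V}}}^{-1}]$ with $\bm{M}_{\widehat{\bm{V}}} := \bm{I}_{p}+\widehat{\bm{A}}+\widehat{\bm{B}}^{\T}\widehat{\bm{B}}$. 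Substituting the definitions of $\widehat{\bm{A}},\widehat{\bm{B}}$ and using the key identity
\begin{equation}
    (\bm{I}+\bm{C})^{-\T}(\bm{I}_{N-p}-\bm{C})(\bm{I}+\bm{C})^{-1} = (\bm{I}-\bm{C})^{-1}(\bm{I}-\bm{C})(\bm{I}+\bm{C})^{-1} = (\bm{I}+\bm{C})^{-1}, \nonumber
\end{equation}
which again exploits $\bm{C}^{\T}=-\bm{C}$, will yield $\bm{M}_{\widehat{\bm{V}}} = \bm{M}_{\bm{V}}$. Combined with $\widehat{\bm{B}} = (\bm{I}+\bm{C})^{-1}\bm{B}$, the two first-$p$-column blocks coincide, completing the proof.

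The main obstacle is purely bookkeeping: assembling the block structure and tracking transposes carefully when simplifying $\bm{M}_{\widehat{\bm{V}}}$. Once the identity $(\bm{I}+\bm{C})^{-\T} = (\bm{I}-\bm{C})^{-1}$ is recognized as the mechanism converting the $\bm{I}_{N-p}-\bm{C}$ numerator into a single factor of $(\bm{I}+\bm{C})^{-1}$, the remainder is a straightforward application of Fact~\ref{fact:Schur}.
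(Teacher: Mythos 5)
Your proposal is correct and follows essentially the same route as the paper's proof: skew-symmetry of $\widehat{\bm{A}}$ from $\bm{A}^{\T}=-\bm{A}$, $\bm{C}^{\T}=-\bm{C}$; reduction via the Schur complement formula (Fact~\ref{fact:Schur}) to the single identity $\bm{I}_{p}+\bm{A}+\bm{B}^{\T}(\bm{I}_{N-p}+\bm{C})^{-1}\bm{B}=\bm{I}_{p}+\widehat{\bm{A}}+\widehat{\bm{B}}^{\T}\widehat{\bm{B}}$; and the same key simplification $(\bm{I}_{N-p}+\bm{C})^{-\T}(\bm{I}_{N-p}-\bm{C})(\bm{I}_{N-p}+\bm{C})^{-1}=(\bm{I}_{N-p}+\bm{C})^{-1}$. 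No gaps.
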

\begin{proof}
  From the skew-symmetries of
  $\bm{A}$
  and
  $\bm{C}$,
  we have
  $\widehat{\bm{A}}^{\T} = \bm{A}^{\T} - \bm{B}^{\T}(\bm{I}_{N-p}+\bm{C})^{-\T}\bm{C}^{\T}(\bm{I}_{N-p}+\bm{C})^{-1}\bm{B} = -\bm{A} + \bm{B}^{\T}(\bm{I}_{N-p}+\bm{C})^{-\T}\bm{C}(\bm{I}_{N-p}+\bm{C})^{-1}\bm{B} = -\widehat{\bm{A}}$,
  thus
  $\widehat{\bm{V}} \in Q_{N,p}(\bm{S})$.

  By letting
  $\bm{M}:=\bm{I}_{p}+\bm{A}+\bm{B}^{\T}(\bm{I}_{N-p}+\bm{C})^{-1}\bm{B} \in \mathbb{R}^{p\times p}$,
  Fact~\ref{fact:Schur} yields
  \begin{equation}
    (\bm{I}+\bm{V})^{-1} =
    \begin{bmatrix}
      \bm{M}^{-1} & \bm{M}^{-1}\bm{B}^{\T}(\bm{I}_{N-p}+\bm{C})^{-1} \\
      -(\bm{I}_{N-p}+\bm{C})^{-1}\bm{B}\bm{M}^{-1} &
      (\bm{I}_{N-p}+\bm{C})^{-1} - (\bm{I}_{N-p}+\bm{C})^{-1}\bm{B}\bm{M}^{-1}\bm{B}^{\T}(\bm{I}_{N-p}+\bm{C})^{-1}
    \end{bmatrix}
  \end{equation}
  from the non-singularities of
  $\bm{I}+\bm{V}$
  and
  $\bm{I}_{N-p}+\bm{C}$
  (see Lemma~\ref{lemma:norm_basic}~\ref{enum:IV_inv_norm}).
  The expressions in~\eqref{eq:ILCT_O} and~\eqref{eq:inv_origin_Cayley} assert that
  \begin{align}
    & \Xi\circ\varphi_{\bm{S}}^{-1}(\bm{V})
    = \bm{S}(\bm{I}-\bm{V})(\bm{I}+\bm{V})^{-1}\bm{I}_{N\times p}
     = 2\bm{S}(\bm{I}+\bm{V})^{-1}\bm{I}_{N\times p} - \bm{S}\bm{I}_{N\times p}\\
    & = 2\bm{S}\begin{bmatrix} (\bm{I}_{p}+\bm{A}+\bm{B}^{\T}(\bm{I}_{N-p}+\bm{C})^{-1}\bm{B})^{-1} \\ -(\bm{I}_{N-p}+\bm{C})^{-1}\bm{B}(\bm{I}_{p}+\bm{A}+\bm{B}^{\T}(\bm{I}_{N-p}+\bm{C})^{-1}\bm{B})^{-1}\end{bmatrix} - \bm{S}\bm{I}_{N\times p}. \\
  \end{align}
  On the other hand, from~\eqref{eq:Cayley_inv}, we obtain
  \begin{equation}
    \Phi_{\bm{S}}^{-1}(\widehat{\bm{V}})
    = 2\bm{S} \begin{bmatrix}
      (\bm{I}_{p}+\widehat{\bm{A}} +\widehat{\bm{B}}^{\T}\widehat{\bm{B}})^{-1} \\
      - \widehat{\bm{B}} (\bm{I}_{p}+\widehat{\bm{A}} +\widehat{\bm{B}}^{\T}\widehat{\bm{B}})^{-1}
    \end{bmatrix} - \bm{S}\bm{I}_{N\times p}.
  \end{equation}
  Clearly to get
  $\Xi\circ\varphi_{\bm{S}}^{-1}(\bm{V}) = \Phi_{\bm{S}}^{-1}(\widehat{\bm{V}})$,
  it suffices to show
  $\bm{A}+\bm{B}^{\T}(\bm{I}_{N-p}+\bm{C})^{-1}\bm{B} = \widehat{\bm{A}} +\widehat{\bm{B}}^{\T}\widehat{\bm{B}}$
  because
  $(\bm{I}_{N-p}+\bm{C})^{-1}\bm{B} = \widehat{\bm{B}}$
  holds automatically by the definition of
  $\widehat{\bm{B}}$
  in~\eqref{eq:Vhat}.
  The equation
  $\bm{A}+\bm{B}^{\T}(\bm{I}_{N-p}+\bm{C})^{-1}\bm{B} = \widehat{\bm{A}} +\widehat{\bm{B}}^{\T}\widehat{\bm{B}}$
  is verified by
  $\bm{C}^{\T}=-\bm{C}$
  and by
  {
    \thickmuskip=0.0\thickmuskip
    \medmuskip=0.0\medmuskip
    \thinmuskip=0.0\thinmuskip
    \begin{align}
      & \widehat{\bm{A}} +\widehat{\bm{B}}^{\T}\widehat{\bm{B}}
      = \bm{A} - \bm{B}^{\T}(\bm{I}_{N-p}+\bm{C})^{-\T}\bm{C}(\bm{I}_{N-p}+\bm{C})^{-1}\bm{B} + \bm{B}^{\T}(\bm{I}_{N-p}+\bm{C})^{-\T}(\bm{I}_{N-p}+\bm{C})^{-1}\bm{B} \\
      & = \bm{A} +\bm{B}^{\T}(\bm{I}_{N-p}-\bm{C})^{-1}(\bm{I}_{N-p}-\bm{C})(\bm{I}_{N-p}+\bm{C})^{-1}\bm{B}
      = \bm{A} + \bm{B}^{\T}(\bm{I}_{N-p}+\bm{C})^{-1}\bm{B}.
    \end{align}
  }
\end{proof}
\ref{enum:dense}
(Openness)
By the continuity of
$g:\mathbb{R}^{N\times p}\to\mathbb{R}:\bm{X} \mapsto \det(\bm{I}_{p}+\bm{S}_{\rm le}^{\T}\bm{X})$,
the preimage
$g^{-1}(\{0\})$
is closed on
$\mathbb{R}^{N\times p}$.
Since
$E_{N,p}(\bm{S})=g^{-1}(\{0\}) \cap \St(p,N)$
is closed in
$\St(p,N)$,
$\St(p,N)\setminus E_{N,p}(\bm{S})$
is open in
$\St(p,N)$.

(Denseness)
It suffices to show, for every
$\bm{U} \in E_{N,p}(\bm{S})$,
there exists a sequence
$(\bm{U}_{n})_{n=0}^{\infty} \subset \St(p,N)\setminus E_{N,p}(\bm{S})$
such that
$\lim_{n\to \infty}\bm{U}_{n} = \bm{U}$.
Let
$\bm{U} = \Xi(\widetilde{\bm{U}}) \in E_{N,p}(\bm{S}) \subset \St(p,N)$
with
$\widetilde{\bm{U}}:= \begin{bmatrix} \bm{U} & \bm{U}_{\perp} \end{bmatrix} \in {\rm O}(N)$,
where
$\bm{U}_{\perp} \in \St(N-p,N)$
satisfies
$\bm{U}^{\T}\bm{U}_{\perp} = \bm{0}$.
Then,
$\Xi({\rm O}(N)\setminus E_{N,N}(\bm{S})) = \St(p,N)\setminus E_{N,p}(\bm{S})$
(see~\ref{enum:St_Xi_O})
ensures
$\widetilde{\bm{U}} \in E_{N,N}(\bm{S})$.
By using the denseness of
${\rm O}(N)\setminus E_{N,N}(\bm{S})$
in
${\rm O}(N,\bm{S})$ (see Fact~\ref{fact:dense}),
we can construct a sequence
$(\widetilde{\bm{U}}_{n})_{n=0}^{\infty} \subset {\rm O}(N)\setminus E_{N,N}(\bm{S})$
such that
$\lim_{n\to\infty} \widetilde{\bm{U}}_{n} = \widetilde{\bm{U}}$.
Moreover by defining
$(\bm{U}_{n})_{n=0}^{\infty} :=(\Xi(\widetilde{\bm{U}}_{n}))_{n=0}^{\infty} \subset \Xi({\rm O}(N)\setminus E_{N,N}(\bm{S})) \overset{\ref{enum:St_Xi_O}}{=} \St(p,N)\setminus E_{N,p}(\bm{S})$,
the continuity of
$\Xi$
yields
$\lim_{n\to\infty}\bm{U}_{n} = \lim_{n\to\infty}\Xi(\widetilde{\bm{U}}_{n}) = \Xi(\widetilde{\bm{U}}) = \bm{U}$.

\ref{enum:intersection_dense}
$\St(p,N) \setminus E_{N,p}(\bm{S}_{i})\ (i=1,2)$
are open dense subsets of
$\St(p,N)$
from Theorem~\ref{theorem:dense}~\ref{enum:dense}.
The openness of
$\Delta(\bm{S}_{1},\bm{S}_{2})$
is clear.
To show the denseness of
$\Delta(\bm{S}_{1},\bm{S}_{2})$
in
$\St(p,N)$,
choose
$\bm{U} \in \St(p,N)$
and
$\epsilon > 0$
arbitrarily.
By the open denseness of
$\St(p,N)\setminus E_{N,p}(\bm{S}_{1})$,
there exist
$\bm{U}_{1} \in B_{\St(p,N)}(\bm{U},\epsilon) \cap \St(p,N)\setminus E_{N,p}(\bm{S}_{1})$
and
$\epsilon_{1} > 0$
satisfying
$B_{\St(p,N)}(\bm{U}_{1},\epsilon_{1}) \subset B_{\St(p,N)}(\bm{U},\epsilon) \cap \St(p,N)\setminus E_{N,p}(\bm{S}_{1})$,
where
$B_{\St(p,N)}(\bm{U},\epsilon) := B_{\mathbb{R}^{N\times p}}(\bm{U},\epsilon) \cap \St(p,N)$.
The denseness of
$\St(p,N) \setminus E_{N,p}(\bm{S}_{2})$
in
$\St(p,N)$
yields the existence of
$\bm{U}_{2} \in B_{\St(p,N)}(\bm{U}_{1},\epsilon_{1}) \cap \St(p,N)\setminus E_{N,p}(\bm{S}_{2})$,
from which we obtain
$\bm{U}_{2} \in B_{\St(p,N)}(\bm{U}_{1},\epsilon_{1}) \cap \St(p,N)\setminus E_{N,p}(\bm{S}_{2}) \subset B_{\St(p,N)}(\bm{U},\epsilon) \cap \St(p,N)\setminus E_{N,p}(\bm{S}_{1}) \cap \St(p,N) \setminus E_{N,p}(\bm{S}_{2}) = B_{\St(p,N)}(\bm{U},\epsilon) \cap \Delta(\bm{S}_{1},\bm{S}_{2})$.

\ref{enum:characterize_singular}
From~\eqref{eq:block_inverse}, we have
$\bm{I}_{p}+\bm{S}_{\rm le}^{\T}\Phi_{\bm{S}}^{-1}(\bm{V}) = 2\bm{M}^{-1}$
for
$\bm{V}\in Q_{N,p}(\bm{S})$,
where
$\bm{M}:=\bm{I}_{p}+\dbra{\bm{V}}_{11}+\dbra{\bm{V}}_{21}^{\T}\dbra{\bm{V}}_{21} \in \mathbb{R}^{p\times p}$
is the Schur complement matrix of
$\bm{I}+\bm{V}\in \mathbb{R}^{N\times N}$.
Fact~\ref{fact:Schur} yields
$g(\bm{V}) = \det(2\bm{M}^{-1}) = 2^{p}(\det(\bm{M}))^{-1} = 2^{p}(\det(\bm{I}+\bm{V}))^{-1}$
due to
$\dbra{\bm{I}+\bm{V}}_{22} = \bm{I}_{N-p}$.
Lemma~\ref{lemma:norm_basic}~\ref{enum:determinant_positive} ensures
$g(\bm{V})>0$.
By Lemma~\ref{lemma:norm_basic}~\ref{enum:norm_determinant}, we have
$\det(\bm{I}+\bm{V}) \geq \sqrt{1 + \|\bm{V}\|_{2}^{2}} \to \infty$
as
$\|\bm{V}\|_{2} \to \infty$,
implying thus
$\lim_{\substack{\bm{V}\in Q_{N,p}(\bm{S}) \\ \|\bm{V}\|_{2}\to\infty}} g(\bm{V}) = 0$.

Assume that
$(\bm{V}_{n})_{n=0}^{\infty} \subset Q_{N,p}(\bm{S})$
satisfies
$\lim_{n\to \infty} g(\bm{V}_{n}) = 0$.
By
$0 < \det(\bm{I}+\bm{V}_{n}) \leq (1+\|\bm{V}_{n}\|_{2}^{2})^{N/2}$
in Lemma~\ref{lemma:norm_basic}~\ref{enum:norm_determinant}, we have
$g(\bm{V}_{n}) = 2^{p}(\det(\bm{I}+\bm{V}_{n}))^{-1} \geq 2^{p}/(1+\|\bm{V}_{n}\|_{2}^{2})^{N/2}$.
The assumption asserts
$\|\bm{V}_{n}\|_{2}\to \infty$
as
$n\to \infty$.

\section[On the choice of selection for the inversion of G-L$^2$CT in Proposition~\ref{proposition:inverse}]{On the choice of $\Xi:{\rm O}(N)\to \St(p,N)$ for $\Phi_{\bm{S}}^{-1}$ in Proposition~\ref{proposition:inverse}} \label{appendix:how_to_choice_projection}
For
$2p < N$,
let
$\bm{\mathfrak{U}} \in \St(p,N)$
and
$\bm{\mathfrak{U}}_{\perp} \in \St(N-p,N)$
satisfy
$\bm{\mathfrak{U}}^{\T}\bm{\mathfrak{U}}_{\perp} = \bm{0}$,
and
$\bm{\mathfrak{S}}:= [\bm{\mathfrak{U}}\ \bm{\mathfrak{U}}_{\perp}] \in {\rm O}(N)$.
From
$\bm{\mathfrak{U}} = \bm{\mathfrak{S}}\bm{I}_{N\times p}$,
we have
\begin{align}
  & (\bm{V} \in Q_{N,p}) \quad
  \Xi_{\langle\bm{\mathfrak{U}}\rangle}\circ \varphi^{-1}_{\bm{S}}(\bm{V})
  = \bm{S}(\bm{I}-\bm{V})(\bm{I}+\bm{V})^{-1}\bm{\mathfrak{S}}\bm{I}_{N\times p}
  = \bm{S}(\bm{I}-\bm{V})\bm{\mathfrak{S}}(\bm{I}+\bm{\mathfrak{S}}^{\T}\bm{V}\bm{\mathfrak{S}})^{-1}\bm{I}_{N\times p} \\
  & = \bm{S}\bm{\mathfrak{S}}(\bm{I}-\bm{\mathfrak{S}}^{\T}\bm{V}\bm{\mathfrak{S}})(\bm{I}+\bm{\mathfrak{S}}^{\T}\bm{V}\bm{\mathfrak{S}})^{-1}\bm{I}_{N\times p}
  = \Xi\circ\varphi_{\bm{S}\bm{\mathfrak{S}}}^{-1}(\bm{\mathfrak{S}}^{\T}\bm{V}\bm{\mathfrak{S}}). \label{eq:Xi_U}
\end{align}
From
$\bm{\mathfrak{S}}^{\T}\bm{V}\bm{\mathfrak{S}} \in Q_{N,N}$,
Theorem~\ref{theorem:dense}~\ref{enum:St_Xi_O} ensures
$\Xi_{\langle\bm{\mathfrak{U}}\rangle}\circ \varphi^{-1}_{\bm{S}}(Q_{N,p}) \subset \Xi\circ\varphi_{\bm{S}\bm{\mathfrak{S}}}^{-1}(Q_{N,N}) = \St(p,N) \setminus E_{N,p}(\bm{S}\bm{\mathfrak{S}})$.

In the following, let us consider the case of
$\bm{\mathfrak{U}}_{\rm up} = \bm{0} \in \mathbb{R}^{p\times p}$
to show that
$\Xi_{\langle\bm{\mathfrak{U}}\rangle}\circ \varphi^{-1}_{\bm{S}}$
is not injective on
$Q_{N,p}$.
Since
$\Xi_{\langle\bm{\mathfrak{U}}\rangle}\circ \varphi^{-1}_{\bm{S}}$
does not depend on
$\bm{\mathfrak{U}}_{\perp}$,
we can assume, without loss of generality,
$\bm{\mathfrak{S}} = \begin{bmatrix} \bm{0} & \bm{I}_{p} & \bm{0} \\ \bm{Z} & \bm{0} & \bm{Z}_{\perp} \end{bmatrix}$,
$\bm{\mathfrak{U}} = \begin{bmatrix} \bm{0} \\ \bm{Z} \end{bmatrix}$
and
$\bm{\mathfrak{U}}_{\perp} = \begin{bmatrix} \bm{I}_{p} & \bm{0} \\ \bm{0} & \bm{Z}_{\perp} \end{bmatrix}$
with
$\bm{Z} \in \St(p,N-p)$
and
$\bm{Z}_{\perp} \in \St(N-2p,N-p)$
satisfying
$\bm{Z}^{\T}\bm{Z}_{\perp} = \bm{0}$.
We have
\begin{align}
  & (\bm{V}\in Q_{N,p}) \quad
  \bm{\mathfrak{S}}^{\T}\bm{V}\bm{\mathfrak{S}}
  = \begin{bmatrix} \bm{0} & \bm{Z}^{\T} \\ \bm{I}_{p} & \bm{0} \\ \bm{0} & \bm{Z}_{\perp}^{\T} \end{bmatrix}
  \begin{bmatrix} \dbra{\bm{V}}_{11} & -\dbra{\bm{V}}_{21}^{\T} \\ \dbra{\bm{V}}_{21} & \bm{0} \end{bmatrix}
  \begin{bmatrix} \bm{0} & \bm{I}_{p} & \bm{0} \\ \bm{Z} & \bm{0} & \bm{Z}_{\perp} \end{bmatrix} \\
   & = \begin{bmatrix} \bm{Z}^{\T}\dbra{\bm{V}}_{21} & \bm{0} \\ \dbra{\bm{V}}_{11} & -\dbra{\bm{V}}_{21}^{\T} \\ \bm{Z}_{\perp}^{\T}\dbra{\bm{V}}_{21} & \bm{0} \end{bmatrix}
  \begin{bmatrix} \bm{0} & \bm{I}_{p} & \bm{0} \\ \bm{Z} & \bm{0} & \bm{Z}_{\perp} \end{bmatrix}
  = \begin{bmatrix}
    \bm{0} & \bm{Z}^{\T}\dbra{\bm{V}}_{21} & \bm{0} \\
    -\dbra{\bm{V}}_{21}^{\T}\bm{Z} & \dbra{\bm{V}}_{11} & -\dbra{\bm{V}}_{21}^{\T}\bm{Z}_{\perp} \\
    \bm{0} & \bm{Z}_{\perp}^{\T}\dbra{\bm{V}}_{21} & \bm{0}
  \end{bmatrix}. \label{eq:Y^TVY}
\end{align}

Now, by using
$\alpha \in \mathbb{R}\setminus \{0\}$,
define
$\bm{V}(\alpha) \in Q_{N,p}$
as
$\dbra{\bm{V}(\alpha)}_{11} = \bm{0}$
and
$\dbra{\bm{V}(\alpha)}_{21} = \alpha\begin{bmatrix} \bm{0}_{(N-p)\times p} & \bm{Z}_{\perp} \end{bmatrix} \begin{bmatrix} \bm{0}_{p\times (N-2p)} & \bm{I}_{p} \end{bmatrix}^{\T}$,
where
$\dbra{\bm{V}(\alpha)}_{21} \neq \bm{0}$
is guaranteed by
$\bm{Z}_{\perp} \in \St(N-2p,N-p)$
and
$0 < N-2p$.
Then,
$\bm{Z}^{\T}\dbra{\bm{V}(\alpha)}_{21} = \bm{0}$
and~\eqref{eq:Y^TVY} with
$\bm{V} = \bm{V}(\alpha)$
yield
\begin{equation}
  \bm{\mathfrak{S}}^{\T}\bm{V}(\alpha)\bm{\mathfrak{S}}
  = \begin{bmatrix}
    \bm{0} & \bm{0} & \bm{0} \\
    \bm{0} & \bm{0} & -\dbra{\bm{V}(\alpha)}_{21}^{\T}\bm{Z}_{\perp} \\
    \bm{0} & \bm{Z}_{\perp}^{\T}\dbra{\bm{V}(\alpha)}_{21} & \bm{0}
  \end{bmatrix}
  =:
  \begin{bmatrix}
    \bm{A} & -\bm{B}^{\T} \\
    \bm{B} & \bm{C}
  \end{bmatrix} \in Q_{N,N},
  \label{eq:Y^TV(a)Y}
\end{equation}
where
$\bm{A} = \bm{0} \in Q_{p,p}$,
$\bm{B} = \bm{0} \in \mathbb{R}^{(N-p)\times p}$
and
$\bm{C} = \begin{bmatrix} \bm{0} & -\dbra{\bm{V}(\alpha)}_{21}^{\T}\bm{Z}_{\perp} \\ \bm{Z}_{\perp}^{\T}\dbra{\bm{V}(\alpha)}_{21} & \bm{0} \end{bmatrix} \in Q_{N-p,N-p}$.

Finally, by applying Lemma~\ref{lemma:translation_V} to~\eqref{eq:Y^TV(a)Y} and~\eqref{eq:Xi_U}, we deduce
$\Xi_{\langle\bm{\mathfrak{U}}\rangle}\circ \varphi^{-1}_{\bm{S}}(\bm{V}(\alpha)) \overset{\eqref{eq:Xi_U}}{=} \Xi\circ\varphi_{\bm{S}\bm{\mathfrak{S}}}^{-1}(\bm{\mathfrak{S}}^{\T}\bm{V}(\alpha)\bm{\mathfrak{S}}) \overset{\rm Lemma~\ref{lemma:translation_V}}{=} \Phi_{\bm{S}\bm{\mathfrak{S}}}^{-1}(\bm{0}) = \bm{S}\bm{\mathfrak{S}}\bm{I}_{N\times p} = \bm{S}\bm{\mathfrak{U}}$
for all
$\alpha \in \mathbb{R} \setminus\{0\}$.
This implies that infinitely many
$\bm{V}(\alpha) \in Q_{N,p}\ (\alpha \in \mathbb{R}\setminus \{0\})$
achieve
$\Xi_{\langle\bm{\mathfrak{U}}\rangle}\circ \varphi^{-1}_{\bm{S}}(\bm{V}(\alpha)) = \bm{S}\bm{\mathfrak{U}}$,
and clearly
$\Xi_{\langle\bm{\mathfrak{U}}\rangle}\circ \varphi^{-1}_{\bm{S}}$
is not injective.

\section{Proof of Proposition~\ref{proposition:gradient}}\label{appendix:gradient}
The differentiability of
$f\circ\Phi_{\bm{S}}^{-1}$
is verified by the differentiabilities of
$f$
and
$\Phi_{\bm{S}}^{-1}$.
Let
$\bm{V},\bm{D} \in Q_{N,p}(\bm{S})$.
From the chain rule, we obtain
\begin{equation}
  \left.\frac{d}{dt} (f\circ \Phi_{\bm{S}}^{-1})(\bm{V}+t\bm{D})\right|_{t=0}
    = \trace\left( \nabla f(\bm{U})^{\T}\left.\frac{d}{dt}\Phi_{\bm{S}}^{-1}(\bm{V}+t\bm{D})\right|_{t=0}\right).\label{eq:grad_element}
\end{equation}
Moreover, by
$\Phi_{\bm{S}}^{-1}(\bm{V}) = 2\bm{S}(\bm{I}+\bm{V})^{-1}\bm{I}_{N\times p} - \bm{S}\bm{I}_{N\times p}$
and Fact~\ref{fact:matrix_differential},
we deduce
\begin{align}
  \left.\frac{d}{dt} \Phi_{\bm{S}}^{-1}(\bm{V}+t\bm{D})\right|_{t=0}
  & = 2\bm{S}\left.\frac{d}{dt} ( \bm{I} + \bm{V} + t\bm{D})^{-1}\right|_{t=0}\bm{I}_{N\times p} \\
  & = -2\bm{S}(\bm{I}+\bm{V})^{-1}\bm{D}(\bm{I}+\bm{V})^{-1}\bm{I}_{N\times p}.
\end{align}
Therefore, we have
\begin{align}
  & \left.\frac{d}{dt} (f\circ \Phi_{\bm{S}}^{-1})(\bm{V}+t\bm{D})\right|_{t=0}
    = \trace( -2\nabla f(\bm{U})^{\T}\bm{S}(\bm{I}+\bm{V})^{-1}\bm{D}(\bm{I}+\bm{V})^{-1}\bm{I}_{N\times p}) \\
    & = \trace( -2(\bm{I}+\bm{V})^{-1}\bm{I}_{N\times p}\nabla f(\bm{U})^{\T}\bm{S}(\bm{I}+\bm{V})^{-1}\bm{D})
    = \trace(-2\overline{\bm{W}}^{f}_{\bm{S}}(\bm{V})\bm{D}),
\end{align}
where
$\overline{\bm{W}}^{f}_{\bm{S}}(\bm{V})$
is defined in~\eqref{eq:matrix_W_bar}.
Furthermore, we have
$\trace(-2\overline{\bm{W}}^{f}_{\bm{S}}(\bm{V})\bm{D}) = \trace(-2\Skew(\overline{\bm{W}}^{f}_{\bm{S}}(\bm{V}))\bm{D}) = \trace(-2\Skew(\bm{W}^{f}_{\bm{S}}(\bm{V}))\bm{D})$,
where the first equality follows by
$\trace(\bm{X}^{\T}\bm{D}) = \frac{1}{2}(\trace(\bm{X}^{\T}\bm{D}) + \trace(\bm{X}\bm{D}^{\T})) = \frac{1}{2}(\trace(\bm{X}\bm{D}) - \trace(\bm{X}\bm{D})) = 0$
for any symmetric matrix
$\bm{X} \in \mathbb{R}^{N\times N}$
and the second equality follows by~\eqref{eq:matrix_W} and
$\dbra{\bm{D}}_{22} = \bm{0}$.
Therefore, we obtain
\begin{equation} \label{eq:f_S_derivative_chain}
  (\bm{D}\in Q_{N,p}(\bm{S})) \quad \left.\frac{d}{dt} (f\circ \Phi_{\bm{S}}^{-1})(\bm{V}+t\bm{D})\right|_{t=0}
  = \trace(-2\Skew(\bm{W}^{f}_{\bm{S}}(\bm{V}))\bm{D}).
\end{equation}

On the other hand, by letting
$\nabla (f\circ\Phi_{\bm{S}}^{-1})(\bm{V}) \in Q_{N,p}(\bm{S})$
be the gradient of
$f\circ \Phi_{\bm{S}}^{-1}$
at
$\bm{V}$,
it follows
\begin{equation} \label{eq:f_S_derivative_direction}
  (\bm{D}\in Q_{N,p}(\bm{S})) \quad \left.\frac{d}{dt} (f\circ \Phi_{\bm{S}}^{-1})(\bm{V}+t\bm{D})\right|_{t=0}
    = \trace(\nabla (f\circ \Phi_{\bm{S}}^{-1})(\bm{V})^{\T}\bm{D}).
\end{equation}
By noting
$2\Skew(\bm{W}^{f}_{\bm{S}}(\bm{V})) \in Q_{N,p}(\bm{S})$,
\eqref{eq:f_S_derivative_chain} and~\eqref{eq:f_S_derivative_direction} imply
$\nabla (f\circ \Phi_{\bm{S}}^{-1})(\bm{V}) = -2\Skew(\bm{W}^{f}_{\bm{S}}(\bm{V}))^{\T} = 2\Skew(\bm{W}^{f}_{\bm{S}}(\bm{V}))$.
By applying~\eqref{eq:IV_inv} to~\eqref{eq:matrix_W_bar},
the expression~\eqref{eq:gradient_block_original} is derived as
{
  \thickmuskip=0.0\thickmuskip
  \medmuskip=0.0\medmuskip
  \thinmuskip=0.0\thinmuskip
  \begin{align}
    & \overline{\bm{W}}^{f}_{\bm{S}}(\bm{V})=(\bm{I}+\bm{V})^{-1}\bm{I}_{N\times p}\nabla f(\bm{U})^{\T}\bm{S}(\bm{I}+\bm{V})^{-1} \\
    & =
    \begin{bmatrix}
      \bm{M}^{-1}\\
      -\dbra{\bm{V}}_{21}\bm{M}^{-1}
    \end{bmatrix}
    \nabla f(\bm{U})^{\T}
    \begin{bmatrix}
      (\bm{S}_{\rm le}-\bm{S}_{\rm ri}\dbra{\bm{V}}_{21})\bm{M}^{-1} & (\bm{S}_{\rm le} -\bm{S}_{\rm ri}\dbra{\bm{V}}_{21})\bm{M}^{-1}\dbra{\bm{V}}_{21}^{\T} +\bm{S}_{\rm ri}
    \end{bmatrix}.
  \end{align}
}

By substituting
$\bm{V} = \bm{0}$
into~\eqref{eq:matrix_W_bar}, and by
$\Phi_{\bm{S}}^{-1}(\bm{0}) = \bm{S}\bm{I}_{N\times p} = \bm{S}_{\rm le}$,
we deduce
\begin{equation}
  (\bm{S} \in {\rm O}(N)) \quad \overline{\bm{W}}_{\bm{S}}^{f}(\bm{0}) = \bm{I}_{N\times p}\nabla f(\Phi_{\bm{S}}^{-1}(\bm{0}))^{\T}\bm{S} = \begin{bmatrix} \nabla f(\bm{S}_{\rm le})^{\T}\bm{S}_{\rm le} & \nabla f(\bm{S}_{\rm le})^{\T}\bm{S}_{\rm ri} \\ \bm{0} & \bm{0} \end{bmatrix} \overset{\eqref{eq:matrix_W}}{=} \bm{W}_{\bm{S}}^{f}(\bm{0}) \label{eq:W_W_bar}
\end{equation}
and
\begin{equation}
  \nabla f_{\bm{S}}(\bm{0})
  \overset{\eqref{eq:grad_propo}}{=}  2\Skew(\bm{W}_{\bm{S}}^{f}(\bm{0})) = \begin{bmatrix} \nabla f(\bm{S}_{\rm le})^{\T}\bm{S}_{\rm le}-\bm{S}_{\rm le}^{\T}\nabla f(\bm{S}_{\rm le}) & \nabla f(\bm{S}_{\rm le})^{\T}\bm{S}_{\rm ri} \\ -\bm{S}_{\rm ri}^{\T}\nabla f(\bm{S}_{\rm le}) & \bm{0} \end{bmatrix}.
\end{equation}

\section{Proof of Proposition~\ref{proposition:change_center}}\label{appendix:change_center}
\underline{\bf (I) Proof of Proposition~\ref{proposition:change_center}~\ref{enum:change_formula}.}
We need the following lemma to show Proposition~\ref{proposition:change_center}~\ref{enum:change_formula}.
Figure~\ref{fig:flow_chart_transformation} illustrates the relation between the following lemma and Proposition~\ref{proposition:change_center}~\ref{enum:change_formula}.
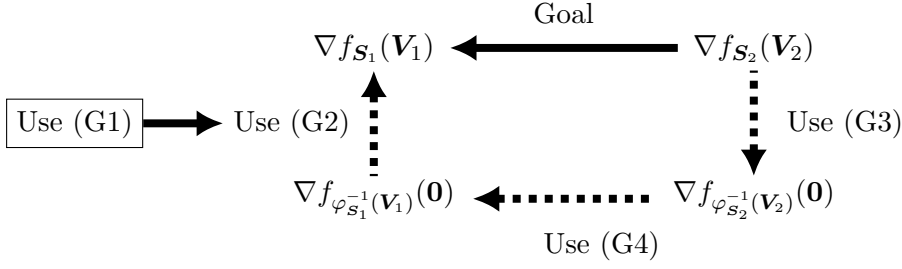
\begin{figure}[t]
  \renewcommand{\thefigure}{\Alph{section}}
  \begin{tikzpicture}
    \draw[line width=3pt, arrows = {Latex[width=10pt, length=10pt]-}, dashed] (1.3,0) -- (3.7,0);
    \draw[line width=3pt, arrows = {Latex[width=10pt, length=10pt]-}, dashed] (5,0.3) -- (5,1.7);
    \draw[line width=3pt, arrows = {-Latex[width=10pt, length=10pt]}] (4,2) -- (1,2);
    \draw[line width=3pt, arrows = {Latex[width=10pt, length=10pt]-}, dashed] (0,1.7) -- (0,0.3);
    \draw (2.5,2.2)node[above]{Goal};
    \draw (5.3,1)node[right, align=left]{Use~\eqref{eq:translate_Vto0}};
    \node[left=5pt, align=left] (a) at (0,1) {Use~\eqref{eq:translate_0toV}};
    \node[draw, left=30pt of a] (b) {Use~\eqref{eq:W_change}};
    \draw[line width=3pt, arrows = {Latex[width=10pt, length=10pt]-}] (a) -- (b);
    \draw (3,-0.3)node[below, align=left]{Use~\eqref{eq:translate_0to0}};
    \draw (0,0)node{$\nabla f_{\varphi_{\bm{S}_{1}}^{-1}(\bm{V}_{1})}(\bm{0})$};
    \draw (5,0)node{$\nabla f_{\varphi_{\bm{S}_{2}}^{-1}(\bm{V}_{2})}(\bm{0})$};
    \draw (5,2)node{$\nabla f_{\bm{S}_{2}}(\bm{V}_{2})$};
    \draw (0,2)node{$\nabla f_{\bm{S}_{1}}(\bm{V}_{1})$};
  \end{tikzpicture}

  \caption{A flow chart represents the overview of the proof of Proposition~\ref{proposition:change_center}~\ref{enum:change_formula}. The goal is to derive a transformation formula from $\nabla f_{\bm{S}_{2}}(\bm{V}_{2})$ to $\nabla f_{\bm{S}_{1}}(\bm{V}_{1})$
    under
$\Phi_{\bm{S}_{1}}^{-1}(\bm{V}_{1})= \Phi_{\varphi_{\bm{S}_{1}}^{-1}(\bm{V}_{1})}^{-1}(\bm{0})= \Phi_{\varphi_{\bm{S}_{2}}^{-1}(\bm{V}_{2})}^{-1}(\bm{0})=\Phi_{\bm{S}_{2}}^{-1}(\bm{V}_{2})$.}
  \label{fig:flow_chart_transformation}
\end{figure}

\begin{lemma} \label{lemma:translate_W}
  Let
  $f:\mathbb{R}^{N\times p}\to \mathbb{R}$
  be a differentiable function, and let
  $\bm{S} \in {\rm O}(N)$,
  $\bm{V} \in Q_{N,p}(\bm{S})$
  and
  $\bm{S}' := \varphi_{\bm{S}}^{-1}(\bm{V})\in {\rm O}(N)$
  in~\eqref{eq:ILCT_O}, implying thus
  $\Phi_{\bm{S}}^{-1}(\bm{V}) = \varphi_{\bm{S}}^{-1}(\bm{V})\bm{I}_{N\times p} = \bm{S}'\bm{I}_{N\times p} = \Phi_{\bm{S}'}^{-1}(\bm{0})$.
  Then, the following hold:
  \begin{enumerate}[label=(\alph*)]
    \item \label{enum:W_change}
      For
      $\bm{W}^{f}_{\bm{S}}(\bm{V})$
      in~\eqref{eq:matrix_W} and
      $\overline{\bm{W}}^{f}_{\bm{S}}(\bm{V})$
      in~\eqref{eq:matrix_W_bar},
      we have
      $\overline{\bm{W}}^{f}_{\bm{S}}(\bm{V}) = (\bm{I}+\bm{V})^{-1}\bm{W}^{f}_{\bm{S}'}(\bm{0})(\bm{I}+\bm{V})^{-T}$
      and
      \begin{equation} \label{eq:W_change}
        \bm{W}^{f}_{\bm{S}}(\bm{V}) = (\bm{I}+\bm{V})^{-1}\bm{W}^{f}_{\bm{S}'}(\bm{0})(\bm{I}+\bm{V})^{-\T}
        -
        \begin{bmatrix} \bm{0} & \bm{0} \\ \bm{0} & \bm{I}_{N-p} \end{bmatrix}
        (\bm{I}+\bm{V})^{-1}\bm{W}^{f}_{\bm{S}'}(\bm{0})(\bm{I}+\bm{V})^{-\T}
        \begin{bmatrix} \bm{0} & \bm{0} \\ \bm{0} & \bm{I}_{N-p} \end{bmatrix}.
      \end{equation}
    \item \label{enum:gradient_change}
      The gradients of
      $f_{\bm{S}}:=f\circ\Phi_{\bm{S}}^{-1}$
      and
      $f_{\bm{S}'}:=f\circ\Phi_{\bm{S}'}^{-1}$
      satisfy
      \begin{align}
        \nabla f_{\bm{S}}(\bm{V})
        & = (\bm{I}+\bm{V})^{-1}\nabla f_{\bm{S}'}(\bm{0})(\bm{I}+\bm{V})^{-\T} \\
        & \quad \quad \quad
          - \begin{bmatrix} \bm{0} & \bm{0} \\ \bm{0} & \bm{I}_{N-p} \end{bmatrix}
            (\bm{I}+\bm{V})^{-1}\nabla f_{\bm{S}'}(\bm{0})(\bm{I}+\bm{V})^{-\T}
            \begin{bmatrix} \bm{0} & \bm{0} \\ \bm{0} & \bm{I}_{N-p} \end{bmatrix} \label{eq:translate_0toV} \\
      \end{align}
      and
      \begin{align}
        \nabla f_{\bm{S}'}(\bm{0})
        & = (\bm{I}+\bm{V})
        \left(
          \nabla f_{\bm{S}}(\bm{V})
          - \begin{bmatrix} \bm{0} & \bm{0} \\ \dbra{\bm{V}}_{21} & \bm{I}_{N-p} \end{bmatrix}
            \nabla f_{\bm{S}}(\bm{V})
            \begin{bmatrix} \bm{0} & \dbra{\bm{V}}_{21}^{\T} \\ \bm{0} & \bm{I}_{N-p} \end{bmatrix}
        \right)
          (\bm{I}+\bm{V})^{\T}.\\ \label{eq:translate_Vto0}
      \end{align}
    \item \label{enum:gradient_change_0}
      If
      $\widehat{\bm{S}},\widecheck{\bm{S}}\in {\rm O}(N)$
      satisfy
      $\Phi_{\widehat{\bm{S}}}^{-1}(\bm{0})=\Phi_{\widecheck{\bm{S}}}^{-1}(\bm{0}) \in \St(p,N)$,
      i.e.,
      $\widehat{\bm{S}}_{\rm le} = \widecheck{\bm{S}}_{\rm le}$
      in~\eqref{eq:Cayley_inv_origin}, then we have
      \begin{equation} \label{eq:translate_0to0}
        \nabla f_{\widehat{\bm{S}}}(\bm{0})
        = \begin{bmatrix} \bm{I}_{p} & \bm{0} \\ \bm{0} & \bm{\mathfrak{Y}} \end{bmatrix}
        \nabla f_{\widecheck{\bm{S}}}(\bm{0})
        \begin{bmatrix} \bm{I}_{p} & \bm{0} \\ \bm{0} & \bm{\mathfrak{Y}}^{\T} \end{bmatrix}, 
      \end{equation}
      where
      $\bm{\mathfrak{Y}}:=\widehat{\bm{S}}_{\rm ri}^{\T}\widecheck{\bm{S}}_{\rm ri} \in {\rm O}(N-p)$.
  \end{enumerate}
\end{lemma}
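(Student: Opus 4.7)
\textbf{Proof plan for Lemma~\ref{lemma:translate_W}.}

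The plan is to dispatch parts \ref{enum:W_change}, \ref{enum:gradient_change}, \ref{enum:gradient_change_0} sequentially: first derive the matrix identity for $\overline{\bm{W}}^{f}_{\bm{S}}$, then pass to the antisymmetrization to get the gradient identity, and finally handle the boundary case of two centers producing the same image at $\bm{0}$.

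For part~\ref{enum:W_change}, the crucial observation is that, since $\bm{V}\in Q_{N,N}$ is skew-symmetric, one has $(\bm{I}+\bm{V})^{\T}=\bm{I}-\bm{V}$, and therefore $(\bm{I}+\bm{V})^{-\T}=(\bm{I}-\bm{V})^{-1}$. Combining this with the definition $\bm{S}':=\varphi_{\bm{S}}^{-1}(\bm{V})=\bm{S}(\bm{I}-\bm{V})(\bm{I}+\bm{V})^{-1}$ yields the key identity $\bm{S}(\bm{I}+\bm{V})^{-1}=\bm{S}'(\bm{I}+\bm{V})^{-\T}$. Substituting this into the definition~\eqref{eq:matrix_W_bar} of $\overline{\bm{W}}^{f}_{\bm{S}}(\bm{V})$, together with $\Phi_{\bm{S}}^{-1}(\bm{V})=\Phi_{\bm{S}'}^{-1}(\bm{0})=\bm{S}'_{\rm le}$ and the identity $\bm{W}^{f}_{\bm{S}'}(\bm{0})=\overline{\bm{W}}^{f}_{\bm{S}'}(\bm{0})=\bm{I}_{N\times p}\nabla f(\bm{S}'_{\rm le})^{\T}\bm{S}'$ from~\eqref{eq:W_W_bar}, gives $\overline{\bm{W}}^{f}_{\bm{S}}(\bm{V})=(\bm{I}+\bm{V})^{-1}\bm{W}^{f}_{\bm{S}'}(\bm{0})(\bm{I}+\bm{V})^{-\T}$. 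The identity~\eqref{eq:W_change} then follows by writing $\bm{W}^{f}_{\bm{S}}(\bm{V})=\overline{\bm{W}}^{f}_{\bm{S}}(\bm{V})-\bm{P}\overline{\bm{W}}^{f}_{\bm{S}}(\bm{V})\bm{P}$ with $\bm{P}:=\diag(\bm{0}_{p},\bm{I}_{N-p})$, which is immediate from the definition~\eqref{eq:matrix_W}.

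For part~\ref{enum:gradient_change}, from~\eqref{eq:grad_propo} the gradient is $\nabla f_{\bm{S}}(\bm{V})=\bm{W}^{f}_{\bm{S}}(\bm{V})-\bm{W}^{f}_{\bm{S}}(\bm{V})^{\T}$, and analogously for $\bm{S}'$ at $\bm{0}$. Transposing~\eqref{eq:W_change} (using $\bm{P}^{\T}=\bm{P}$) and subtracting gives $\nabla f_{\bm{S}}(\bm{V})=\bm{Y}-\bm{P}\bm{Y}\bm{P}$ with $\bm{Y}:=(\bm{I}+\bm{V})^{-1}(\bm{W}^{f}_{\bm{S}'}(\bm{0})-\bm{W}^{f}_{\bm{S}'}(\bm{0})^{\T})(\bm{I}+\bm{V})^{-\T}=(\bm{I}+\bm{V})^{-1}\nabla f_{\bm{S}'}(\bm{0})(\bm{I}+\bm{V})^{-\T}$, which is exactly~\eqref{eq:translate_0toV}. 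To invert this relation into~\eqref{eq:translate_Vto0}, I write $\bm{H}:=(\bm{I}+\bm{V})^{-1}\nabla f_{\bm{S}'}(\bm{0})(\bm{I}+\bm{V})^{-\T}$ and observe that $\bm{H}$ agrees with $\nabla f_{\bm{S}}(\bm{V})$ in the $(1,1),(1,2),(2,1)$ blocks, and that the unknown block $\dbra{\bm{H}}_{22}$ is pinned down by the constraint $\dbra{\nabla f_{\bm{S}'}(\bm{0})}_{22}=\bm{0}$ (from Proposition~\ref{proposition:gradient}). Reading off the $(2,2)$ block of $(\bm{I}+\bm{V})\bm{H}(\bm{I}+\bm{V})^{\T}$ using the explicit form of $\bm{I}+\bm{V}$ gives $\dbra{\bm{H}}_{22}=-\dbra{\bm{V}}_{21}\dbra{\bm{H}}_{11}\dbra{\bm{V}}_{21}^{\T}-\dbra{\bm{V}}_{21}\dbra{\bm{H}}_{12}-\dbra{\bm{H}}_{21}\dbra{\bm{V}}_{21}^{\T}$, and a direct block-by-block calculation shows that this is precisely the $(2,2)$ entry of $-\bm{L}\nabla f_{\bm{S}}(\bm{V})\bm{R}$ with $\bm{L}=\begin{bmatrix}\bm{0}&\bm{0}\\ \dbra{\bm{V}}_{21}&\bm{I}_{N-p}\end{bmatrix}$ and $\bm{R}=\begin{bmatrix}\bm{0}&\dbra{\bm{V}}_{21}^{\T}\\ \bm{0}&\bm{I}_{N-p}\end{bmatrix}$, while the other three blocks of $\bm{L}\nabla f_{\bm{S}}(\bm{V})\bm{R}$ vanish. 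Multiplying the resulting identity $\bm{H}=\nabla f_{\bm{S}}(\bm{V})-\bm{L}\nabla f_{\bm{S}}(\bm{V})\bm{R}$ on the left by $(\bm{I}+\bm{V})$ and on the right by $(\bm{I}+\bm{V})^{\T}$ yields~\eqref{eq:translate_Vto0}.

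For part~\ref{enum:gradient_change_0}, the hypothesis $\widehat{\bm{S}}_{\rm le}=\widecheck{\bm{S}}_{\rm le}$ together with the orthogonality relations $\bm{I}=\widehat{\bm{S}}\widehat{\bm{S}}^{\T}=\widehat{\bm{S}}_{\rm le}\widehat{\bm{S}}_{\rm le}^{\T}+\widehat{\bm{S}}_{\rm ri}\widehat{\bm{S}}_{\rm ri}^{\T}$ (similarly for $\widecheck{\bm{S}}$) yields $\widehat{\bm{S}}_{\rm ri}\widehat{\bm{S}}_{\rm ri}^{\T}=\widecheck{\bm{S}}_{\rm ri}\widecheck{\bm{S}}_{\rm ri}^{\T}$, whence $\bm{\mathfrak{Y}}=\widehat{\bm{S}}_{\rm ri}^{\T}\widecheck{\bm{S}}_{\rm ri}\in{\rm O}(N-p)$ and $\widecheck{\bm{S}}_{\rm ri}=\widehat{\bm{S}}_{\rm ri}\bm{\mathfrak{Y}}$. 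Substituting these into the explicit expression~\eqref{eq:gradient_0} for $\nabla f_{\widecheck{\bm{S}}}(\bm{0})$ and multiplying by the block-diagonal orthogonal matrices as in~\eqref{eq:translate_0to0} reduces, via the identities $\widecheck{\bm{S}}_{\rm ri}\bm{\mathfrak{Y}}^{\T}=\widehat{\bm{S}}_{\rm ri}$ and $\bm{\mathfrak{Y}}\widecheck{\bm{S}}_{\rm ri}^{\T}=\widehat{\bm{S}}_{\rm ri}^{\T}$, to $\nabla f_{\widehat{\bm{S}}}(\bm{0})$ as read from~\eqref{eq:gradient_0}. The main obstacle is the block-wise bookkeeping in part~\ref{enum:gradient_change} to pin down $\dbra{\bm{H}}_{22}$ and match it against the candidate correction $-\bm{L}\nabla f_{\bm{S}}(\bm{V})\bm{R}$; the other two parts are algebraically routine once the skew-symmetry identity $(\bm{I}+\bm{V})^{-\T}=(\bm{I}-\bm{V})^{-1}$ is invoked.
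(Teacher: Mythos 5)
Your proposal is correct and follows essentially the same route as the paper's proof: the identity $\bm{S}(\bm{I}+\bm{V})^{-1}=\bm{S}'(\bm{I}+\bm{V})^{-\T}$ obtained from skew-symmetry and the commutativity of the Cayley factors gives part~\ref{enum:W_change}, antisymmetrization of~\eqref{eq:W_change} gives~\eqref{eq:translate_0toV}, and the orthogonality bookkeeping $\widecheck{\bm{S}}_{\rm ri}=\widehat{\bm{S}}_{\rm ri}\bm{\mathfrak{Y}}$ gives part~\ref{enum:gradient_change_0} exactly as in the paper. The one point of divergence is the derivation of~\eqref{eq:translate_Vto0}: the paper computes every block of $(\bm{I}+\bm{V})^{-1}\nabla f_{\bm{S}'}(\bm{0})(\bm{I}+\bm{V})^{-\T}$ explicitly via the Schur-complement expression~\eqref{eq:IV_inv} (introducing $\bm{M}^{-1}$ and auxiliary blocks $\bm{E},\bm{F},\bm{G}$), whereas you pin down the unknown $(2,2)$ block of $\bm{H}$ implicitly from the constraint $\dbra{\nabla f_{\bm{S}'}(\bm{0})}_{22}=\bm{0}$ and match it against the correction term built from your triangular factors; I checked the block identity $\dbra{\bm{H}}_{22}=-\dbra{\bm{V}}_{21}\dbra{\bm{H}}_{11}\dbra{\bm{V}}_{21}^{\T}-\dbra{\bm{V}}_{21}\dbra{\bm{H}}_{12}-\dbra{\bm{H}}_{21}\dbra{\bm{V}}_{21}^{\T}$ and the matching computation, and both are correct, so your variant is a slightly leaner way to reach the same formula.
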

\begin{proof}
  \ref{enum:W_change}
  Combining
  $\Phi_{\bm{S}}^{-1}(\bm{V}) = \Phi_{\bm{S}'}^{-1}(\bm{0})$
  and
  $\bm{W}^{f}_{\bm{S}'}(\bm{0}) \overset{\eqref{eq:W_W_bar}}{=} \overline{\bm{W}}^{f}_{\bm{S}'}(\bm{0}) \overset{\eqref{eq:matrix_W_bar}}{=} \bm{I}_{N\times p}\nabla f(\Phi_{\bm{S}'}^{-1}(\bm{0}))^{\T}\bm{S}' = \bm{I}_{N\times p}\nabla f(\Phi_{\bm{S}}^{-1}(\bm{V}))^{\T}\bm{S}'$,
  we obtain
  \begin{align}
    & (\bm{I}+\bm{V})^{-1}\bm{W}^{f}_{\bm{S}'}(\bm{0})(\bm{I}+\bm{V})^{-\T}
    = (\bm{I}+\bm{V})^{-1}\bm{I}_{N\times p}\nabla f(\Phi_{\bm{S}}^{-1}(\bm{V}))^{\T}\bm{S}'(\bm{I}^{\T}+\bm{V}^{\T})^{-1} \\
    & \overset{\eqref{eq:ILCT_O}}{=} (\bm{I}+\bm{V})^{-1}\bm{I}_{N\times p}\nabla f(\Phi_{\bm{S}}^{-1}(\bm{V}))^{\T}\bm{S}(\bm{I}-\bm{V})(\bm{I}+\bm{V})^{-1}(\bm{I}-\bm{V})^{-1} \\
    & \overset{\eqref{eq:commute}}{=} (\bm{I}+\bm{V})^{-1}\bm{I}_{N\times p}\nabla f(\Phi_{\bm{S}}^{-1}(\bm{V}))^{\T}\bm{S}(\bm{I}+\bm{V})^{-1}(\bm{I}-\bm{V})(\bm{I}-\bm{V})^{-1} \\
    & = (\bm{I}+\bm{V})^{-1}\bm{I}_{N\times p}\nabla f(\Phi_{\bm{S}}^{-1}(\bm{V}))^{\T}\bm{S}(\bm{I}+\bm{V})^{-1}
    \overset{\eqref{eq:matrix_W_bar}}{=} \overline{\bm{W}}^{f}_{\bm{S}}(\bm{V}). \label{eq:W_0_W_S}
  \end{align}
  The relation~\eqref{eq:W_change} is obtained by substituting~\eqref{eq:W_0_W_S} to an alternative expression of~\eqref{eq:matrix_W}:
  \begin{equation}
    \bm{W}^{f}_{\bm{S}}(\bm{V}) =
      \overline{\bm{W}}^{f}_{\bm{S}}(\bm{V})
    - \begin{bmatrix} \bm{0} & \bm{0} \\ \bm{0} & \bm{I}_{N-p} \end{bmatrix} \overline{\bm{W}}^{f}_{\bm{S}}(\bm{V}) \begin{bmatrix} \bm{0} & \bm{0} \\ \bm{0} & \bm{I}_{N-p} \end{bmatrix}.
  \end{equation}

  \ref{enum:gradient_change}
  \eqref{eq:translate_0toV} is confirmed by applying~\eqref{eq:W_change} to~\eqref{eq:grad_propo} as
  \begin{align}
    & \nabla f_{\bm{S}}(\bm{V})
    \overset{\eqref{eq:grad_propo}}{=}  \bm{W}^{f}_{\bm{S}}(\bm{V})-\bm{W}^{f}_{\bm{S}}(\bm{V})^{\T} \\
    & \overset{\eqref{eq:W_change}}{=} (\bm{I}+\bm{V})^{-1}(\bm{W}^{f}_{\bm{S}'}(\bm{0}) - \bm{W}^{f}_{\bm{S}'}(\bm{0})^{\T})(\bm{I}+\bm{V})^{-\T} \\
    & \quad \quad
    - \begin{bmatrix} \bm{0} & \bm{0} \\ \bm{0} & \bm{I}_{N-p} \end{bmatrix}
    (\bm{I}+\bm{V})^{-1}(\bm{W}^{f}_{\bm{S}'}(\bm{0}) - \bm{W}^{f}_{\bm{S}'}(\bm{0})^{\T})(\bm{I}+\bm{V})^{-\T}
    \begin{bmatrix} \bm{0} & \bm{0} \\ \bm{0} & \bm{I}_{N-p} \end{bmatrix}\\
    & \overset{\eqref{eq:grad_propo}}{=} (\bm{I}+\bm{V})^{-1}\nabla f_{\bm{S}'}(\bm{0})(\bm{I}+\bm{V})^{-\T}
    - \begin{bmatrix} \bm{0} & \bm{0} \\ \bm{0} & \bm{I}_{N-p} \end{bmatrix}
    (\bm{I}+\bm{V})^{-1}\nabla f_{\bm{S}'}(\bm{0})(\bm{I}+\bm{V})^{-\T}
    \begin{bmatrix} \bm{0} & \bm{0} \\ \bm{0} & \bm{I}_{N-p} \end{bmatrix}.
  \end{align}

  To derive~\eqref{eq:translate_Vto0} from~\eqref{eq:translate_0toV},
  let first
  $\nabla f_{\bm{S}'}(\bm{0})=
  \begin{bmatrix}
    \bm{E} \in Q_{p,p} & -\bm{F}^{\T}\\
    \bm{F} \in \mathbb{R}^{(N-p)\times p} & \bm{0}_{N-p}
  \end{bmatrix} \in Q_{N,p}(\bm{S}')$
  and apply~\eqref{eq:IV_inv} with
  $\bm{M}:= \bm{I}_{p} + \dbra{\bm{V}}_{11} + \dbra{\bm{V}}_{21}^{\T}\dbra{\bm{V}}_{21} \in \mathbb{R}^{p\times p}$
  as
  \begin{align}
    &(\bm{I}+\bm{V})^{-1}\nabla f_{\bm{S}'}(\bm{0})(\bm{I}+\bm{V})^{-\T} \\
    & =
    \begin{bmatrix}
      \bm{M}^{-1} & \bm{M}^{-1}\dbra{\bm{V}}_{21}^{\T} \\
      -\dbra{\bm{V}}_{21}\bm{M}^{-1} & \bm{I}_{N-p} - \dbra{\bm{V}}_{21}\bm{M}^{-1}\dbra{\bm{V}}_{21}^{\T}
    \end{bmatrix}
    \begin{bmatrix}
      \bm{E} & -\bm{F}^{\T}\\
      \bm{F} & \bm{0}
    \end{bmatrix}
    \begin{bmatrix}
      \bm{M}^{-\T} & -\bm{M}^{-\T}\dbra{\bm{V}}_{21}^{\T} \\
      \dbra{\bm{V}}_{21}\bm{M}^{-\T} & \bm{I}_{N-p} - \dbra{\bm{V}}_{21}\bm{M}^{-\T}\dbra{\bm{V}}_{21}^{\T}
    \end{bmatrix} \\
    & =
    \begin{bmatrix}
      \bm{M}^{-1}(\bm{E}+\dbra{\bm{V}}_{21}^{\T}\bm{F}) & -\bm{M}^{-1}\bm{F}^{\T} \\
      -\dbra{\bm{V}}_{21}\bm{M}^{-1}(\bm{E}+\dbra{\bm{V}}_{21}^{\T}\bm{F})+\bm{F} & \dbra{\bm{V}}_{21}\bm{M}^{-1}\bm{F}^{\T}
    \end{bmatrix}
    \begin{bmatrix}
      \bm{M}^{-\T} & -\bm{M}^{-\T}\dbra{\bm{V}}_{21}^{\T} \\
      \dbra{\bm{V}}_{21}\bm{M}^{-\T} & \bm{I}_{N-p} - \dbra{\bm{V}}_{21}\bm{M}^{-\T}\dbra{\bm{V}}_{21}^{\T}
    \end{bmatrix} \\
    & =
    \begin{bmatrix}
      \bm{M}^{-1}\bm{G}\bm{M}^{-\T} & -\bm{M}^{-1}\bm{G}\bm{M}^{-\T}\dbra{\bm{V}}_{21}^{\T} -\bm{M}^{-1}\bm{F}^{\T}\\
      -\dbra{\bm{V}}_{21}\bm{M}^{-1}\bm{G}\bm{M}^{-\T} + \bm{F}\bm{M}^{-\T} &
      \dbra{\bm{V}}_{21}\bm{M}^{-1}\bm{G}\bm{M}^{-\T}\dbra{\bm{V}}_{21}^{\T} + \dbra{\bm{V}}_{21}\bm{M}^{-1}\bm{F}^{\T}-\bm{F}\bm{M}^{-\T}\dbra{\bm{V}}_{21}^{\T}
    \end{bmatrix},\\\label{eq:grad_change_block}
  \end{align}
  where
  $\bm{G} := \bm{E}+\dbra{\bm{V}}_{21}^{\T}\bm{F} - \bm{F}^{\T}\dbra{\bm{V}}_{21} \in \mathbb{R}^{p\times p}$
  satisfies
  $\bm{G}^{\T} = -\bm{G}$.
  By substituting~\eqref{eq:grad_change_block} to~\eqref{eq:translate_0toV}, we obtain
  \begin{equation*}
    \nabla f_{\bm{S}}(\bm{V})
    =
    \begin{bmatrix}
      \bm{M}^{-1}\bm{G}\bm{M}^{-\T}  & -\bm{M}^{-1}\bm{G}\bm{M}^{-\T}\dbra{\bm{V}}_{21}^{\T} - \bm{M}^{-1}\bm{F}^{\T}\\
      -\dbra{\bm{V}}_{21}\bm{M}^{-1}\bm{G}\bm{M}^{-\T} + \bm{F}\bm{M}^{-\T} & \bm{0}_{N-p}
    \end{bmatrix}
  \end{equation*}
  and
  \begin{equation}
    -\begin{bmatrix} \bm{0} & \bm{0} \\ \dbra{\bm{V}}_{21} & \bm{I}_{N-p} \end{bmatrix}
    \nabla f_{\bm{S}}(\bm{V})
    \begin{bmatrix} \bm{0} & \dbra{\bm{V}}_{21}^{\T} \\ \bm{0} & \bm{I}_{N-p} \end{bmatrix}
    = \begin{bmatrix} \bm{0} & \bm{0} \\ \bm{0} & \dbra{(\bm{I}+\bm{V})^{-1}\nabla f_{\bm{S}'}(\bm{0})(\bm{I}+\bm{V})^{-\T}}_{22}
    \end{bmatrix},
  \end{equation}
  from which we obtain
  \begin{equation}
    \nabla f_{\bm{S}}(\bm{V}) =
    (\bm{I}+\bm{V})^{-1}\nabla f_{\bm{S}'}(\bm{0})(\bm{I}+\bm{V})^{-\T}
    +
    \begin{bmatrix} \bm{0} & \bm{0} \\ \dbra{\bm{V}}_{21} & \bm{I}_{N-p} \end{bmatrix}
    \nabla f_{\bm{S}}(\bm{V})
    \begin{bmatrix} \bm{0} & \dbra{\bm{V}}_{21}^{\T} \\ \bm{0} & \bm{I}_{N-p} \end{bmatrix}.
  \end{equation}

  \ref{enum:gradient_change_0}
  From
  $\widehat{\bm{S}}_{\rm le}=\Phi_{\widehat{\bm{S}}}^{-1}(\bm{0}) = \Phi_{\widecheck{\bm{S}}}^{-1}(\bm{0}) = \widecheck{\bm{S}}_{\rm le}=:\bm{U}$,
  and
  \begin{align}
    {\widehat{\bm{S}}}^{\T}\widecheck{\bm{S}}
    & = \begin{bmatrix}
      \bm{I}_{p} & \bm{0} \\
      \bm{0} & {{\widehat{\bm{S}}_{\rm ri}}}^{\T}\widecheck{\bm{S}}_{\rm ri}
      \end{bmatrix}
    = \begin{bmatrix}
      \bm{I}_{p} & \bm{0} \\
      \bm{0} & \bm{\mathfrak{Y}}
    \end{bmatrix}
      \in {\rm O}(N),
  \end{align}
  we see
  $\bm{\mathfrak{Y}}\in {\rm O}(N-p)$
  and
  $\widecheck{\bm{S}}_{\rm ri} = \widehat{\bm{S}}_{\rm ri}\bm{\mathfrak{Y}}$
  by
  $\widehat{\bm{S}}_{\rm ri}\bm{\mathfrak{Y}} = \widehat{\bm{S}}_{\rm ri}\widehat{\bm{S}}_{\rm ri}^{\T}\widecheck{\bm{S}}_{\rm ri} = (\bm{I}-\widehat{\bm{S}}_{\rm le}\widehat{\bm{S}}_{\rm le}^{\T})\widecheck{\bm{S}}_{\rm ri} = (\bm{I}-\widecheck{\bm{S}}_{\rm le}\widecheck{\bm{S}}_{\rm le}^{\T})\widecheck{\bm{S}}_{\rm ri} = \widecheck{\bm{S}}_{\rm ri}\widecheck{\bm{S}}_{\rm ri}^{\T}\widecheck{\bm{S}}_{\rm ri}=\widecheck{\bm{S}}_{\rm ri}$.

  Thus, it follows from
  $\widecheck{\bm{S}}_{\rm ri} = \widehat{\bm{S}}_{\rm ri}\bm{\mathfrak{Y}}$
  and
  $\bm{U} = \widehat{\bm{S}}_{\rm le} = \widecheck{\bm{S}}_{\rm le}$
  that
  \begin{align}
    \nabla f_{\widehat{\bm{S}}}(\bm{0})
    & \overset{\eqref{eq:gradient_0}}{=} \begin{bmatrix} \nabla f(\bm{U})^{\T}\widehat{\bm{S}}_{\rm le} - {\widehat{\bm{S}}_{\rm le}}^{\T}\nabla f(\bm{U}) & \nabla f(\bm{U})^{\T}\widehat{\bm{S}}_{\rm ri} \\
    -{\widehat{\bm{S}}_{\rm ri}}^{\T}\nabla f(\bm{U}) & \bm{0}\end{bmatrix} \\
                                                      & = \begin{bmatrix} \nabla f(\bm{U})^{\T}\widecheck{\bm{S}}_{\rm le} - {\widecheck{\bm{S}}_{\rm le}}^{\T}\nabla f(\bm{U}) & \nabla f(\bm{U})^{\T}\widecheck{\bm{S}}_{\rm ri}\bm{\mathfrak{Y}}^{\T} \\
    -\bm{\mathfrak{Y}}{\widecheck{\bm{S}}_{\rm ri}}^{\T}\nabla f(\bm{U}) & \bm{0}\end{bmatrix}  \\
    & = 
    \begin{bmatrix} \bm{I}_{p} & \bm{0} \\ \bm{0} & \bm{\mathfrak{Y}} \end{bmatrix}
    \begin{bmatrix} \nabla f(\bm{U})^{\T}\widecheck{\bm{S}}_{\rm le} - {\widecheck{\bm{S}}_{\rm le}}^{\T}\nabla f(\bm{U}) & \nabla f(\bm{U})^{\T}\widecheck{\bm{S}}_{\rm ri} \\
    -{\widecheck{\bm{S}}_{\rm ri}}^{\T}\nabla f(\bm{U}) & \bm{0}\end{bmatrix}
    \begin{bmatrix} \bm{I}_{p} & \bm{0} \\ \bm{0} & \bm{\mathfrak{Y}}^{\T} \end{bmatrix}\\
                               & \overset{\eqref{eq:gradient_0}}{=}
    \begin{bmatrix} \bm{I}_{p} & \bm{0} \\ \bm{0} & \bm{\mathfrak{Y}} \end{bmatrix}
      \nabla f_{\widecheck{\bm{S}}}(\bm{0})
      \begin{bmatrix} \bm{I}_{p} & \bm{0} \\ \bm{0} & \bm{\mathfrak{Y}}^{\T} \end{bmatrix}.
  \end{align}
\end{proof}

Return to the proof of Proposition~\ref{proposition:change_center}~\ref{enum:change_formula}.
Let
$\widehat{\bm{S}_{1}}:=\varphi_{\bm{S}_{1}}^{-1}(\bm{V}_{1})\in {\rm O}(N)$
and
$\widecheck{\bm{S}_{2}} :=\varphi_{\bm{S}_{2}}^{-1}(\bm{V}_{2})\in{\rm O}(N)$.
Since
$\widehat{\bm{S}_{1}}_{\rm le} = \Phi_{\widehat{\bm{S}_{1}}}^{-1}(\bm{0}) = \Phi_{\bm{S}_{1}}^{-1}(\bm{V}_{1}) = \Phi_{\bm{S}_{2}}^{-1}(\bm{V}_{2}) = \Phi_{\widecheck{\bm{S}_{2}}}^{-1}(\bm{0}) = \widehat{\bm{S}_{2}}_{\rm le}$,
Lemma~\ref{lemma:translate_W}~\ref{enum:gradient_change_0} implies
$\bm{\mathfrak{X}}= \widehat{\bm{S}_{1}}_{\rm ri}^{\T}\widecheck{\bm{S}_{2}}_{\rm ri} \in {\rm O}(N-p)$.
Moreover from Lemma~\ref{lemma:translate_W}, we have the relations
\begin{align}
  \nabla f_{\bm{S}_{1}}(\bm{V}_{1})
  & \overset{\eqref{eq:translate_0toV}}{=} (\bm{I}+\bm{V}_{1})^{-1}\nabla f_{\widehat{\bm{S}_{1}}}(\bm{0})(\bm{I}+\bm{V}_{1})^{-\T} \\
  & \quad \quad \quad
    - \begin{bmatrix} \bm{0} & \bm{0} \\ \bm{0} & \bm{I}_{N-p} \end{bmatrix}
    (\bm{I}+\bm{V}_{1})^{-1}\nabla f_{\widehat{\bm{S}_{1}}}(\bm{0})(\bm{I}+\bm{V}_{1})^{-\T}
      \begin{bmatrix} \bm{0} & \bm{0} \\ \bm{0} & \bm{I}_{N-p} \end{bmatrix}, \\
      \nabla f_{\widehat{\bm{S}_{1}}}(\bm{0})
  & \overset{\eqref{eq:translate_0to0}}{=} \begin{bmatrix} \bm{I}_{p} & \bm{0} \\ \bm{0} & \bm{\mathfrak{X}} \end{bmatrix}
  \nabla f_{\widecheck{\bm{S}_{2}}}(\bm{0})
  \begin{bmatrix} \bm{I}_{p} & \bm{0} \\ \bm{0} & \bm{\mathfrak{X}}^{\T} \end{bmatrix}, \\
    \nabla f_{\widecheck{\bm{S}_{2}}}(\bm{0})
  & \overset{\eqref{eq:translate_Vto0}}{=} (\bm{I}+\bm{V}_{2})
  \left(
    \nabla f_{\bm{S}_{2}}(\bm{V}_{2})
  - \begin{bmatrix} \bm{0} & \bm{0} \\ \dbra{\bm{V}_{2}}_{21}& \bm{I}_{N-p} \end{bmatrix}
  \nabla f_{\bm{S}_{2}}(\bm{V}_{2})
  \begin{bmatrix} \bm{0} & \dbra{\bm{V}_{2}}_{21}^{\T} \\ \bm{0} & \bm{I}_{N-p} \end{bmatrix}
  \right)
  (\bm{I}+\bm{V}_{2})^{\T}.\\ 
\end{align}
Finally by substituting the second and last relations into the first relation, we complete the proof.

\underline{\bf (II) Proof of Proposition~\ref{proposition:change_center}~\ref{enum:change_center} and~\ref{enum:zero}.}
From Proposition~\ref{proposition:change_center}~\ref{enum:change_formula}, Lemma~\ref{lemma:norm_basic}~\ref{enum:norm_upper} and~\ref{enum:IV_inv_norm}, we obtain
\begin{align}
  &\|\nabla f_{\bm{S}_{1}}(\bm{V}_{1})\|_{F} \\
  & = \left\|\mathcal{G}_{\bm{S}_{1},\bm{S}_{2}}(\bm{V}_{1},\bm{V}_{2})
      - \begin{bmatrix} \bm{0}&\bm{0}\\\bm{0}&\bm{I}_{N-p}\end{bmatrix}
      \mathcal{G}_{\bm{S}_{1},\bm{S}_{2}}(\bm{V}_{1},\bm{V}_{2})
      \begin{bmatrix} \bm{0}&\bm{0}\\\bm{0}&\bm{I}_{N-p}\end{bmatrix}\right\|_{F}
    \leq \|\mathcal{G}_{\bm{S}_{1},\bm{S}_{2}}(\bm{V}_{1},\bm{V}_{2})\|_{F} \\
  & \leq  \|(\bm{I}+\bm{V}_{1})^{-1}\|_{2}^{2}
    \begin{Vmatrix}\bm{I}_{p}&\bm{0}\\\bm{0}&\bm{\mathfrak{X}}\end{Vmatrix}_{2}^{2}
    \|\bm{I}+\bm{V}_{2}\|_{2}^{2}
    \left\|
    \nabla f_{\bm{S}_{2}}(\bm{V}_{2})-\begin{bmatrix}\bm{0}&\bm{0}\\ \dbra{\bm{V}_{2}}_{21}&\bm{I}_{N-p}\end{bmatrix} \nabla f_{\bm{S}_{2}}(\bm{V}_{2})\begin{bmatrix}\bm{0}&\dbra{\bm{V}_{2}}_{21}^{\T}\\ \bm{0}&\bm{I}_{N-p}\end{bmatrix}
  \right\|_{F} \\
  & \leq \|\bm{I}+\bm{V}_{2}\|_{2}^{2}
    \left\|\nabla f_{\bm{S}_{2}}(\bm{V}_{2})-\begin{bmatrix}\bm{0}&\bm{0}\\ \dbra{\bm{V}_{2}}_{21}&\bm{I}_{N-p}\end{bmatrix} \nabla f_{\bm{S}_{2}}(\bm{V}_{2})\begin{bmatrix}\bm{0}&\dbra{\bm{V}_{2}}_{21}^{\T}\\ \bm{0}&\bm{I}_{N-p}\end{bmatrix}\right\|_{F}\ (\because {\rm Lemma~\ref{lemma:norm_basic}}~\ref{enum:IV_inv_norm})\\
  & \leq \|\bm{I}+\bm{V}_{2}\|_{2}^{2}\left( 1 + \begin{Vmatrix} \bm{0}&\bm{0}\\ \dbra{\bm{V}_{2}}_{21}&\bm{I}_{N-p}\end{Vmatrix}_{2}^{2}\right)\|\nabla f_{\bm{S}_{2}}(\bm{V}_{2})\|_{F}
  \leq 2\|\bm{I}+\bm{V}_{2}\|_{2}^{2}\|\nabla f_{\bm{S}_{2}}(\bm{V}_{2})\|_{F},
\end{align}
where the last inequality is derived by
$\begin{Vmatrix} \bm{0}&\bm{0}\\ \dbra{\bm{V}_{2}}_{21}&\bm{I}_{N-p}\end{Vmatrix}_{2} = 1$
from the fact that each eigenvalue of a triangular matrix equals its diagonal entry.
Finally by applying Lemma~\ref{lemma:norm_basic}~\ref{enum:IV_inv_norm} again, we obtain Proposition~\ref{proposition:change_center}~\ref{enum:change_center}, which implies Proposition~\ref{proposition:change_center}~\ref{enum:zero}.

\section[Useful properties of the gradient after Cayley parametrization for optimization]{Useful properties of $\nabla (f\circ\Phi_{\bm{S}}^{-1})$ for optimization}\label{appendix:gradient_property}
The properties of
$\nabla (f\circ\Phi_{\bm{S}}^{-1})$
in the following Proposition~\ref{proposition:gradient_property} are useful in transplanting powerful computational arts designed for optimization over a vector space into the minimization of
$f\circ\Phi_{\bm{S}}^{-1}$
over
$Q_{N,p}(\bm{S})$.
Indeed, the Lipschitz continuity of the gradient is one of the commonly used assumptions in optimization over a vector space (see, e.g.,~\cite{Reddi-Hefny-Sra16,G16,Zeyuan18,Ward-Wu-Bottou,Chen-Liu-Sun-Hong19,Tatarenko-Touri17}).
The boundedness of the gradient is a key property for distributed optimization and stochastic optimization over a vector space (see, e.g.,~\cite{Ward-Wu-Bottou,Chen-Liu-Sun-Hong19,Tatarenko-Touri17}).
The variance bounded of the gradient is also commonly assumed in stochastic optimization over a vector space (see, e.g.,~\cite{G16,Zeyuan18,Ward-Wu-Bottou}).
\begin{proposition}[Bounds for gradient after Cayley parametrizaton] \label{proposition:gradient_property}
  Let
  $f:\mathbb{R}^{N\times p}\to\mathbb{R}$
  be continuously differentiable.
  Then, for any
  $\bm{S} \in {\rm O}(N)$,
  the following hold:
  \begin{enumerate}[label=(\alph*)]
    \item (Lipschitz continuity).
      \label{enum:Lipschitz}
      If
      \begin{equation}
        (\exists L> 0, \forall \bm{U}_1,\bm{U}_2 \in \St(p,N)) \quad \|\nabla f(\bm{U}_1)-\nabla f(\bm{U}_2)\|_{F} \leq L \|\bm{U}_1 - \bm{U}_2\|_{F}
      \end{equation}
      and
      $\mu \geq \max_{\bm{U}\in \St(p,N)} \|\nabla f(\bm{U})\|_{2}$,
      then the gradient of
      $f_{\bm{S}}:= f\circ\Phi_{\bm{S}}^{-1}$
      satisfies
      \begin{equation}
        (\forall \bm{V}_1,\bm{V}_2 \in Q_{N,p}(\bm{S})) \quad\| \nabla f_{\bm{S}}(\bm{V}_1) - \nabla f_{\bm{S}}(\bm{V}_2) \|_{F}
        \leq 4(\mu + L) \|\bm{V}_1-\bm{V}_2\|_{F}. \label{eq:prop:gradient_Lipschitz}
      \end{equation}
    \item (Boundedness).
      \label{enum:bounded}
      \begin{equation}
        (\bm{V}\in Q_{N,p}(\bm{S})) \quad \|\nabla f_{\bm{S}}(\bm{V})\|_{F} \leq 2\max_{\bm{U}\in \St(p,N)} \|\nabla f(\bm{U})\|_{F}. \label{eq:bounded_gradient}
      \end{equation}
    \item (Variance boundedness).
      \label{enum:variance_bounded}
      Suppose
      $f^{\xi}:\mathbb{R}^{N\times p} \to \mathbb{R}$
      is indexed with realizations of a random variable
      $\xi$
      and continuously differentiable for each realization.
      If there exists
      $\sigma \geq 0$
      and
      $f$
      satisfies
      \begin{align}
        (\bm{U}\in \St(p,N)) \quad
        \begin{cases}
          \mathbb{E}_{\xi} [f^{\xi}(\bm{U})]   = f(\bm{U}), \\
          \mathbb{E}_{\xi} [\nabla f^{\xi}(\bm{U})]  = \nabla f(\bm{U}), \\
          \mathbb{E}_{\xi}[\|\nabla f^{\xi}(\bm{U})- \nabla f(\bm{U})\|_{F}^{2}]  \leq \sigma^{2},
        \end{cases}
      \end{align}
      we have
      \begin{equation}
        (\bm{V}\in Q_{N,p}(\bm{S})) \quad \mathbb{E}_{\xi}[\|\nabla f^{\xi}_{\bm{S}}(\bm{V}) - \nabla f_{\bm{S}}(\bm{V})\|_{F}^{2}] \leq 4\sigma^{2}. \label{eq:bounded_variance_gradient}
      \end{equation}
  \end{enumerate}
\end{proposition}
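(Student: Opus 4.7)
The plan is to deduce all three bounds from the explicit formula $\nabla f_{\bm{S}}(\bm{V}) = \bm{W}^{f}_{\bm{S}}(\bm{V}) - \bm{W}^{f}_{\bm{S}}(\bm{V})^{\T}$ in Proposition~\ref{proposition:gradient}. A recurrent ingredient will be the chain of inequalities $\|\nabla f_{\bm{S}}(\bm{V})\|_{F} \leq 2\|\bm{W}^{f}_{\bm{S}}(\bm{V})\|_{F} \leq 2\|\overline{\bm{W}}^{f}_{\bm{S}}(\bm{V})\|_{F}$, where the first step is the triangle inequality for the skew-symmetrization and the second uses that $\bm{W}^{f}_{\bm{S}}$ is obtained from $\overline{\bm{W}}^{f}_{\bm{S}}$ by zeroing the $(2,2)$-block, which can only decrease the Frobenius norm. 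The workhorse estimates throughout will be the submultiplicative bounds in Lemma~\ref{lemma:norm_basic}~\ref{enum:norm_upper}, the uniform bound $\|(\bm{I}+\bm{V})^{-1}\|_{2} \leq 1$ from Lemma~\ref{lemma:norm_basic}~\ref{enum:IV_inv_norm}, and $\|\bm{S}\|_{2} = \|\bm{I}_{N\times p}\|_{2} = 1$.

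For (b), I would apply these bounds directly to the definition~\eqref{eq:matrix_W_bar} of $\overline{\bm{W}}^{f}_{\bm{S}}(\bm{V})$, yielding $\|\overline{\bm{W}}^{f}_{\bm{S}}(\bm{V})\|_{F} \leq \|\nabla f(\Phi_{\bm{S}}^{-1}(\bm{V}))\|_{F}$, and then take the maximum over $\bm{U} = \Phi_{\bm{S}}^{-1}(\bm{V}) \in \St(p,N)$; the existence of the maximum is guaranteed by compactness of $\St(p,N)$ and continuity of $\nabla f$. For (c), the crucial observation is that $\overline{\bm{W}}^{f}_{\bm{S}}(\bm{V})$, and hence $\bm{W}^{f}_{\bm{S}}(\bm{V})$ and $\nabla f_{\bm{S}}(\bm{V})$, depend linearly on $\nabla f$. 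Thus applying the bound derived in~(b) to the difference $f^{\xi}-f$ gives the pointwise estimate
\begin{equation}
  \|\nabla f^{\xi}_{\bm{S}}(\bm{V}) - \nabla f_{\bm{S}}(\bm{V})\|_{F} \leq 2\|\nabla f^{\xi}(\Phi_{\bm{S}}^{-1}(\bm{V})) - \nabla f(\Phi_{\bm{S}}^{-1}(\bm{V}))\|_{F},
\end{equation}
and squaring, taking $\mathbb{E}_{\xi}$, and invoking the assumed variance bound delivers~\eqref{eq:bounded_variance_gradient}.

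The main obstacle is (a), which requires a telescoping estimate for $\overline{\bm{W}}^{f}_{\bm{S}}(\bm{V}_{1}) - \overline{\bm{W}}^{f}_{\bm{S}}(\bm{V}_{2})$. I would factor $\overline{\bm{W}}^{f}_{\bm{S}}(\bm{V}) = \bm{A}(\bm{V})\bm{B}(\bm{V})$ with $\bm{A}(\bm{V}):=(\bm{I}+\bm{V})^{-1}\bm{I}_{N\times p}$ and $\bm{B}(\bm{V}):=\nabla f(\Phi_{\bm{S}}^{-1}(\bm{V}))^{\T}\bm{S}(\bm{I}+\bm{V})^{-1}$, then split the difference as $(\bm{A}(\bm{V}_{1})-\bm{A}(\bm{V}_{2}))\bm{B}(\bm{V}_{1}) + \bm{A}(\bm{V}_{2})(\bm{B}(\bm{V}_{1})-\bm{B}(\bm{V}_{2}))$. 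The estimate $\|\bm{A}(\bm{V}_{1})-\bm{A}(\bm{V}_{2})\|_{F} \leq \|\bm{V}_{1}-\bm{V}_{2}\|_{F}$ follows from Lemma~\ref{lemma:norm_basic}~\ref{enum:norm_Lipschitz}, $\|\bm{B}(\bm{V}_{1})\|_{2} \leq \mu$ from the assumption on $\nabla f$ and $\|(\bm{I}+\bm{V})^{-1}\|_{2}\leq 1$, and $\|\bm{A}(\bm{V}_{2})\|_{2}\leq 1$ is immediate. For $\bm{B}(\bm{V}_{1})-\bm{B}(\bm{V}_{2})$ I would insert an intermediate term $\nabla f(\Phi_{\bm{S}}^{-1}(\bm{V}_{2}))^{\T}\bm{S}(\bm{I}+\bm{V}_{1})^{-1}$ to separate the change in $\nabla f$ from that in $(\bm{I}+\bm{V})^{-1}$; the former piece requires $\|\Phi_{\bm{S}}^{-1}(\bm{V}_{1}) - \Phi_{\bm{S}}^{-1}(\bm{V}_{2})\|_{F} \leq 2\|\bm{V}_{1}-\bm{V}_{2}\|_{F}$, which is immediate from the expression $\Phi_{\bm{S}}^{-1}(\bm{V}) = 2\bm{S}(\bm{I}+\bm{V})^{-1}\bm{I}_{N\times p} - \bm{S}_{\rm le}$ in~\eqref{eq:Cayley_inv_alt} together with Lemma~\ref{lemma:norm_basic}~\ref{enum:norm_Lipschitz}. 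Assembling the bookkeeping carefully should yield $\|\bm{B}(\bm{V}_{1})-\bm{B}(\bm{V}_{2})\|_{F} \leq (2L+\mu)\|\bm{V}_{1}-\bm{V}_{2}\|_{F}$ and hence $\|\overline{\bm{W}}^{f}_{\bm{S}}(\bm{V}_{1}) - \overline{\bm{W}}^{f}_{\bm{S}}(\bm{V}_{2})\|_{F} \leq 2(\mu+L)\|\bm{V}_{1}-\bm{V}_{2}\|_{F}$, which, combined with the factor $2$ coming from skew-symmetrization, produces the desired constant $4(\mu+L)$ in~\eqref{eq:prop:gradient_Lipschitz}.
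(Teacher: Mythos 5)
Your proposal is correct and follows essentially the same route as the paper: reduce everything to $\overline{\bm{W}}^{f}_{\bm{S}}$ via $\|\nabla f_{\bm{S}}(\bm{V})\|_{F}\leq 2\|\bm{W}^{f}_{\bm{S}}(\bm{V})\|_{F}\leq 2\|\overline{\bm{W}}^{f}_{\bm{S}}(\bm{V})\|_{F}$, bound part (b) directly, get part (c) from the linearity of $\bm{V}\mapsto\overline{\bm{W}}^{f}_{\bm{S}}(\bm{V})$ in $\nabla f$, and prove part (a) by telescoping $\overline{\bm{W}}^{f}_{\bm{S}}(\bm{V}_1)-\overline{\bm{W}}^{f}_{\bm{S}}(\bm{V}_2)$ into the same three elementary differences (left inverse, gradient, right inverse) using the resolvent Lipschitz bound and the Lipschitz constant $2$ of $\Phi_{\bm{S}}^{-1}$. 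Your two-stage splitting $(\bm{A}(\bm{V}_1)-\bm{A}(\bm{V}_2))\bm{B}(\bm{V}_1)+\bm{A}(\bm{V}_2)(\bm{B}(\bm{V}_1)-\bm{B}(\bm{V}_2))$ is just a regrouping of the paper's three-term triangle inequality and yields the identical constant $4(\mu+L)$.
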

\begin{proof} 
  The existence of the maximum of
  $\|\nabla f(\cdot)\|$
  over
  $\St(p,N)$
  is guaranteed by the compactness of
  $\St(p,N)$
  and the continuities of
  $\|\cdot\|$
  and
  $\nabla f$.
  We divide the proof of (a)-(c) as follows.
  Recall that
  $\overline{\bm{W}}^{f}_{\bm{S}}(\bm{V})$
  and
  $\bm{W}^{f}_{\bm{S}}(\bm{V})$
  for
  $\bm{S} \in {\rm O}(N)$
  were respectively defined as~\eqref{eq:matrix_W_bar} and~\eqref{eq:matrix_W}, and we have
  $\nabla f_{\bm{S}}(\bm{V}):= \nabla (f\circ\Phi_{\bm{S}}^{-1})(\bm{V}) = 2\Skew(\bm{W}_{\bm{S}}^{f}(\bm{V}))$
  (see Proposition~\ref{proposition:gradient}).
  In the following, we use properties of
  $\Skew$;
  (i)
  $\|\Skew(\bm{X})\|_{F}\leq \|\bm{X}\|_{F}$
  for
  $\bm{X} \in \mathbb{R}^{N\times N}$;
  (ii) the linearity of
  $\Skew$.

  \underline{\bf (I) Proof of Proposition~\ref{proposition:gradient_property}~\ref{enum:Lipschitz}.}
  First, we introduce a useful inequalities.
  \begin{lemma}[Lipschitz continuity of $\Phi_{\bm{S}}^{-1}$]\label{lemma:Lipschitz_inverse}
    For every
    $\bm{S} \in O(N)$,
    $\Phi_{\bm{S}}^{-1}$
    is Lipschitz continuous over
    $Q_{N,p}(\bm{S})$
    with a constant
    $2$, i.e.,
    \begin{equation}
      (\bm{V}_{1},\bm{V}_{2} \in Q_{N,p}(\bm{S})) \quad
      \| \Phi_{\bm{S}}^{-1}(\bm{V}_1) - \Phi_{\bm{S}}^{-1}(\bm{V}_2)\|_{F} \leq 2\|\bm{V}_1-\bm{V}_2 \|_{F}. \label{eq:Lipschitz_inverse}
    \end{equation}
  \end{lemma}
  \begin{proof}
    From~\eqref{eq:Cayley_inv_alt} and Lemma~\ref{lemma:norm_basic}~\ref{enum:norm_upper} and~\ref{enum:norm_Lipschitz}, we have
    \begin{align}
      & \| \Phi_{\bm{S}}^{-1}(\bm{V}_1) - \Phi_{\bm{S}}^{-1}(\bm{V}_2) \|_{F}
      = \|2\bm{S}\left((\bm{I}+\bm{V}_{1})^{-1} -(\bm{I}+\bm{V}_{2})^{-1}\right)\bm{I}_{N\times p}\|_{F} \\
      & \leq 2\|\bm{S}\|_{2}\|(\bm{I}+\bm{V}_{1})^{-1} -(\bm{I}+\bm{V}_{2})^{-1}\|_{F}\|\bm{I}_{N\times p}\|_{2}
      \leq 2\|(\bm{I}+\bm{V}_1)^{-1}-(\bm{I}+\bm{V}_2)^{-1} \|_{F}
      \leq 2\|\bm{V}_1-\bm{V}_2 \|_{F}.
    \end{align}
  \end{proof}
  Return to the proof of Proposition~\ref{proposition:gradient_property}~\ref{enum:Lipschitz}.
  From~\eqref{eq:grad_propo},~\eqref{eq:matrix_W} in Proposition~\ref{proposition:gradient}, we have
  \begin{align}
    & \|\nabla f_{\bm{S}}(\bm{V}_1) - \nabla f_{\bm{S}}(\bm{V}_2) \|_{F}
    = 2\|\Skew(\bm{W}^{f}_{\bm{S}}(\bm{V}_{1})- \bm{W}^{f}_{\bm{S}}(\bm{V}_{2}))\|_{F} \\
    & \leq 2\|\bm{W}^{f}_{\bm{S}}(\bm{V}_1) - \bm{W}^{f}_{\bm{S}}(\bm{V}_2)\|_{F}
    \leq 2\|\overline{\bm{W}}^{f}_{\bm{S}}(\bm{V}_1) - \overline{\bm{W}}^{f}_{\bm{S}}(\bm{V}_2)\|_{F}. \label{eq:gradient_Lipschitz_W}
  \end{align}
  Moreover, from~\eqref{eq:matrix_W_bar}, for all
  $\bm{V}_{1},\bm{V}_{2}\in Q_{N,p}(\bm{S})$
  with
  $\bm{U}_1 := \Phi_{\bm{S}}^{-1}(\bm{V}_1), \bm{U}_2 :=  \Phi_{\bm{S}}^{-1}(\bm{V}_2) \in \St(p,N)\setminus E_{N,p}(\bm{S})$,
  we deduce
  {
    \thickmuskip=0.0\thickmuskip
    \medmuskip=0.0\medmuskip
    \thinmuskip=0.0\thinmuskip
    \begin{align}
      & \|\overline{\bm{W}}^{f}_{\bm{S}}(\bm{V}_1) - \overline{\bm{W}}^{f}_{\bm{S}}(\bm{V}_2)\|_{F} \\
      & = \|(\bm{I}+\bm{V}_1)^{-1}\bm{I}_{N\times p}\nabla f(\bm{U}_1)^{\T}\bm{S}(\bm{I}+\bm{V}_1)^{-1} - (\bm{I}+\bm{V}_2)^{-1}\bm{I}_{N\times p}\nabla f(\bm{U}_2)^{\T}\bm{S}(\bm{I}+\bm{V}_2)^{-1}\|_{F} \\
      & \leq \|(\bm{I}+\bm{V}_1)^{-1}\bm{I}_{N\times p}\nabla f(\bm{U}_1)^{\T}\bm{S}(\bm{I}+\bm{V}_1)^{-1} - (\bm{I}+\bm{V}_2)^{-1}\bm{I}_{N\times p}\nabla f(\bm{U}_1)^{\T}\bm{S}(\bm{I}+\bm{V}_1)^{-1}\|_{F} \\
      & \quad + \|(\bm{I}+\bm{V}_2)^{-1}\bm{I}_{N\times p}\nabla f(\bm{U}_1)^{\T}\bm{S}(\bm{I}+\bm{V}_1)^{-1} - (\bm{I}+\bm{V}_2)^{-1}\bm{I}_{N\times p}\nabla f(\bm{U}_2)^{\T}\bm{S}(\bm{I}+\bm{V}_1)^{-1}\|_{F} \\
      & \quad + \|(\bm{I}+\bm{V}_2)^{-1}\bm{I}_{N\times p}\nabla f(\bm{U}_2)^{\T}\bm{S}(\bm{I}+\bm{V}_1)^{-1} - (\bm{I}+\bm{V}_2)^{-1}\bm{I}_{N\times p}\nabla f(\bm{U}_2)^{\T}\bm{S}(\bm{I}+\bm{V}_2)^{-1}\|_{F}. \\
      & \label{eq:Lipschitz_triangle}
    \end{align}
  }
  The first term in the right-hand side of~\eqref{eq:Lipschitz_triangle} can be bounded as
  {
    \thickmuskip=0.0\thickmuskip
    \medmuskip=0.0\medmuskip
    \thinmuskip=0.0\thinmuskip
    \begin{align}
      &\|(\bm{I}+\bm{V}_1)^{-1}\bm{I}_{N\times p}\nabla f(\bm{U}_1)^{\T}\bm{S}(\bm{I}+\bm{V}_1)^{-1} - (\bm{I}+\bm{V}_2)^{-1}\bm{I}_{N\times p}\nabla f(\bm{U}_1)^{\T}\bm{S}(\bm{I}+\bm{V}_1)^{-1}\|_{F} \\
      & = \left\| \left((\bm{I}+\bm{V}_{1})^{-1} - (\bm{I}+\bm{V}_{2})^{-1} \right) \bm{I}_{N\times p}\nabla f(\bm{U}_1)^{\T}\bm{S}(\bm{I}+\bm{V}_1)^{-1}\right\|_{F} \\
      & \leq \|\bm{I}_{N\times p}\|_{2}\|\nabla f(\bm{U}_{1})\|_{2}\|\bm{S}\|_{2}\|(\bm{I}+\bm{V}_1)^{-1}\|_{2} \|(\bm{I}+\bm{V}_{1})^{-1} - (\bm{I}+\bm{V}_{2})^{-1}\|_{F} \ (\because \textrm{Lemma~\ref{lemma:norm_basic}~\ref{enum:norm_upper}}) \\
      & \leq \|\nabla f(\bm{U}_{1})\|_{2}\|\bm{V}_{1}-\bm{V}_{2}\|_{F}
      \leq \mu\|\bm{V}_{1}-\bm{V}_{2}\|_{F}. \ (\because \textrm{Lemma~\ref{lemma:norm_basic}~\ref{enum:IV_inv_norm} \ and~\ref{enum:norm_Lipschitz}})
    \end{align}
  }%
  \noindent
  Similarly the last term in~\eqref{eq:Lipschitz_triangle} can be bounded above by
  $\mu \|\bm{V}_{1} - \bm{V}_{2}\|_{F}$.
  The second term in~\eqref{eq:Lipschitz_triangle} can be evaluated as
  \begin{align}
    &  \|(\bm{I}+\bm{V}_2)^{-1}\bm{I}_{N\times p}\nabla f(\bm{U}_1)^{\T}\bm{S}(\bm{I}+\bm{V}_1)^{-1} - (\bm{I}+\bm{V}_2)^{-1}\bm{I}_{N\times p}\nabla f(\bm{U}_2)^{\T}\bm{S}(\bm{I}+\bm{V}_1)^{-1}\|_{F} \\
    & =\|(\bm{I}+\bm{V}_{2})^{-1}\bm{I}_{N\times p}(\nabla f(\bm{U}_{1})-\nabla f(\bm{U}_{2}))^{\T}\bm{S}(\bm{I}+\bm{V}_1)^{-1}\|_{F} \\
    & \leq \|(\bm{I}+\bm{V}_{2})^{-1}\|_{2}\|\bm{I}_{N\times p}\|_{2}\|\bm{S}\|_{2}\|(\bm{I}+\bm{V}_{1})^{-1}\|_{2}\|\nabla f(\bm{U}_1) - \nabla f(\bm{U}_2) \|_{F} \quad (\because \textrm{Lemma~\ref{lemma:norm_basic}~\ref{enum:norm_upper}})\\
    & \leq \|\nabla f(\bm{U}_1) - \nabla f(\bm{U}_2) \|_{F} 
    \leq L\|\bm{U}_1 - \bm{U}_2\|_{F} \ (\because \textrm{Lemma~\ref{lemma:norm_basic}~\ref{enum:IV_inv_norm} and Lipschitz continuity of }\nabla f) \\[-1em]
    & = L\|\Phi_{\bm{S}}^{-1}(\bm{V}_1)- \Phi_{\bm{S}}^{-1}(\bm{V}_2)\|_{F}
    \leq 2L\|\bm{V}_1-\bm{V}_2\|_{F} \quad (\because \textrm{Lemma~\ref{lemma:Lipschitz_inverse}}).
  \end{align}
  Therefore, the left-hand side of~\eqref{eq:Lipschitz_triangle} is bounded as
  \begin{align}
    (\bm{V}_{1},\bm{V}_{2} \in Q_{N,p}(\bm{S}))\ \|\overline{\bm{W}}^{f}_{\bm{S}}(\bm{V}_1) - \overline{\bm{W}}^{f}_{\bm{S}}(\bm{V}_2)\|_{F}
    & \leq 2(\mu+L)\|\bm{V}_1 - \bm{V}_2\|_{F},
  \end{align}
  which is combined with~\eqref{eq:gradient_Lipschitz_W} to get~\eqref{eq:prop:gradient_Lipschitz}.

  \underline{\bf (II) Proof of Proposition~\ref{proposition:gradient_property}~\ref{enum:bounded}.}
  From~\eqref{eq:grad_propo},~\eqref{eq:matrix_W} in Proposition~\ref{proposition:gradient}, we have
  \begin{align}
    \|\nabla f_{\bm{S}}(\bm{V})\|_{F}
    & = 2\|\Skew(\bm{W}^{f}_{\bm{S}}(\bm{V}))\|_{F}
    \leq 2\|\bm{W}^{f}_{\bm{S}}(\bm{V})\|_{F}
    \leq 2\|\overline{\bm{W}}^{f}_{\bm{S}}(\bm{V})\|_{F}.
  \end{align}
  By using Lemma~\ref{lemma:norm_basic}~\ref{enum:norm_upper} and~\ref{enum:IV_inv_norm}, we get
  \begin{align}
    & \|\overline{\bm{W}}^{f}_{\bm{S}}(\bm{V}) \|_{F}
    = \|(\bm{I}+\bm{V})^{-1}\bm{I}_{N\times p}\nabla f(\Phi_{\bm{S}}^{-1}(\bm{V}))^{\T}\bm{S}(\bm{I}+\bm{V})^{-1}\|_{F} \\
    & \leq \|(\bm{I}+\bm{V})^{-1}\|_{2}^{2}\|\bm{I}_{N\times p}\|_{2}\|\bm{S}\|_{2}\|\nabla f(\Phi_{\bm{S}}^{-1}(\bm{V}))\|_{F}
    \leq \|\nabla f(\Phi_{\bm{S}}^{-1}(\bm{V}))\|_{F}
    \leq \max_{\bm{U}\in \St(p,N)} \|\nabla f(\bm{U})\|_{F},
  \end{align}
  which implies~\eqref{eq:bounded_gradient}.

  \underline{\bf (III) Proof of Proposition~\ref{proposition:gradient_property}~\ref{enum:variance_bounded}.}
  From~\eqref{eq:grad_propo},~\eqref{eq:matrix_W} in Proposition~\ref{proposition:gradient}, we obtain, for each
  $\xi$,
  \begin{align}
    & \|\nabla f_{\bm{S}}^{\xi}(\bm{V}) - \nabla f_{\bm{S}}(\bm{V})\|_{F}^{2}
    = 4\|\Skew(\bm{W}_{\bm{S}}^{f^{\xi}}(\bm{V}) - \bm{W}_{\bm{S}}^{f}(\bm{V}))\|_{F}^{2} \\
    & \leq 4\|\bm{W}_{\bm{S}}^{f^{\xi}}(\bm{V}) - \bm{W}_{\bm{S}}^{f}(\bm{V})\|_{F}^{2}
    \leq 4\|\overline{\bm{W}}_{\bm{S}}^{f^{\xi}}(\bm{V}) - \overline{\bm{W}}_{\bm{S}}^{f}(\bm{V})\|_{F}^{2} \\
    & = 4\|(\bm{I}+\bm{V})^{-1}\bm{I}_{N\times p}(\nabla f^{\xi}(\Phi_{\bm{S}}^{-1}(\bm{V})) - \nabla f(\Phi_{\bm{S}}^{-1}(\bm{V})))\bm{S}(\bm{I}+\bm{V})^{-1}\|_{F}^{2} \\
    & \leq 4\|(\bm{I}+\bm{V})^{-1}\|_{2}^{4}\|\bm{I}_{N\times p}\|_{2}^{2}\|\bm{S}\|_{2}^{2}\|\nabla f^{\xi}(\Phi_{\bm{S}}^{-1}(\bm{V})) - \nabla f(\Phi_{\bm{S}}^{-1}(\bm{V}))\|_{F}^{2} \ (\because \textrm{Lemma~\ref{lemma:norm_basic}~\ref{enum:norm_upper}}) \\
    & \leq 4\|\nabla f^{\xi}(\Phi_{\bm{S}}^{-1}(\bm{V})) - \nabla f(\Phi_{\bm{S}}^{-1}(\bm{V}))\|_{F}^{2}. \quad (\because \textrm{Lemma~\ref{lemma:norm_basic}~\ref{enum:IV_inv_norm}})
  \end{align}
  By taking the expectation of both sides, we get~\eqref{eq:bounded_variance_gradient}.

\end{proof}

\section{Proof of Proposition~\ref{proposition:mobility}}\label{appendix:mobility}
Application of~\eqref{eq:Cayley_inv_alt} to
$\bm{U}(\tau) := \Phi_{\bm{S}}^{-1}(\bm{V}+\tau \bm{\mathcal{E}}) = 2\bm{S}(\bm{I}+\bm{V}+\tau \bm{\mathcal{E}})^{-1}\bm{I}_{N\times p}-\bm{S}\bm{I}_{N\times p}$
yields
$\bm{U}(\tau) - \bm{U}(0) = 2\bm{S}\left((\bm{I}+\bm{V}+\tau \bm{\mathcal{E}})^{-1} - (\bm{I} + \bm{V})^{-1}\right)\bm{I}_{N\times p}
= -2\tau\bm{S} (\bm{I}+\bm{V}+\tau\bm{\mathcal{E}})^{-1}\bm{\mathcal{E}}(\bm{I}+\bm{V})^{-1}\bm{I}_{N\times p}$
(for the 2nd equality, see the proof of Lemma~\ref{lemma:norm_basic}~\ref{enum:norm_Lipschitz}) and
\begin{align}
  & \|\bm{U}(\tau) - \bm{U}(0)\|_{F}
  = \|2\tau \bm{S}(\bm{I}+\bm{V}+\tau\bm{\mathcal{E}})^{-1}\bm{\mathcal{E}}(\bm{I}+\bm{V})^{-1}\bm{I}_{N\times p}\|_{F} \\
  & \leq  2\tau \|\bm{S}\|_{2}\|(\bm{I}+\bm{V}+\tau\bm{\mathcal{E}})^{-1}\|_{2}\|\bm{\mathcal{E}}\|_{F}\|(\bm{I}+\bm{V})^{-1}\bm{I}_{N\times p}\|_{2}\  (\because \textrm{Lemma~\ref{lemma:norm_basic}~\ref{enum:norm_upper}})\\
  & \leq 2\tau \|(\bm{I}+\bm{V})^{-1}\bm{I}_{N\times p}\|_{2}
  \leq 2\tau \begin{Vmatrix} \bm{I}_{p} & -\dbra{\bm{V}}_{21}^{\T} \end{Vmatrix}_{2}\|\bm{M}^{-1}\|_{2}, \label{eq:proof_mobility}
\end{align}
where
$\bm{M}=\bm{I}_{p}+\dbra{\bm{V}}_{11}+\dbra{\bm{V}}_{21}^{\T}\dbra{\bm{V}}_{21} \in \mathbb{R}^{p\times p}$,
the second last inequality is derived by Lemma~\ref{lemma:norm_basic}~\ref{enum:IV_inv_norm}, and the last inequality is derived by~\eqref{eq:IV_inv}.

To evaluate the norm
$\|\bm{M}^{-1}\|_{2}$,
let
$\bm{I}_{p}+\dbra{\bm{V}}_{21}^{\T}\dbra{\bm{V}}_{21} = \bm{Q}(\bm{I}_{p}+\bm{\Sigma})\bm{Q}^{\T}$
be the eigenvalue decomposition,
where
$\bm{Q} \in \textrm{O}(p)$
is an orthogonal matrix and
$\bm{\Sigma}\in\mathbb{R}^{p\times p}$
is a diagonal matrix whose entries are non-negative.
Then, we have
\begin{equation}
  \bm{M} = \bm{Q}(\bm{I}_{p}+\bm{\Sigma})^{1/2}\left(\bm{I}_{p}+(\bm{I}_{p}+\bm{\Sigma})^{-1/2}\bm{Q}^{\T}\dbra{\bm{V}}_{11}\bm{Q}(\bm{I}_{p}+\bm{\Sigma})^{-1/2}\right)(\bm{I}_{p}+\bm{\Sigma})^{1/2}\bm{Q}^{\T}.
\end{equation}
The norm
$\|\bm{M}^{-1}\|_{2}=\|(\bm{I}_{p}+\dbra{\bm{V}}_{11}+\dbra{\bm{V}}_{21}^{\mathrm{T}}\dbra{\bm{V}}_{21})^{-1}\|_{2}$
is bounded above as
{
  \thickmuskip=0.0\thickmuskip
  \medmuskip=0.0\medmuskip
  \thinmuskip=0.0\thinmuskip
  \begin{align}
    & \|\bm{M}^{-1}\|_{2}
    = \left\|\bm{Q}(\bm{I}_{p}+\bm{\Sigma})^{-1/2}\left(\bm{I}_{p}+(\bm{I}_{p}+\bm{\Sigma})^{-1/2}\bm{Q}^{\T}\dbra{\bm{V}}_{11}\bm{Q}(\bm{I}_{p}+\bm{\Sigma})^{-1/2}\right)^{-1}(\bm{I}_{p}+\bm{\Sigma})^{-1/2}\bm{Q}^{\T}\right\|_{2} \\
    & \leq \|(\bm{I}_{p}+\bm{\Sigma})^{-1/2}\|_{2}^{2}
    \left\|\left(\bm{I}_{p}+(\bm{I}_{p}+\bm{\Sigma})^{-1/2}\bm{Q}^{\T}\dbra{\bm{V}}_{11}\bm{Q}(\bm{I}_{p}+\bm{\Sigma})^{-1/2}\right)^{-1}\right\|_{2}
    \leq \|(\bm{I}_{p}+\bm{\Sigma})^{-1}\|_{2}, \label{eq:mobility_proof_A}
  \end{align}
}%
\noindent
where the last inequality is derived from the skew-symmetry of
$(\bm{I}_{p}+\bm{\Sigma})^{-1/2}\bm{Q}^{\T}\dbra{\bm{V}}_{11}\bm{Q}(\bm{I}_{p}+\bm{\Sigma})^{-1/2}$
and Lemma~\ref{lemma:norm_basic}~\ref{enum:IV_inv_norm}.
Moreover, by
$\|(\bm{I}_{p}+\bm{\Sigma})^{-1}\|_{2}= (1+\sigma^{2}_{\min}(\dbra{\bm{V}}_{21}))^{-1}$,
we have
$\|\bm{M}^{-1}\|_{2} \leq (1+\sigma^{2}_{\min}(\dbra{\bm{V}}_{21}))^{-1}$.
Furthermore, from the definition of the spectral norm, we have
$\begin{Vmatrix} \bm{I}_{p} & -\dbra{\bm{V}}_{21}^{\T} \end{Vmatrix}_{2} = \sqrt{\lambda_{\max}(\bm{I}_{p}+\dbra{\bm{V}}_{21}^{\T}\dbra{\bm{V}}_{21})} = \sqrt{1+\|\dbra{\bm{V}}_{21}\|_{2}^{2}}$.
By substituting these relations into~\eqref{eq:proof_mobility}, we completed the proof of~\eqref{eq:mobility_rate}.
The equation~\eqref{eq:bound_ratio} is verified by
$\sigma_{\min}(\dbra{\bm{V}}_{21}) \leq \sigma_{\max}(\dbra{\bm{V}}_{21}) = \|\dbra{\bm{V}}_{21}\|_{2}$.

\section[Gradient of the composite of the Cayley retraction and the cost function]{Gradient of $f\circ R_{\bm{U}}^{\rm Cay}$} \label{appendix:gradient_retraction}
\begin{proposition} \label{proposition:gradient_retraction}
  Let
  $\bm{U} \in \St(p,N)$
  and
  $\bm{U}_{\perp} \in \St(N-p,N)$
  satisfy
  $\bm{U}^{\T}\bm{U}_{\perp} = \bm{0}$.
  For a differentiable function
  $f:\mathbb{R}^{N\times p}\to \mathbb{R}$,
  the Cayley transform-based retraction
  $R^{\rm Cay}$
  in~\eqref{eq:Cayley_retraction}, and
  $\bm{U} \in \St(p,N)$,
  the function
  $f\circ R_{\bm{U}}^{\rm Cay}:T_{\bm{U}}\St(p,N) \to \mathbb{R}$
  is differentiable with
  \begin{equation}
    (\bm{\mathcal{V}} \in T_{\bm{U}}\St(p,N)) \quad \nabla (f\circ R_{\bm{U}}^{\rm Cay})(\bm{\mathcal{V}})
    = -2\bm{P}_{\bm{U}}\Skew(\bm{Z}\bm{U}\nabla f(R_{\bm{U}}^{\rm Cay}(\bm{\mathcal{V}}))^{\T}\bm{Z})\bm{U}
  \end{equation}
  where
  $\bm{P}_{\bm{U}}:=\bm{I} - \bm{U}\bm{U}^{\T}/2 \in \mathbb{R}^{N\times N}$
  and
  $\bm{Z} := (\bm{I}+\Skew(\bm{U}\bm{\mathcal{V}}^{\T}\bm{P}_{\bm{U}}))^{-1} \in \mathbb{R}^{N\times N}$.
  The matrix
  $\bm{Z}$
  can be expressed as
  $\bm{Z} = \bm{I} - \bm{A}(\bm{I}_{2p} + \bm{B}^{\T}\bm{A})^{-1}\bm{B}^{\T}$
  with
  $\bm{A} = \begin{bmatrix} \bm{U} & \bm{P}_{\bm{U}}\bm{\mathcal{V}}/2 \end{bmatrix} \in \mathbb{R}^{N\times 2p}$
  and
  $\bm{B} = \begin{bmatrix} \bm{P}_{\bm{U}}\bm{\mathcal{V}}/2 & -\bm{U} \end{bmatrix} \in \mathbb{R}^{N\times 2p}$.
\end{proposition}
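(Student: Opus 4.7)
The plan is to compute the directional derivative of $f\circ R_{\bm{U}}^{\rm Cay}$ at $\bm{\mathcal{V}}$ along an arbitrary $\bm{\mathcal{D}} \in T_{\bm{U}}\St(p,N)$, then rewrite it as a Frobenius inner product with a matrix that lies in $T_{\bm{U}}\St(p,N)$. Set $\bm{W}(\bm{\mathcal{V}}) := \Skew(\bm{U}\bm{\mathcal{V}}^{\T}\bm{P}_{\bm{U}}) \in Q_{N,N}$. By~\eqref{eq:inv_origin_Cayley}, the retraction has the compact form $R_{\bm{U}}^{\rm Cay}(\bm{\mathcal{V}}) = 2(\bm{I}+\bm{W}(\bm{\mathcal{V}}))^{-1}\bm{U} - \bm{U}$, so that $\bm{Z} = (\bm{I}+\bm{W}(\bm{\mathcal{V}}))^{-1}$ is precisely the inversion factor. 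First I would invoke Fact~\ref{fact:matrix_differential}~\ref{enum:inverse_rule} together with the linearity of $\Skew$ to obtain
\begin{equation}
  \left.\frac{d}{dt}\right|_{t=0} R_{\bm{U}}^{\rm Cay}(\bm{\mathcal{V}} + t\bm{\mathcal{D}})
  = -2\bm{Z}\,\Skew(\bm{U}\bm{\mathcal{D}}^{\T}\bm{P}_{\bm{U}})\,\bm{Z}\bm{U},
\end{equation}
and then apply the chain rule to get the directional derivative $-2\trace\!\left(\nabla f(R_{\bm{U}}^{\rm Cay}(\bm{\mathcal{V}}))^{\T}\bm{Z}\,\Skew(\bm{U}\bm{\mathcal{D}}^{\T}\bm{P}_{\bm{U}})\,\bm{Z}\bm{U}\right)$.

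Next, I would use the cyclic property of $\trace$ to move $\bm{Z}\bm{U}\nabla f(R_{\bm{U}}^{\rm Cay}(\bm{\mathcal{V}}))^{\T}\bm{Z}$ to the left of the $\Skew$ factor, and then apply the elementary identity $\trace(\bm{Y}\Skew(\bm{X})) = \trace(\Skew(\bm{Y})\bm{X})$ (valid for any square $\bm{Y},\bm{X}$ of matching size, proved by splitting $\Skew$ into its defining halves). This yields
\begin{equation}
  \left.\frac{d}{dt}\right|_{t=0} (f\circ R_{\bm{U}}^{\rm Cay})(\bm{\mathcal{V}} + t\bm{\mathcal{D}})
  = -2\trace\!\left(\Skew(\bm{Z}\bm{U}\nabla f(R_{\bm{U}}^{\rm Cay}(\bm{\mathcal{V}}))^{\T}\bm{Z})\,\bm{U}\bm{\mathcal{D}}^{\T}\bm{P}_{\bm{U}}\right).
\end{equation}
By the symmetry of $\bm{P}_{\bm{U}}$ and the cyclic property once more, this is $\inprod{-2\bm{P}_{\bm{U}}\Skew(\bm{Z}\bm{U}\nabla f(R_{\bm{U}}^{\rm Cay}(\bm{\mathcal{V}}))^{\T}\bm{Z})\bm{U}}{\bm{\mathcal{D}}}_{F}$, identifying the claimed expression as a candidate gradient. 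To show this candidate is the true gradient on $T_{\bm{U}}\St(p,N)$ (rather than its projection), I would verify that it already lies in $T_{\bm{U}}\St(p,N)$: denoting the candidate by $\bm{G}$ and using $\bm{U}^{\T}\bm{P}_{\bm{U}} = \bm{U}^{\T}/2$, we get $\bm{U}^{\T}\bm{G} = -\bm{U}^{\T}\Skew(\bm{Z}\bm{U}\nabla f(R_{\bm{U}}^{\rm Cay}(\bm{\mathcal{V}}))^{\T}\bm{Z})\bm{U}$, which is manifestly skew-symmetric, so $\bm{U}^{\T}\bm{G} + \bm{G}^{\T}\bm{U} = \bm{0}$ and $\bm{G} \in T_{\bm{U}}\St(p,N)$ by Fact~\ref{fact:stiefel}~\ref{enum:tangent}.

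For the Sherman-Morrison-Woodbury representation of $\bm{Z}$, I would factor $\bm{W}(\bm{\mathcal{V}})$ as a rank-$2p$ product: since $\bm{P}_{\bm{U}}^{\T} = \bm{P}_{\bm{U}}$,
\begin{equation}
  \bm{W}(\bm{\mathcal{V}}) = \tfrac{1}{2}\bm{U}\bm{\mathcal{V}}^{\T}\bm{P}_{\bm{U}} - \tfrac{1}{2}\bm{P}_{\bm{U}}\bm{\mathcal{V}}\bm{U}^{\T}
  = \begin{bmatrix} \bm{U} & \bm{P}_{\bm{U}}\bm{\mathcal{V}}/2 \end{bmatrix}
    \begin{bmatrix} \bm{\mathcal{V}}^{\T}\bm{P}_{\bm{U}}/2 \\ -\bm{U}^{\T} \end{bmatrix}
  = \bm{A}\bm{B}^{\T},
\end{equation}
with the $\bm{A},\bm{B} \in \mathbb{R}^{N\times 2p}$ stated in the proposition. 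Applying Fact~\ref{fact:SMW} to $\bm{I}+\bm{A}\bm{I}_{2p}\bm{B}^{\T}$ immediately gives $\bm{Z} = \bm{I} - \bm{A}(\bm{I}_{2p}+\bm{B}^{\T}\bm{A})^{-1}\bm{B}^{\T}$, provided $\bm{I}_{2p}+\bm{B}^{\T}\bm{A}$ is invertible; this invertibility follows from the invertibility of $\bm{I}+\bm{W}(\bm{\mathcal{V}})$ (Lemma~\ref{lemma:norm_basic}~\ref{enum:IV_inv_norm}) via the Weinstein-Aronszajn identity $\det(\bm{I}+\bm{A}\bm{B}^{\T}) = \det(\bm{I}_{2p}+\bm{B}^{\T}\bm{A})$. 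The main obstacle is the trace bookkeeping: one must carefully apply the $\trace(\bm{Y}\Skew(\bm{X})) = \trace(\Skew(\bm{Y})\bm{X})$ identity while keeping track of which side of $\bm{\mathcal{D}}$ each factor lives on so that $\bm{\mathcal{D}}^{\T}$ is ultimately isolated on the right, and subsequently confirm that no further tangent-space projection is needed thanks to the idempotent-like identity $\bm{U}^{\T}\bm{P}_{\bm{U}} = \bm{U}^{\T}/2$.
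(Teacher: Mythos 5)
Your proposal is correct and follows essentially the same route as the paper's proof: chain rule plus the derivative-of-inverse formula to get $-2\bm{Z}\Skew(\bm{U}\bm{\mathcal{D}}^{\T}\bm{P}_{\bm{U}})\bm{Z}\bm{U}$, trace rearrangement to isolate $\bm{\mathcal{D}}$, verification that the candidate lies in $T_{\bm{U}}\St(p,N)$ via skew-symmetry of $\bm{U}^{\T}\bm{G}$, and the factorization $\Skew(\bm{U}\bm{\mathcal{V}}^{\T}\bm{P}_{\bm{U}})=\bm{A}\bm{B}^{\T}$ followed by Sherman--Morrison--Woodbury. The only (welcome) refinements are packaging the trace bookkeeping in the identity $\trace(\bm{Y}\Skew(\bm{X}))=\trace(\Skew(\bm{Y})\bm{X})$ and explicitly justifying the invertibility of $\bm{I}_{2p}+\bm{B}^{\T}\bm{A}$, which the paper leaves implicit.
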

\begin{proof}
  Let
  $\bm{\mathcal{V}},\bm{\mathcal{D}} \in T_{\bm{U}}\St(p,N)$.
  From the chain rule and Fact~\ref{fact:matrix_differential}, we obtain
  \begin{align}
    & \left.\frac{d(f\circ R_{\bm{U}}^{\rm Cay})}{dt}(\bm{\mathcal{V}} + t\bm{\mathcal{D}})\right|_{t=0} = \trace\left(\nabla f(R_{\bm{U}}^{\rm Cay}(\bm{\mathcal{V}}))^{\T}\left.\frac{dR_{\bm{U}}^{\rm Cay}}{dt}(\bm{\mathcal{V}}+t\bm{\mathcal{D}})\right|_{t=0}\right) \\
    & = -2\trace\left(\nabla f(R_{\bm{U}}^{\rm Cay}(\bm{\mathcal{V}}))^{\T} \bm{Z}\Skew(\bm{U}\bm{\mathcal{D}}^{\T}\bm{P}_{\bm{U}})\bm{Z}\bm{U}\right) \\
    & = \trace\left(\bm{U}^{\T}\bm{Z}\bm{U}\nabla f(R_{\bm{U}}^{\rm Cay}(\bm{\mathcal{V}}))^{\T}\bm{Z}\bm{P}_{\bm{U}}\bm{\mathcal{D}}\right)
    - \trace\left(\bm{\mathcal{D}}^{\T}\bm{P}_{\bm{U}}\bm{Z}\bm{U}\nabla f(R_{\bm{U}}^{\rm Cay}(\bm{\mathcal{V}}))^{\T}\bm{Z}\bm{U}\right) \\
    & = \trace\left(\bm{U}^{\T}\bm{Z}\bm{U}\nabla f(R_{\bm{U}}^{\rm Cay}(\bm{\mathcal{V}}))^{\T}\bm{Z}\bm{P}_{\bm{U}}\bm{\mathcal{D}}\right)
    - \trace\left(\bm{U}^{\T}\bm{Z}^{\T}\nabla f(R_{\bm{U}}^{\rm Cay}(\bm{\mathcal{V}}))\bm{U}^{\T}\bm{Z}^{\T}\bm{P}_{\bm{U}}\bm{\mathcal{D}}\right) \\
    & = \trace\left(2\bm{U}^{\T}\Skew(\bm{Z}\bm{U}\nabla f(R_{\bm{U}}^{\rm Cay}(\bm{\mathcal{V}}))^{\T}\bm{Z})\bm{P}_{\bm{U}}\bm{\mathcal{D}}\right)
    = \trace\left(\left(-2\bm{P}_{\bm{U}}\Skew(\bm{Z}\bm{U}\nabla f(R_{\bm{U}}^{\rm Cay}(\bm{\mathcal{V}}))^{\T}\bm{Z})\bm{U}\right)^{\T}\bm{\mathcal{D}}\right) \label{eq:retraction_derivative_chain}
  \end{align}
  due to
  $R_{\bm{U}}^{\rm Cay}(\bm{\mathcal{V}})  = 2(\bm{I}+\Skew(\bm{U}\bm{\mathcal{V}}^{\T}\bm{P}_{\bm{U}}))^{-1}\bm{U} - \bm{U}$
  (see~\eqref{eq:Cayley_retraction} and~\eqref{eq:inv_origin_Cayley}) and
  \begin{align}
    \left.\frac{dR_{\bm{U}}^{\rm Cay}}{dt}(\bm{\mathcal{V}}+t\bm{\mathcal{D}})\right|_{t=0}
      & = -2(\bm{I}+\Skew(\bm{U}\bm{\mathcal{V}}^{\T}\bm{P}_{\bm{U}}))^{-1}\Skew(\bm{U}\bm{\mathcal{D}}^{\T}\bm{P}_{\bm{U}})(\bm{I}+\Skew(\bm{U}\bm{\mathcal{V}}^{\T}\bm{P}_{\bm{U}}))^{-1}\bm{U} \\
      & = -2\bm{Z}\Skew(\bm{U}\bm{\mathcal{D}}^{\T}\bm{P}_{\bm{U}})\bm{Z}\bm{U}.
  \end{align}
  For
  $\bm{\mathcal{W}}:= -2\bm{P}_{\bm{U}}\Skew(\bm{Z}\bm{U}\nabla f(R_{\bm{U}}^{\rm Cay}(\bm{\mathcal{V}}))^{\T}\bm{Z})\bm{U} \in \mathbb{R}^{N\times p}$,
  we have
  $\bm{U}^{\T}\bm{\mathcal{W}} + \bm{\mathcal{W}}^{\T}\bm{U} = \bm{0}$
  because
  $\bm{U}^{\T}\bm{\mathcal{W}} = -\bm{U}^{\T}\Skew(\bm{Z}\bm{U}\nabla f(R_{\bm{U}}^{\rm Cay}(\bm{\mathcal{V}}))^{\T}\bm{Z})\bm{U}$
  is skew-symmetric.
  Fact~\ref{fact:stiefel}~\ref{enum:tangent} yields
  $\bm{\mathcal{W}} \in T_{\bm{U}}\St(p,N)$.

  On the other hand, we obtain
  \begin{equation} \label{eq:retraction_derivative_direction}
    \left.\frac{d(f\circ R_{\bm{U}}^{\rm Cay})}{dt}(\bm{\mathcal{V}} + t\bm{\mathcal{D}})\right|_{t=0}
      = \trace(\nabla (f\circ R_{\bm{U}}^{\rm Cay})(\bm{\mathcal{V}})^{\T}\bm{\mathcal{D}}).
  \end{equation}
  From~\eqref{eq:retraction_derivative_chain},~\eqref{eq:retraction_derivative_direction} and
  $\bm{\mathcal{W}} \in T_{\bm{U}}\St(p,N)$,
  it holds
  $\nabla (f\circ R_{\bm{U}}^{\rm Cay})(\bm{\mathcal{V}}) = \bm{\mathcal{W}}$.

  In the following, let us consider the expression of
  $\bm{Z}$
  along the discussion in~\cite[Lemma 4]{W13}.
  From
  $\bm{I} + \Skew(\bm{U}\bm{\mathcal{V}}^{\T}\bm{P}_{\bm{U}}) = \bm{I} + \bm{A}\bm{B}^{\T}$,
  we have
  $\bm{Z} = (\bm{I}+\bm{A}\bm{B}^{\T})^{-1}$.
  Then, applying the Sherman-Morrison-Woodbury formula (see Fact~\ref{fact:SMW}) to
  $\bm{Z}$,
  we obtain
  $\bm{Z} = \bm{I} - \bm{A}(\bm{I}_{2p} + \bm{B}^{\T}\bm{A})^{-1}\bm{B}^{\T}$.
\end{proof}

\end{document}